\def\eqalign#1{\null\,\vcenter{\openup\jot\m@th
  \ialign{\strut\hfil$\displaystyle{##}$&$\displaystyle{{}##}$\hfil
      \crcr#1\crcr}}\,}
\newcommand\eindproof{\unskip\nobreak\hfill\hbox{\quad $\square$}\par \medskip}
\newtheorem{theorem}{Theorem}[subsection]
\newtheorem{proposition}[theorem]{Proposition}
\newtheorem{convention}{Convention}
\newtheorem{definition}[theorem]{Definition}
\newtheorem{lemma}[theorem]{Lemma}
\newtheorem{corollary}[theorem]{Corollary}
\newenvironment{proof}{\noindent {\bf Proof.}}{\eindproof}
\newtheorem{example}[theorem]{Example}
\newtheorem{remark}[theorem]{Remark}}
\newcounter{nootje}
\renewcommand\check[1]{\relax}
\renewcommand\hat{\widehat}
\newcommand\isom{\cong}
\newcommand\Sel{\mathop{{\rm Sel}}\nolimits}
\newcommand\car{\mathop{{\rm char}}\nolimits}
\newcommand\Aut{\mathop{{\rm Aut}}\nolimits}
\newcommand\Sym{\mathop{{\rm Sym}}\nolimits}
\newcommand\Hom{\mathop{{\rm Hom}}\nolimits}
\newcommand\NS{\mathop{{\rm NS}}\nolimits}
\newcommand\Gal{\mathop{{\rm Gal}}\nolimits}
\newcommand\GL{\mathop{{\rm GL}}\nolimits}
\newcommand\PGL{\mathop{{\rm PGL}}\nolimits}
\newcommand\SL{\mathop{{\rm SL}}\nolimits}
\newcommand\PSL{\mathop{{\rm PSL}}\nolimits}
\newcommand\Pic{\mathop{{\rm Pic}}\nolimits}
\newcommand\Spec{\mathop{{\rm Spec}}\nolimits}
\newcommand\disc{\mathop{{\rm disc}}\nolimits}
\newcommand\rk{\mathop{{\rm rk}}\nolimits}
\renewcommand\ell{\mathop{{\rm ell}}\nolimits}
\newcommand\SQi{\Aut \Lambda}
\renewcommand\P{\mathbb{P}}
\newcommand\Z{\mathbb{Z}}
\newcommand\Q{\mathbb{Q}}
\renewcommand\overline{\bar}
\renewcommand\O{\mathcal{O}}
\newcommand\kbar{\overline{k}}
\newcommand\Abar{\overline{A}}
\newcommand\Vbar{\overline{V}}
\newcommand\ra{\rightarrow}
\newcommand\semi{\rtimes}
\newcommand\cV{\mathcal{V}}
\newcommand\cS{\mathcal{S}}
\newcommand\cN{{\Psi}}
\newcommand\cL{\mathcal{L}}
\newcommand\cP{\mathcal{P}}
\newcommand\gonquad{exhibit}          
\newcommand\dgonquad{gallery}   
\newcommand\gonquads{exhibits}
\newcommand\dgonquads{galleries}
\newcommand\tmu{\tilde{\mu}}
\newcommand\tT{\tilde{T}}
\newcommand\sqdelta{\xi}
\newcommand\sqd{\epsilon}
\newcommand\Avar{z}
\newcommand\muvar{\zeta}
\newcommand\tmuvar{\tilde{\zeta}}
  \newcommand{\textcyr}[1]{%
    {\fontencoding{OT2}\fontfamily{wncyr}\fontseries{m}\fontshape{n}%
     \selectfont #1}}
\newcommand{\Sha}{{\mbox{\textcyr{Sh}}}}
\begin{document}

\begin{center}
{\bf Nontrivial elements of Sha explained through K3 surfaces}\\
Adam Logan and Ronald van Luijk \\

\vspace{1cm}

\begin{minipage}{4in}
\small In this paper we present a new method to show that a
principal homogeneous space of the Jacobian of a curve of genus two
is nontrivial. The idea is to exhibit a Brauer-Manin obstruction
to the existence of rational points on a quotient of this
principal homogeneous space. In an explicit example we apply the
method to show that a specific curve has infinitely many quadratic
twists whose Jacobians have nontrivial Tate-Shafarevich group.
\end{minipage}
\end{center}

\vspace{1cm}

\noindent {\bf Keywords}: Jacobian, Tate-Shafarevich group, Kummer surface, Brauer-Manin obstruction, genus 2 curve \\
{\bf MSC classification}: 14H40, 11G10, 14J27-28 \\

\smallskip

\section{Introduction}

By Faltings' Theorem every curve of genus $2$ defined over a
number field $k$ has only finitely many rational points. Several
methods have been developed to find all rational points of a given
curve $C$, such as the method of Chabauty-Coleman, the
Mordell-Weil sieve, and combinations of these with covering
techniques. All these methods require that we know the finitely
generated abelian group $J(k)$ of rational points on the Jacobian
$J$ of $C$, at least up to a subgroup of finite index. The torsion
subgroup of $J(k)$ is generally easy to find and the problem is
therefore to find the rank $r$ of $J(k)$. The rank can be read off
from the size of the group $J(k)/2J(k)$ once the torsion subgroup
is known. This group fits into an exact sequence $$ 0 \ra
J(k)/2J(k) \ra \Sel^{(2)}(J/k) \ra \Sha(J/k)[2] \ra 0, $$ where
$\Sel^{(2)}(J/k)$ is the $2$-Selmer group of $J/k$ and
$\Sha(J/k)[2]$ is the $2$-torsion subgroup of the Tate-Shafarevich
group $\Sha(J/k)$ of $J/k$. The $2$-Selmer group is computable
(see \cite{S}). It is, however, not even known whether the
Tate-Shafarevich group is always finite. Many papers have been
devoted to exhibiting nontrivial elements of $\Sha(J/k)$. In this
paper we will follow a new method, suggested by Michael Stoll,
which leads to the following result, our main theorem.

\begin{theorem}\label{main}
Let $S$ be the union of $\{5\}$
with the set of primes that split completely in the field of definition
of the lines of $V$, which is
$$\Q\left(\sqrt {-1}, \sqrt 2, \sqrt 5, \sqrt {-3(1 + \sqrt 2)}, \sqrt {6(1 + \sqrt 5)}\right).$$
Then for all $n$ which are products of elements of $S$, the $2$-part of the
Tate-Shafarevich
group of the Jacobian of the curve defined by
$$y^2 = -6n(x^2+1)(x^2-2x-1)(x^2+x-1)$$
is nontrivial.
\end{theorem}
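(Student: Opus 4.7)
Following the approach outlined in the abstract, for each admissible $n$ I would construct a principal homogeneous space $T_n$ for the Jacobian $J_n$ of the twist $y^2 = -6n(x^2+1)(x^2-2x-1)(x^2+x-1)$, take a Kummer-type quotient $V_n = T_n/\langle -1\rangle$ (which is a K3 surface after desingularization), and show that $V_n$ has no rational point by producing a Brauer--Manin obstruction on $V_n$. A rational point on $T_n$ would descend to $V_n$, so this forces $T_n(\Q) = \emptyset$; combined with the everywhere-local solvability of $T_n$, this yields a nontrivial class in $\Sha(J_n/\Q)[2]$. The surface $V$ appearing in the statement of the theorem is the $n=1$ instance of $V_n$.

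The work then splits into three steps. \emph{Local solvability:} one checks that $V_n(\Q_v) \neq \emptyset$ at every place $v$. At places of good reduction and at $\infty$ this is routine via Hensel's lemma; at the finite set of bad primes $\{2,3,5\}$ together with those dividing $n$, and at the primes $p\in S$ dividing $n$ in general, the splitting hypothesis (or, for $p=5$, a direct calculation) reduces the check to the untwisted case $n=1$ handled in the body of the paper. \emph{Brauer class:} one builds a quaternion class $\alpha_n \in \mathrm{Br}(V_n)$ from pairs of Galois-conjugate lines on $V_n$; since the twist by $n$ only introduces the square root $\sqrt n$, which belongs to $\Q_p$ for every $p\in S$, the class $\alpha_n$ admits a uniform description in $n$. \emph{Invariant sum:} for any adelic point $(P_v) \in \prod_v V_n(\Q_v)$ one computes $\sum_v \mathrm{inv}_v \alpha_n(P_v)$ and shows that it is nonzero, giving the desired Brauer--Manin obstruction.

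The main obstacle I anticipate is the uniform computation of the local invariants $\mathrm{inv}_v \alpha_n$ across the whole family of twists. The splitting hypothesis is exactly what controls this: if $p\in S$ splits completely in the field of definition of the lines of $V$, then over $\Q_p$ the surface $V_n$ is isomorphic to $V_1$ (because the rescaling by $\sqrt n$ is defined over $\Q_p$), so the local invariant of $\alpha_n$ at $p$ equals that of $\alpha_1$ regardless of $v_p(n)$; hence only finitely many places require an individual calculation. The prime $5$ lies in $S$ even though it does not split in the lines-field, so the invariant at $5$ needs a separate computation, and one must verify that introducing a factor of $5$ into $n$ leaves the global sum unchanged. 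Assembling all of this, one concludes that the sum of local invariants is nonzero for every adelic point on $V_n$, delivering the obstruction and hence $V_n(\Q) = \emptyset$, which finishes the proof.
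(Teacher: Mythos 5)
Your proposal has the right skeleton (local solvability of the $2$-covering plus a Brauer--Manin obstruction on the Kummer-type quotient), but you have a genuine misconception about where the twist by $n$ enters, and it leads you to anticipate a difficulty that is not there while missing the actual structure of the argument.

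The key point you miss is that the quotient surface does not depend on $n$ at all. The paper defines $V_{f,\delta}\subset\P(A_f)$ by equations involving only the algebra $A_f=k[X]/f$ and the element $\delta\in A_f^*$, and immediately observes that $A_{cf}=A_f$ and $V_{cf,\delta}=V_{f,\delta}$ for any scalar $c$. So $V_n = V_{-6nf,\delta} = V_{f,\delta} = V$ is literally one fixed K3 surface for every $n$. Consequently there is no ``family'' to control uniformly: the Brauer class is constructed once and its local invariants are computed once, for the single surface $V$ (Proposition \ref{obstr}). Your anticipated ``main obstacle''--- the uniform computation of invariants across the twists, using the splitting hypothesis to make $V_n\simeq V_1$ over $\Q_p$ --- is solving a problem that doesn't arise; and conversely, if one really did need such a uniform argument, the splitting-completely condition would not by itself give it, since it only controls what happens at primes dividing $n$, not how the global Brauer class varies.

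Where the hypothesis on $S$ actually bites is entirely in the \emph{Selmer} step, i.e.\ in showing that the fixed $\delta$ lies in the fake $2$-Selmer group of $\Jac(C_{-6n})$ (Proposition \ref{selmer}). The nontriviality of $\Sha$ requires $\delta$ to be an everywhere-locally-solvable Selmer class that is not in the image of the global $2$-descent map; the Brauer--Manin argument on $V$ supplies the second half (via Proposition \ref{usev}), and the splitting condition supplies the first. At a prime $p\mid n$ that splits completely in the field $F$ of definition of the lines, the image of $\delta$ in $(A_f\otimes\Q_p)^*/{(A_f\otimes\Q_p)^*}^2\Q_p^*$ is trivial, so local solvability there is automatic; the finitely many remaining places ($2,3,5,\infty$, and $n$ congruent to $1$ or $5$) are checked directly. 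The prime $5$ appears in $S$ not because of any Brauer-invariant computation but because the needed local Selmer membership can be checked at $p=5$ directly even though $5$ fails the splitting condition. In short: you correctly identified the two ingredients but assigned the role of $S$ to the wrong one, and this comes from not noticing that the quotient K3 surface is twist-independent, which is the structural observation that makes the whole family argument go through with a single Brauer--Manin computation.
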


Our method uses the fact that every element of $\Sel^{(2)}(J/k)$
can be represented by an everywhere locally solvable $2$-covering
of $J$. A $2$-covering of $J$ is a surface $X$ together with a
morphism $\pi \colon X \ra J$, defined over $k$, such that over the
algebraic closure $\kbar$ there exists an isomorphism $X_{\kbar} \isom
J_{\kbar}$ making the following diagram commutative.
$$
\xymatrix{
X_{\kbar} \ar[r]^{\isom}\ar[dr]_{\pi} & J_{\kbar} \ar[d]^{[2]} \\
&J_{\kbar} \\
}
$$
The image of $X$ in $\Sha(J/k)$ is trivial if and only if $X$ has a
rational point. Unfortunately, the easiest way to describe $X$ in
general is as the intersection of
$72$ quadrics in $\P^{15}$ (see \cite{CF}, section 2.3
for the statement). The isomorphism
in the diagram above is determined up to translation of $J$ by a
$2$-torsion point. Since multiplication by $-1$ commutes with these
translations, it induces a unique involution $\iota$ on $X$. Our
strategy to prove that $X$ has no rational points is to show there is a
Brauer-Manin obstruction to the existence of rational points on
$X/\iota$, or rather on a minimal nonsingular model $V$ of this
quotient variety. Note that $V$ is a twist of the Kummer surface
associated to $J$.

The goal of this paper is twofold.  In addition to proving the main theorem,
we will analyze the geometry of $V$. By \cite{CF}, chapter 16, the
surface $V$ can be embedded in $\P^5$ as the complete intersection
of three quadrics. In this reference this is only done when $V$ is a
trivial twist, but we will see that it holds for any twist.
In this embedding, $V$ contains $32$ lines that
generically generate the N\'eron-Severi group of $V$. We will also
investigate the intersection pairing among these lines and exhibit
$15$ pairs of elliptic fibrations, each associated to one of the
nontrivial $2$-torsion points of $J$.  In section \ref{geometry} we
will find the fields of
definition of these lines and elliptic fibrations
together with explicit equations for them.  Then in section
\ref{sectionsha}, we will use the information we have acquired in section
\ref{geometry} to exhibit an explicit example for which we are able to
show a Brauer-Manin obstruction.
This will be the most important part
of the proof of the main theorem.

We thank Michael Stoll for suggesting this project to us, Nils Bruin
and Victor Flynn for helpful suggestions and explanations, William
Stein for letting us use his computers, CRM in Montreal and MSRI
for their hospitality and financial support, and the {\sc
magma} group for developing their software. Also, the first author
thanks the Nuffield Foundation for funding his research with a
grant in their Awards to Newly Appointed Lecturers program, which
supported him at CRM in the fall of 2005, and the University of
Waterloo.  The second author also
thanks PIMS, Simon Fraser University, the University of British Columbia,
and Universidad de los Andes.

\section{The geometry of the surface}\label{geometry}

In this section we will investigate the geometry of the K3 surfaces
that arise as the quotients of the principal homogeneous spaces under
the Jacobian as described in the introduction. These K3 surfaces are
twists of the Kummer surface associated to the Jacobian.
In \cite{CF}, sect.\ 16.2, it is remarked that the Kummer surface
itself, i.e., the trivial twist, contains $32$ lines. We will give a
direct proof that all twists contain $32$ lines. We will analyze the
Galois action on the set of these lines. We will also describe certain
elliptic fibrations, coming in pairs associated to pairs of roots of
$f$. In the next section these will be used to find Brauer-Manin
obstructions to the existence of rational points on some of these
surfaces. In this section we will rarely use the fact that these K3
surfaces are twists of the Kummer surface. Given that our goal is
to actually implement an algorithm, we will keep everything very
explicit, including our proofs. We will, however, refer to \cite{CF}
at times in order not to lose the context our work should be seen in.

\subsection{The surface}

%
%

Let $k$ be a field and $W$ a vector space over $k$ of dimension $r\geq 1$.
We let $\P(W)$ denote the projective space $(W - \{0\})/k^*$ associated
to $W$. The homogeneous coordinate ring of $\P(W)$ is the symmetric
algebra $S(\hat{W}) = \bigoplus_{n \geq 0} S^n(\hat{W})$, where
$\hat{W}=\Hom_k(W,k)$ is the dual of $W$. Let $(x_1, \ldots, x_r)$ be
a basis of $\hat{W}$. This basis yields an isomorphism
$\P(W) \ra \P_k^{r-1}$ that sends the element $w\in W$ to
$[x_1(w):\ldots : x_r(w)]$. Thus the $x_i$ determine a coordinate
system on $\P(W)$. The symmetric algebra $S(\hat{W})$ is isomorphic to the
polynomial ring $k[x_1,\ldots, x_r]$.

Let $f \in k[X]$ be a separable polynomial of degree $6$, and set
$A_f=k[X]/f$. Consider $\delta \in A_f^*$ and set
$$
\cV_{f,\delta}=
\left\{q \in A_f \,\,:\,\, \exists c_0,c_1,c_2
\mbox{ such that } \delta q^2 = c_2X^2+c_1X+c_0 \right\}.
$$
Let $V_{f,\delta}$ denote the subset of $\P(A_f)$ corresponding to
$\cV_{f,\delta}$. For any $c \in k^*$ we obviously have $A_{cf} =
A_f$,  $\cV_{cf,\delta} = \cV_{f,\delta}$, and $V_{cf,\delta} = V_{f,\delta}$.
We will often leave any subscript out of the notation that is clear
from the context. Let $(a_0,\ldots,a_5)$
be the canonical basis of $\hat{A}$ associated to the basis $(1,X,\ldots, X^5)$
of $A$, so that any $q \in A$ can be written as $q = \sum_{i=0}^5
a_i(q) X^i$. As above the $a_i$ determine a coordinate system on $\P(A)$.
Writing out $\delta q^2$, we see that there are quadratic forms
$C_0,\ldots, C_5$ in the homogeneous coordinate ring
$S(\hat{A})$ of $\P(A)$, depending on $f$ and $\delta$, such that
$a_i(\delta q^2) = C_i(q)$ for any $q \in A$.
We have $q \in \cV$ if and only if we have $C_i(q) = 0$ for
$3 \leq i \leq 5$. This implies that $V$ is an algebraic set in
$\P(A)$, defined
over $k$ by the three quadrics $C_3,C_4$, and $C_5$.
We will express the $C_i$ in a new coordinate
system, inspired by \cite{CF}, Chapter 16.

For any field extension $k'$ of $k$ we write $A_{k'}=A \otimes_k
k'$, viewed as a vector space over $k'$, so that we have
$\P(A_{k'}) \isom \P(A)_{k'}$. We write $\bar A$ and $\bar V$ for
$A_{\kbar}$ and $V_{\kbar}$ respectively, where $\kbar$ is a fixed
algebraic closure $k$. Let $\Omega$ denote the set of roots of $f$
in $\kbar$. Then $l=k(\Omega)$ is the splitting field of $f$. For
$\omega \in \Omega$ we let $\varphi_\omega$ denote the $l$-algebra
homomorphism $A_{l}\ra l$ given by $X \mapsto \omega$. The
$\varphi_\omega$ form a basis of $\hat{A}_{l}$ and therefore
induce a coordinate system on $\P(A_{l})$.

\begin{remark}\label{legendre}
Let $(P_\omega)_\omega$ be the canonical basis of $A_l$ associated to
the basis $(\varphi_\omega)_\omega$ of $\hat{A}_l$.
Then for each $q \in A_l$ we have
$$
C_i(q) = a_i(\delta q^2)
  = a_i \left(\sum_\omega \varphi_\omega(\delta q^2) P_\omega\right)
  = \sum_\omega a_i(P_\omega) \varphi_\omega(\delta)
     \varphi_\omega (q)^2,
$$
which implies $C_i = \sum_\omega a_i(P_\omega) \delta_\omega
\varphi_\omega^2$, with $\delta_\omega  = \varphi_\omega(\delta)$.
Note that we have $\varphi_\omega = \sum_{i=0}^5 \omega^i a_i$, so we
can also write the $C_i$ in terms of the coordinates $a_i$.
We can make the constants $a_i(P_\omega)$ explicit by setting
$P'_\omega = \prod_{\theta \in \Omega\setminus \{\omega\}}(X-\theta)$.
Then $P_\omega$ equals the Legendre polynomial $P'_\omega(\omega)^{-1}
P'_\omega$.
\end{remark}

For all $\omega \in \Omega$ we set $\lambda_\omega =
\varphi_\omega(P'_\omega) = P'_\omega(\omega)$ with $P'$ as in Remark
\ref{legendre}. For $j=0,1,2$, set
$$
Q_j = \sum_\omega \omega^j\lambda_\omega^{-1} \delta_\omega \varphi_\omega^2.
$$

\begin{convention}
\bf From now on we will assume that the characteristic of $k$ is
different from $2$.
\end{convention}

\begin{proposition}\label{threequad}
The algebraic set $V_{f,\delta}$ is a smooth, geometrically integral
complete intersection of the three quadrics $Q_0$,
$Q_1$, and $Q_2$. It is a K3 surface of degree $8$.
\end{proposition}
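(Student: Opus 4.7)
The plan is to work over $\kbar$ in the $\varphi_\omega$ coordinates on $\P(\Abar)$, in which both the original equations $C_3, C_4, C_5$ and the proposed equations $Q_0, Q_1, Q_2$ diagonalize into linear combinations of the quadrics $\delta_\omega \varphi_\omega^2$. I would first identify $V$ with the zero locus $Z(Q_0, Q_1, Q_2)$, then check smoothness by a Jacobian calculation, and finally deduce the remaining geometric statements from standard facts about smooth complete intersections in $\P^5$.

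For the first step, the key identity is
$$\sum_{\omega \in \Omega} \omega^j \lambda_\omega^{-1} \;=\; \delta_{j,5} \qquad (0 \leq j \leq 5),$$
which I would obtain by applying the coefficient-of-$X^5$ functional $a_5$ to the Lagrange expansion $X^j = \sum_\omega \omega^j P_\omega$ together with $a_5(P_\omega) = \lambda_\omega^{-1}$. From this, the three vectors $(\omega^j \lambda_\omega^{-1})_\omega$ for $j=0,1,2$ annihilate the three-dimensional Vandermonde subspace of $\kbar^\Omega$ spanned by $(1)_\omega, (\omega)_\omega, (\omega^2)_\omega$. Since under the isomorphism $\Abar \isom \kbar^\Omega$ given by $q \mapsto (\varphi_\omega(q))_\omega$ the defining condition $\delta q^2 \in k + kX + kX^2$ translates to $(\delta_\omega \varphi_\omega(q)^2)_\omega$ lying in that same Vandermonde subspace, and since its annihilator is also three-dimensional, the three linearly independent $Q_j$ cut out exactly $\Vbar$.

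Second, computing $\partial Q_j / \partial \varphi_\omega = 2\omega^j \lambda_\omega^{-1} \delta_\omega \varphi_\omega$, at any point $p$ the $3 \times 6$ Jacobian factors as the Vandermonde matrix $(\omega^j)_{j,\omega}$ times the diagonal matrix with entries $\lambda_\omega^{-1} \delta_\omega \varphi_\omega(p)$. Because $\delta_\omega \neq 0$ (as $\delta \in A^*$) and any three columns of the Vandermonde are linearly independent, the Jacobian rank falls below three only when at most two of the $\varphi_\omega(p)$ are nonzero; but substituting such a $p$ into $Q_0 = Q_1 = Q_2 = 0$ then gives a square Vandermonde system forcing the remaining $\varphi_\omega(p)$ to vanish as well, contradicting $p$ being a projective point. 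Hence $\Vbar$ is smooth.

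Given smoothness and that $V$ is defined by three equations in $\P^5$, every irreducible component of $V$ has codimension exactly three, so $V$ is a smooth complete intersection of $Q_0, Q_1, Q_2$ of pure dimension two and degree $2^3 = 8$ by Bezout. Adjunction gives $K_V \isom \O_V(2+2+2-6) = \O_V$, and the standard vanishing $H^1(\O_V) = 0$ for smooth complete intersections in $\P^n$ of dimension two confirms that $\Vbar$ is a K3 surface; the connectedness of smooth complete intersections of positive dimension in projective space then yields geometric integrality. The main obstacle is really the first step, where the duality identity must be set up carefully; once it is in hand, the Jacobian calculation and the standard complete-intersection toolkit dispose of the rest.
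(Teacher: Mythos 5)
Your proof is correct and takes a genuinely different route than the paper's. For the identification $V = Z(Q_0,Q_1,Q_2)$, the paper exhibits $Q_0',Q_1',Q_2'$ as an explicit upper-triangular combination of $C_5,C_4,C_3$ (with coefficients supplied by $-f_5/f_6=\sum_\omega\omega$ and $f_4/f_6=\sum_{\psi\ne\omega}\psi\omega$) and then verifies $Q_j'=Q_j$ by direct expansion; you instead translate the defining condition $\delta q^2\in {\rm span}(1,X,X^2)$ directly into $\varphi_\omega$-coordinates and observe that the vectors $(\omega^j\lambda_\omega^{-1})_\omega$, $j=0,1,2$, span the annihilator of ${\rm span}\big((1)_\omega,(\omega)_\omega,(\omega^2)_\omega\big)$ under the dot product on $\kbar^\Omega$, the key input being the Lagrange identity $\sum_\omega\omega^j\lambda_\omega^{-1}=0$ for $0\le j\le 4$. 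Your version is more conceptual; the paper's makes it manifest that the $Q_j$ are defined over $k$ (via the explicit $Q_j'$), which your argument leaves implicit. For smoothness the paper only says ``one checks,'' so your diagonalized Jacobian argument --- a $3\times 6$ Vandermonde matrix times the diagonal matrix with entries $\lambda_\omega^{-1}\delta_\omega\varphi_\omega$, then ruling out the rank-deficient locus by substituting a would-be singular point back into $Q_0=Q_1=Q_2=0$ --- is a genuine and welcome addition. Two minor points to spell out: $Z(Q_0,Q_1,Q_2)$ is nonempty (three hypersurfaces in $\P^5$ always meet), so the Jacobian criterion is not vacuous; and the claim ``every irreducible component has codimension exactly three'' should be justified by combining the Krull bound (codimension at most $3$) with the tangent-space bound coming from the rank-$3$ Jacobian (dimension at most $2$ at every point).
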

\begin{proof}
Suppose $f=\sum_{i=0}^6 f_iX^i$. The set $V$ is defined by the
quadrics $C_3,C_4,C_5$, so it is also defined by
$$
Q_0' = C_5, \qquad Q_1' = C_4-f_5f_6^{-1}C_5, \qquad \mbox{and} \qquad
Q_2' = C_3 - f_5f_6^{-1}C_4 +(f_5^2f_6^{-2}-f_4f_6^{-1})C_5.
$$
From the equations $-f_5f_6^{-1} = \sum_\omega \omega$ and $f_4f_6^{-1} =
\sum_{\psi \neq \omega} \psi\omega$ we find $Q_i' = Q_i$ for $i=0,1,2$.
One checks that the
quadrics define a smooth complete intersection. Every smooth complete
intersection of three quadrics in $\P^5$ is a K3 surface of degree $8$.
\end{proof}

\begin{remark}\label{Vistwist}
The statement that $V$ is a K3 surface also
follows from the fact that $V$ is the twist by $\delta$ of the
desingularized Kummer surface associated to the Jacobian of the curve
given by $y^2 = f$; see \cite{CF}, Chapter 16.
\end{remark}

\begin{corollary}\label{VisK3}
The N\'eron-Severi group $\NS(V)$ of $V$ is free, finitely generated,
isomorphic
to $\Pic V$, and it has a lattice structure induced by the
intersection pairing.
\end{corollary}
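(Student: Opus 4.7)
The plan is to deduce each claim from the fact, established in Proposition \ref{threequad}, that $V$ is a K3 surface, i.e.\ that $H^1(V, \O_V) = 0$ and $\omega_V \isom \O_V$. First I would verify $\Pic V \isom \NS V$: the Picard scheme parameterizing line bundles on $V$ has tangent space $H^1(V,\O_V) = 0$ at the identity, so its identity component $\Pic^0 V$ is trivial, and therefore $\NS V = \Pic V / \Pic^0 V$ coincides with $\Pic V$.

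Next, $\NS V$ is finitely generated by the theorem of the base, valid for any smooth projective variety. To see that it is torsion-free, I would pass to $\Vbar$: Hilbert 90 applied to the Leray spectral sequence for $\mathbb{G}_m$ shows that $\Pic V \hookrightarrow \Pic \Vbar$, so torsion-freeness of $\Pic V$ reduces to the same claim for $\Pic \Vbar$. Over $\kbar$, an $n$-torsion line bundle would produce a connected \'etale cyclic cover of degree $n$, contradicting the fact that K3 surfaces are simply connected; alternatively one may cite the injectivity of the cycle class map into the torsion-free second \'etale cohomology group of $\Vbar$.

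Finally, the intersection pairing of divisors on a smooth projective surface descends from $\Pic V$ to $\NS V$ (algebraic equivalence implies numerical equivalence), providing a symmetric $\Z$-valued bilinear form on the free, finitely generated group $\NS V$; non-degeneracy is supplied by the Hodge index theorem. The only step that is not pure bookkeeping is the torsion-freeness of $\NS \Vbar$, and that is the natural place to quote the standard structure theory of K3 surfaces; everything else follows mechanically from Proposition \ref{threequad}.
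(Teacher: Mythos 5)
Your proof is correct, but it takes a different route from the paper's primary argument. The paper's proof leans on the fact, also established in Proposition \ref{threequad}, that $V$ is a smooth complete intersection in $\P^5$, and quotes Deligne's theorem from SGA 7 II (Thm.\ 1.8) to obtain both $\Pic^0\Vbar = 0$ and the torsion-freeness of $\Pic\Vbar$ in a single stroke; it then invokes the standard fact that the intersection pairing gives a lattice structure on the N\'eron--Severi group modulo torsion. You instead work directly from the K3 structure of $V$ --- which is the alternative the paper acknowledges in its final sentence, citing BPV and Bombieri--Mumford: $H^1(V,\O_V)=0$ kills $\Pic^0 V$ by the tangent-space computation on the Picard scheme, and torsion-freeness is extracted from triviality of the \'etale fundamental group or from the cycle class map into torsion-free $H^2_{\hbox{\small\rm\'et}}$. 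The tradeoff is that the SGA~7 route is a uniform black box for all smooth complete intersections, while yours is intrinsic to K3 surfaces and makes visible exactly how each ingredient is used. Two small cautions: the cyclic-cover argument for torsion-freeness requires the order of the torsion class to be prime to the characteristic, so in positive characteristic the cycle-class-map variant (or direct citation of Bombieri--Mumford) is the safer of your two alternatives; and for the tangent-space argument to kill $\Pic^0 V$ scheme-theoretically in positive characteristic one must note that vanishing of $\mathfrak m/\mathfrak m^2$ forces the local ring at the identity to be a field, so reducedness is not an extra hypothesis.
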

\begin{proof}
There are injections
$\Pic V \hookrightarrow \Pic \bar V$ and
$\Pic^0 V \hookrightarrow \Pic^0 \bar V$.
As $V$ is a complete intersection by Proposition \ref{threequad},
we find from \cite{sga7ii}, Thm. 1.8, that $\Pic^0 \bar V=0$,
so $\NS(V) = \Pic V$. The N\'eron-Severi group of any projective
variety is finitely generated, see \cite{hag}, exc. V.1.7.
Also by \cite{sga7ii}, Thm. 1.8, we find that $\Pic \bar V$ is
torsion-free, so it is free. In general the intersection pairing
induces a lattice structure on the N\'eron-Severi group modulo
torsion (see \cite{hagtwo}), which in this case is isomorphic to
$\Pic V$.

This theorem also follows from the fact that $V$ is a K3 surface,
as shown for characteristic $0$ in
Prop.\ VIII.3.2 and on page 120 of \cite{BPV}, and for positive
characteristic in Theorem 5 of \cite{BM}.
\end{proof}

%

\begin{remark}\label{coneovercone}
Consider the net of quadrics $pQ_0+qQ_1+rQ_2$ vanishing on $V$.
The curve $C$ in $\P^2(p,q,r)$ corresponding to singular quadrics is given
by the equation $\det(pM_0+qM_1+rM_2)=0$ of degree $6$, where $M_i$ is
the symmetric matrix corresponding to the quadratic form $Q_i$. For
any $\omega \in \Omega$ the quadric hypersurface corresponding to any
point on the line $p+\omega q +\omega^2 r =0$ is singular at the
point in $\P(A_l)$ given by $\varphi_\theta=0$ for all $\theta\neq
\omega$. This
implies that $C$ consists of $6$ lines. The $15$ intersection points
are parametrized by pairs $(\omega,\psi) \in \Omega^2$ with $\omega
\neq \psi$. The corresponding quadrics are given by $Q_{\omega \psi} =
\omega \psi Q_0-(\omega+\psi)Q_1 +Q_2$. The hypersurface given by
$Q_{\omega\psi}$ is singular at every point on the line
$m_{\omega\psi}$ given by
$\varphi_\theta =0$ for $\theta \neq \omega,\psi$. This hypersurface
is a cone over a cone over a quadric $D_{\omega\psi}$ in the $\P^3$
obtained by projecting $\P(A_l)$ away from $m_{\omega\psi}$, and
therefore contains two families of
linear three-spaces. Each family cuts out a family of curves on $V$,
given by the two quadrics $Q_0,Q_1$ in these three-spaces. This yields
two elliptic
fibrations of $V$, both defined over a quadratic extension of
$k(\omega\psi, \omega+\psi)$. We will see later which extension this
is. Note that the projection from $m_{\omega\psi}$ induces a
$4$--to--$1$ map from $V$ to $D_{\omega\psi}$. The elliptic fibrations
factor through this map. Since $D_{\omega\psi}$
satisfies the Hasse principle this map may be used to obtain
information about the arithmetic of $V$.
\end{remark}

Let $k'$ be any field extension of $k$.
For every $\Avar \in A_{k'}^*$, let $[\Avar]$ denote the automorphism of
$\P(A_{k'})$ induced by multiplication by $\Avar$.
Note that $[\Avar]$
maps $V_\delta$ isomorphically to $V_{\delta \Avar^{-2}}$, so if $\delta$ is
a square in $A_{k'}^*$, then $V_{\delta}$ is isomorphic to $V_1$ over $k'$.
For any commutative
ring $R$ let $\mu(R)$ denote the kernel of the endomorphism $x \mapsto
x^2$ of $R^*$.
The scheme $\Spec A[t]/(t^2-1)$ represents the functor from the category
of $A$-algebras to the category of groups that sends $R$ to $\mu(R)$ in the
sense that the elements of $\mu(R)$ are parametrized by the maps from
$\Spec R$ to $\Spec A[t]/(t^2-1)$ that respect the map to $\Spec A$.
Such a map corresponds to the image of $t$ under the associated
homomorphism $A[t]/(t^2-1) \rightarrow R$.
Let $\mu_A$ be the Weil restriction of this scheme associated to the
extension $A/k$. Then $\mu_A$ is a $k$-scheme representing the functor
that sends a field extension $l$ of $k$ to $\mu(A_l)$. Let $\tmu$ be the quotient
of $\mu_A$ by the automorphism that is induced by $t \mapsto -t$ on
$\Spec A[t]/(t^2-1)$. Then for all field extensions $l$ of $k$ we have
$\tmu(l) = (\mu(A_{\overline{l}})/\langle-1\rangle)^{G_l}$, where $G_l$ is the absolute Galois group of $l$.

\begin{lemma}
The homomorphism $A_{k'}^* \ra \Aut_{k'} \P(A_{k'})$ that sends $\Avar$
to $[\Avar]$ has kernel $k'^*$. It induces
an injective homomorphism $\tmu({k'}) \ra \Aut_{k'} V_{k'}$.
\end{lemma}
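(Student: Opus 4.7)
The plan is to treat the two claims separately. For the first, evaluating $[z]$ on the class of $1 \in A_{k'}^*$ shows that if $[z]$ is the identity then $z = z \cdot 1$ lies in the same projective class as $1$, hence $z = \lambda$ for some $\lambda \in k'^*$. Conversely scalars act trivially on $\P(A_{k'})$, so the kernel is exactly $k'^*$.

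For the second claim I first construct the homomorphism. Given $\tilde{z} \in \tmu(k')$, lift it to some $z \in \mu(A_{\overline{k'}})$, well-defined up to sign. The observation immediately above the lemma shows that $[z]$ sends $V_\delta$ to $V_{\delta z^{-2}}$, which equals $V_\delta$ since $z^2 = 1$. Galois invariance of $\tilde{z}$ gives $\sigma z = \pm z$ for every $\sigma \in G_{k'}$, and since $-z$ and $z$ differ by a scalar we have $[\sigma z] = [z]$; hence $[z]$ descends to an element of $\Aut_{k'} V_{k'}$, independently of the choice of lift.

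For injectivity, suppose $[z]$ acts trivially on $V_{\overline{k'}}$. Because $[z]$ is the projectivization of the linear map of multiplication by $z$ on $A_{\overline{k'}}$, its fixed locus in $\P(A_{\overline{k'}})$ is the union $\bigcup_\lambda \P(E_\lambda)$ of projectivizations of the eigenspaces $E_\lambda \subset A_{\overline{k'}}$ of this linear map, a finite union of linear subspaces. Since $V_{\overline{k'}}$ is irreducible by Proposition \ref{threequad} and is contained in this fixed locus, it lies in a single $\P(E_\lambda)$. The crucial step is to show that $V$ spans the ambient $\P(A_{\overline{k'}}) \cong \P^5$: by Proposition \ref{threequad}, $V$ is the smooth, geometrically integral complete intersection cut out by $Q_0, Q_1, Q_2$, so the homogeneous ideal of $V$ is generated by these three quadrics and contains no linear forms, whence no hyperplane contains $V$. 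Then $\P(E_\lambda) = \P(A_{\overline{k'}})$, so the first claim applied over $\overline{k'}$ gives $z = \lambda \in \overline{k'}^*$; combined with $z^2 = 1$ this forces $z = \pm 1$, so $\tilde{z} = 1$. The main obstacle is precisely this non-degeneracy of $V$; the remaining well-definedness and descent steps are formal.
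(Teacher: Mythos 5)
Your proof is correct, and the underlying idea is the same as the paper's: both arguments hinge on the fact that $V$ is nondegenerate, i.e.\ not contained in any hyperplane of $\P(A_{\overline{k'}})$. The route to injectivity from there is phrased differently, though. The paper works abstractly: restriction $H^0(\P^5,\O_{\P^5}(1)) \to H^0(V,\O_V(1))$ is injective because $V$ is not in a hyperplane, and an automorphism of $\P^5$ is determined by its action on linear forms, so the restriction map $H_V \ra \Aut_{k'} V_{k'}$ is injective; this establishes a reusable fact about \emph{all} projective automorphisms stabilizing $V$. You instead exploit the concrete form of $[\Avar]$ as the projectivization of the specific linear map of multiplication by $\Avar$: its fixed locus in $\P(A_{\overline{k'}})$ is a finite union of projectivized eigenspaces, irreducibility of $V$ (Proposition \ref{threequad}) forces $V$ into a single one of them, and nondegeneracy then forces that eigenspace to be all of $A_{\overline{k'}}$, so $\Avar$ is scalar. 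Your version is more hands-on and slightly more elementary, at the cost of being specific to multiplication maps.

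One small caveat: you justify nondegeneracy by asserting that the homogeneous ideal of $V$ is generated by the three quadrics, hence contains no linear form. That is true, but it is not a formal consequence of $V$ being cut out set-theoretically by quadrics; it uses projective normality of a smooth complete intersection (or, concretely, $H^0(\P^5,\mathcal{I}_V(1))=0$ from the Koszul resolution). The paper simply states ``$V$ is not contained in a hyperplane'' without further comment, so your argument is at the same level of rigor, but be aware that this step is the one place where a genuine geometric input is being invoked.
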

\begin{proof}
Note that for $z \in \mu(A_{k})'$ the automorphisms $[z]$ and
$[-z]$ are equal, so the homomorphism $\tmu({k'}) \ra \Aut_{\kbar}
V_{\kbar}$ is well defined and has image in $\Aut_{k'} V_{k'}$. We
may therefore assume that $k'$ is algebraically closed, so that
$\tmu(k') = \mu(A_{k'})/\langle -1\rangle$. Let $\rho$ denote
the homomorphism
$\Avar \mapsto [\Avar]$ in question. If $\rho(\Avar)$ is the
identity, then we have $\Avar\cdot 1 = 1$ in
$(A_{k'}-\{0\})/k'^*$, which implies $\Avar \in k'^*$. Set $H_V =
\{ \tau \in \Aut_{k'} \P(A_{k'}) \,\, : \,\, \tau(V) = V\}$. Since
$[\Avar]$ maps $V_{\delta}$ to $V_{\delta \Avar^{-2}}$, the
restriction $\rho_\mu$ of $\rho$ to $\mu(A_{k'})$ factors through
$H_V$. Because $V$ is not contained in a hyperplane, the map
$H^0(\P^5,\O_{\P^5}(1)) \ra H^0(V,\O_V(1))$ is injective. As every
element in $\Aut_{k'} \P^5_{k'}$ is determined by its action on
$H^0(\P^5,\O_{\P^5}(1))$, this implies that the restriction map
$r\colon \,H_V \ra \Aut_{k'} V_{k'}$ is injective. Hence, the
composition $r\circ{\rho_\mu}\colon \, \mu(A_{k'}) \ra \Aut_{k'}
V_{k'}$ has kernel $\ker \rho_\mu = \mu(A_{k'}) \cap k'^* = \{\pm
1\}$ and therefore induces the injective homomorphism already
mentioned.
\end{proof}

\noindent For any $\muvar \in \mu(A_{k'})$ we write $\tmuvar$ for the
image of $\muvar$ in $\tmu({k'})$, and $[\muvar]$ or $[\tmuvar]$ for
the induced action by multiplication on $\P(A_{k'})$ and $V_{k'}$.
Let $T$ be the Weil restriction of the scheme $\Spec A[t]/(t^2-\delta)$
associated to the extension $A/k$ and let $\tT$ be the $k$-scheme that
is the quotient of $T$ by the automorphism induced by $t \mapsto -t$
on $\Spec A[t]/(t^2-\delta)$. As for $\mu_A$ and $\tmu$ above, we
can make the identifications
$$
T(l) = \{ \sqdelta \in A_{l} \,\, : \,\, \sqdelta^2 = \delta\},
\qquad \mbox{and} \qquad
\tT(l) = \left(\{ \sqdelta \in A_{\overline{l}} \,\, : \,\,
   \sqdelta^2 = \delta\}/\langle [-1] \rangle \right)^{G_l}.
$$
Clearly $T$ is a $k$-torsor under $\mu_A$, with the transitive free action
of $\mu_A(k')=\mu(A_{k'})$ on $T(k')$ given by multiplication in $A_{k'}$.
Similarly, $\tT$ is a $k$-torsor under $\tmu$.

For any $\sqdelta\in T(k')$ the $2$-dimensional subspace
$$
\cL_{\sqdelta} = \{\sqdelta^{-1} (sX+t) \,\, : \,\, s,t \in k'\}
$$
of $A_{k'}$ corresponds to a line in  $\P(A_{k'})$,
defined over $k'$, which is contained in $V_{k'}$ and which we will
denote by $L_\sqdelta$.
Since $L_{\sqdelta} = L_{-\sqdelta}$, this implies that to each
$\tilde{\sqdelta} \in \tT(k')$ we can associate a unique line
$L_{\tilde{\sqdelta}}$, namely $L_{\tilde{\sqdelta}}=L_{\sqdelta}$,
where $\sqdelta \in T(\overline{k}')$ is a lift of $\tilde{\sqdelta}$.
Let $\Lambda(k')$ denote the set
of all lines $L_{\tilde{\sqdelta}}$ corresponding to some
$\tilde{\sqdelta} \in \tT(k')$.
Note that for any $\Avar \in \mu(A_{\overline{k'}})$ the automorphism $[\Avar]$ maps $L_{\sqdelta}$ to $L_{\sqdelta \Avar^{-1}}$.
This induces an action of $\tmu({k'})$ on $\Lambda(k')$.

\begin{lemma}\label{transfree}
The action of $\tmu({k'})$ on $\Lambda(k')$ is transitive and free.
\end{lemma}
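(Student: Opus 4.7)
The plan is to factor the question through the torsor structure on $\tT$, and then reduce freeness to a concrete linear-algebra statement about the subspace of $A$ spanned by $1$ and $X$. Assume $\tT(k')\neq \emptyset$; otherwise $\Lambda(k') = \emptyset$ and there is nothing to prove. The assignment $\tilde\sqdelta \mapsto L_{\tilde\sqdelta}$ is a surjection $\psi\colon \tT(k') \ra \Lambda(k')$ by definition of $\Lambda(k')$, and by the observation in the excerpt that $[\Avar]$ sends $L_\sqdelta$ to $L_{\sqdelta \Avar^{-1}}$, this map is equivariant for the natural actions of $\tmu(k')$.

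Since $\tT$ is a $k$-torsor under $\tmu$ and has a $k'$-point, the action of $\tmu(k')$ on $\tT(k')$ is free and transitive. Equivariance of $\psi$ then immediately gives transitivity of the induced action on $\Lambda(k')$. For freeness it suffices to show that $\psi$ is injective; combined with transitivity, this will transport the free action on $\tT(k')$ to a free action on $\Lambda(k')$. Concretely, injectivity amounts to showing that whenever $L_{\sqdelta \Avar^{-1}} = L_\sqdelta$ for some lift $\sqdelta \in T(\overline{k'})$ and some $\Avar \in \mu(A_{\overline{k'}})$, one has $\Avar = \pm 1$, so that $\tilde\Avar = 1$ in $\tmu(\overline{k'})$.

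Unpacking the definition of $\cL_\sqdelta$, the condition $L_{\sqdelta \Avar^{-1}} = L_\sqdelta$ means that $\Avar$ preserves the $2$-dimensional subspace spanned by $1$ and $X$ in $A_{\overline{k'}}$. The key step, and the only place where any work is needed, is to deduce from this that $\Avar \in \{\pm 1\}$. Using the isomorphism $A_{\overline{k'}} \isom \prod_{\omega \in \Omega} \overline{k'}$ under which $X$ corresponds to $(\omega)_{\omega \in \Omega}$, the condition $\Avar^2 = 1$ forces $\Avar = (\epsilon_\omega)_\omega$ with each $\epsilon_\omega \in \{\pm 1\}$. The condition that $\Avar \cdot 1$ lies in the span of $1$ and $X$ means that there exist $a,b \in \overline{k'}$ with $\epsilon_\omega = a + b\omega$ for every $\omega \in \Omega$.

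The polynomial $a + bt$ has degree at most $1$, so it can attain each of the values $\pm 1$ at most once unless it is constant. Since $|\Omega|=6 \geq 3$, we must have $b = 0$ and $a \in \{\pm 1\}$, whence $\Avar = \pm 1$ as required. This degree count is the only place where we use that $f$ has degree $6$ rather than $\leq 2$, and it is the essential content of the lemma.
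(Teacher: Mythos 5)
Your proof is correct and follows essentially the same route as the paper: both arguments deduce transitivity from the equivariant surjection $\tT(k')\ra\Lambda(k')$ together with the torsor structure of $\tT$, and both reduce freeness to showing that a $\muvar\in\mu(A_{\overline{k'}})$ preserving the span $W$ of $\{1,X\}$ must equal $\pm 1$. The only divergence is in that last step: the paper writes $\muvar\cdot 1=sX+t$ and uses $\muvar\cdot X=sX^2+tX\in W$ together with linear independence of $1,X,X^2$ in $A$ to force $s=0$, while you pass through the Chinese Remainder splitting and observe that a polynomial of degree $\le 1$ cannot take values in $\{\pm 1\}$ at six distinct roots unless it is constant. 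These are interchangeable elementary observations — both ultimately exploit $\deg f\ge 3$ — so the argument is a valid proof with a cosmetic variation at the end.
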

\begin{proof}
Transitivity follows from the fact that the action of $\mu(A_{\overline{k'}})$ on $T(\overline{k'})$ is transitive
and the map $\tT(k')\ra \Lambda(k')$ sending $\tilde{\sqdelta}$ to $L_{\tilde{\sqdelta}}$ is surjective and respects the action of
$\mu(A_{k'})$. To show that the action is free, we may assume that
$k'$ is algebraically closed.
Suppose that for $\tmuvar \in \tmu({k'})$ we have
$\tmuvar L_{\sqdelta} = L_{\sqdelta}$ and let $\muvar \in \mu(A_{k'})$ be
a lift of $\tmuvar$. Then multiplication by $\muvar$ sends the
subspace $\cL_{\sqdelta}$ to itself, so it also sends the subspace
$W=\{sX+t \,\, :\,\, s,t \in k'\}$ to itself. In particular this
implies that
there are $s,t\in k'$ such that $\muvar\cdot 1  = sX+t$. From the fact
that $\muvar \cdot X
\in W$ we then find $s=0$, so $\muvar=t$ is in $\mu(A_{k'}) \cap k' =
\{\pm 1\}$, which means $\tmuvar=1$. We conclude that the action of
$\tmu({k'})$ on $\Lambda(k')$ is free.
\end{proof}


\begin{lemma}\label{paramlines}
The map $T(\overline{k'}) \ra \Lambda(\overline{k'})$ that sends
$\sqdelta$ to $L_\sqdelta$
induces a bijection $\tT(k') \ra \Lambda(k')$ that respects the action of
$\tmu({k'})$ and $G(\kbar/k)$.
\end{lemma}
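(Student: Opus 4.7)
My approach is to check surjectivity, injectivity, and the two equivariance properties, most of which reduce to facts already established in the preceding discussion. To begin, since $L_\sqdelta = L_{-\sqdelta}$ by construction, the assignment $\sqdelta \mapsto L_\sqdelta$ factors through $\tT(\overline{k'})$. The subspace $\cL_\sqdelta$ is cut out from $A_{\overline{k'}}$ by $k$-rational algebraic formulas in the coordinates of $\sqdelta$, so the induced map $\tT(\overline{k'}) \to \Lambda(\overline{k'})$ is manifestly $G(\kbar/k)$-equivariant. Restricting to $G(\overline{k'}/k')$-fixed points yields the desired map $\tT(k') \to \Lambda(k')$, which inherits the Galois equivariance for free.

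Surjectivity is immediate from the definition, since $\Lambda(k')$ was defined to be precisely the image of this map. For injectivity I would argue as follows: if $\tT(k')$ is empty, so is $\Lambda(k')$ and the result is trivial; otherwise $\tT(k')$ is a torsor under $\tmu(k')$, so any two elements are related by $\tilde{\sqdelta}_2 = \tmuvar \tilde{\sqdelta}_1$ for a unique $\tmuvar \in \tmu(k')$. Combining the formula $[\Avar]L_\sqdelta = L_{\sqdelta \Avar^{-1}}$ stated just before Lemma \ref{transfree} with the commutativity of $A_{k'}$ yields $[\tmuvar^{-1}] L_{\tilde{\sqdelta}_1} = L_{\tilde{\sqdelta}_2}$. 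If $L_{\tilde{\sqdelta}_1} = L_{\tilde{\sqdelta}_2}$, then $[\tmuvar^{-1}]$ fixes $L_{\tilde{\sqdelta}_1}$, and the freeness part of Lemma \ref{transfree} forces $\tmuvar = 1$, so $\tilde{\sqdelta}_1 = \tilde{\sqdelta}_2$.

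For $\tmu(k')$-equivariance, the action on $\tT(k')$ is by multiplication in $A_{k'}$, while the action on $\Lambda(k')$ sends $L_{\tilde{\sqdelta}}$ to $L_{\tilde{\sqdelta}\tmuvar^{-1}}$. Because every element of $\tmu$ satisfies $\tmuvar^2 = 1$, we have $\tmuvar^{-1} = \tmuvar$, so under the map the two actions correspond. The only genuinely subtle point in the whole argument is precisely this exponent-$2$ observation, without which the two natural actions would differ by an inverse; the rest of the proof simply assembles Lemma \ref{transfree}, the tautological definition of $\Lambda(k')$, and the algebraic nature of the assignment $\sqdelta \mapsto L_\sqdelta$.
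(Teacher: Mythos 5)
Your proof is correct and follows essentially the same route as the paper's: the paper also obtains surjectivity and equivariance as immediate from the definitions and derives injectivity from the free, transitive action of $\tmu(k')$ on $\tT(k')$ and $\Lambda(k')$ (Lemma \ref{transfree}). You spell out the $\tmu(k')$-equivariance check and note explicitly that it hinges on $\tmuvar^{-1}=\tmuvar$, a detail the paper suppresses as ``obvious,'' but the underlying argument is the same.
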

\begin{proof}
It is obvious that we obtain a surjective map $\tT(k') \ra \Lambda(k')$ that
respects the action of $\tmu({k'})$ and $G(\kbar/k)$.
Injectivity then follows from the fact that $\tmu({k'})$ acts
transitively and freely on both $\tT(k')$ and $\Lambda(k')$, as we saw in
Lemma
\ref{transfree}.
\end{proof}

\begin{remark}\label{torsorslines}
Lemmas \ref{transfree} and \ref{paramlines} combined say that
$\tT(\kbar)$ and $\Lambda(\kbar)$ are isomorphic over $k$
as $k$-torsors under $\tmu({\kbar})$.
\end{remark}

\begin{convention}
\bf For the rest of this section we will suppose that $l$ is contained
in $k'$.
\end{convention}

By the Chinese Remainder Theorem, the map
$$
\varphi_{k'}\colon \, A_{k'} \ra \bigoplus_{\omega\in \Omega} k'
$$
induced by the $\varphi_\omega$ is an
isomorphism, defined over $l$. Note that the induced Galois action on
$\bigoplus_\omega k'$ is given by acting on both the indices and
the coefficients in $k'$. In other words, for $\sigma \in G(\kbar/k)$
we have
$$
{}^\sigma\!\big((c_\omega)_{\omega\in\Omega}\big) =
\left({}^\sigma\!c_{{}^{\sigma^{-1}}\!\omega}\right)_{\omega\in\Omega}.
$$
It follows that $\mu(A_{k'})$ and $\tmu({k'})$ are isomorphic to
$\bigoplus_{\omega} \{\pm 1\}$ and $(\bigoplus_{\omega} \{\pm
1\})/\{\pm 1\}$ respectively.

\begin{lemma}\label{zott}
Either the set $\Lambda(k')$ is empty, or it contains exactly $32$ lines.
\end{lemma}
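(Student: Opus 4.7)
The plan is to reduce the statement to a simple counting argument using the free transitive action of $\tmu(k')$ on $\Lambda(k')$ established in Lemma \ref{transfree}. The key input is the hypothesis $l \subseteq k'$, which splits $f$ and thereby gives us complete control over $\mu(A_{k'})$ and $\tmu(k')$.

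First I would invoke the Chinese Remainder Theorem isomorphism $\varphi_{k'} \colon A_{k'} \to \bigoplus_{\omega \in \Omega} k'$, which exists precisely because $l \subseteq k'$ (as just noted in the text preceding the lemma). Since $f$ has degree $6$, the set $\Omega$ has $6$ elements, and the group of square roots of unity in $\bigoplus_\omega k'$ is just the product $\bigoplus_\omega \{\pm 1\} \cong (\Z/2\Z)^6$, of order $64$. Passing to the quotient by the diagonal $\{\pm 1\}$ gives $\tmu(k') \cong (\Z/2\Z)^5$, of order exactly $32$.

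Next I would apply Lemma \ref{transfree}: the group $\tmu(k')$ acts transitively and freely on $\Lambda(k')$. If $\Lambda(k')$ is empty we are done; otherwise, the action having a single orbit of trivial stabilizers means the orbit-stabilizer theorem yields $|\Lambda(k')| = |\tmu(k')| = 32$, completing the proof.

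There is no real obstacle: the whole statement is essentially a bookkeeping corollary of the torsor structure described in Remark \ref{torsorslines} combined with the degree of $f$. The only thing that needs a moment of care is confirming that the quotient $(\Z/2\Z)^6 / \{\pm 1\}$ really has order $32$, i.e., that the diagonal $(-1,-1,\ldots,-1)$ is not zero (equivalently, that $-1 \neq 1$ in $k'$), which is assured by the standing assumption that $\mathrm{char}\,k \neq 2$.
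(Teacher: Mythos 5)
Your proof is correct and takes essentially the same route as the paper: the paper's proof is a one-liner citing the count $|\tmu(k')| = 32$ (established via the CRT isomorphism just before the lemma) together with the free transitive action from Lemma \ref{transfree}. Your version simply spells out the bookkeeping, including the helpful observation that $\mathrm{char}\,k \neq 2$ is what guarantees the diagonal $-1$ is nontrivial.
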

\begin{proof}
Since $\tmu({k'})$ has exactly $32$ elements, this follows from
Lemma \ref{transfree}.
\end{proof}

For $I \subset \Omega$ let $\muvar_I$ denote the unique
element in $\mu(A_{k'})$ with $\varphi_\omega(\muvar_I) = -1$ for
$\omega \in I$ and $\varphi_\omega(\muvar_I) = 1$ for $\omega \not \in I$.
Note that we have $\tmuvar_I =
\tmuvar_{\Omega\setminus I}$. We will also denote this element by
$\tmuvar_\pi$, where $\pi$ is the partition $\{I,\Omega\setminus I\}$
of $\Omega$. The map from the set $\cP(\Omega)$ of all subsets of $\Omega$ to
$\mu(A_{k'})$ that sends $I$ to $\muvar_I$ is a bijection.
It induces a bijection from the set $\Pi$ of partitions of
$\Omega$ into two sets to $\tmu({k'})$, sending $\pi$ to
$\tmuvar_\pi$. The inverse $\pi\colon \,\tmu({k'})\ra \Pi$ of the
latter bijection is given by
$$
\pi (\tmuvar) = \left\{\{\omega \,\,:\,\, \varphi_\omega(\muvar) =
1\}, \{\omega \,\,:\,\, \varphi_\omega(\muvar) = -1\}\right\},
$$
where $\muvar \in \mu(A_{k'})$ is a lift of $\tmuvar$.
For any $\pi =\{I,J\}\in \Pi$ the automorphism
$[\tmuvar_\pi] = [\muvar_I]$ is defined
over the fixed field of the group $\{g \in G(\kbar/k)\,\,:\,\,
{}^g\pi = \pi\}$. Note that $[\muvar_I]$ acts on $\P(A_l)$ by sending
the coordinate $\varphi_\omega$ to $\pm \varphi_\omega$, where the
sign is negative if and only if we have $\omega \in I$.

For any $\omega \in \Omega$ and $\sqdelta \in T(k')$ we let
$P_{\sqdelta,\omega}$
denote the point on the line $L_{\sqdelta}$ corresponding to the set
$$
    \{\sqdelta^{-1} s(X-\omega) \,\, : \,\, s \in k'\}\subset A_{k'}.
$$
For any $\Avar \in \mu(A_{k'})$ the map $[\Avar]$ sends
$P_{\sqdelta,\omega}$ to $P_{\sqdelta \Avar^{-1},\omega}$.
The notation distinguishes the points
$P_{\sqdelta,\omega}$, indexed by two subscripts,
from the polynomials $P_\omega$ from Remark \ref{legendre}, which are
indexed by only one.

\begin{proposition}\label{intersectionpoints}
For all $\omega \in \Omega$ and all $\sqdelta \in T(k')$ we have
$\muvar_\omega P_{\sqdelta,\omega} = P_{\sqdelta,\omega}$.
Two lines $L,L'\in \Lambda(k')$ intersect if and only if there exists an
$\omega \in \Omega$ such that $\muvar_\omega L = L'$, in which
case the intersection point is $P_{\sqdelta,\omega} =
P_{\sqdelta',\omega}$, where $\sqdelta,\sqdelta'\in T(k')$ are such
that $L = L_{\sqdelta}$ and $L' = L_{\sqdelta'}$.
\end{proposition}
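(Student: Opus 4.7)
My plan is to establish the fixed-point statement by a direct computation in the $\varphi$-coordinates and then to deduce both directions of the intersection statement from it. For the first claim, $P_{\sqdelta,\omega}$ is represented by $\sqdelta^{-1}(X-\omega)\in A_{k'}$, so $[\muvar_\omega]P_{\sqdelta,\omega}$ is represented by $\sqdelta^{-1}\muvar_\omega(X-\omega)$; in the $\varphi$-coordinates, $\muvar_\omega(X-\omega)$ has $\varphi_\omega$-component $(-1)\cdot 0=0$ and $\varphi_\theta$-component $(+1)(\theta-\omega)=\theta-\omega$ for $\theta\neq\omega$, so $\muvar_\omega(X-\omega)=X-\omega$ in $A_{k'}$, whence $[\muvar_\omega]P_{\sqdelta,\omega}=P_{\sqdelta,\omega}$. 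The backward direction of the second claim then follows: if $[\muvar_\omega]L=L'$ and $L=L_\sqdelta$, then $[\muvar_\omega]L_\sqdelta=L_{\sqdelta\muvar_\omega}$ shows $\sqdelta'$ and $\sqdelta\muvar_\omega$ agree in $\tT(k')$, and the same calculation gives $P_{\sqdelta',\omega}=P_{\sqdelta,\omega}$, a common point of $L$ and $L'$.

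For the forward direction, assume $L=L_\sqdelta$ and $L'=L_{\sqdelta'}$ are distinct lines meeting in a point. Then there exist linear polynomials $sX+t$ and $s'X+t'$ (not both zero) and $\lambda\in\overline{k'}^*$ with $\sqdelta^{-1}(sX+t)=\lambda\sqdelta'^{-1}(s'X+t')$ in $A_{\overline{k'}}$. Set $u=\sqdelta'\sqdelta^{-1}$; since $u^2=1$, we have $u=\muvar_I$ for some $I\subseteq\Omega$, and in the $\varphi$-coordinates the equation $\muvar_I(sX+t)=\lambda(s'X+t')$ reads $\epsilon_\theta(s\theta+t)=\lambda(s'\theta+t')$ for every $\theta\in\Omega$, where $\epsilon_\theta=-1$ iff $\theta\in I$.

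The heart of the argument is the case analysis on $|I|$. If $2\leq|I|\leq 4$, then comparing the equations at two indices in $\Omega\setminus I$ yields $s=\lambda s'$ and $t=\lambda t'$, while comparing at two indices in $I$ yields $-s=\lambda s'$ and $-t=\lambda t'$; this forces $s=t=0$ since $\car k\neq 2$, contradicting the nonvanishing of $sX+t$. If $|I|\in\{0,6\}$ then $u\in\{\pm1\}$, giving $L=L'$, which is excluded. So $|I|\in\{1,5\}$; since $\muvar_{\Omega\setminus\{\omega\}}=-\muvar_\omega$ and $-1$ acts trivially on $\P(A_{\overline{k'}})$, we may take $I=\{\omega\}$, and comparing at $\theta\neq\omega$ yields $s=\lambda s'$ and $t=\lambda t'$, while comparing at $\omega$ forces $s\omega+t=0$. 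Hence $sX+t$ is a scalar multiple of $X-\omega$, the common point is $P_{\sqdelta,\omega}=P_{\sqdelta',\omega}$, and $[\muvar_\omega]L=L_{\sqdelta\muvar_\omega}=L'$. The main technical care will lie in tracking the sign ambiguities in $T\to\tT$ and between $\muvar_I$ and $\muvar_{\Omega\setminus I}$ consistently throughout.
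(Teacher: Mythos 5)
Your proof is correct and follows essentially the same route as the paper's: compute in the $\varphi$-coordinates, identify $u=\sqdelta'\sqdelta^{-1}\in\mu(A_{k'})$ with some $\muvar_I$, and conclude from the six linear equations that the sign pattern must be $\muvar_\omega$. Where the paper shortcuts the analysis by replacing $\sqdelta'$ with $-\sqdelta'$ so that at least three coordinates carry a $+$ sign (forcing $s'=s$, $t'=t$, and then a single sign flip), you make the same point via an explicit case split on $|I|$; this is a stylistic rather than substantive difference, and both arguments are sound.
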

\begin{proof}
One easily checks $\varphi((\muvar_\omega-1)(X-\omega)) =0$, so we
have $X-\omega = \muvar_\omega(X-\omega)$ for all $\omega \in
\Omega$. This implies that for all $\sqdelta \in T(k')$ we have
$\muvar_\omega P_{\sqdelta,\omega} =P_{\sqdelta\muvar_\omega^{-1},\omega}
 =P_{\sqdelta,\omega}$,
which proves the first statement. Let $\sqdelta,\sqdelta'\in T(k')$ be such
that $L = L_{\sqdelta}$ and $L' = L_{\sqdelta'}$.
Suppose there is an $\omega \in \Omega$ such that $\muvar_\omega L = L'$.
Then $L$ and $L'$ both go through the point
$P_{\sqdelta,\omega}=\muvar_\omega P_{\sqdelta,\omega}$, so they intersect.

Conversely, suppose $L$ and $L'$ intersect. Then the subspaces
$\cL_\sqdelta$ and $\cL_{\sqdelta'}$ have a nonzero
intersection, so we can choose $s,t,s',t' \in k'$ such that
$\sqdelta^{-1}(sX+t) = \sqdelta'^{-1}(s'X+t') \neq 0$. Applying
$\varphi_\omega$ we find $\varphi_\omega(\muvar)(s\omega+t)=s'\omega+t'$
for all $\omega \in \Omega$, with
$\muvar = \sqdelta^{-1}\sqdelta' \in \mu(A_{k'})$.
After replacing $\sqdelta'$ by $-\sqdelta'$ if necessary,
we may assume that there are at least three $\omega\in \Omega$ with
$\varphi_\omega(\muvar) =1$. Then the equation $sx+t = s'x+t'$ has at
least three solutions in $x$, which implies $s'=s$ and $t'=t$.
From $L \neq L'$ we deduce $\muvar\neq 1$,
so there is an $\omega$ with $\varphi_\omega(\muvar) \neq 1$. The
equation $\varphi_\omega(\muvar)(s\omega+t)=s'\omega+t'=s\omega +t$ then
yields $s\omega+t =0$. Since the equation $sx+t=0$ has at most one
solution in $x$, this shows $\muvar =\muvar_\omega$ and $\muvar_\omega
L = L'$.
\end{proof}

\begin{corollary}\label{whichinter}
For any line $L\in \Lambda(k')$ and any elements $\tmuvar,\tmuvar'\in
\tmu({k'})$ the lines $\tmuvar L$ and $\tmuvar' L$ intersect if and only
if we have $\tmuvar \cdot \tmuvar' =
\tmuvar_\omega$ for some $\omega \in \Omega$.
\end{corollary}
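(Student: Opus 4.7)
The plan is to derive this as an immediate consequence of Proposition \ref{intersectionpoints} together with the freeness of the $\tmu(k')$-action from Lemma \ref{transfree}, translating the geometric intersection condition into a purely group-theoretic equation in $\tmu(k')$.

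First I would apply Proposition \ref{intersectionpoints} to the two lines $\tmuvar L$ and $\tmuvar' L$: they intersect if and only if there exists $\omega \in \Omega$ with $\tmuvar_\omega \cdot (\tmuvar L) = \tmuvar' L$. Since the action of $\tmu(k')$ on $\Lambda(k')$ comes from a group homomorphism, this equation can be rewritten as $(\tmuvar_\omega \cdot \tmuvar) L = \tmuvar' L$, i.e.\ $(\tmuvar' )^{-1} \tmuvar_\omega \tmuvar \cdot L = L$.

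Next I would invoke the fact that the action of $\tmu(k')$ on $\Lambda(k')$ is free, proved in Lemma \ref{transfree}. This forces $(\tmuvar')^{-1} \tmuvar_\omega \tmuvar = 1$, i.e.\ $\tmuvar_\omega \tmuvar = \tmuvar'$ in $\tmu(k')$. Finally, using that $\tmu(k')$ is an abelian group of exponent dividing $2$ (as noted in the paper, it is a quotient of $\bigoplus_\omega \{\pm 1\}$), every element is its own inverse, so multiplying both sides by $\tmuvar$ gives $\tmuvar \cdot \tmuvar' = \tmuvar_\omega$, as required. The reverse direction is obtained by simply running the same chain of equivalences backwards.

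There is no real obstacle here; the content of the corollary is entirely contained in Proposition \ref{intersectionpoints} and Lemma \ref{transfree}, and the only thing to check carefully is that the intersection criterion for an arbitrary pair of lines in $\Lambda(k')$ translates, under the identification of $\Lambda(k')$ as a torsor under $\tmu(k')$, into the stated multiplicative relation in that group.
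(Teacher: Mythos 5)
Your proof is correct and follows essentially the same route as the paper: both arguments combine Proposition \ref{intersectionpoints} with the freeness of the $\tmu(k')$-action from Lemma \ref{transfree}, together with the exponent-$2$ property of $\tmu(k')$, to translate the intersection condition into the relation $\tmuvar \cdot \tmuvar' = \tmuvar_\omega$. The only difference is cosmetic ordering (you apply the intersection criterion first and then use freeness, while the paper first pins down the unique translator and then applies the intersection criterion), but the content is identical.
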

\begin{proof}
By Lemma \ref{transfree} the element
$\tmuvar'' = \tmuvar \cdot \tmuvar' = \tmuvar^{-1} \cdot
\tmuvar'$ is the unique element in $\tmu({k'})$ for which we have
$\tmuvar'' \cdot \tmuvar L = \tmuvar' L$. By Proposition
\ref{intersectionpoints} the lines intersect if and only if we have
$\tmuvar'' = \tmuvar_\omega$ for some $\omega \in \Omega$.
\end{proof}

Put $\Abar = A_{\kbar}$, $\Vbar = V_{\kbar}$, and $\Lambda = \Lambda(\kbar)$.
For any $L,L' \in \Lambda$ we say that $L$ and $L'$ have the same or opposite
parity if for the unique $\tmuvar \in \tmu(\kbar)$ with $\tmuvar L =
L'$, the number of elements of the sets in the partition
$\pi(\tmuvar)$ is even or odd respectively. For any $\omega,\psi \in
\Omega$, let $\Phi_{\omega\psi}$ denote the subgroup of $\tmu(\kbar)$
generated by $\tmuvar_\omega$ and $\tmuvar_\psi$.

\begin{lemma}\label{fourgons}
Let $L,L' \in \Lambda$ be different lines of the same parity. Then $L$ and
$L'$ do not intersect and there are exactly two lines $M$ and $M'$ of
the opposite parity that intersect both $L$ and $L'$. There are
$\omega,\psi \in \Omega$ such that the set $\{L,L',M,M'\}$ is an orbit
of $\Lambda$ under the action of $\Phi_{\omega\psi}$.
\end{lemma}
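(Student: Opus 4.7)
The plan is to translate everything into the group $\tmu(\kbar)$ via Lemma \ref{transfree} and the bijection between $\tmu(\kbar)$ and partitions of $\Omega$ into two subsets, and then to use Corollary \ref{whichinter} to read off intersections. Since $|\Omega|=6$ and $\tmuvar_\Omega=\tmuvar_\emptyset=1$ in $\tmu(\kbar)$, the first step is to observe that the unique $\tmuvar\in\tmu(\kbar)$ with $L'=\tmuvar L$ is nontrivial and that $\pi(\tmuvar)$ consists of two even-sized sets; the only possibility (up to complementation) is $\pi(\tmuvar)=\{\{\omega,\psi\},\Omega\setminus\{\omega,\psi\}\}$ for distinct $\omega,\psi\in\Omega$, so that $\tmuvar=\tmuvar_\omega\tmuvar_\psi$. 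The disjointness of $L$ and $L'$ then follows immediately from Corollary \ref{whichinter}: the elements $\tmuvar_\alpha$ correspond to partitions with parts of sizes $1$ and $5$, incompatible with $\tmuvar$ having parts of even size.

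For the second claim I would parametrize an arbitrary line of $\Lambda$ as $\tau L$ with $\tau\in\tmu(\kbar)$ and apply Corollary \ref{whichinter} twice: such a line meets $L$ iff $\tau=\tmuvar_\alpha$ for some $\alpha\in\Omega$, and meets $L'$ iff $\tau\tmuvar=\tmuvar_\beta$ for some $\beta\in\Omega$, i.e.\ $\tau=\tmuvar_\beta\tmuvar_\omega\tmuvar_\psi$. Under the partition bijection, the latter $\tau$ corresponds to the subset $\{\beta\}\triangle\{\omega,\psi\}$, and the requirement that this subset (or its complement in $\Omega$) be a singleton forces $\beta\in\{\omega,\psi\}$, yielding exactly the two candidates $\tau\in\{\tmuvar_\omega,\tmuvar_\psi\}$. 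Both have $\pi$-partitions with odd parts, so $M=\tmuvar_\omega L$ and $M'=\tmuvar_\psi L$ automatically have parity opposite to that of $L$ and $L'$; moreover, every line meeting $L$ at all is of the form $\tmuvar_\alpha L$ and hence of opposite parity, so the count "exactly two of opposite parity" coincides with the count "exactly two in total".

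Finally, I would observe that $\Phi_{\omega\psi}$ has order four, with underlying set $\{1,\tmuvar_\omega,\tmuvar_\psi,\tmuvar_\omega\tmuvar_\psi\}=\{1,\tmuvar_\omega,\tmuvar_\psi,\tmuvar\}$; by the freeness of the $\tmu(\kbar)$-action on $\Lambda$ from Lemma \ref{transfree}, the orbit of $L$ under $\Phi_{\omega\psi}$ has size $4$, and by what has just been shown it is precisely $\{L,M,M',L'\}$. I expect the only genuine obstacle to be the middle combinatorial step: the hypothesis $|\Omega|=6$ is what prevents a three-element subset of $\Omega$ from being complementary to a singleton, and it is precisely this numerical accident that forces the count of intersecting lines to be two rather than larger.
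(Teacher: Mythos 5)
Your proof is correct and takes essentially the same route as the paper: translate the problem into the $2$-torsion group $\tmu(\kbar)$ via the free transitive action from Lemma \ref{transfree}, identify $\tmuvar$ with the partition $\{\{\omega,\psi\},\Omega\setminus\{\omega,\psi\}\}$, and read off intersections from Corollary \ref{whichinter}. The paper's version is terser (it asserts the count of two intersecting lines without spelling out the symmetric-difference argument), so your explicit unpacking — and the observation that $|\Omega|=6$ prevents a $3$-element set from being complementary to a singleton — is a useful elaboration rather than a different method.
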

\begin{proof}
Let $I \subset \Omega$ be such that $\muvar_I L  = L'$. Then $\# I$ is
even, so $L$ and $L'$ do not intersect by Corollary
\ref{whichinter}. After replacing $I$ by $\Omega\setminus I$
if necessary, there are $\omega,\psi \in \Omega$, such that $I =
\{\omega, \psi\}$. From Corollary \ref{whichinter} we deduce that the only
lines that intersect both $L$ and $L'$ are $M=\muvar_\omega L$ and
$M'=\muvar_\psi L$. Indeed the set $\{L,L',M,M'\}$ is an orbit
under $\Phi_{\omega\psi}$.
\end{proof}

\begin{remark}\label{nodestropes}
Remembering that $\Vbar$ is a twist of the Kummer surface associated
to the Jacobian $J$ of the curve given by $y^2 = f(x)$, we note that
the lines of one parity correspond to the $16$ blow-ups of the nodes
on the singular surface $J/\langle -1 \rangle$. The lines of the other
parity correspond to the tropes, see \cite{CF}, Sect. 3.7.
The intersection numbers among these lines are well known.
\end{remark}

\begin{definition}
A {\em $4$-gon} is a set $\{L,L',M,M'\}$ of four lines, such that
$L$ and $L'$ intersect both $M$ and $M'$.
\end{definition}

By Lemma \ref{fourgons} any two lines of the same parity determine a
unique $4$-gon. All $4$-gons arise in this way, because if the lines
$L$ and $L'$ both intersect a line $M$, then by Lemma \ref{fourgons}
both $L$ and $L'$ are of the opposite parity than $M$, so $L$ and $L'$
have the same parity.

\begin{lemma}\label{gonint}
Let $\omega,\psi\in \Omega$ and any $I,J \subset \Omega$ be such that
the lines $\muvar_I L$ and $\muvar_J L$ are not in the same orbit
under $\Phi_{\omega\psi}$. Then
the cardinalities of the sets $I \cap (\Omega\setminus \{\omega,\psi\})$ and
$J \cap (\Omega\setminus \{\omega,\psi\})$ have different parity
if and only if the line $\muvar_I L$ intersects
some line in the orbit under $\Phi_{\omega\psi}$ of the line $\muvar_J
L$ in which case it intersects exactly one line in this orbit.
\end{lemma}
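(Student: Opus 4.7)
The plan is to parametrise everything by the symmetric difference $N = I \triangle J$ and reduce the statement to a short enumeration. Since $\muvar_I\muvar_J = \muvar_{I\triangle J}$, the element of $\tmu(\kbar)$ that sends $\muvar_J L$ to $\muvar_I L$ is $\tmuvar_N$; by the freeness of the action in Lemma \ref{transfree}, the two orbits of $\Phi_{\omega\psi}$ through $\muvar_I L$ and $\muvar_J L$ are distinct precisely when $\tmuvar_N \notin \Phi_{\omega\psi}$.

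First I would unpack $\Phi_{\omega\psi}$ as the set $\{\tmuvar_K : K\in\{\emptyset,\{\omega\},\{\psi\},\{\omega,\psi\}\}\}$; since $\tmuvar_K = \tmuvar_{\Omega\setminus K}$, this is exactly the set of $\tmuvar_N$ for which $N\cap(\Omega\setminus\{\omega,\psi\})$ is either empty or all of $\Omega\setminus\{\omega,\psi\}$. Writing $c := |N\cap(\Omega\setminus\{\omega,\psi\})|$, the hypothesis of the lemma thus becomes $c\in\{1,2,3\}$.

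Next I would apply Corollary \ref{whichinter}: for $K$ any subset of $\Omega$, the lines $\muvar_I L$ and $\muvar_K L$ meet iff $\tmuvar_{I\triangle K} = \tmuvar_\theta$ for some $\theta\in\Omega$, equivalently $|I\triangle K|\in\{1,5\}$. As $K$ runs over the $\Phi_{\omega\psi}$-orbit $\{J,\,J\triangle\{\omega\},\,J\triangle\{\psi\},\,J\triangle\{\omega,\psi\}\}$, the four symmetric differences $I\triangle K$ are $N,\ N\triangle\{\omega\},\ N\triangle\{\psi\},\ N\triangle\{\omega,\psi\}$. These all have the same intersection (of size $c$) with $\Omega\setminus\{\omega,\psi\}$, while their intersections with $\{\omega,\psi\}$ exhaust the four subsets of $\{\omega,\psi\}$. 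Their cardinalities are therefore $c,\ c+1,\ c+1,\ c+2$.

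Finally I would enumerate over the admissible values of $c$: for $c=2$ the list $\{2,3,3,4\}$ contains no element of $\{1,5\}$, so no intersection occurs; for $c=1$ the list $\{1,2,2,3\}$ contains exactly one $1$; for $c=3$ the list $\{3,4,4,5\}$ contains exactly one $5$. In particular, when an intersection exists it is unique. To close the loop with parities, I would use $|N\cap S|\equiv |I\cap S|+|J\cap S|\pmod 2$ for any set $S$, applied to $S=\Omega\setminus\{\omega,\psi\}$: the parity of $c$ is exactly the condition that $|I\cap S|$ and $|J\cap S|$ differ in parity. Combining this with the case analysis yields the stated equivalence and uniqueness. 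The proof is essentially bookkeeping; the only real observation is that multiplication by $\tmuvar_\omega$ or $\tmuvar_\psi$ leaves the intersection with $\Omega\setminus\{\omega,\psi\}$ unchanged, which is what collapses the problem to the single integer $c$.
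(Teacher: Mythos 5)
Your proof is correct and follows essentially the same route as the paper: both hinge on reducing to the single element $\tmuvar_I\tmuvar_J$ and then invoking Corollary \ref{whichinter}. Your version, which works directly with the symmetric difference $N = I\triangle J$ and collapses everything to the integer $c = |N\cap(\Omega\setminus\{\omega,\psi\})|$ with the clean list $c, c+1, c+1, c+2$, is a slightly tidier bookkeeping of the same argument than the paper's partition-based phrasing.
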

\begin{proof}
Suppose that the cardinalities of the sets
$I \cap (\Omega\setminus \{\omega,\psi\})$ and
$J \cap (\Omega\setminus \{\omega,\psi\})$
have the same parity and that they are not equal.
Let $\pi=\{\pi_1,\pi_2\}\in \Pi$ be such that $\tmuvar_\pi=
\tmuvar_I\tmuvar_J$. Then $\muvar_I L$ and $\muvar_J L$ are in the
same orbit under $\Phi_{\omega\psi}$ if and only if we have $\pi_i
\subset \{\omega,\psi\}$ for $i=1$ or $i=2$. We conclude  $\pi_i
\not \subset \{\omega,\psi\}$ for $i=1,2$. Suppose that $I \cap
(\Omega\setminus \{\omega,\psi\})$ and $J \cap (\Omega\setminus
\{\omega,\psi\})$ have the same parity. Then $\pi_i \cap (\Omega
\setminus \{\omega,\psi\})\neq \emptyset$ has even parity for $i=1$
and $i=2$. It follows that for each $\tmuvar \in  \Phi_{\omega\psi}$ the
sets in the partition $\pi(\muvar\muvar_I\muvar_J) =
\pi(\muvar\muvar_\pi)$ contain exactly $2$ elements of
$\Omega\setminus \{\omega,\psi\}$, so $\tmuvar_I L$ does not intersect
any of the lines $\tmuvar\tmuvar_J L$ in the orbit of $\tmuvar_J L$ by
Corollary \ref{whichinter}.

Conversely, suppose that $I \cap (\Omega\setminus \{\omega,\psi\})$ and
$J \cap (\Omega\setminus \{\omega,\psi\})$ have different parity. Then
we have $\pi = \{K \cup \{\theta\}, \Omega \setminus (\{\theta\} \cup K
)\}$ for some $\theta \in \Omega \setminus \{\omega,\psi\}$ and
$K \subset \{\omega,\psi\}$. Then by Corollary \ref{whichinter} the
line $\muvar_I L$ intersects exactly one line in the orbit of
$\tmuvar_J L$, namely $\tmuvar_K \tmuvar_J L$.
\end{proof}

\begin{lemma}\label{gonquads}
Let $F_1$ be a $4$-gon. Then there are exactly $12$ lines in $\Lambda$ that do
not intersect any line in $F_1$.  The set of these $12$ lines can be
partitioned into three $4$-gons $F_2,F_3,F_4$ and no other subset of this
set is a $4$-gon.
The set of the remaining $16$ lines can be partitioned into four $4$-gons
$G_1,G_2,G_3,G_4$ in such a way that,
for every $i,j\in \{1,2,3,4\}$, each line
in $F_i$ intersects exactly one of the lines in $G_j$ and each line
in $G_i$ intersects exactly one of the lines in $F_j$.
For any different $i,j\in \{1,2,3,4\}$, no line in $F_i$ intersects a
line in $F_j$ and no line in $G_i$ intersects a line in $G_j$.
There are $\omega,\psi \in \Omega$ such that the $F_i$ and $G_i$ are
the orbits of $\Lambda$ under the action of $\Phi_{\omega\psi}$. If $L$
is a line in $F_1$, then for $i \in \{2,3,4\}$ there are
$\theta,\theta' \in \Omega \setminus \{\omega,\psi\}$ such that $F_i$
is the orbit of $L_{\theta\theta'}$, and for $j \in \{1,2,3,4\}$ there
is a $\theta \in \Omega \setminus \{\omega,\psi\}$ such that $G_j$
is the orbit of $L_{\theta}$.
\end{lemma}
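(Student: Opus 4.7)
The plan is to exploit the free action of $\tmu(\kbar)$ on $\Lambda$ from Lemma~\ref{transfree} together with the intersection criterion in Lemma~\ref{gonint}. By Lemma~\ref{fourgons}, the $4$-gon $F_1$ is already an orbit of $\Lambda$ under $\Phi_{\omega\psi}$ for some $\omega,\psi\in\Omega$; fix such a pair, fix a base line $L\in F_1$, and set $\Omega'=\Omega\setminus\{\omega,\psi\}$. Since $\Phi_{\omega\psi}$ has order $4$, the $32$ lines split into $8$ orbits of size $4$, and I would parametrize these orbits by subsets $I\subseteq\Omega'$ modulo the relation $I\sim\Omega'\setminus I$, which is justified because $[\muvar_{\Omega'}]=[\muvar_{\{\omega,\psi\}}]$ already lies in $\Phi_{\omega\psi}$. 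Since $|\Omega'|=4$, the parity of $|I|$ descends to an invariant of the orbit, splitting the eight orbits into $4$ \emph{even} (one being $F_1$, with representative $I=\emptyset$) and $4$ \emph{odd}.

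Next I apply Lemma~\ref{gonint} with $I=\emptyset$: a line $[\muvar_J]L$ outside $F_1$ meets some line of $F_1$ if and only if $|J|$ is odd, and then meets exactly one such line. Hence the $12$ lines missing $F_1$ are precisely the union of the three non-$F_1$ even orbits; label them $F_2,F_3,F_4$, and label the four odd orbits $G_1,\ldots,G_4$. To see that each $F_i$ (and each $G_j$) is itself a $4$-gon, I apply Lemma~\ref{fourgons} to any $L'$ in the orbit together with $[\muvar_{\{\omega,\psi\}}]L'$: these share the same parity because $|\{\omega,\psi\}|=2$, and the unique $4$-gon they determine is the $\Phi_{\omega\psi}$-orbit itself.

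The remaining assertions follow by rerunning Lemma~\ref{gonint} pairwise. No two lines in distinct $F_i$'s intersect, since both representatives have even $|J|$; similarly no two lines in distinct $G_j$'s intersect; and each line of $F_i$ meets exactly one line of $G_j$, whence by symmetry of intersection so does each line of $G_j$ meet exactly one of $F_i$. The uniqueness claim that no other subset of the $12$ lines forms a $4$-gon follows because any $4$-gon contains intersecting pairs and the only intersections among the $12$ lines occur inside a single $F_i$, forcing such a $4$-gon to coincide with that $F_i$. For the concrete identification, the three non-$F_1$ even orbits correspond to the $\binom{4}{2}/2=3$ complementary pairs $\{\{\theta,\theta'\},\Omega'\setminus\{\theta,\theta'\}\}$ in $\Omega'$, so each $F_i$ is the $\Phi_{\omega\psi}$-orbit of $[\muvar_{\{\theta,\theta'\}}]L$ for appropriate $\theta,\theta'\in\Omega'$; the four odd orbits correspond to singletons $\{\theta\}\subseteq\Omega'$, so each $G_j$ is the orbit of $[\muvar_\theta]L$. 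The one delicate point is pinning down the orbit parametrization and the complement equivalence in the first step; after that, everything is a mechanical application of Lemma~\ref{gonint}.
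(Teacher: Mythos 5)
Your proof is correct and follows the same strategy as the paper's: decompose $\Lambda$ into the eight $\Phi_{\omega\psi}$-orbits and apply the parity criterion of Lemma~\ref{gonint} to determine which orbits meet $F_1$, which are $4$-gons, and how the $F_i$ and $G_j$ intersect one another. Your parametrization of the orbits by subsets $I\subseteq\Omega'=\Omega\setminus\{\omega,\psi\}$ modulo complementation, with parity of $|I|$ as the separating invariant, is a clean and systematic way to organize the case analysis that the paper's proof leaves to ``one checks.''
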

\begin{proof}
Let $\omega,\psi \in \Omega$ be such that $F_1$ is an orbit under
$\Phi_{\omega\psi}$, and let $L$ denote some line in $F_1$.
For any $\theta, \theta' \in
\Omega\setminus\{\omega, \psi\}$ the lines $\muvar_{\theta\theta'} L$
and $\muvar_{\omega \theta\theta'} L$ do not intersect any line in $F_1$
by Lemma \ref{gonint}. This gives $12$ lines and one checks
that the only three $4$-gons contained in the set of these $12$ lines are
of the form
$$
\left\{\muvar_{\theta_1\theta_2} L,\muvar_{\theta_3\theta_4} L,
\muvar_{\omega\theta_1\theta_2} L,\muvar_{\omega\theta_3\theta_4} L\right\},
$$
for some permutation $(\theta_i)_i$ of the elements in
$\Omega\setminus\{\omega, \psi\}$. From the equalities
$\muvar_{\theta_3\theta_4} L = \muvar_{\omega\psi\theta_1\theta_2} L$
and $\muvar_{\omega\theta_3\theta_4} L = \muvar_{\psi\theta_1\theta_2}
L$ we deduce that these $4$-gons are also orbits under
$\Phi_{\omega\psi}$, each containing an element $L_{\theta\theta'}$ for
some $\theta, \theta' \in \Omega\setminus\{\omega, \psi\}$. The
remaining $16$ lines do intersect a line in $F_1$ by Lemma \ref{gonint}
and the only four $4$-gons
contained in the set of these $16$ lines are of the form
$$
\left\{
\muvar_\theta L, \muvar_{\omega\theta} L, \muvar_{\psi\theta} L,
\muvar_{\omega\psi\theta} L \right\},
$$
for some $\theta \in \Omega\setminus\{\omega, \psi\}$. Clearly these
$4$-gons are orbits under $\Phi_{\omega\psi}$ as well. The remaining
statements follow from Lemma \ref{gonint}.
\end{proof}

\begin{definition}
An {\em \gonquad} is a quadruple $\cS = \{S_1,S_2,S_3,S_4\}$ of
$4$-gons such that
the $S_i$ are pairwise disjoint and no line in $S_i$ intersects a line
in $S_j$ for $i \neq j$. A {\em \dgonquad} is an unordered pair
$\{\cS_1,\cS_2\}$
of \gonquads, such that $\bigcup_{S\in \cS_1}$ and $\bigcup_{S\in
  \cS_2}$ are disjoint. For any \dgonquad{} $\{\cS_1,\cS_2\}$ we say
that $\cS_2$ is the {\em complementary \gonquad} of $\cS_1$.
\end{definition}

\begin{remark}\label{paramgalleries}
Lemma \ref{gonquads} says that each $4$-gon is contained in a unique
\gonquad, which is contained in a unique \dgonquad. It also implies
that the set of \dgonquads{} is in bijection with the set of $15$ pairs of
different elements in $\Omega$. Figure \ref{figints}
displays a \dgonquad{} and the intersections among all the $32$ lines in
$\Lambda$. In Figure \ref{figints} we use the notation $I$ for $\muvar_I L$
for some fixed line $L$. The elements of $\Omega$ are denoted by
$\omega,\psi,1,2,3,4$. The two \gonquads{} are made up by the $4$-gons
on the bottom and the left of the figure respectively.
\end{remark}

\begin{figure}[ht]
\begin{center}
\resizebox{5in}{!}{\input{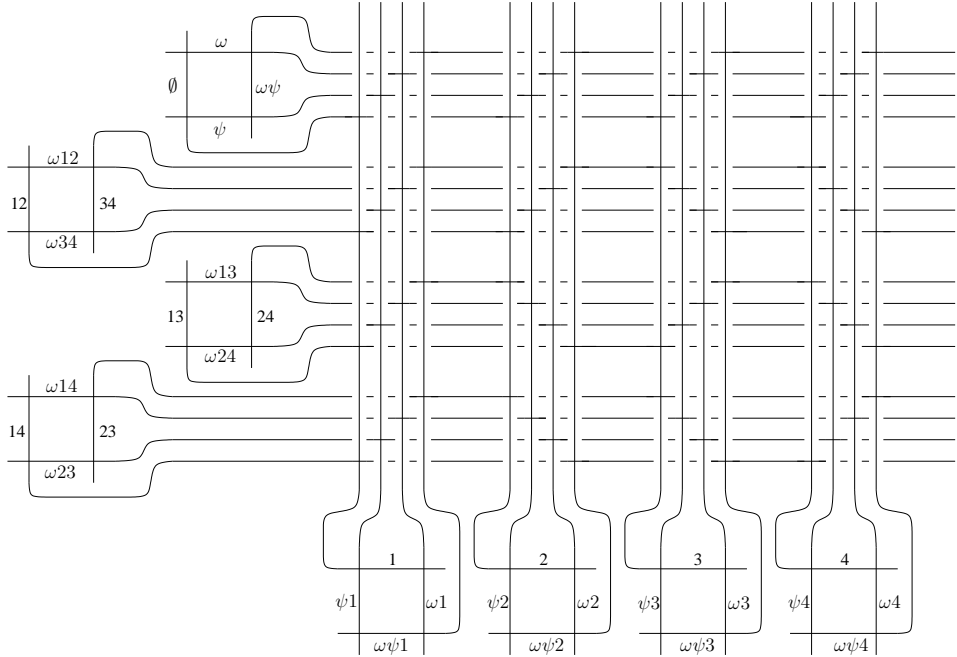}}
\end{center}
\caption{the intersections among the $32$ lines in $\Lambda$}
\label{figints}
\end{figure}


\begin{lemma}\label{adjunction}
For each smooth curve $C$ of genus $g$ on a K3 surface, we have
$C^2 = 2g-2$.
\end{lemma}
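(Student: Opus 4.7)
The plan is to apply the adjunction formula for a smooth curve on a smooth surface. Recall that for any smooth irreducible curve $C$ on a smooth projective surface $X$, the adjunction formula states
$$
2g_C - 2 = C \cdot (C + K_X),
$$
where $g_C$ is the genus of $C$ and $K_X$ is a canonical divisor on $X$. This is the standard formulation; see for instance \cite{hag}, Prop.\ V.1.5.

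The next ingredient is the defining property of a K3 surface: the canonical bundle $\omega_X$ is trivial, so $K_X = 0$ in $\Pic X$. Combined with the fact that $V$ is a K3 surface (Proposition \ref{threequad}), this means that for any smooth curve $C$ on $V$ the intersection number $C \cdot K_V$ vanishes.

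Substituting $K_X = 0$ into the adjunction formula immediately gives $2g - 2 = C \cdot C = C^2$, which is the claimed identity. There is no real obstacle here; the only thing to verify is that the standard adjunction formula applies in our setting, which follows from the smoothness of both $V$ and $C$.
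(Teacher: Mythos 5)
Your proof is correct and follows exactly the same route as the paper's: apply the adjunction formula $C\cdot(C+K_X)=2g-2$ and use that $K_X$ is trivial on a K3 surface.
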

\begin{proof}
The adjunction formula gives $C\cdot (C+K)  = 2g-2$, where $K$ is the
canonical divisor of the surface. The lemma follows from the fact that
the canonical divisor of a K3 surface is trivial.
\end{proof}

\begin{proposition}\label{seventeen}
The elements of $\Lambda$ generate a sublattice of the N\'eron-Severi group
$\NS(\Vbar)$ of rank $17$ and discriminant $64$.
\end{proposition}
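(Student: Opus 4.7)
The plan is to diagonalize the $32 \times 32$ Gram matrix $M$ of the 32 lines using the group action from Lemma \ref{transfree}, and extract rank and discriminant from its spectrum. By Lemma \ref{adjunction}, Proposition \ref{intersectionpoints}, and Corollary \ref{whichinter}, after fixing a base line $L_0$ and identifying $\Lambda$ with $G := \tmu(\kbar) \cong (\Z/2\Z)^5$ via $g \mapsto g L_0$, the matrix takes the ``group matrix'' form $M_{g,h} = f(gh)$ where $f \colon G \to \Z$ satisfies $f(e) = -2$, $f(\tmuvar_\omega) = 1$ for each $\omega \in \Omega$, and $f \equiv 0$ otherwise. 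Such a convolution is simultaneously diagonalized by the characters of $G$; lifting $\chi$ to an even-weight character of $\mu(A_\kbar) \cong (\Z/2)^6$, the eigenvalue becomes $\lambda_\chi = -2 + \sum_\omega \chi(\tmuvar_\omega) = 4-2k$ where $k \in \{0,2,4,6\}$ is the number of $\omega \in \Omega$ with $\chi(\tmuvar_\omega) = -1$, so the eigenvalues $4, 0, -4, -8$ occur with multiplicities $1, 15, 15, 1$ and $\rk M = 17$.

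For the rank of $\Lambda$: the trivial character yields $\bigl(\sum_g L_g\bigr)^2 = \mathbf{1}^T M \mathbf{1} = 32 \cdot 4 = 128 > 0$, so $\Lambda$ contains a class of positive self-intersection. Since $\NS(\Vbar) \otimes \mathbb{R}$ has signature $(1, \rho-1)$, the orthogonal complement of any positive class is negative definite; thus any radical element of $\Lambda \otimes \Q$, being both isotropic and orthogonal to this positive class, must vanish. In particular, for each of the $15$ characters $\chi$ with $\lambda_\chi = 0$, the class $v_\chi := \sum_g \chi(g) L_g \in \Lambda$ satisfies $v_\chi \cdot L_h = (Mv_\chi)_h = 0$ for every $h$ and therefore vanishes in $\Lambda$, providing $15$ linearly independent $\Z$-relations among the $L_g$. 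Together with the inequality $\rk \Lambda \geq \rk M = 17$ (which follows since $\ker(\Z^{32} \twoheadrightarrow \Lambda) \subseteq \ker M$), this forces $\rk \Lambda = 17$.

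For the discriminant: the seventeen nonzero $v_\chi$ are mutually orthogonal with $v_\chi^2 = 32 \lambda_\chi$, so the sublattice $\Lambda_0 := \bigoplus_{\lambda_\chi \ne 0} \Z v_\chi \subseteq \Lambda$ has discriminant $\prod_{\lambda_\chi \ne 0} 32|\lambda_\chi| = 32^{17} \cdot 4 \cdot 4^{15} \cdot 8 = 2^{120}$. The Fourier inversion $32 L_g = \sum_{\lambda_\chi \ne 0} \chi(g) v_\chi$ shows $32 \Lambda \subseteq \Lambda_0$, so the index $[\Lambda : \Lambda_0]$ is a power of $2$ dividing $2^{85}$; computing $[\Lambda : \Lambda_0] = 2^{57}$ explicitly---for instance via the Smith normal form of the $32 \times 17$ sign matrix $(\chi(g))_{g,\chi}$---yields $\disc \Lambda = 2^{120}/2^{114} = 2^6 = 64$.

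The main obstacle is this final index (equivalently, Smith-normal-form) computation. A more transparent alternative, closer to the Kummer-lattice perspective of Remark \ref{nodestropes}, is to exhibit a concrete $\Z$-basis of $\Lambda$ formed by $17$ divisor classes---some of the $32$ lines together with a few integer ``half-sum'' classes that the $15$ character relations force into $\NS(\Vbar)$---and to verify directly that the resulting $17 \times 17$ Gram determinant equals $\pm 64$.
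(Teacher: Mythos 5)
Your proposal is correct and takes a genuinely more structured route than the paper, which simply reports that the $32\times32$ Gram matrix determined by Lemma~\ref{adjunction} and Corollary~\ref{whichinter} has rank $17$ and that a chosen $17\times17$ Gram determinant equals $64$ --- a direct computation. You exploit the simply transitive, intersection-preserving action of $\tmu(\kbar)\cong(\Z/2\Z)^5$ on $\Lambda$ (Lemma~\ref{transfree}) to recognize the Gram matrix as a group matrix for $(\Z/2\Z)^5$, diagonalize it by characters, and read off the spectrum: lifting a character to an even subset $T\subset\Omega$ gives eigenvalue $4-2|T|$, hence $\{4,0,-4,-8\}$ with multiplicities $1,15,15,1$. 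You also make explicit a point the paper glosses over: the rank of the sublattice generated by $\Lambda$ equals the rank of this pulled-back Gram matrix only because the $15$ null relations genuinely vanish in $\NS(\Vbar)$, which your Hodge-index argument supplies (a radical vector is isotropic and orthogonal to the positive class $\sum_g L_g$ of square $128$, hence zero). This yields the rank $17$ and the signature $(1,16)$ conceptually rather than by numerical inspection, which is a real improvement. For the discriminant you are on the same footing as the paper --- both defer to a finite computation. Your reduction is correct: $\Lambda_0=\bigoplus_{\lambda_\chi\ne0}\Z\bar v_\chi$ has $\disc\Lambda_0=2^{120}$, Fourier inversion gives $32\Lambda\subseteq\Lambda_0\subseteq\Lambda$, and $[\Lambda_0:32\Lambda]$ is precisely the product of the elementary divisors of the $17\times32$ character matrix $(\chi(g))$; so $[\Lambda:\Lambda_0]=32^{17}/[\Lambda_0:32\Lambda]$ is effectively computable. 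But the numerical check that this index is $2^{57}$ (equivalently that $\disc\Lambda=64$) remains, as you honestly note, a Smith-normal-form computation still to be carried out.
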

\begin{proof}
Lemma \ref{adjunction} implies that $L^2=-2$
for all $L\in \Lambda$. From Corollary \ref{whichinter} we can deduce all
other intersection numbers among elements of $\Lambda$. This gives a
$32\times 32$ Gram matrix of intersection numbers that has rank $17$.
The matrix also allows us to pick a basis of this sublattice.
The Gram matrix with respect to such a basis turns out to have
determinant $64$.
\end{proof}

\begin{proposition}\label{sixteenplusrkNS}
The rank of the N\'eron-Severi group $\NS(\Vbar)$ equals
$16 + \rk \NS(J)$, where $J$ is the Jacobian of
the curve given by $y^2 = f(x)$.
\end{proposition}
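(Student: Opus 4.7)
The plan is to reduce the claim, via Remark \ref{Vistwist}, to a purely geometric statement about the desingularized Kummer surface and then compute its N\'eron--Severi rank by the standard double-cover argument. Since twists become trivial after base change to $\kbar$, there is an isomorphism $\Vbar \isom \tilde K$ over $\kbar$, where $\tilde K$ denotes the desingularized Kummer surface of $J_\kbar$. It therefore suffices to show $\rk\NS(\tilde K)=16+\rk\NS(J_\kbar)$, with $\rk\NS(J)$ in the statement interpreted as this geometric rank.

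First I would let $b\colon \tilde J \to J_\kbar$ be the blow-up at the $16$ two-torsion points with exceptional divisors $E_1,\ldots,E_{16}$; then $\NS(\tilde J)=b^*\NS(J_\kbar)\oplus \bigoplus_i \Z E_i$, so $\rk\NS(\tilde J)=16+\rk\NS(J_\kbar)$. The involution $[-1]$ on $J_\kbar$ has differential $-\mathrm{id}$ at each two-torsion point, whose projectivization on the exceptional $\P^1$ is trivial; hence the lift $\tilde\iota$ of $[-1]$ to $\tilde J$ fixes each $E_i$ pointwise, and a local calculation shows that $q\colon \tilde J \to \tilde J/\tilde\iota$ is smooth and is canonically identified with the minimal desingularization $\tilde K$ of $J_\kbar/[-1]$. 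Next I would check that $\tilde\iota^*$ acts trivially on $\NS(\tilde J)_\Q$: on each $[E_i]$ by pointwise fixation, and on $b^*\NS(J_\kbar)$ by the classical fact that $[-1]^*$ is the identity on the N\'eron--Severi group of any abelian variety (for instance because $[-1]$ acts as $-1$ on $H^1$, hence as $+1$ on $H^2$ and on the algebraic classes it contains). Finally, the standard quotient principle that $q^*\colon \NS(\tilde K)_\Q \ra \NS(\tilde J)_\Q^{\tilde\iota^*}$ is an isomorphism (injectivity from $q_* q^* = \deg q = 2$, surjectivity after inverting $2$ by averaging via $\tfrac12 q_*$) gives $\rk\NS(\tilde K)=\rk\NS(\tilde J)=16+\rk\NS(J_\kbar)$.

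The main obstacle is conceptual bookkeeping rather than a single hard step: one must verify correctly that the lift $\tilde\iota$ acts trivially on the exceptional divisors (so that the quotient is smooth and agrees with $\tilde K$), and one must use that $[-1]^*$ acts as $+1$ on $\NS$ of an abelian variety. Both points are standard, the first by a tangent-space computation at each two-torsion point and the second by the Hodge-theoretic or symmetric/antisymmetric decomposition of $\Pic(J_\kbar)\otimes\Q$. Everything else is the routine comparison of N\'eron--Severi groups under a blow-up and under a finite quotient.
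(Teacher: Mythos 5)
Your proof is correct, but it takes a genuinely different route from the paper. The paper's entire proof is a two-line citation: since $\Vbar$ is isomorphic over $\kbar$ to the desingularized Kummer surface of $J$, the rank formula follows directly from Proposition~1 of Shioda's \emph{Algebraic cycles on certain K3 surfaces in characteristic $p$}. You instead reconstruct the content of that proposition from scratch, via the standard blow-up and quotient computation: $\NS(\tilde J)$ has rank $16+\rk\NS(J_{\kbar})$ by adding the exceptional classes; $[-1]^*$ acts trivially on $\NS$ of an abelian variety and fixes each $E_i$ pointwise; and $q^*$ identifies $\NS(\tilde K)_\Q$ with the $\tilde\iota^*$-invariants via the degree-2 projection formula. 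The trade-off is clear: the paper's citation is shorter and defers the geometric bookkeeping to Shioda, while your argument is self-contained and makes visible exactly which structural facts are being used. One remark that would slightly tighten your write-up: instead of the Hodge/$\wedge^2 H^1$ argument for $[-1]^*=\mathrm{id}$ on $\NS$, you can invoke the theorem of the cube directly, which gives $[-1]^*L \cong L \otimes M$ with $M\in\Pic^0$ for any line bundle $L$; this is cleaner and manifestly characteristic-free, which matters since the paper allows any characteristic $\neq 2$. You are also right to flag the interpretational point that $\rk\NS(J)$ in the statement must be read geometrically (i.e.\ as $\rk\NS(J_{\kbar})$) for the formula to hold and to match Shioda.
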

\begin{proof}
The surface $\Vbar$ is isomorphic to the desingularized Kummer
surface associated to $J$ by \cite{CF}, Chapter 16. The statement
therefore follows from \cite{shi}, Prop. 1.
\end{proof}

\begin{proposition}\label{fullNS}
Generically the lines in $\Lambda$ generate a lattice of
finite index in the N\'eron-Severi group $\NS(\Vbar)$.
\end{proposition}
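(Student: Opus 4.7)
The approach is immediate from the two preceding propositions: Proposition~\ref{seventeen} furnishes a rank-$17$ sublattice of $\NS(\Vbar)$ spanned by $\Lambda$, while Proposition~\ref{sixteenplusrkNS} identifies $\rk \NS(\Vbar)$ with $16 + \rk \NS(J)$. Hence the sublattice spanned by the lines has finite index in $\NS(\Vbar)$ exactly when the Jacobian $J$ of $y^2 = f(x)$ has Picard number $1$. The task therefore reduces to showing that, for a generic sextic $f$, one has $\rk \NS(J) = 1$.

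For this last fact I would appeal to the classical theory of moduli of principally polarized abelian surfaces: the moduli space $\mathcal{A}_2$ is three-dimensional, whereas the locus of $J$ with $\rk \NS(J) \geq 2$ is a countable union of two-dimensional Humbert surfaces. Outside a countable union of proper subvarieties of the parameter space of sextics, the Jacobian $J$ therefore has Picard number $1$, and this is how the word ``generically'' is to be understood.

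For an explicit algebraic witness, I would instead exhibit a single sextic $f_0 \in \Q[X]$ whose Jacobian $J_0$ has geometric Picard number $1$, and then use specialization. One verifies this by computing the characteristic polynomial of Frobenius on $J_0$ modulo one or two small primes of good reduction: the Tate conjecture, which is known for abelian surfaces over finite fields, bounds the geometric Néron-Severi rank by the number of eigenvalue ratios that are roots of unity, and a single prime often already forces $\rk \NS(\overline{J}_0) \leq 1$. Upper semicontinuity of the Picard number in families then extends this bound to a non-empty Zariski-open subset of the parameter space, after which Propositions~\ref{seventeen} and~\ref{sixteenplusrkNS} close the argument. The main obstacle is thus supplying the witness $f_0$ and running the Frobenius computation; the rest is a formal lattice-rank comparison.
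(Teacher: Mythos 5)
Your argument is essentially the paper's own: combine Proposition~\ref{seventeen} (the lines span a rank-$17$ sublattice) with Proposition~\ref{sixteenplusrkNS} (so $\rk \NS(\Vbar) = 17$ exactly when $\rk \NS(J) = 1$), and invoke the fact that generically $\rk \NS(J) = 1$. The paper simply asserts this last fact without elaboration, whereas you sketch two standard justifications (Humbert surfaces in $\mathcal{A}_2$, or Tate-conjecture computations via Frobenius at small primes followed by specialization); both are acceptable, though the paper's intended reading of ``generically'' is the function-field setting of Example~\ref{generic}, where the coefficients are independent transcendentals and the rank-one claim is immediate.
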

\begin{proof}
Let $J$ be as in Proposition \ref{sixteenplusrkNS}. Generically
we have $\rk \NS(J)=1$, so $\rk \NS(\Vbar) = 17$. By Proposition
\ref{seventeen} the elements of $\Lambda$ generate a lattice of
rank $17$ as well, so this lattice has finite index in $\NS(\Vbar)$.
\end{proof}

In fact the finite index in Proposition \ref{fullNS} is equal
to $1$ as we will see in Proposition \ref{reallyfull}. The reason
for stating that result separately is that one can compute
the rank of the N\'eron-Severi group in explicit cases using
methods from \cite{heron} and \cite{picone}.

For any $4$-gon $S$ let $D_S$ denote the divisor that is the sum of
the lines in $S$.

\begin{lemma}\label{hyp}
Let $S$ and $S'$ be two $4$-gons in complementary \gonquads. Then the
image of $D_{S}+D_{S'}$ in $\Pic \Vbar$ is the class of hyperplane sections.
\end{lemma}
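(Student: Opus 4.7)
My plan is to exhibit a hyperplane $\Pi \subset \P^5$ whose intersection with $V$ equals $D_S + D_{S'}$ as a Cartier divisor, which immediately yields the desired equality of classes in $\Pic \Vbar$. By Remark \ref{paramgalleries}, the \dgonquad{} $\{\cS_1,\cS_2\}$ containing $S$ and $S'$ corresponds to a pair $\{\omega,\psi\} \subset \Omega$, and I will take $\Pi$ to be the linear span of two $\P^3$'s that come from the two rulings of the quadric cone $Q_{\omega\psi}$ of Remark \ref{coneovercone}, one containing $S$ and one containing $S'$.

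First I would show that each $4$-gon $F \in \cS_1 \cup \cS_2$ spans a $\P^3$, call it $\Pi_F$, which is contained in $Q_{\omega\psi}$ and contains the vertex line $m_{\omega\psi}$. In the $\varphi$-coordinates, if $F$ is the $\Phi_{\omega\psi}$-orbit of $L_{\sqdelta}$, then the four lines of $F$ consist of points satisfying $\sqdelta_\theta \varphi_\theta = \pm(s\theta+t)$ for $\theta \in \Omega$, where the signs depend on the line but vary only in the positions $\theta \in \{\omega,\psi\}$; these span a $\P^3$ cut out by two linear relations among the $\varphi_\theta$ with $\theta \in \Omega\setminus\{\omega,\psi\}$. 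Using $\lambda_\theta = (\theta-\omega)(\theta-\psi)\prod_{\eta \in \Omega\setminus\{\omega,\psi,\theta\}}(\theta-\eta)$ to simplify $Q_{\omega\psi} = \omega\psi Q_0 - (\omega+\psi)Q_1 + Q_2$, one obtains $Q_{\omega\psi} = \sum_{\theta \in \Omega\setminus\{\omega,\psi\}} \delta_\theta \varphi_\theta^2 / \prod_{\eta \in \Omega\setminus\{\omega,\psi,\theta\}}(\theta-\eta)$. Restricted to $\Pi_F$, using $\delta_\theta = \sqdelta_\theta^2$, this becomes $\sum_\theta (s\theta+t)^2 / \prod_\eta(\theta-\eta)$, which vanishes identically since $(s\theta+t)^2$ has degree at most two in $\theta$ and Lagrange interpolation gives $\sum_\theta \theta^k / \prod_\eta(\theta-\eta) = 0$ for $k=0,1,2$ over four points. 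Hence $\Pi_F \subset Q_{\omega\psi}$; the containment $m_{\omega\psi} \subset \Pi_F$ is immediate.

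Next I would verify that $\Pi_S$ and $\Pi_{S'}$ lie in opposite rulings of $Q_{\omega\psi}$. Two $\P^3$'s in the same ruling meet only along $m_{\omega\psi}$, and a direct check using the equations $Q_0 = Q_1 = 0$ shows $m_{\omega\psi} \cap V = \emptyset$. But by Lemma \ref{gonquads}, each line in $S$ meets exactly one line in $S'$, providing four intersection points that lie on $V$ and in $\Pi_S \cap \Pi_{S'}$ but not in $m_{\omega\psi}$; so $\Pi_S$ and $\Pi_{S'}$ must belong to different rulings. Projecting from $m_{\omega\psi}$ to $\P^3$ carries them to two lines on $D_{\omega\psi}$ from opposite rulings, meeting in a single point, so $\Pi_S \cap \Pi_{S'}$ is a $\P^2$ and $\Pi := \Pi_S + \Pi_{S'}$ is a hyperplane in $\P^5$. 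This $\Pi$ contains all eight lines of $S \cup S'$, so $\Pi \cap V$ is an effective divisor containing $D_S + D_{S'}$; since both have degree $8$ by Proposition \ref{threequad}, they coincide as divisors, proving $D_S + D_{S'}$ is a hyperplane section. The main technical step is the Lagrange-interpolation calculation that puts $\Pi_F$ inside $Q_{\omega\psi}$; the remaining arguments are linear algebra and degree bookkeeping.
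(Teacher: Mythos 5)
Your proof is correct, and it takes a genuinely different and more constructive route than the one in the paper. The paper's argument is homological: it writes out $D_S+D_{S'}$ explicitly as a sum of eight lines, checks that $(D_S+D_{S'})\cdot L' = 1 = H\cdot L'$ for every $L'\in\Lambda$ and every hyperplane section $H$, and then invokes nondegeneracy of the intersection pairing together with Proposition \ref{fullNS} (that the lines generically generate a finite-index sublattice of $\NS(\Vbar)$) to conclude $D_S+D_{S'}\sim H$ in the generic case, finishing by specialization of the transcendentals. Your argument instead exhibits the hyperplane itself: each $4$-gon $F$ in the $\dgonquad$ spans a $\P^3$ cut out by linear forms among the $\sqdelta_\theta\varphi_\theta$ ($\theta\notin\{\omega,\psi\}$) and lying in the singular quadric $Q_{\omega\psi}$, which you check by the Lagrange-interpolation identity $\sum_{\theta}\theta^k/\prod_{\eta\neq\theta}(\theta-\eta)=0$ for $k\le 2$ over four nodes; because lines of $S$ and $S'$ meet on $V$ while $m_{\omega\psi}\cap V = \emptyset$, the two spans $\Pi_S,\Pi_{S'}$ lie in opposite rulings of the cone, so they meet in a $\P^2$ and together span a hyperplane $\Pi$; a degree count ($\deg V=8$ from Proposition \ref{threequad}) forces $\Pi\cap V = D_S+D_{S'}$ on the nose, not merely up to linear equivalence. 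The trade-off: the paper's proof is shorter once the intersection matrix is in hand, while yours avoids Proposition \ref{fullNS} and the generic-then-specialize step entirely, proves the stronger assertion that the divisor is literally cut out by a hyperplane, and gives a concrete geometric picture of where that hyperplane sits relative to the singular quadrics in the net. (One small point worth making explicit, which you in effect used: the four lines of a $4$-gon do span a $\P^3$ rather than something smaller, because two disjoint lines and a transversal already span $\P^3$, and the fourth line then lies inside it, having two of its points there.)
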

\begin{proof}
Let $L$ be a line in $S$ and let $\omega,\psi\in \Omega$ be such that
$S$ is the orbit of $L$ under $\Phi_{\omega\psi}$. By Lemma
\ref{gonquads}, the $4$-gon $S'$ is also an orbit under
$\Phi_{\omega\psi}$. It follows from Lemma \ref{gonint} that there is a
$\theta \in \Omega \setminus \{\omega,\psi\}$ such that $S'$ is the
orbit of $\muvar_\theta L$. We deduce
$$
D_S+D_{S'} = L+\muvar_{\omega} L +\muvar_{\psi} L+ \muvar_{\psi\omega}
L+\muvar_{\theta}L+\muvar_{\theta\omega} L +\muvar_{\theta\psi} L+
\muvar_{\theta\psi\omega} L.
$$
One checks that for each $L'\in \Lambda$ we have $(D_S+D_{S'}) \cdot L' = 1$.
For a hyperplane section $H$ we also have $H\cdot L'=1$ for all $L'
\in \Lambda$. Since the intersection pairing on $\Pic \Vbar$ is
nondegenerate and the lines generically generate a lattice of
finite index in $\Pic \Vbar$ by
Proposition \ref{fullNS}, we find that generically $D_S+D_{S'}$ is a
hyperplane section. By specializing the transcendentals,
this implies that $D_S+D_{S'}$ is always a hyperplane section.
\end{proof}

\begin{lemma}\label{fourgonslineq}
Let $S$ and $S'$ be two $4$-gons in the same \gonquad. Then $D_{S}$
and $D_{S'}$ are linearly equivalent.
\end{lemma}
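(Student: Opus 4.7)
The plan is to reduce this to Lemma \ref{hyp} by a simple cancellation argument. Let $\cS_1$ be the \gonquad{} containing $S$ and $S'$, and let $\cS_2$ be its complementary \gonquad. By Lemma \ref{gonquads} (or equivalently the definition of a \dgonquad), $\cS_2$ is nonempty, so we may choose any $4$-gon $T \in \cS_2$. Since both $S$ and $S'$ lie in $\cS_1$, which is complementary to $\cS_2$, the pair $(S,T)$ and the pair $(S',T)$ each consist of $4$-gons in complementary \gonquads.

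First I would apply Lemma \ref{hyp} to each of these two pairs. Letting $H$ denote the hyperplane class on $\Vbar$, this yields the linear equivalences
\[
D_S + D_T \sim H \qquad \text{and} \qquad D_{S'} + D_T \sim H.
\]
Subtracting the second from the first eliminates $D_T$ and gives $D_S - D_{S'} \sim 0$, i.e.\ $D_S \sim D_{S'}$, which is exactly what we want.

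There is essentially no obstacle: the only thing to verify is that a complementary \gonquad{} really exists, which is guaranteed by Lemma \ref{gonquads} (the twelve lines disjoint from any given $4$-gon partition into three further $4$-gons, and any of these together with $S,S',$ and the fourth $4$-gon in $\cS_1$ form the \gonquad/complementary-\gonquad{} pair). The argument is independent of the choice of $T$, as expected since linear equivalence is an equivalence relation.
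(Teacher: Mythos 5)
Your proof is correct and is essentially identical to the paper's: both pick an arbitrary $4$-gon $S''$ (your $T$) in the complementary \gonquad, apply Lemma \ref{hyp} twice, and cancel $D_{S''}$ to conclude $D_S \sim D_{S'}$.
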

\begin{proof}
Let $S''$ be any $4$-gon in the complementary \gonquad, and let $H$ denote a
hyperplane section. Then by Lemma \ref{hyp} both
$D_{S}$ and $D_{S'}$ are linearly equivalent with $H-D_{S''}$.
\end{proof}

\begin{proposition}\label{K3ell}
Let $X$ be a K3 surface over a field and $F$ a
reduced and connected curve on $X$ that
satisfies $F^2=0$.
Suppose further that the linear system $|F|$ does not have a base curve.
Then there is an elliptic fibration $X \ra
\P^1$ whose fibers are the elements of $|F|$.
Up to an automorphism of $\P^1$ this fibration is unique.
\end{proposition}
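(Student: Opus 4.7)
The plan is to combine adjunction, Riemann--Roch, and the intersection hypothesis on a K3 surface. Since $K_X$ is trivial, the adjunction formula gives $2p_a(F) - 2 = F \cdot (F + K_X) = F^2 = 0$, so members of $|F|$ have arithmetic genus one. Riemann--Roch on $X$ yields $\chi(\O_X(F)) = 2 + \frac{1}{2}F^2 = 2$, and Serre duality identifies $h^2(\O_X(F))$ with $h^0(\O_X(-F))$, which vanishes since $F$ is effective and nonzero. Hence $h^0(\O_X(F)) \geq 2$, so $|F|$ contains a pencil.

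Next I would combine the hypothesis that $|F|$ has no base curve with $F^2 = 0$ to show $|F|$ is base-point free and one-dimensional. Two general members $D_1, D_2 \in |F|$ share no irreducible component, and therefore $D_1 \cap D_2 = \emptyset$, since two distinct effective divisors on a smooth surface with no common component and intersection number zero are disjoint. Any base point of $|F|$ would lie in both $D_1$ and $D_2$ and hence nowhere, so $|F|$ is base-point free. Consider the induced morphism $\phi \colon X \ra \P^N$ with $N = h^0(\O_X(F)) - 1$. The image $Y$ cannot be a surface, because two general hyperplane sections of $Y$ would then meet, contradicting $D_1 \cap D_2 = \emptyset$. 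Thus $Y$ is a curve; passing to the Stein factorization $X \ra \tilde{Y} \ra Y$, a general member of $|F|$ is the pullback under $X \ra \tilde{Y}$ of a general hyperplane section of $Y \subseteq \P^N$, i.e.\ a disjoint union of $e \cdot \deg(Y)$ fibers, where $e$ is the degree of $\tilde{Y} \ra Y$. Since $F$ is reduced and connected, we must have $e \cdot \deg(Y) = 1$, so $\tilde{Y} = Y$ is a line in $\P^N$ and $N = 1$.

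The morphism $\phi \colon X \ra \P^1$ so obtained is the desired fibration: its scheme-theoretic fibers are precisely the elements of $|F|$, each connected and of arithmetic genus one. Uniqueness up to $\Aut(\P^1)$ follows because any morphism $\pi' \colon X \ra \P^1$ whose fibers make up $|F|$ satisfies ${\pi'}^{*} \O_{\P^1}(1) \isom \O_X(F)$, and is then determined up to the choice of a basis of $H^0(\O_X(F))$, i.e.\ up to an automorphism of $\P^1$.

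The main obstacle I expect is showing that the image of $\phi$ is $\P^1$ itself rather than a higher-degree curve in some $\P^N$ with $N \geq 2$: this is the step that requires both the reducedness and the connectedness of $F$, used simultaneously via the Stein factorization to rule out degenerate behavior.
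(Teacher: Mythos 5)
Your proof is correct, but it takes a genuinely different route from the paper. The paper's argument establishes $h^0(\O_X(F))=2$ exactly by pushing the ideal-sheaf sequence $0\to\O_X(-F)\to\O_X\to\O_F\to 0$ through cohomology and using the reducedness and connectedness of $F$ to conclude $h^1(\O_X(-F))=0$; with $h^0(\O_X(F))=2$ in hand, the map $\phi_{|F|}$ lands in $\P^1$ automatically and only base-point-freeness needs checking. You instead settle for the weaker bound $h^0(\O_X(F))\geq 2$ (Riemann--Roch plus Serre duality, without the $h^1$-vanishing), establish base-point-freeness, and then do the real work downstream: showing the image $Y\subset\P^N$ is a curve, passing to Stein factorization, and using the reducedness and connectedness of $F$ to force $e\cdot\deg(Y)=1$, hence $N=1$. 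In effect, the paper spends the hypothesis on $F$ to pin down $h^0$ cohomologically, while you spend it geometrically at the Stein factorization. Your approach is somewhat longer but more directly exhibits the fibration structure; the paper's is shorter but hides the geometry inside the cohomology.

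One small caution on your Stein factorization step: you phrase the argument in terms of a \emph{general} member being a disjoint union of $e\cdot\deg(Y)$ fibers, and then apply the conclusion to $F$ itself. Since $F$ need not be general, it is cleaner to argue directly: $F=\phi^*(H)$ for some hyperplane $H$, and $\phi^*(H)=g^*(h^*(H))$ where $h^*(H)$ is a divisor of degree $e\cdot\deg(Y)$ on $\tilde Y$; if this degree were at least $2$, then $\phi^*(H)$ would either have two disjoint components (disconnected) or a fiber with multiplicity at least $2$ (non-reduced), contradicting the hypothesis on $F$. The degree $e\cdot\deg(Y)$ does not depend on the choice of $H$, so no genericity is needed. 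It would also be worth one sentence to note that $Y$ spans $\P^N$ (because $\phi^*\colon H^0(\P^N,\O(1))\to H^0(X,\O_X(F))$ is injective), so that $\deg(Y)=1$ indeed forces $N=1$.
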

\begin{proof}
By the adjunction formula we have $F \cdot (F+K_X) = 2p_a-2$, where $p_a$
is the arithmetic genus of $F$, but $F^2 = 0$ and $K_X = 0$, so $p_a = 1$.
By the Riemann-Roch theorem for surfaces (\cite{hag}, thm.\ V.1.6),
we have $h^0(\O_X(-F))
- h^1(\O_X(-F)) + h^0(\O_X(K_X+F)) = 1/2(F \cdot (F-K_X))
+ 1 + p_a = 2$.  Here
$h^0(\O_X(F)) = 0$ because $F$ is a nonzero
effective divisor.  Let us prove that $h^1(\O_X(-F)) = 0$.
From the exact sequence $0 \ra \O_X(-F) \ra \O_X \ra \O_F \ra 0$ of sheaves
on $X$ we obtain the exact sequence of cohomology groups
$$H^0(X,\O_X) \ra H^0(X,\O_F) \ra H^1(X,\O_X(-F)) \ra H^1(X,\O_X).$$
Since $F$ is reduced and
connected, $H^0(X,\O_F)$ consists only of sections constant
on $F$, and so the map from $H^0(X,\O_X)$ is surjective.  On the other
hand, $H^1(X,\O_X) = 0$ because $X$ is a K3 surface.  It follows that
$H^1(X,\O_X(-F)) = 0$ as claimed, and therefore that $h^0(\O_X(-F)) = 2$.

The only maps whose fibers are elements of the linear system $F$ are those
associated to subseries of the complete linear system $|F|$.  Since
$\O_X(-F)$ has a $2$-dimensional space of sections, the only nonconstant
maps of this kind are those associated to the complete linear system.
This map is a fibration,
for by hypothesis there is no curve contained in all divisors
in the linear system $|F|$,
so $F^2 = 0$ implies that no two fibers
intersect.
\end{proof}

\begin{lemma}\label{zeroint}
For any $4$-gon $S$ we have $D_{S}^2=0$.
\end{lemma}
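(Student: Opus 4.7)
The plan is to expand $D_S^2$ directly using bilinearity of the intersection pairing and to plug in the self- and pairwise-intersection numbers of the four lines in $S$ that are already accessible from the results above.

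Write $S = \{L, L', M, M'\}$, so that $D_S = L + L' + M + M'$. By Lemma \ref{adjunction}, every line in $\Lambda$ has self-intersection $-2$ (each is a smooth rational curve on the K3 surface $\Vbar$), contributing $4 \cdot (-2) = -8$ to $D_S^2$. So what remains is to evaluate the six pairwise intersection numbers.

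By Lemma \ref{fourgons}, inside a $4$-gon the pair $L, L'$ consists of two distinct lines of the same parity, and likewise $M, M'$; lines of the same parity do not intersect, so $L \cdot L' = 0$ and $M \cdot M' = 0$. The remaining four pairs $(L,M)$, $(L,M'),$ $(L',M)$, $(L',M')$ all intersect by the definition of a $4$-gon, and by Proposition \ref{intersectionpoints} any two intersecting lines in $\Lambda$ meet in a single point; since two distinct lines in $\P^5$ meeting at one point do so transversally, each such intersection number equals $1$.

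Putting these together,
\[
D_S^2 = \sum_{N \in S} N^2 + 2 \sum_{\{N,N'\}\subset S} N \cdot N' = -8 + 2\cdot 4 = 0.
\]
No step should present any real obstacle; the only thing to be careful about is the transversality claim that yields intersection multiplicity one, which is immediate because we are intersecting distinct lines in projective space on a smooth surface.
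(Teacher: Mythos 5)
Your proof is correct and follows essentially the same route as the paper's: expand $D_S^2$ bilinearly, use $L^2=-2$ from Lemma \ref{adjunction}, the four nonzero pairwise intersections of $1$ each, and the two zero intersections of opposite lines in the $4$-gon. The only cosmetic difference is that you explicitly justify the intersection number $1$ via transversality of distinct lines in projective space, which the paper treats as given.
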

\begin{proof}
Write $D_S = D_1 + D_2 + D_3 + D_4$, where the $D_i$ are the geometric
irreducible components of $D_S$.
By Lemma \ref{adjunction} each $D_i$ has self-intersection $-2$.
Also, $D_i \cdot D_{i+1} = 1$ and $D_i \cdot D_{i+2} = 0$, with indices read
mod $4$.  The self-intersection of
$D_S$ is therefore $4 \cdot -2 + 4 \cdot 2 = 0$.
\end{proof}

\begin{lemma}\label{Vbarell}
Let $\cS$ be an \gonquad. Then there is an elliptic fibration of
$\Vbar$ for which the lines in each $4$-gon $S\in \cS$ are the
irreducible components of a fiber.
Up to an automorphism of $\P^1$ this fibration is unique.
\end{lemma}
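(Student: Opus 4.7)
The plan is to apply Proposition \ref{K3ell} directly to the divisor $F = D_S$ for any $4$-gon $S \in \cS$. To do so I need to check the three hypotheses: that $D_S$ is reduced and connected, that $D_S^2 = 0$, and that $|D_S|$ has no base curve.

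Reducedness is immediate since $D_S$ is the sum of four distinct lines. Connectedness holds because the four lines in a $4$-gon form a cycle: writing $S = \{L, L', M, M'\}$ with $L$ and $L'$ intersecting both $M$ and $M'$, the dual graph of $D_S$ is a $4$-cycle, hence connected. The self-intersection $D_S^2 = 0$ is precisely Lemma \ref{zeroint}.

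The crucial step (and the only one where the hypothesis that $\cS$ is an entire exhibit, not just a single $4$-gon, really enters) is showing that $|D_S|$ has no base curve. Write $\cS = \{S_1, S_2, S_3, S_4\}$ with $S = S_1$. By Lemma \ref{fourgonslineq}, the divisors $D_{S_1}, D_{S_2}, D_{S_3}, D_{S_4}$ all lie in the same linear system $|D_S|$. By the definition of an exhibit the supports of the $D_{S_i}$ are pairwise disjoint, so no irreducible curve is contained in more than one of them, and in particular $|D_S|$ admits two members (say $D_{S_1}$ and $D_{S_2}$) with no common component. Hence $|D_S|$ has no base curve.

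With the three hypotheses verified, Proposition \ref{K3ell} produces an elliptic fibration $\Vbar \to \P^1$ whose fibers are the members of $|D_S|$. In particular each $D_{S_i}$ appears as a fiber, and its four irreducible components are exactly the lines of $S_i$, as required. The uniqueness clause of Proposition \ref{K3ell} gives the claimed uniqueness up to an automorphism of $\P^1$. The only thing one might worry about is that some $D_{S_i}$ could fail to be a \emph{full} fiber (i.e.\ be a proper sub-divisor of a fiber), but this is ruled out numerically: the generic fiber has self-intersection $0$ and is linearly equivalent to $D_S$, so $D_{S_i}$, being effective of the same class, is the entire fiber set-theoretically.
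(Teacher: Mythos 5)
Your proof is correct and follows essentially the same route as the paper's: cite Lemma \ref{zeroint} for $D_S^2 = 0$, cite Lemma \ref{fourgonslineq} to place all four $D_{S_i}$ in a single linear system (which in particular rules out a base curve since the $D_{S_i}$ have disjoint supports), then invoke Proposition \ref{K3ell}. You spell out the reduced/connected/no-base-curve checks that the paper leaves implicit, which is a welcome addition; the only small quibble is that your final remark about $D_{S_i}$ being a full fiber is more directly settled by observing that an effective divisor linearly equivalent to, and dominated by, another effective divisor in the same class must equal it, rather than by the self-intersection argument you sketch.
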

\begin{proof}
By Proposition \ref{threequad} the surface $\Vbar$ is a K3
surface. By Lemma \ref{zeroint} we have $D_{S}^2=0$ for any $4$-gon
$S\in \cS$. By Lemma
\ref{fourgonslineq} the effective divisors $D_{S}$ with $S \in \cS$
are all contained in the same linear system. The lemma now follows
immediately from Proposition \ref{K3ell}.
\end{proof}

\begin{remark}\label{paramfibs}
Since the \gonquads{} come in pairs, so do the elliptic fibrations
mentioned in Lemma \ref{Vbarell}. By Remark \ref{paramgalleries}
these pairs of fibrations are parametrized by the $15$ pairs of
elements in $\Omega$.
\end{remark}

It is known that generically the lines associated to the nodes
and the tropes on the desingularized
Kummer surface generate the full N\'eron-Severi group (see Remark
\ref{nodestropes}). Together with Propositions
\ref{seventeen} and \ref{fullNS}, the following statement
is slightly stronger.

\begin{proposition}\label{reallyfull}
If $\rk \NS(\Vbar) = 17$, then the lines in $\Lambda$ generate
the full N\'eron-Severi group $\NS(\Vbar)$.
\end{proposition}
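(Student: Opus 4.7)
The plan is a short lattice-theoretic comparison that relies entirely on results already in hand plus the Kummer-surface interpretation of $\Vbar$.

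First I would combine Proposition \ref{seventeen} with the hypothesis $\rk \NS(\Vbar) = 17$. The lines in $\Lambda$ generate a sublattice $L \subseteq \NS(\Vbar)$ of rank $17$ and discriminant $64$; since the two lattices now have the same rank, $[\NS(\Vbar) : L] = n < \infty$, and the standard formula for the discriminant of an overlattice gives $n^2 \cdot |\disc \NS(\Vbar)| = |\disc L| = 64$. The proposition is therefore equivalent to the single assertion $|\disc \NS(\Vbar)| = 64$, and everything else in the plan is devoted to this computation.

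To evaluate $|\disc \NS(\Vbar)|$ I would use Remark \ref{Vistwist}: over $\kbar$ the surface $\Vbar$ is isomorphic to the desingularized Kummer surface of the Jacobian $J$ of $y^2 = f(x)$, so its N\'eron-Severi lattice is that of a Kummer surface and depends only on $\NS(J)$ up to the standard Kummer construction. By Proposition \ref{sixteenplusrkNS}, the assumption $\rk \NS(\Vbar) = 17$ is equivalent to $\rk \NS(J) = 1$, in which case $\NS(J) = \Z \Theta$ is generated by the theta class with $\Theta^2 = 2$, so $|\disc \NS(J)| = 2$. Passing to transcendental lattices in the unimodular lattice $H^2(J,\Z)$, the orthogonal complement $T(J)$ has rank $5$ and discriminant $2$ in absolute value.

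The final ingredient is the classical identification $T(\Vbar) \isom T(J)(2)$ (intersection form of $T(J)$ scaled by $2$), which is the Kummer-lattice relation proved in work of Nikulin and Morrison. Multiplying the form of a rank-$5$ lattice by $2$ multiplies the discriminant by $2^5$, so $|\disc T(\Vbar)| = 2^5 \cdot 2 = 64$. Since $\NS(\Vbar)$ and $T(\Vbar)$ are orthogonal complements inside the unimodular K3 lattice, $|\disc \NS(\Vbar)| = |\disc T(\Vbar)| = 64$, forcing $n = 1$ and hence $L = \NS(\Vbar)$.

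The main obstacle is justifying the Kummer relation $T(\Vbar) \isom T(J)(2)$ cleanly in our setting: a priori $\Vbar$ is a \emph{twist} of the Kummer surface of $J$, so one must check that this twisting is invisible to the geometric N\'eron-Severi lattice (which it is, since a twist affects only the $k$-structure, not the $\kbar$-isomorphism type, and both the Kummer and transcendental lattice statements are invariants of $\Vbar$). Once that point is made, the discriminant comparison is immediate and the proof is complete.
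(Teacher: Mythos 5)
Your argument is correct and follows the same route as the paper's proof: reduce to a discriminant comparison via Proposition \ref{seventeen}, pass to the transcendental lattice using unimodularity of $H^2(\Vbar,\Z)$, and invoke the Kummer-lattice relation $T_{\Vbar}\isom T_J(2)$ from Morrison. The only difference is one of economy: you compute $|\disc T_{\Vbar}|=64$ exactly, which requires the extra (true, but needing a word of justification) observation that $\rk\NS(J)=1$ forces $\NS(J)=\Z\Theta$ with $\Theta^2=2$, hence $|\disc T_J|=2$. The paper sidesteps this by arguing by contradiction: if the index were $\ge 2$ then $|\disc\NS(\Vbar)|$ would divide $16$, hence so would $|\disc T_{\Vbar}|$; but $T_{\Vbar}\isom T_J(2)$ has rank $5$, so $2^5=32$ already divides $|\disc T_{\Vbar}|$, which is a contradiction requiring no knowledge of $\disc T_J$ at all. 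Both arguments are sound; the paper's is slightly leaner, yours is slightly more informative (it pins down $\disc\NS(\Vbar)=64$). Your closing remark that twisting is invisible to the geometric lattice is exactly the point the paper makes by citing Remark \ref{Vistwist}.
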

\begin{proof}
Let $L$ denote the sublattice of $\NS(\Vbar)$ generated by
the elements of $\Lambda$. By Proposition \ref{seventeen}, the
lattice $L$ has finite index in $\NS(\Vbar)$. Suppose this index
is not $1$. Then from the equality $\disc L = [\NS(\Vbar) : L]^2
\cdot \disc \NS(\Vbar)$, we find it is divisible by $2$, and
$\disc \NS(\Vbar)$ is a divisor of $64/2^2 = 16$.

Consider the transcendental lattice $T_{\Vbar}$
of $\Vbar$, which is the orthogonal complement of $\NS(\Vbar)$
in $H^2(\Vbar,\Z)$. As the lattice $H^2(\Vbar,\Z)$ is unimodular,
we have $|\disc T_{\Vbar}| = |\disc \NS(\Vbar)|$, so $\disc T_{\Vbar}$
is a divisor of $16$ as well. However, since $\Vbar$ is isomorphic
to the Kummer surface associated to the Jacobian $J$ of the curve
$y^2 = f(x)$ (see Remark \ref{Vistwist}), we find from
\cite{mor}, Prop. 4.3, that $T_{\Vbar}$ is isomorphic to $T_J(2)$,
the transcendental lattice of $J$, scaled by a factor of $2$.
Since $T_{\Vbar}$ has rank $22-17=5$, its discriminant is divisible
by $2^5 = 32$. From this contradiction we conclude that the index
does equal $1$.
\end{proof}

\subsection{Fields of definition}

Recall that $T(F) = \{ \sqdelta \in A_{F} \,\, : \,\, \sqdelta^2 = \delta\}$
for any field $F$ for which $\delta$ is contained in $A_F$
and that $l=k(\Omega)$ is the splitting field of $f$.
Also recall that if $\omega \in F$ is a root of $f$, then
$\varphi_\omega$ denotes the map $A_F \rightarrow F$ that sends
$g(X)$ to $g(\omega)$. These maps induce the isomorphism
$\varphi \colon A_l \rightarrow \bigoplus_{\omega \in \Omega} l$
given by $X \mapsto (\varphi_\omega(X))_\omega = (\omega)_\omega$.

\begin{lemma}\label{Ptoomega}
For any $\sqdelta \in T(\kbar)$ and $\omega,\psi \in \Omega$ we have
$\varphi_\psi(P_{\sqdelta,\omega})=0$ if and only if $\psi=\omega$.
\end{lemma}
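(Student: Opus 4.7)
The plan is a direct computation using the definitions. A nonzero representative in $A_{\kbar}$ of the projective point $P_{\sqdelta,\omega}$ is $\sqdelta^{-1}(X-\omega)$, so the statement amounts to computing the value of the linear coordinate $\varphi_\psi$ on this representative.

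First I would observe that although $\varphi_\psi$ is being used here as one of the coordinate functions on $\P(A_{\kbar})$, it is by construction an $l$-algebra homomorphism $A_l \to l$ (extended to $A_{\kbar} \to \kbar$), so it is multiplicative. Applying this to the chosen lift gives
$$
\varphi_\psi\big(\sqdelta^{-1}(X-\omega)\big)
 = \varphi_\psi(\sqdelta)^{-1}\,\varphi_\psi(X-\omega)
 = \varphi_\psi(\sqdelta)^{-1}(\psi-\omega).
$$
Thus the vanishing of $\varphi_\psi(P_{\sqdelta,\omega})$ is equivalent to the vanishing of the right-hand side.

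Next I would check that the factor $\varphi_\psi(\sqdelta)^{-1}$ is a well-defined nonzero element of $\kbar$. This is where we use that $\delta \in A_f^*$: from $\sqdelta^2 = \delta$ we get $\varphi_\psi(\sqdelta)^2 = \varphi_\psi(\delta) \neq 0$, so $\varphi_\psi(\sqdelta)$ is a unit in $\kbar$. Consequently the product vanishes if and only if the factor $\psi - \omega$ vanishes, which happens precisely when $\psi = \omega$ (both being elements of the algebraic closure, where this equality makes sense). This gives both implications of the lemma.

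The only mild subtlety — and it is not really an obstacle — is to make clear at the outset that the representative $\sqdelta^{-1}(X-\omega)$ is indeed nonzero in $A_{\kbar}$ (so that it actually defines a projective point, and so that the value $\varphi_\psi$ at this lift is a meaningful test for vanishing); this again follows from $\sqdelta \in A_{\kbar}^*$ together with the fact that $X-\omega$ is a nonzero element of $A_{\kbar}$, since $X$ reduces to a nonconstant element of $A_f$ modulo the ideal $(f)$ and $f$ has degree $6 > 1$.
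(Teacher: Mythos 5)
Your proof is correct and takes the same approach as the paper's (which simply cites ``the definition of $P_{\sqdelta,\omega}$ and the fact that $\sqdelta\in\Abar$ is a unit''); you have merely spelled out the computation $\varphi_\psi(\sqdelta^{-1}(X-\omega)) = \varphi_\psi(\sqdelta)^{-1}(\psi-\omega)$ and the unit argument in full.
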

\begin{proof}
This follows from the definition of $P_{\sqdelta,\omega}$ and the fact
that $\sqdelta\in \Abar$ is a unit.
\end{proof}

For any object $Y$ to which we can apply every Galois automorphism
in $G(\kbar/k)$ we will say that $Y$ is defined over the field
extension $k'\subset \kbar$ of $k$ if for all $\sigma \in
G(\kbar/k')$ we have ${}^\sigma Y = Y$. The smallest field over
which $Y$ is defined will be called the field of definition of $Y$ and
denoted by $k(Y)$. Note that every element of $\tT(k')$ is defined
over $k'$, even though it may be represented by an element in
$T(k')$ that is only defined over a quadratic extension of $k'$.
Note that if $Y=(y_1,\ldots, y_n)$ is a sequence,
then $k(Y)$ is the composition of all the $k(y_i)$. If
$Z=\{z_1,\ldots,z_n\}$ is a set, then $k(Z)$ may be strictly
smaller then the field of definition of the sequence $(z_1,\ldots,z_n)$,
a field that we will denote by $k([Z])$.

\begin{lemma}\label{lkLkLambda}
Inside $\kbar$ we have $l\cdot k(L) = k([\Lambda])$ for all $L \in \Lambda$.
\end{lemma}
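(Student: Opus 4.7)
The plan is to recognise both sides as fixed fields of subgroups of $G(\kbar/k)$ and to show that these subgroups coincide. Explicitly, $k([\Lambda])$ is by definition the fixed field of
$$H = \{\sigma \in G(\kbar/k) : \sigma L' = L' \text{ for every } L'\in\Lambda\},$$
while $l\cdot k(L)$ is the fixed field of $G(\kbar/l)\cap G(\kbar/k(L))$. Thus it suffices to prove $H = G(\kbar/l)\cap G(\kbar/k(L))$.

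The inclusion $G(\kbar/l)\cap G(\kbar/k(L)) \subseteq H$ is the easy direction. Any $\sigma\in G(\kbar/l)$ fixes every $\omega\in\Omega$, hence acts trivially on $\mu(\Abar)=\bigoplus_\omega\{\pm1\}$ and therefore on $\tmu(\kbar)$. If moreover $\sigma\in G(\kbar/k(L))$, then for any $L'\in\Lambda$, writing $L'=\tmuvar L$ with $\tmuvar\in\tmu(\kbar)$ (which is possible by transitivity in Lemma \ref{transfree}), we get $\sigma L' = ({}^\sigma\!\tmuvar)({}^\sigma L)=\tmuvar L = L'$.

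For the reverse inclusion, let $\sigma\in H$. Then in particular $\sigma L=L$, so $\sigma\in G(\kbar/k(L))$. It remains to show $\sigma\in G(\kbar/l)$, i.e.\ ${}^\sigma\!\omega=\omega$ for every $\omega\in\Omega$. Given any $\tmuvar\in\tmu(\kbar)$, write $L'=\tmuvar L\in\Lambda$; then $L' = \sigma L' = ({}^\sigma\!\tmuvar)L$, and by the free action of $\tmu(\kbar)$ on $\Lambda$ (Lemma \ref{transfree}) we conclude ${}^\sigma\!\tmuvar=\tmuvar$. Apply this to $\tmuvar=\tmuvar_{\{\omega\}}$, which under the bijection $\tmu(\kbar)\to\Pi$ corresponds to the partition $\pi=\{\{\omega\},\Omega\setminus\{\omega\}\}$. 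Since Galois acts on $\Pi$ through its action on $\Omega$, the condition ${}^\sigma\!\tmuvar_{\{\omega\}}=\tmuvar_{\{\omega\}}$ means ${}^\sigma\!\pi=\pi$; because the two parts of $\pi$ have different cardinalities ($1$ and $5$), this forces ${}^\sigma\!\{\omega\}=\{\omega\}$, hence ${}^\sigma\!\omega=\omega$. Running over all $\omega\in\Omega$ gives $\sigma\in G(\kbar/l)$.

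The main obstacle, if one can call it that, is the final numerical point: one must exploit that $|\Omega|=6$ so that the singleton partitions $\{\{\omega\},\Omega\setminus\{\omega\}\}$ are unbalanced; this is what allows us to recover the individual roots of $f$ (and hence all of $l$) from the Galois action on $\tmu(\kbar)\cong\Lambda$. Everything else is formal manipulation of the torsor structure established in Lemmas \ref{transfree} and \ref{paramlines}.
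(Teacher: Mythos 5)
Your proof is correct, and it takes a genuinely different route from the paper's for the key containment $l\subset k([\Lambda])$. The paper argues geometrically: a $\sigma$ fixing all lines fixes each intersection point $P_{\sqdelta,\omega}$ of $L_\sqdelta$ and $\muvar_\omega L_\sqdelta$, and then Lemma \ref{Ptoomega} (exactly one $\varphi_\psi$-coordinate of $P_{\sqdelta,\omega}$ vanishes, namely $\psi=\omega$) shows this forces ${}^\sigma\omega=\omega$. You instead stay entirely within the torsor formalism: trivial action on $\Lambda$ plus freeness (Lemma \ref{transfree}) forces trivial action on $\tmu(\kbar)$, and then the observation that singleton partitions $\{\{\omega\},\Omega\setminus\{\omega\}\}$ are unbalanced (sizes $1$ and $5$ since $\deg f = 6 > 2$) recovers each $\omega$. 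Both hard steps are correct; yours avoids invoking the intersection points and Lemma \ref{Ptoomega} at the cost of the small numerical observation about partition sizes, whereas the paper's is indifferent to $|\Omega|$ but relies on the geometry already set up for $P_{\sqdelta,\omega}$. For the easy containment $k([\Lambda])\subset l\cdot k(L)$ you and the paper give essentially the same argument (each $L'=\tmuvar L$ is defined over $l\cdot k(L)$ because $[\tmuvar]$ is defined over $l$).
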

\begin{proof}
Suppose $\sigma \in G(\kbar/k)$ acts trivially on $\Lambda$. Then for all
$\omega \in \Omega$ and $\sqdelta \in T(\kbar)$ the automorphism $\sigma$
fixes the intersection point $P_{\sqdelta,\omega}$ of $L_\sqdelta$ and
$\muvar_\omega L_\sqdelta$. The point $P_{\sqdelta,\omega}$ determines
$\omega$ uniquely by Lemma \ref{Ptoomega}. We conclude that
$\sigma$ fixes all $\omega\in \Omega$, so $\sigma$ fixes $l$ and $l$
is contained in $k([\Lambda])$. Clearly we also have $k(L) \subset
k([\Lambda])$, so we find $l \cdot k(L) \subset k([\Lambda])$. For every
other $L' \in \Lambda$ there is a $\muvar \in \mu(A_l)$ such that
$\muvar L = L'$. As the
automorphism $[\muvar]$ is defined over $l$, we find that $L'$ is
defined over $l\cdot k(L)$, so $k(L')\subset l\cdot k(L)$. This holds
for all $L'\in \Lambda$ so we find $k([\Lambda]) \subset l\cdot k(L)$ and thus
$k([\Lambda]) = l\cdot k(L)$.
\end{proof}

For every $\omega$ we fix a square root $\sqrt{\delta_\omega}$ of $\delta_\omega
= \varphi_\omega(\delta)$ in $\kbar$, yielding also a fixed square
root $\sqdelta_0 = \varphi^{-1}\big((\sqrt{\delta_\omega})_{\omega \in
  \Omega}\big)$ of $\delta$.
Note that with the Legendre polynomials $P_\omega$ of Remark
\ref{legendre} we can write $\sqdelta_0 = \sum_\omega
\sqrt{\delta_\omega} P_\omega$. We define the fields
$$
m' = l(\{\sqrt{\delta_\omega}\,\,:\,\, \omega\in \Omega\}), \qquad
\mbox{and} \qquad m =l(\{\sqrt{\delta_\omega}
\sqrt{\delta_\psi}\,\,:\,\, \omega,\psi\in \Omega\}).
$$
The square root $\sqdelta_0$ of $\delta$ trivializes the torsors
$T$ and $\tT$ under $\mu_A$ and $\tmu$ respectively, identifying
$\muvar \in \mu_A(\kbar) = \mu(\Abar)$ with $\muvar \sqdelta_0 \in
T(\kbar)$. By Remark \ref{torsorslines} the $k$-torsors
$\tT(\kbar)$ and $\Lambda(\kbar)$ under $\tmu({\kbar})$
are isomorphic over $k$ as well, identifying the class of
$\sqdelta$ in $\tT(\kbar)$ with the line $L_\sqdelta$.
Just after Lemma \ref{zott} we identified the subset $I \subset
\Omega$ with the element $\muvar_I \in \mu_A(\kbar)$. Similarly,
we write $\sqdelta_I = \muvar_I \sqdelta_0$. We also set $L_0 =
L_{\sqdelta_0}$ and write $L_I = \muvar_I L_0 = L_{\sqdelta_I}$.
Note that $L_I = L_{\Omega - I}$.

\begin{lemma}\label{whichI}
Fix $\sigma \in G(\kbar/k)$. Then ${}^\sigma\! L_0 = L_I$ if and
only if $I$ or $\Omega-I$ equals
$$
\Big\{{}^\sigma\!\omega\,\,:\,\, \omega \in\Omega\,\,,\,\, {}^\sigma\!
\sqrt{\delta_\omega}
=\sqrt{\delta_{{}^\sigma\!\omega}}\Big\}.
$$
\end{lemma}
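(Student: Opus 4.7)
My plan is to reduce the statement to a computation in $T(\kbar)$ via the isomorphism $\varphi$, and then track signs componentwise. First, by Lemma \ref{paramlines} the map $\sqdelta \mapsto L_\sqdelta$ from $T(\kbar)$ to $\Lambda$ is $G(\kbar/k)$-equivariant and factors through $\tT(\kbar)$; in particular $L_\sqdelta = L_{\sqdelta'}$ if and only if $\sqdelta' = \pm \sqdelta$ in $T(\kbar)$. Hence the condition ${}^\sigma L_0 = L_I$ is equivalent to ${}^\sigma \sqdelta_0 = \pm \sqdelta_I$ in $T(\kbar)$, where $\sqdelta_I = \muvar_I \sqdelta_0$, and it suffices to characterize when each of these two sign equalities holds.

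Next I would apply the Chinese-remainder isomorphism $\varphi \colon \Abar \to \bigoplus_\omega \kbar$, which is $G(\kbar/k)$-equivariant for the action ${}^\sigma((c_\omega)_\omega) = ({}^\sigma\!c_{{}^{\sigma^{-1}}\!\omega})_\omega$ recalled just before Remark \ref{legendre}. Because $\delta$ is defined over $k$ we have ${}^\sigma \delta_\psi = \delta_{{}^\sigma\!\psi}$ for every $\psi \in \Omega$, so ${}^\sigma \sqrt{\delta_\psi} = \pm \sqrt{\delta_{{}^\sigma\!\psi}}$. From $\varphi(\sqdelta_0) = (\sqrt{\delta_\omega})_\omega$ and equivariance the $\omega$-component of $\varphi({}^\sigma \sqdelta_0)$ is ${}^\sigma\!\sqrt{\delta_{{}^{\sigma^{-1}}\!\omega}}$, while the $\omega$-component of $\varphi(\sqdelta_I)$ is $\varphi(\muvar_I)_\omega\cdot\sqrt{\delta_\omega}$, which equals $-\sqrt{\delta_\omega}$ when $\omega \in I$ and $\sqrt{\delta_\omega}$ otherwise.

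Comparing the two vectors coordinate by coordinate, ${}^\sigma \sqdelta_0 = \sqdelta_I$ translates into the equivalence $\omega \in I \iff {}^\sigma\!\sqrt{\delta_{{}^{\sigma^{-1}}\!\omega}} = -\sqrt{\delta_\omega}$; reparametrizing by $\omega = {}^\sigma\!\psi$ (so $\psi$ ranges over $\Omega$ as $\omega$ does) this says $\Omega - I = \{{}^\sigma\!\psi : {}^\sigma\!\sqrt{\delta_\psi} = \sqrt{\delta_{{}^\sigma\!\psi}}\}$. The other case ${}^\sigma \sqdelta_0 = -\sqdelta_I$ is identical with all signs swapped and produces $I$ itself equal to the same set. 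Combining the two cases yields the stated ``$I$ or $\Omega - I$'' dichotomy. The only real difficulty is keeping the indices straight in the equivariance formula for $\varphi$; once one is careful with the ${}^{\sigma^{-1}}$ that appears there and performs the substitution $\omega = {}^\sigma\!\psi$, the conclusion is essentially one line of bookkeeping.
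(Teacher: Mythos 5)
Your proof is correct and follows essentially the same route as the paper's: both reduce the statement to the componentwise computation of ${}^\sigma\!\sqdelta_0$ via $\varphi$ and the Galois-equivariance of the map $\sqdelta \mapsto L_\sqdelta$ (you invoke Lemma \ref{paramlines} where the paper cites Lemma \ref{transfree}, but these are the same fact). You merely spell out the $\pm$ dichotomy and the reindexing $\omega = {}^\sigma\!\psi$ that the paper leaves implicit in its one-line identity ${}^\sigma\!\sqdelta_0 = \muvar_J\sqdelta_0$.
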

\begin{proof}
This follows immediately from Lemma \ref{transfree} and the equation
$$
{}^\sigma\! \sqdelta_0 =
\varphi^{-1}\left(\Big({}^\sigma\!
\sqrt{\delta_{{}^{\sigma^{-1}}\!\omega}}\Big)_{\omega \in
  \Omega}\right) = \muvar_J \sqdelta_0,
$$
where $J$ is the set given in the lemma.
\end{proof}

\begin{lemma}\label{kLambdam}
We have $k([\Lambda]) = m$.
\end{lemma}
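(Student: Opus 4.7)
The plan is to compare $k([\Lambda])$ with $m$ via their absolute Galois groups inside $G(\kbar/k)$. By Lemma \ref{lkLkLambda} applied to $L=L_0$, we have $k([\Lambda])=l\cdot k(L_0)$, so it suffices to show $l\cdot k(L_0)=m$. Since both fields contain $l$, I will identify the subgroup of $G(\kbar/l)$ fixing each of them and verify these two subgroups agree.

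First I will describe $G(\kbar/m)$. For $\sigma\in G(\kbar/l)$, write $\sigma(\sqrt{\delta_\omega})=\epsilon_\omega(\sigma)\sqrt{\delta_\omega}$ with $\epsilon_\omega(\sigma)\in\{\pm 1\}$; this makes sense since $\sigma$ fixes $\delta_\omega=\varphi_\omega(\delta)$ (because $\sigma$ fixes $l$ and hence each $\omega\in\Omega$, and $\delta\in A$ is $k$-rational). Then $\sigma$ fixes $\sqrt{\delta_\omega}\sqrt{\delta_\psi}$ if and only if $\epsilon_\omega(\sigma)\epsilon_\psi(\sigma)=1$, so $\sigma\in G(\kbar/m)$ if and only if all the signs $\epsilon_\omega(\sigma)$ coincide, i.e.\ $\sigma$ either fixes every $\sqrt{\delta_\omega}$ or negates every $\sqrt{\delta_\omega}$.

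Next I will describe $G(\kbar/(l\cdot k(L_0)))$, i.e.\ the stabilizer of $L_0$ in $G(\kbar/l)$. For such $\sigma$ we have ${}^\sigma\omega=\omega$ for all $\omega\in\Omega$, so the set appearing in Lemma \ref{whichI} reduces to $J(\sigma)=\{\omega\in\Omega:\epsilon_\omega(\sigma)=1\}$. Taking $I=\emptyset$, that lemma says ${}^\sigma L_0={}^\sigma L_\emptyset=L_\emptyset$ if and only if $J(\sigma)$ equals $\emptyset$ or $\Omega$, which is exactly the condition that all $\epsilon_\omega(\sigma)$ are equal. Thus $G(\kbar/(l\cdot k(L_0)))=G(\kbar/m)$, and by Galois theory $l\cdot k(L_0)=m$, giving $k([\Lambda])=m$.

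There is no serious obstacle here: the only point to watch is the two-valued ambiguity $L_I=L_{\Omega\setminus I}$ in Lemma \ref{whichI}, which is precisely why ``all $\epsilon_\omega$ equal'' (rather than ``all $\epsilon_\omega=+1$'') is the correct stabilizer condition, and this is what matches the description of $G(\kbar/m)$.
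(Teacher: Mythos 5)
Your proof is correct and rests on the same two ingredients as the paper's, namely Lemma \ref{lkLkLambda} to reduce to $l\cdot k(L_0)$ and Lemma \ref{whichI} to characterize the stabilizer of $L_0$. The only cosmetic difference is that the paper establishes $k([\Lambda])\subset m$ by exhibiting a defining equation of $L_0$ over $m$ directly, whereas you deduce both inclusions uniformly from the subgroup comparison in $G(\kbar/l)$; the underlying argument is the same.
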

\begin{proof}
For every $\psi\in \Omega$ the element $\sqrt{\delta_\psi}\sqdelta_0 =
\sum_\omega\sqrt{\delta_\psi}\sqrt{\delta_\omega} P_\omega$
is defined over $m$, where $P_\omega$ is the Legendre polynomial
introduced in Remark \ref{legendre}.
The line $L_0$ corresponds to the
  subspace $\{(\sqrt{\delta_\psi}\sqdelta_0)^{-1}(sX+t)\,\,:\,\, s,t \in m\}$
  of $A_m$, so $L_0$ is defined over $m$ as well and we have
  $k(L_0) \subset m$. From Lemma \ref{lkLkLambda} we deduce $k([\Lambda])
  \subset  m$. For the converse,
  consider $\sigma \in G(\kbar/k([\Lambda]))$.
From Lemma \ref{whichI} and the equation $L_0 = {}^\sigma\! L_0$ we find
that either we have ${}^\sigma\!\sqrt{\delta_\omega} = \sqrt{\delta_\omega}$
for all $\omega$, or we have ${}^\sigma\!\sqrt{\delta_\omega} =
-\sqrt{\delta_\omega}$ for
all $\omega$. In both cases we find that
$\sigma$ acts trivially on $m$, so we also have $m \subset k([\Lambda])$.
\end{proof}

\begin{lemma}\label{gonquadfields}
Let $\cS = \{\cS_1,\cS_2\}$ be a \dgonquad. Then there are $\omega,\psi
\in \Omega$ such that we have
$$
k(\cS) = k(\omega+\psi,\omega\psi)\qquad \mbox{and} \qquad k(\cS_1)
= k(\cS_2) = k\Big(\omega+\psi,\omega\psi,\prod_{\theta \in
\Omega \setminus \{\omega,\psi\}} \sqrt{\delta_\theta}\Big).
$$
\end{lemma}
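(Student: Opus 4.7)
The plan is to exploit the bijection of Remark \ref{paramgalleries} between \dgonquads\ and unordered pairs of distinct elements of $\Omega$, combined with the parity description of $\cS_1$ and $\cS_2$ from Lemma \ref{gonquads}. Let $\{\omega,\psi\}$ be the pair corresponding to $\cS$, and fix a line $L_0 \in \cS_1$; every line of $\Lambda$ is then of the form $L_I = \muvar_I L_0$ for some $I\subset\Omega$, with $L_I = L_{\Omega\setminus I}$. Since $|\Omega\setminus\{\omega,\psi\}|=4$ is even, the parity of $|I\cap(\Omega\setminus\{\omega,\psi\})|$ is invariant under $I\mapsto\Omega\setminus I$, so it is a well-defined function on the line $L_I$; by Lemma \ref{gonquads}, $\cS_1$ and $\cS_2$ consist of the lines of even and odd parity respectively.

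For the first equality, the bijection of Remark \ref{paramgalleries} is manifestly Galois equivariant, so $\sigma\in G(\kbar/k)$ fixes $\cS$ iff it fixes the unordered pair $\{\omega,\psi\}$, iff it fixes the symmetric functions $\omega+\psi$ and $\omega\psi$. For the equality $k(\cS_1)=k(\cS_2)$, note that any $\sigma$ preserving $\cS_1$ also fixes $\cS$, hence preserves its complementary \gonquad\ $\cS_2$ inside $\cS$, and conversely. It therefore suffices to prove that $\sigma$ fixes $\cS_1$ iff $\sigma$ fixes both $\{\omega,\psi\}$ and the product $\Pi:=\prod_{\theta\in\Omega\setminus\{\omega,\psi\}}\sqrt{\delta_\theta}$.

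Assuming $\sigma$ fixes $\{\omega,\psi\}$, the condition $\sigma(\cS_1)=\cS_1$ is equivalent to ${}^\sigma L_0\in\cS_1$. Writing ${}^\sigma\sqrt{\delta_\theta}=\epsilon_\theta\sqrt{\delta_{\sigma(\theta)}}$ with $\epsilon_\theta\in\{\pm 1\}$, Lemma \ref{whichI} yields ${}^\sigma L_0 = L_J$ for $J=\sigma(\{\theta\in\Omega:\epsilon_\theta=1\})$. Because $\sigma$ permutes $\Omega\setminus\{\omega,\psi\}$, one has $|J\cap(\Omega\setminus\{\omega,\psi\})|=N$, where $N:=\#\{\theta\in\Omega\setminus\{\omega,\psi\}:\epsilon_\theta=1\}$, so ${}^\sigma L_0\in\cS_1$ iff $N$ is even. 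On the other hand, the same permutation argument gives ${}^\sigma\Pi = \bigl(\prod_{\theta\in\Omega\setminus\{\omega,\psi\}}\epsilon_\theta\bigr)\cdot\Pi$, and the sign is $+1$ iff the number of $\theta\in\Omega\setminus\{\omega,\psi\}$ with $\epsilon_\theta=-1$ (which equals $4-N$) is even, iff $N$ is even. Thus both conditions coincide, completing the plan. The main obstacle is the parity bookkeeping in this last paragraph, particularly the observation that the parity distinguishing $\cS_1$ from $\cS_2$ is well-defined on lines precisely because $\Omega\setminus\{\omega,\psi\}$ has even cardinality; once this is set up, Lemma \ref{whichI} translates the combinatorial condition on lines into the Galois condition on the product of square roots.
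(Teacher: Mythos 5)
Your proposal is correct and follows essentially the same route as the paper: both reduce the second equality to the observation that whether $\sigma$ fixes $\cS_1$ and $\cS_2$, and whether $\sigma$ fixes $\prod_{\theta\notin\{\omega,\psi\}}\sqrt{\delta_\theta}$, are each controlled by the parity of the number of sign-flips of $\sqrt{\delta_\theta}$ for $\theta\in\Omega\setminus\{\omega,\psi\}$, and both exploit that $|\Omega\setminus\{\omega,\psi\}|=4$ is even so this parity is well-defined on lines $L_I=L_{\Omega\setminus I}$. The only cosmetic differences are that you extract the parity characterization of $\cS_1,\cS_2$ directly from Lemma~\ref{gonquads} rather than running the paper's case analysis through Lemma~\ref{gonint}, and for $k(\cS)=k(\omega+\psi,\omega\psi)$ you appeal to Galois-equivariance of the bijection in Remark~\ref{paramgalleries} where the paper verifies the reverse inclusion explicitly via the intersection points $P_{\sqdelta,\omega}$, $P_{\sqdelta,\psi}$ and Lemma~\ref{Ptoomega}.
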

\begin{proof}
Let $\omega,\psi$ be such that the $4$-gons in the $\cS_i$ are orbits
under $\Phi_{\omega\psi}$.
Write $k' = k(\omega+\psi,\omega\psi)$ and suppose we have $\sigma \in
G(\kbar/k')$. Then $\sigma$ fixes the polynomial
$(x-\omega)(x-\psi)$, so it permutes $\omega$ and $\psi$. Therefore, $\sigma$
permutes the orbits under $\Phi_{\omega\psi}$, which are the $4$-gons in
$\cS_1 \cup \cS_2$ (compare Lemma \ref{gonquads}).
Since every $4$-gon is contained in a unique
\dgonquad, this implies that $\sigma$ fixes $\cS$, so we have $k(\cS)
\subset k'$. For the converse, suppose we have $\sigma \in
G(\kbar/k(\cS))$. Then $\sigma$ permutes the intersection points
among any two lines in the same $4$-gon in $\cS_1 \cup \cS_2$. These
intersection points are all of the form $P_{\sqdelta,\omega}$ or
$P_{\sqdelta, \psi}$ for some $\sqdelta \in T(\kbar)$. By Lemma
\ref{Ptoomega} this implies that $\sigma$ permutes $\omega$ and
$\psi$, so it acts trivially on $k'$ and we find $k' \subset k(\cS)$,
so $k' = k(\cS)$. For the second equality, set $N = \prod_{\theta \in
\Omega \setminus \{\omega,\psi\}} \sqrt{\delta_\theta}$ and consider
$\sigma \in G(\kbar/k')$. Then $\sigma$ fixes $\cS$, so it permutes
$\cS_1$ and $\cS_2$, and $\sigma$ sends $N$ to $\pm N$.
Let $n$ denote the number of $\theta \in \Omega
\setminus \{\omega,\psi\}$ for which we have ${}^\sigma\!
\sqrt{\delta_\theta} = -\sqrt{\delta_{{}^\sigma\! \theta}}$. Then
$\sigma$ fixes $N$ if and only if $n$ is even.
Let $I \subset \Omega$ be such that
${}^\sigma\! \sqdelta_0 = \muvar_I \sqdelta_0$. Then we have
$n = \# I \cap \Omega \setminus \{\omega,\psi\}$ and
${}^\sigma\! L_0 = L_I$. Suppose that $n=0$ or
$n=4$. Then $L_0$ and ${}^\sigma\! L_0=L_I$ are in the same orbit under
$\Phi_{\omega\psi}$, so in the same $4$-gon in $\cS_1 \cup \cS_2$. By
Lemma \ref{gonquads} each $4$-gon is contained in a unique \gonquad,
so $\sigma$ fixes $\cS_1$ or $\cS_2$, and thus both.
Now suppose $n\in\{1,2,3\}$. Then ${}^\sigma\! L_0$ is in a different
orbit under
$\Phi_{\omega\psi}$ than $L_0$. By Lemma \ref{gonint} the number $n$ is odd
if and only if the line $L_0$ intersects some line in the orbit of
${}^\sigma\! L_0$, which,  by Lemma \ref{gonquads},
happens if and only if $L_0$ and ${}^\sigma\! L_0$ are
in opposite \gonquads{}.
We conclude that for all $n$ the automorphism $\sigma$ fixes $\cS_1$
and $\cS_2$ if and only if $n$ is even, so if and only if $\sigma$
fixes $N$. This implies that $k(\cS_1) = k(\cS_2) =  k'(N)$.
\end{proof}

\begin{remark}\label{factorell}
The first equality of Lemma \ref{gonquadfields} is not surprising
as we already saw in Remark \ref{paramgalleries} that
\dgonquads{} are parametrized by pairs of elements in $\Omega$. Note
that $k' = k(\cS) = k(\omega+\psi,\omega\psi)$ is the smallest
field over which $f$ factors as $f = f_2f_4$, where $f_2$ has
degree $2$ and roots $\omega$ and $\psi$. It is the field of
definition of the $2$-torsion point $(\omega,0)+(\psi,0)-2\cdot
\infty$ on the Jacobian of the curve $y^2 = f(x)$. If we set $A_4
= k'[X]/f_4$ and we let $\delta'$ denote the image of $\delta$
under the natural map $A_{k'} \ra A_4$ then the element
$\prod_\theta \sqrt{\delta_\theta}$ in Lemma \ref{gonquadfields}
is a square root of the norm $N_{A_4/k'} \delta'$ of $\delta'$
from $A_4$ to $k'$. The two elliptic fibrations associated to
$\cS_1$ and $\cS_2$ in Lemma \ref{Vbarell} are also defined over
the field $k(\cS_1) = k(\cS_2) = k'(\sqrt{N_{A_4/k'} \delta'})$.
We will say that these are the elliptic fibrations associated to
the pair $(\omega,\psi)$, or to the factorization $f=f_2f_4$.
The $4$-gons in $\cS_1$ and $\cS_2$ that make up the fibers of these
fibrations are orbits of $\Lambda$ under the group $\Phi_{\omega\psi}$.
In section \ref{sectionellfibs} we will find explicit equations for
these fibrations.
\end{remark}

Let $\Lambda_1$ and $\Lambda_2$ be the two maximal subsets $S$ of
$\Lambda$ for which all lines in $S$ have the same parity.

\begin{lemma}
We have $k(\Lambda_1) = k(\Lambda_2) = k\big(\sqrt{N(\delta)}\big)$, where $N
= N_{A/k}$ is the norm from $A$ to $k$.
\end{lemma}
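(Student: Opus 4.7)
The plan is to attach to each $\sigma \in G(\kbar/k)$ two signs $\varepsilon_1(\sigma), \varepsilon_2(\sigma) \in \{\pm 1\}$ and show the associated characters coincide. I will take $\varepsilon_1(\sigma) = +1$ if $\sigma$ fixes $\Lambda_1$ setwise and $-1$ otherwise, and $\varepsilon_2(\sigma) = \sigma(\sqrt{N(\delta)})/\sqrt{N(\delta)}$, where $\sqrt{N(\delta)} := \prod_{\omega \in \Omega}\sqrt{\delta_\omega}$ is a fixed square root of $N(\delta) = \prod_\omega \delta_\omega$ in $\kbar$. The fixed field of $\ker \varepsilon_1$ is $k(\Lambda_1) = k(\Lambda_2)$, and that of $\ker \varepsilon_2$ is $k(\sqrt{N(\delta)})$, so coincidence of characters proves the lemma.

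To see $\varepsilon_1$ is well defined, I first note that parity is an equivalence relation on $\Lambda$ with exactly the two classes $\Lambda_1$ and $\Lambda_2$, using $\muvar_I\muvar_J = \muvar_{I \triangle J}$ and $\#(I \triangle J) \equiv \#I + \#J \pmod 2$. Moreover the partition $\{\Lambda_1,\Lambda_2\}$ is $G(\kbar/k)$-stable: for $L = \muvar_I L_0 \in \Lambda_1$ we have $\sigma L = \muvar_{\sigma(I)}\sigma L_0$ with $\#\sigma(I) = \#I$ even, so $\sigma L$ lies in the same parity class as $\sigma L_0$. Hence $\sigma$ either fixes both $\Lambda_i$ or swaps them, and $\varepsilon_1(\sigma) = +1$ precisely when $\sigma L_0$ has the same parity as $L_0$. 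By Lemma \ref{whichI}, $\sigma L_0 = L_J$ for the agreement set $J = \{\sigma\omega : {}^\sigma\!\sqrt{\delta_\omega} = \sqrt{\delta_{\sigma\omega}}\}$ (replacing $J$ by $\Omega \setminus J$ does not affect $\#J \bmod 2$ since $|\Omega| = 6$ is even), so $\varepsilon_1(\sigma) = (-1)^{\#J}$.

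Finally I would compute $\varepsilon_2(\sigma)$ by reindexing $\omega' = \sigma\omega$:
$$
\sigma\Big(\prod_{\omega \in \Omega}\sqrt{\delta_\omega}\Big)
= \prod_{\omega' \in \Omega}{}^\sigma\!\sqrt{\delta_{\sigma^{-1}\omega'}}
= (-1)^{\#(\Omega\setminus J)}\prod_{\omega' \in \Omega}\sqrt{\delta_{\omega'}},
$$
since the $\omega'$-th factor equals $+\sqrt{\delta_{\omega'}}$ when $\omega' \in J$ and $-\sqrt{\delta_{\omega'}}$ otherwise. Using evenness of $|\Omega|$ again, $\varepsilon_2(\sigma) = (-1)^{\#(\Omega \setminus J)} = (-1)^{\#J} = \varepsilon_1(\sigma)$, which gives the claimed equality of fields. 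The only subtle point is the sign bookkeeping in the reindexed product; this is formally parallel to the proof of Lemma \ref{whichI} and to the norm argument at the end of Lemma \ref{gonquadfields}, so no real obstacle is anticipated.
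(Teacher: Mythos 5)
Your proof is correct and follows essentially the same path as the paper's: both reduce to showing that $\sigma$ fixes $\Lambda_1$ and $\Lambda_2$ precisely when $\sigma$ fixes $\prod_\omega \sqrt{\delta_\omega}$, by comparing the parity of the indexing set $I$ (or its complement $J$) for which ${}^\sigma L_0 = L_I$ with the sign picked up by $\prod_\omega \sqrt{\delta_\omega}$ under $\sigma$. The character-theoretic packaging and the explicit check that $\{\Lambda_1,\Lambda_2\}$ is a Galois-stable partition are small presentational additions, not a different argument.
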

\begin{proof}
Take $\sigma \in G(\kbar/k)$. Let $I \subset \Omega$ be such that
${}^\sigma\! \sqdelta_0 = \sqdelta_I$, and set $n = \# I$.
The automorphism $\sigma$ permutes $\Lambda_1$ and
$\Lambda_2$, so it fixes both if and only if $L_0$ and ${}^\sigma\! L_0=L_I$
have the same parity, i.e., if and only if $n$ is even. Note that
$n$ also equals the number of $\omega \in \Omega$ with ${}^\sigma\!
\sqrt{\delta_\omega} = -\sqrt{\delta_{{}^\sigma\! \omega}}$, so $n$ is
even if and only if $\sigma$ fixes the element $\prod_\omega
\sqrt{\delta_\omega}=\sqrt{N(\delta)}$. We conclude
that $\sigma$ fixes $\Lambda_1$ and $\Lambda_2$ if and only if
$\sigma$ fixes $\sqrt{N(\delta)}$, which shows that $k(\Lambda_1) =
k(\Lambda_2) = k\big(\sqrt{N(\delta)}\big)$.
\end{proof}

Let $\Aut \Lambda$ denote the group of permutations of $\Lambda$ that
respect the intersection pairing. Let $\rho\colon G(\kbar/k) \ra \Aut
\Lambda$ be the corresponding Galois representation.

\begin{lemma}\label{kernelrho}
The kernel of the representation $\rho\colon G(\kbar/k) \ra \Aut
\Lambda$ is $G(\kbar/m)$.
\end{lemma}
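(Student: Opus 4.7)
The plan is to unwind definitions and invoke Lemma \ref{kLambdam}. The representation $\rho$ sends $\sigma \in G(\kbar/k)$ to the permutation of $\Lambda$ it induces by acting on lines in $\P(\Abar)$. Thus $\sigma \in \ker\rho$ if and only if $\sigma$ fixes every line $L \in \Lambda$ setwise, i.e., ${}^\sigma L = L$ for all $L \in \Lambda$.

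By the definition of the field of definition of a set (given in the paragraph before Lemma \ref{lkLkLambda}), the subgroup of $G(\kbar/k)$ that fixes every element of the sequence $[\Lambda]$ is precisely $G(\kbar/k([\Lambda]))$. Combining these two observations gives
$$
\ker\rho = \{\sigma \in G(\kbar/k) : {}^\sigma L = L \text{ for all } L\in \Lambda\} = G(\kbar/k([\Lambda])).
$$

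Finally, Lemma \ref{kLambdam} identifies $k([\Lambda]) = m$, so $\ker\rho = G(\kbar/m)$, as required. There is no real obstacle here: the whole content of the lemma has already been established in Lemma \ref{kLambdam}, and the present statement is just the reformulation of that result in the language of the Galois representation $\rho$.
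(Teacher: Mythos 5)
Your proposal is correct and takes essentially the same approach as the paper, which proves the lemma by the single sentence ``This follows from Lemma \ref{kLambdam}.'' You have merely spelled out the identification $\ker\rho = G(\kbar/k([\Lambda]))$ coming from the definition of the field of definition of a sequence, which is exactly the intended deduction.
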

\begin{proof}
This follows from Lemma \ref{kLambdam}.
\end{proof}

\begin{proposition}\label{exactgal}
All extensions among the fields $k \subset l \subset m \subset m'$ are
Galois and we have exact sequences
$$
\eqalign{
1 \ra \Gal(m'/m) \ra \Gal&(m'/l) \ra \Gal(m/l) \ra 1 \cr
1 \ra \Gal(m/l) \ra \Gal&(m/k) \ra \Gal(l/k) \ra 1 \cr
1 \ra \Gal(m'/m) \ra \Gal(m'/l)&\ra \Gal(m/k) \ra \Gal(l/k) \ra 1.\cr
}
$$
\end{proposition}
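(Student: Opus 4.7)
The plan is to proceed in two stages: first establish that each of the six extensions in the tower $k \subset l \subset m \subset m'$ is Galois, and then extract the three exact sequences from standard Galois theory.

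For the Galois claims, $l/k$ is Galois by definition, being the splitting field of $f$. The extensions $m/l$, $m'/l$, and $m'/m$ are compositums of quadratic extensions of their base field (generated by square roots of elements of the base), so each is Galois over its base. The content is in showing $m/k$ and $m'/k$ are Galois. For this I would fix $\sigma \in G(\kbar/k)$ and check $\sigma(m) \subseteq m$ on the generators: $\sigma$ preserves $l$ (since $l/k$ is Galois), and on generators we have $\sigma(\delta_\omega) = \delta_{\sigma\omega}$ (because $\delta \in A_k$ means $\delta_\omega = g(\omega)$ for some $g \in k[X]$), so $\sigma(\sqrt{\delta_\omega\delta_\psi})$ is a square root of $\delta_{\sigma\omega}\delta_{\sigma\psi}$ and therefore lies in $m$ up to sign. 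The same argument shows $\sigma(m') \subseteq m'$. Since $[m:k]$ and $[m':k]$ are finite, these inclusions are equalities, so both extensions are Galois over $k$.

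Once all the relevant extensions are Galois, the first two exact sequences are the standard restriction sequences of Galois theory: for a tower $F \subset E \subset E'$ in which $E/F$ and $E'/F$ are both Galois, one has $1 \to \Gal(E'/E) \to \Gal(E'/F) \to \Gal(E/F) \to 1$. Applying this to $l \subset m \subset m'$ gives the first sequence, and applying it to $k \subset l \subset m$ gives the second.

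For the third sequence, I would define the map $\Gal(m'/l) \to \Gal(m/k)$ as the restriction to $m$ (which is valid because $m/k$ is Galois, so the restriction lands in $\Gal(m/k)$, and it automatically factors through the inclusion $\Gal(m/l) \hookrightarrow \Gal(m/k)$ since elements of $\Gal(m'/l)$ fix $l \supset k$). Its kernel is the kernel of $\Gal(m'/l) \to \Gal(m/l)$, namely $\Gal(m'/m)$, and its image is exactly $\Gal(m/l)$, which coincides with the kernel of $\Gal(m/k) \to \Gal(l/k)$ from the second sequence. Surjectivity at the right end is again from the second sequence. The main obstacle is purely bookkeeping here — verifying that the composite map $\Gal(m'/l) \to \Gal(m/k)$ is well-defined and the image/kernel computations line up — but nothing beyond the first two sequences is needed.
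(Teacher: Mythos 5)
Your proposal is correct, but it takes a genuinely different route from the paper on the one step that has real content, namely showing that $m/k$ and $m'/k$ are Galois. You do this by a direct generator check: using $\delta_\omega = g(\omega)$ for a representative $g \in k[X]$ to conclude $\sigma(\delta_\omega) = \delta_{\sigma\omega}$, hence $\sigma$ permutes the $\sqrt{\delta_\omega\delta_\psi}$ up to sign. The paper instead invokes Lemma \ref{kernelrho}: since $\Gal(\kbar/m)$ is the kernel of the Galois representation $\rho\colon \Gal(\kbar/k) \to \Aut\Lambda$, it is automatically normal, and similarly $\Gal(\kbar/m')$ is the kernel of the action on $T(\kbar)$. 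The paper does mention your style of argument as an alternative for $m'$, but its primary route is the representation-kernel one. Your approach is more elementary and self-contained; the paper's reuses machinery already set up and is shorter in context. One small omission on your side: the paper explicitly points out that separability of the $2$-power extensions $m/l$, $m'/l$, $m'/m$ depends on $\car k \neq 2$ (a standing convention in that section); you implicitly rely on this when you call a compositum of quadratic extensions Galois, and it would be worth a one-line acknowledgement. The extraction of the three exact sequences is the same in both arguments.
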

\begin{proof}
The extension $l/k$ is normal because $l$ is the splitting field of
$f$ over $k$, and separable because $f$ is.
Since $[m':l] = 64 = 2^6$, and we have assumed that $\car k \ne 2$,
the extension $m'/k$ and all subextensions are separable.
The group $\Gal(\kbar/m)$ is normal in $\Gal(\kbar/k)$ because it
is the kernel of $\rho$ by Lemma \ref{kernelrho}. This
means that $m/k$ is Galois, and therefore so is $m/l$.
Similarly, the group $\Gal(\kbar/m')$ is normal in $\Gal(\kbar/k)$
because it is the kernel of the representation $\Gal(\kbar/k) \ra
\Aut T(\kbar)$. This
implies that $m'/k$ is Galois, which also follows from the fact that
$m'$ is obtained from $l$ by adjoining a square root of an element in
$l$ as well as the square roots of all conjugates of that element
under $\Gal(l/k)$. Therefore, the extensions $m'/m$ and
$m'/l$ are Galois, too. The first two exact sequences are
the standard short exact sequences of Galois groups associated to
the double extensions $k \subset l \subset m$ and $l \subset m \subset
m'$. They can be combined to give the last sequence.
\end{proof}

\begin{example}\label{generic}
Let $F$ be any field and define the generic fields
$$
\eqalign{
m_g' &= F(\omega_1,\ldots,\omega_6,d_0,\ldots,d_5)[T_1, \ldots,
  T_6] / \left(T_j^2 - \sum_{i=0}^5 d_i \omega_j^i \,\, :
\,\, 1\leq j \leq 6\right),\cr
m_g &= F(\omega_1,\ldots,\omega_6, d_0,\ldots, d_5,
\{\sqd_i\sqd_j\,\,:\,\,1 \leq i,j\leq 6\}),\cr
l_g &= F(\omega_1,\ldots,\omega_6,d_0,\ldots, d_5),\cr
k_g&= F(s_1,\ldots,s_6,d_0,\ldots,d_5),\cr
}
$$
where $\omega_1,\ldots,\omega_6,d_1, \ldots, d_6$ are
independent transcendentals, $s_j$ denotes the elementary symmetric
polynomial of degree $j$ in the variables $\omega_1,\ldots, \omega_6$,
and $\sqd_j$ is the image of $T_j$ in $m_g'$. We have $k_g \subset l_g
\subset m_g \subset m_g'$. Set
$$
f=\prod_{j=1}^6 (X-\omega_j) =
X^6-s_1X^5+s_2X^4-s_3X^3+s_2X^4-s_5X^5+s_6 \in k_g[X],
$$
and define $A = k_g[X]/f$. By abuse of notation we will also write $X$
for the image of $X$ in $A$. Set $\delta = \sum_{i=0}^5 d_iX^i \in A$.
The evaluation maps $\varphi_j\colon \, A \ra l_g$ sending $X$ to
$\omega_j$ induce an isomorphism $\varphi \colon \, A_{l_g} \ra
\bigoplus_{j=1}^6 l_g$. We have $\sqd_j^2 = \delta_j$ with $\delta_j =
\varphi_j(\delta)$, so
the fields $l_g$, $m_g$, and $m_g'$ depend on $k_g$, $f$ and $\delta$ exactly
as the corresponding fields without the subscript $g$ for ``generic'' did
before, abbreviating $\omega_j$ to $j$ in any index.
We will give explicit equations for the intersection points
of the lines in $\Lambda_g$ in this generic situation.
As in Remark \ref{legendre}, let $P_j$ denote the Legendre polynomial
$P_j = \prod_{i \neq j} (X-\omega_i)/(\omega_j-\omega_i) \in A_{l_g}$ for
$1\leq j \leq 6$. Then $\varphi^{-1}$ sends
$(c_j)_{j=1}^6$ to $\sum_{j=1}^6 c_jP_j$. Set $\sqdelta_0 =
\varphi^{-1}\big((\sqd_j)_j\big) = \sum_{j=1}^6
\sqd_jP_j$. Then we have $\sqdelta_0^2 = \delta$. Let $b_{ji}\in
F(\omega_1,\ldots,\omega_6)$ be such that $P_j = \sum_{i=0}^5 a_{ji}
X^i$. In $A_{l_g}$ we have $XP_j=\omega_jP_j$, so we find that the
coordinates of the point
$P_{\varepsilon_0,\omega_r}$ in terms of the $a_i$
are given by the coefficients of
$$
\sqdelta_0^{-1}(X-\omega_r) = \sum_{j=1}^6
\sqd_j^{-1}(X-\omega_r)P_j = \sum_{j=1}^6
\sqd_j^{-1}(\omega_j-\omega_r)P_j =
\sum_{i=0}^5 \left(\sum_{j=1}^6\sqd_j^{-1}
(\omega_j-\omega_r)b_{ji}\right) X^i.
$$
Multiplying all the coefficients by one of the $\sqd_j^{-1}$ shows that
the point $P_{\sqdelta_0,\omega_r}$ is indeed defined over $m_g$. All other
intersection points are obtained by replacing some of the $\sqd_j$
by their negatives and $r$ by some $r' \in \{1,\ldots,6\}$. By
specialization, these formulas give explicit equations for the intersection
points of the lines in $\Lambda$ over any field. This also gives all the lines.
Note that the group $\Gal(m_g'/l_g)$ is isomorphic to
$\bigoplus_{j=1}^6 \Z/2\Z$, where the generator of the $j$-th
component sends $\sqd_j$ to $-\sqd_j$. The group
$\Gal(m_g'/m_g)\isom \Z/2\Z$ embeds diagonally into $\Gal(m_g'/l_g)$, sending
every $\sqd_j$ to $-\sqd_j$. Hence, the group $\Gamma=\Gal(m_g/l_g)$ is
isomorphic to $(\Z/2\Z)^6/(\Z/2\Z)$. The group $\Gal(l_g/k_g)$ is
isomorphic to $S_6$. There is a section $\iota'$ of the homomorphism
$\Gal(m_g'/k_g) \ra \Gal(l_g/k_g)$ that sends an element $\sigma \in
\Gal(l_g/k_g)$ to the unique lift that sends the set $\{\sqd_1,
\ldots, \sqd_6\}$ to itself. The composition $\iota$ of $\iota'$ and the
restriction map $\Gal(m_g'/k_g)\ra \Gal(m_g/k_g)$ is a section of the homomorphism
$\Gal(m_g/k_g) \ra \Gal(l_g/k_g)$. Through $\iota$ the group
$\Gal(l_g/k_g)\isom S_6$ acts on $\Gamma$ by conjugation. This action
is induced by permutation of the
components of $\bigoplus_{j=1}^6 \Z/2\Z = \Gal(m_g'/l_g)$ in the obvious
way. Since the middle sequence of
Proposition \ref{exactgal} splits in this generic case,
we find that $\Gal(m_g/k_g)$ is isomorphic to the semidirect product
$\Gamma \semi S_6$, which has $32\cdot 6! = 23040$ elements.
\end{example}

\begin{proposition}\label{rhosurj}
Generically, the representation $\rho\colon\, \Gal(\kbar/k) \ra \SQi$ is
surjective.
\end{proposition}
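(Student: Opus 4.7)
The plan is to show that $|\Aut \Lambda| \le 23040$ unconditionally, and then combine this with the Galois computation in Example \ref{generic} to match orders. By Lemma \ref{kernelrho} the representation $\rho$ factors through an injection $\Gal(m/k) \hookrightarrow \Aut \Lambda$, and in the generic setup of Example \ref{generic} we have $|\Gal(m_g/k_g)| = |\Gamma \semi S_6| = 32 \cdot 6! = 23040$. Once the upper bound on $\Aut \Lambda$ is in hand this matches and forces $\rho$ to be surjective in the generic case.

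To bound $|\Aut \Lambda|$ I would first exploit that $\tmu(\kbar)$ acts freely and transitively on $\Lambda$ (Lemma \ref{transfree}) while preserving intersections (Corollary \ref{whichinter}), so it embeds as a subgroup of order $32$ and $|\Aut \Lambda| = 32 \cdot |\mathrm{Stab}_{\Aut \Lambda}(L_0)|$. Since the only lines of $\Lambda$ meeting $L_0$ are the six lines $\muvar_\omega L_0$ for $\omega \in \Omega$ (Corollary \ref{whichinter}), any element of $\mathrm{Stab}_{\Aut \Lambda}(L_0)$ gives a permutation of this $6$-element set, yielding a homomorphism $\mathrm{Stab}_{\Aut \Lambda}(L_0) \to S_\Omega \cong S_6$. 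It remains to show this map is injective.

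The injectivity is the main technical step, and the only place where real work happens. Given $\phi \in \Aut \Lambda$ fixing $L_0$ and each $\muvar_\omega L_0$, I would reconstruct the other $25$ lines from their intersection data. For each pair $\{\omega,\psi\} \subset \Omega$, Corollary \ref{whichinter} shows that $\muvar_{\{\omega,\psi\}} L_0$ is the unique line in $\Lambda \setminus \{L_0\}$ meeting both $\muvar_\omega L_0$ and $\muvar_\psi L_0$, so $\phi$ fixes all $15$ two-subset lines. Then, for each three-subset $K \subset \Omega$, the six two-subset lines neighboring $\muvar_K L_0$ are precisely the $2$-subsets of $K$ together with the $2$-subsets of $\Omega \setminus K$, and this data recovers the pair $\{K,\Omega \setminus K\}$ and hence the line $\muvar_K L_0 = \muvar_{\Omega \setminus K} L_0$. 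So $\phi$ fixes the remaining $10$ lines as well, accounting for all $1 + 6 + 15 + 10 = 32$ lines of $\Lambda$.

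Granting this rigidity, $|\Aut \Lambda| \le 23040$, and combined with Example \ref{generic} we conclude that $\rho$ is surjective in the generic situation. The rigidity argument just sketched is where the real work lies; the rest is bookkeeping around Lemma \ref{kernelrho} and Example \ref{generic}.
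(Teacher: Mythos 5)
Your argument is correct and uses the same rigidity lemma that drives the paper's proof, but you package it as an order count while the paper constructs preimages directly. In the paper, an arbitrary $\tau \in \Aut\Lambda_g$ is reduced to the identity in three steps: first a $\sigma_1\in\Gamma$ is chosen so $\rho(\sigma_1)^{-1}\tau$ fixes $L_0$; then a $\sigma_2$ in the $S_6$-section is chosen so that $\tau''=\rho(\sigma_2)^{-1}\rho(\sigma_1)^{-1}\tau$ additionally fixes the six lines $L_j$ meeting $L_0$; finally $\tau''=\mathrm{id}$ follows from the rigidity you also identify (each $L_{ij}$ is the unique line other than $L_0$ meeting $L_i$ and $L_j$, and each $L_{ijr}$ is the unique line other than $L_i$ meeting $L_{ij}$ and $L_{ir}$). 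You instead observe that $\tmu(\kbar)$ gives $32$ elements of $\Aut\Lambda$ acting freely transitively, so $|\Aut\Lambda|=32\cdot|\mathrm{Stab}(L_0)|$, and the rigidity argument shows $\mathrm{Stab}(L_0)\hookrightarrow S_6$; hence $|\Aut\Lambda|\le 32\cdot 720 = 23040 = |\Gal(m_g/k_g)|$ and the injection $\rho$ must be onto. The two proofs are logically distinct (constructive vs.\ cardinality comparison) but share their technical core; your version has the minor advantage of making the unconditional bound on $|\Aut\Lambda|$ explicit, which the paper leaves implicit. Your reconstruction of the $|K|=3$ lines from their two-subset neighbors is a valid alternative to the paper's $L_{ijr}$ step and checks out under Corollary \ref{whichinter}.
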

\begin{proof}
It suffices to show that $\rho$ is
surjective in the case of the generic situation of
Example \ref{generic}, so suppose we are in that case.
We will use the same notation as in Example \ref{generic}, including
abbreviating $\omega_j$ to $j$ in indices of lines and points. Take any
$\tau\in \SQi_g$ and consider the line $L_0 = L_{\sqdelta_0}$. Since $\Gamma =
\Gal(m_g/l_g)$ acts transitively on $\Lambda_g$, there is a $\sigma_1 \in
\Gamma$ with $\rho(\sigma_1)(L_0) = \tau(L_0)$. Then $\tau' =
\rho(\sigma_1)^{-1}\tau$ fixes $L_0$, so it permutes the six lines $L_j$
that intersect $L_0$. The corresponding six intersection points are
$P_{\sqdelta_0,\omega_j}$ for $1\leq j \leq 6$, so $\tau'$ induces
a unique permutation of the $\omega_j$ by Lemma \ref{Ptoomega},
which corresponds to an element
$\psi \in \Gal(l_g/k_g)$. Set $\sigma_2 = \iota'(\psi)$. Then $\sigma_2$
sends the set $\{\sqd_1,\ldots,\sqd_6\}$ to itself, so it
fixes $\sqdelta_0 = \sum_{j=1}^6 \sqd_j P_j$ as both the
$\sqd_j$ and the $P_j$ are acted on according to their
indices. This implies that
$\rho(\sigma_2)$ fixes $L_0$, while it permutes the intersection points
$P_{\sqdelta_0,\omega_j}$ in the same way $\tau'$ does. Therefore
$\tau'' = \rho(\sigma_2)^{-1}\tau'$ fixes $L_0$ and the six lines
$L_j$. For $i\neq j$ the line $L_{ij}$ is
the unique line that intersects $L_{i}$ and $L_{j}$ that
is not equal to $L_0$. This implies that $\tau''$ also fixes
$L_{ij}$. Similarly, the line
$L_{ijr}$ is the unique line that intersects
both $L_{ij}$ and $L_{ir}$ that is not
equal to $L_{i}$. This implies that $\tau''$ also fixes
$L_{ijr}$. We conclude that $\tau''$ is the
identity, so $\tau = \rho(\sigma_1\sigma_2)$ and $\rho$ is surjective.
\end{proof}

By Lemma \ref{kernelrho} and Proposition \ref{rhosurj} the generic
representation $\rho_g \colon \Gamma \semi S_6 \isom \Gal(m_g/k_g) \ra
\Aut \Lambda_g \isom \Aut \Lambda$ is an isomorphism. We will denote
the composition $\rho_g \circ \iota \colon S_6 \ra \Aut \Lambda$
by $\iota$ as well.

It will be useful to have names for the elements of $\SQi$. For every
set $I \subset \Omega$, let $s_I\in \SQi$ denote the permutation
induced by $[\muvar_I]$. Note we have $s_I=s_{\Omega\setminus I}$
and $s_I$ and $s_J$ commute for every $I,J \subset \Omega$.
For any permutation $\sigma \in
S_6 = \Sym(\Omega)$, let $t_\sigma$ denote the permutation that sends
$L_I$ to $L_{{}^\sigma\! I}$. For $\sigma, \tau \in S_6$ we have
$t_\sigma \circ t_\tau = t_{\sigma\tau}$, and
\begin{equation}\label{action}
t_\sigma \circ s_I = s_{{}^\sigma\! I} \circ t_\sigma.
\end{equation}
Note that the action of $S_6$ on $\Lambda$ that we have defined
depends on the choice of
$L_0$, or the $\sqrt{\delta_\omega}$, just as the section $\iota$ in Example
\ref{generic} depends on the choice of the square roots $\sqd_j$ of
the $\delta_j$. We will state some of the following lemmas under an
extra condition on $L_0$, knowing that the general case may always be
obtained by changing some of the $\sqrt{\delta_\omega}$ to their
negatives and changing the $L_I$ and $t_\sigma$ accordingly.
By specialization of the generic $\omega_j \in m_g'$ of Example \ref{generic}
to the $\omega \in \Omega$, and the $\sqd_j$ to the corresponding
$\sqrt{\delta_\omega}$, we specialize $k_g,l_g,m_g,\Lambda_g$, and the
corresponding generic representation $\rho_g$
to $k,l,m,\Lambda$, and $\rho$ respectively. Let $r$ denote the associated
injective map from $\Gal(m/k)$ to $\Gal(m_g/k_g)$. Then we have the following
commutative diagram.
$$
\xymatrix{
\Gal(m_g/k_g) \ar[rr]^{\rho_g}_{\isom} & & \Aut \Lambda_g \\
&&\\
\Gal(m/k) \ar[uu]^r \ar[rr]^{\rho} & & \Aut \Lambda \ar@{-}[uu]_{\isom} \\
}
$$

\begin{lemma}\label{specialization}
Let $H$ be a subgroup of $\Aut \Lambda$ and let $H_g$ be the
corresponding subgroup of $\Aut \Lambda_g$. Then the fixed field of
$\rho^{-1} H$ is exactly the specialization of the fixed field of
$\rho_g^{-1} H_g$.
\end{lemma}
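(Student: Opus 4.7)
The plan is to reduce the claim to elementary Galois theory applied to the two representations $\rho_g$ and $\rho$, exploiting the fact that $\rho_g$ is an isomorphism (by Lemma \ref{kernelrho} and Proposition \ref{rhosurj}) and that $r$ is injective. The only content specific to the paper is the group-theoretic identification $\rho^{-1} H = r^{-1}(\rho_g^{-1} H_g)$, which I would establish first; once that is in hand, the rest is a standard fact about how fixed fields of Galois subgroups behave under specialization.

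First I would observe that the commutative diagram displayed just before the lemma gives the identification $\rho^{-1} H = r^{-1}(\rho_g^{-1} H_g)$. Indeed, for $\tau \in \Gal(m/k)$, unfolding the identification $\Aut \Lambda \isom \Aut \Lambda_g$ (which by hypothesis sends $H$ to $H_g$), one has $\rho(\tau) \in H$ if and only if $\rho_g(r(\tau)) \in H_g$, which since $\rho_g$ is bijective is equivalent to $r(\tau) \in \rho_g^{-1} H_g$. Next, write $F_g = (m_g)^{\rho_g^{-1} H_g}$ and let $F \subset m$ denote its specialization. One inclusion is immediate: for any $x \in F_g$ and any $\tau \in \rho^{-1} H$, the element $r(\tau) \in \rho_g^{-1} H_g$ fixes $x$, and because the specialization $m_g \to m$ intertwines the actions of $r(\tau)$ and $\tau$ by the very definition of $r$, the image $\bar x \in m$ is fixed by $\tau$. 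Hence $F \subseteq (m)^{\rho^{-1} H}$.

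The reverse inclusion I would obtain by comparing degrees. By standard Galois theory applied to $m_g/k_g$, the fixed field satisfies $[F_g : k_g] = [\Gal(m_g/k_g) : \rho_g^{-1} H_g]$. Under the specialization, the relevant index inside $\Gal(m/k)$ is $[\Gal(m/k) : r^{-1}(\rho_g^{-1} H_g)] = [\Gal(m/k) : \rho^{-1} H]$, which by Galois theory applied to $m/k$ equals $[(m)^{\rho^{-1} H} : k]$. Since the specialization is compatible with the Galois correspondence (the integral closure of the local ring of $k_g$ inside $m_g$ restricts to the integral closure inside $F_g$, and the residue field of the former is $m$ while that of the latter is exactly $F$), one obtains $[F : k] = [(m)^{\rho^{-1} H} : k]$, and the previous inclusion forces equality.

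The main obstacle is the final degree count, namely the verification that the specialization of $F_g$ really has degree $[\Gal(m/k):\rho^{-1} H]$ over $k$ rather than a proper divisor of it. This boils down to the injectivity of $r$ (so that the decomposition group at the specialization equals the full Galois group $\Gal(m/k)$) together with the explicit description of $F_g$ via the generic construction in Example \ref{generic}; writing down a primitive element for $F_g/k_g$ and tracking its specialization makes this transparent, but some care is required to ensure that the primitive element behaves well under the chosen specialization.
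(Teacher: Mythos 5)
Your proof is correct and takes essentially the same approach as the paper: both reduce to the group-theoretic identity $\rho^{-1}H = r^{-1}(\rho_g^{-1}H_g)$ read off from the commutative diagram, and then invoke the compatibility of specialization with the Galois correspondence, which the paper states directly as $\Gal(m/k') = r^{-1}(\Gal(m_g/k_g'))$ for any subextension $k_g'$ of $m_g$ over $k_g$. Your one-inclusion-plus-degree-count framing is a somewhat more verbose rendering of that same compatibility fact, which both you and the paper leave as a standard assertion about the chosen specialization.
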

\begin{proof}
Set $H' = \rho^{-1} H$ and $H_g' = \rho_g^{-1} H_g$. By the
commutative diagram above we have $r^{-1}(H_g') = H'$.
For every specialization $k'$ of a
subextension $k_g'$ of $m_g$ over $k_g$, we have $\Gal(m/k') =
r^{-1}(\Gal(m_g/k_g'))$. 
In other words, the fixed field of $H' = r^{-1}(H_g')$ is
exactly the specialization of the fixed field of $H_g'$.
\end{proof}

For any $\omega, \psi \in \Omega$, let $\cN_{\omega\psi}$ denote the
group generated by $s_\omega$ and $s_\psi$. Then $\Phi_{\omega\psi}$
acts on $\Lambda$ through $\cN_{\omega\psi}$.
For any \gonquad{} $\cS$, let $G_\cS$ denote the maximal subgroup of $\Aut
\Lambda$ that fixes $\cS$ and let $G_{[\cS]}$ denote the maximal subgroup
that fixes all $4$-gons in $\cS$. We will use Lemma
\ref{specialization} to find generators of $k([\cS])$, the compositum of the
fields $k(S)$ for all $S$ in some \gonquad{} $\cS$. This field will be
used in Section \ref{sectionellfibs} to find explicit equations for
the elliptic fibration associated to $\cS$ in Lemma \ref{Vbarell}.

\begin{lemma}\label{GcS}
For any \gonquad{} $\cS$ the
group $G_\cS$ has order $768$ and the natural homomorphism from
$G_\cS$ to the group $\Sym(\cS)$ of permutations of the $4$-gons in
$\cS$ is surjective. Its kernel is $G_{[\cS]}$.
\end{lemma}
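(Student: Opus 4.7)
The plan is to identify $\Aut\Lambda$ with $\Gamma \semi S_6$ via Proposition \ref{rhosurj} and the section $\iota$, where $\Gamma = \tmu(\kbar)$ has order $32$, and then compute $|G_\cS|$ and $|G_{[\cS]}|$ by analyzing their intersections with $\Gamma$ and their images in $S_6$. Using Lemma \ref{gonquads}, I would fix $\omega,\psi\in\Omega$ and a line $L_0 \in \Lambda$ so that the four $4$-gons in $\cS$ correspond to the $\Phi_{\omega\psi}$-orbits of lines $L_J$ whose representatives $J \subseteq \Omega$ satisfy $|J \cap (\Omega \setminus \{\omega,\psi\})|$ even, and write a general element of $\Aut\Lambda$ as $s_I t_\sigma$, with $\pi\colon\Aut\Lambda\ra S_6$ the projection.

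To compute $|G_\cS|$, I would first show that $\pi(G_\cS)$ equals the stabilizer of $\{\omega,\psi\}$ in $S_6$, which has order $48$. For the containment $\pi(G_\cS) \subseteq \{\sigma : \sigma\{\omega,\psi\}=\{\omega,\psi\}\}$, observe that $s_I$ permutes $\cN_{\omega\psi}$-orbits among themselves and thus preserves the \dgonquad{} of $\{\omega,\psi\}$; so if $s_I t_\sigma$ preserves $\cS$, then $t_\sigma$ must send $\cN_{\omega\psi}$-orbits to $\cN_{\omega\psi}$-orbits, forcing $\sigma$ to fix the pair by Remark \ref{paramgalleries}. The reverse inclusion is automatic because such $\sigma$ preserves $\Omega\setminus\{\omega,\psi\}$ and hence the parity condition defining $\cS$. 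A count modulo the identification $I \sim \Omega \setminus I$ gives $|G_\cS \cap \Gamma| = 16$ (those $s_I$ with $|I \cap (\Omega\setminus\{\omega,\psi\})|$ even), yielding $|G_\cS| = 16 \cdot 48 = 768$.

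For $G_{[\cS]}$, the condition $s_I t_\sigma \cdot [L_J] = [L_J]$ translates in $\tmu$ to $I \triangle \sigma(J) \triangle J \in \Phi_{\omega\psi}$ for each of the four indexing sets $J$. Taking $J = \emptyset$ forces $I \in \Phi_{\omega\psi}$ (four elements), and the three remaining conditions, with $J$ a $2$-subset of $\Omega\setminus\{\omega,\psi\}$, reduce to $\sigma(J) \triangle J \in \Phi_{\omega\psi}$. Since $\sigma$ preserves $\Omega\setminus\{\omega,\psi\}$, this forces $\sigma$ to preserve each of the three pair-partitions of the $4$-set $\Omega\setminus\{\omega,\psi\}$, so $\sigma$ lies in the Klein four-group of $\Sym(\Omega\setminus\{\omega,\psi\})$ combined with $\Sym\{\omega,\psi\}$, giving $8$ possibilities. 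Hence $|G_{[\cS]}| = 4 \cdot 8 = 32$, and since $G_{[\cS]}$ is by definition the kernel of $G_\cS \to \Sym(\cS)$, the image has order $768/32 = 24 = |\Sym(\cS)|$, proving surjectivity.

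The main subtlety is careful bookkeeping modulo the identification $I \sim \Omega\setminus I$ in $\tmu$ when counting, and verifying in the $G_{[\cS]}$ computation that allowing $s_I \in \Phi_{\omega\psi}$ to vary does not enlarge the admissible set of $\sigma$ beyond the Klein four-group, which follows from $\Phi_{\omega\psi}$ being a subgroup so that the condition on $\sigma$ is independent of the choice of $I \in \Phi_{\omega\psi}$.
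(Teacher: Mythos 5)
Your proof is correct, and it takes a genuinely different route from the paper's. The paper computes $|G_\cS|=768$ by a Galois-theoretic detour: it invokes Lemma~\ref{gonquadfields} to get $[k_g(\cS):k_g]=30$ in the generic case, hence $|\Gal(m_g/k_g(\cS))|=23040/30=768$, and then transports this across the isomorphism $\rho_g$. For surjectivity the paper exhibits the concrete subgroup $H=\{t_\sigma : \sigma\in\Sym(\Omega\setminus\{\omega,\psi\})\}$, observes $H\subset G_\cS$, and checks directly that $H$ surjects onto $\Sym(\cS)$. You instead work entirely inside $\Aut\Lambda\cong\Gamma\rtimes S_6$, computing $\pi(G_\cS)=\mathrm{Stab}_{S_6}(\{\omega,\psi\})$ of order $48$ and $|G_\cS\cap\Gamma|=16$ (the $s_I$ with $|I\cap(\Omega\setminus\{\omega,\psi\})|$ even, counted modulo $I\sim\Omega\setminus I$), which gives $|G_\cS|=768$ from the short exact sequence $1\to G_\cS\cap\Gamma\to G_\cS\to\pi(G_\cS)\to 1$; you then count $|G_{[\cS]}|=4\cdot 8=32$ directly and deduce surjectivity from $768/32=24=|\Sym(\cS)|$. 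Your approach has the advantage of being self-contained, avoiding any appeal to the field-of-definition computation; it also proves $|G_{[\cS]}|=32$ en route, a fact the paper only records later in Lemma~\ref{KcS} as a consequence. The paper's approach, on the other hand, reuses machinery it has already built and produces the explicit complement $H\cong S_4$ inside $G_\cS$, which it needs in the proof of Lemma~\ref{KcS}. The one small place to be careful in your argument (which you handle correctly, relying on Remark~\ref{paramgalleries}) is that each $4$-gon is an orbit under $\Phi_{\omega\psi}$ for a \emph{unique} pair $\{\omega,\psi\}$; this is what forces $\sigma$ to stabilize $\{\omega,\psi\}$ rather than merely sending $\Phi_{\omega\psi}$-orbits to $\Phi_{\sigma\omega,\sigma\psi}$-orbits.
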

\begin{proof}
In the generic case of Example \ref{generic}, the field $k_g(\cS)$ has
degree $30$ by Lemma \ref{gonquadfields}. Therefore the group
$\Gal(m_g/k_g(\cS))$ has order $23040/30 =768$. By Lemma
\ref{kernelrho} and Proposition \ref{rhosurj}, the
representation $\rho_g \colon \, \Gal(m_g/k_g) \ra \Aut \Lambda_g$ is an
isomorphism, so we find that $G_\cS$ has order
$768$ as well. Clearly the kernel of the homomorphism $\chi \colon
G_\cS \ra \Sym(\cS)$ equals $G_{[\cS]}$.
Let $\omega$ and $\psi$ be such that the $4$-gons in
$\cS$ are orbits under $\Phi_{\omega\psi}$, and set $H = \{t_\sigma \,\,:
\,\, \sigma \in \Sym(\Omega \setminus \{\omega,\psi\})\}\subset \Aut \Lambda$.
Each $h \in H$
sends orbits under the group $\Phi_{\omega\psi}$ to orbits under the same
group, and as we have $h(L_0) = L_0$, the permutation
$h$ fixes at least one of these orbits, so it fixes
the two complementary \gonquads{} associated to the pair
$(\omega,\psi)$. We deduce $H \subset G_\cS$.
Without loss of generality we
will assume that $L_0$ is not contained in any of the $4$-gons of
$\cS$. Then by Lemma \ref{gonquads}, for each $4$-gon $S$ in $\cS$ there
is a $\theta \in \Omega \setminus \{\omega,\psi\}$ such that $S$ is
the orbit of $L_\theta$. It follows that each permutation of $\cS$ is
induced by a permutation of $\Omega \setminus \{\omega,\psi\}$, so the
restriction of $\chi$ to $H$ is surjective, which implies that $\chi$ is
surjective. 
\end{proof}


\begin{lemma}\label{KcS}
Let $\cS$ be an \gonquad, let $\omega,\psi \in \Omega$ be such that
the $4$-gons of $\cS$ are orbits under $\Phi_{\omega\psi}$, and
assume that $L_0$ is contained in one of the $4$-gons of $\cS$. Let
$\theta_1,\theta_2,\theta_3,\theta_4$ be the elements of
$\Omega\setminus \{\omega,\psi\}$.
Then $G_{[\cS]}$ is generated by $s_\omega$, $s_\psi$,
and $t_\sigma$ for $\sigma \in
\langle\,(\omega\,\psi),\, (\theta_1\, \theta_2)(\theta_3\, \theta_4), \,
(\theta_1\, \theta_3)(\theta_2\, \theta_4)\, \rangle$.
\end{lemma}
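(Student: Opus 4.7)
The plan is to combine the order count from Lemma \ref{GcS} with a direct verification that the listed elements stabilize each $4$-gon in $\cS$.

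First, by Lemma \ref{GcS}, $G_{[\cS]}$ is the kernel of the surjection $G_\cS \to \Sym(\cS) \cong S_4$, so $|G_{[\cS]}| = 768/24 = 32$. Since $s_\omega$ and $s_\psi$ act on $\Lambda$ through $\cN_{\omega\psi}$, and each $4$-gon of $\cS$ is an orbit of $\Phi_{\omega\psi}$ by hypothesis, both elements lie in $G_{[\cS]}$ and together generate the order-$4$ subgroup $\cN_{\omega\psi} = \{e, s_\omega, s_\psi, s_\omega s_\psi\}$.

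Next, under the assumption that $L_0$ lies in a $4$-gon of $\cS$, Lemma \ref{gonquads} identifies the four $4$-gons of $\cS$ as the $\Phi_{\omega\psi}$-orbit $F_1 = \{L_\emptyset, L_\omega, L_\psi, L_{\omega\psi}\}$ together with three further orbits, one for each of the three partitions $\{\{\theta_i,\theta_j\},\{\theta_k,\theta_l\}\}$ of $\{\theta_1,\ldots,\theta_4\}$ into two pairs. Using the identity $L_I = L_{\Omega\setminus I}$ and the formula $t_\sigma L_I = L_{\sigma I}$, I would check that $t_\sigma$ stabilizes every $4$-gon of $\cS$ setwise if and only if $\sigma$ preserves $\{\omega,\psi\}$ as a set and preserves each of the three pair-partitions of $\{\theta_1,\ldots,\theta_4\}$. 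The subgroup of $\Sym(\Omega)$ with these properties is exactly $H = \langle (\omega\psi)\rangle \times V_4$, where $V_4$ denotes the Klein four-group on $\{\theta_1,\ldots,\theta_4\}$; this group has order $8$ and is generated by the three involutions listed in the statement.

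Finally, equation (\ref{action}) shows that $t_H$ normalizes $\cN_{\omega\psi}$, since each $\sigma \in H$ fixes $\{\omega,\psi\}$ as a set, and the intersection $\cN_{\omega\psi} \cap t_H$ is trivial: evaluating $s_I = t_\sigma$ on $L_\emptyset$ forces $I = \emptyset$ and then $\sigma = e$. Hence the subgroup generated by $s_\omega$, $s_\psi$, and $t_H$ has order $4 \cdot 8 = 32$, and, being contained in $G_{[\cS]}$, must equal it. The main technical obstacle is the characterization of $H$ in the middle paragraph, where the modulo-complement ambiguity $L_I = L_{\Omega \setminus I}$ must be tracked carefully when determining which permutations $\sigma$ stabilize each $4$-gon.
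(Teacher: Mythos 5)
Your proposal is correct and takes essentially the same approach as the paper's proof: bound $|G_{[\cS]}|$ above by $32$ via Lemma \ref{GcS}, verify that the claimed generators lie in $G_{[\cS]}$, and then show that they generate a subgroup of order $32$ via the normal-subgroup/trivial-intersection count $4\cdot 8 = 32$. The only cosmetic difference is that you characterize the full set $\{\sigma : t_\sigma \in G_{[\cS]}\}$, whereas the paper only checks that the specific $t_\sigma$'s fix each $4$-gon; both routes require the same case check on the orbit structure under $\Phi_{\omega\psi}$.
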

\begin{proof}
Set $B = \langle\, (\theta_1\, \theta_2)(\theta_3\, \theta_4),
(\theta_1\, \theta_3)(\theta_2\, \theta_4)\, \rangle$ and
let $H$ denote the subgroup of $\Aut \Lambda$ generated by $s_\omega$,
$s_\psi$, $t_{(\omega\,\psi)}$, and $t_\sigma$ for $\sigma \in B$.
Note that every $\sigma \in B$ fixes $\omega$ and $\psi$. By
(\ref{action}) this implies that for every $h \in H$ we have
$h\cN_{\omega\psi}h^{-1} =
\cN_{\omega\psi}$, so $h$ sends orbits under $\cN_{\omega\psi}$ to
orbits under $\cN_{\omega\psi}$, i.e., $h$ permutes the $4$-gons in
$\cS$ and its complementary \gonquad. The elements
$s_\omega$, $s_\psi$, and $t_{(\omega\,\psi)}$ fix each of these $4$-gons.
Let $S \in \cS$ be the $4$-gon containing $L_0$. We have $t_\sigma
(L_0) = L_0$ for all $\sigma \in B$, so $h$ sends $S$ to $S$ for all $h \in
H$. Let $S' \in \cS$ be a different $4$-gon. Then by Lemma \ref{gonquads}
there are $\theta,\theta' \in \Omega\setminus \{\omega,\psi\}$ such
that $s_{\theta\theta'}(S) = S'$. For each $\sigma \in B$ we have
$$
t_\sigma(s_{\theta\theta'}(L_0))) =
s_{{}^\sigma\!(\theta\theta')}(t_\sigma(L_0)) =
s_{{}^\sigma\!(\theta\theta')}(L_0).
$$
For all $\sigma \in B$ we have $s_{\theta\theta'}S =
s_{{}^\sigma\!(\theta\theta')}S$, so $t_\sigma$ also fixes $S'$. We
conclude $H \subset G_{[\cS]}$. By Lemma \ref{GcS} the order of $G_{[\cS]}$
equals $768/4!=32=\#H$, so we have $H=G_{[\cS]}$.
\end{proof}

We can now find explicit generators of the field $k([\cS])$ in the
generic case.

\begin{lemma}\label{gen}
Consider the generic case of Example {\rm \ref{generic}}.
Let $\{\cS,\cS'\}$ be a \dgonquad, such that the $4$-gons of $\cS$ are orbits
under $\Phi_{\omega_5\omega_6}$, and
assume that $L_0$ is contained in one of the $4$-gons of $\cS$.
Set $N=\sqd_1\sqd_2\sqd_3\sqd_4$, $\alpha_1 =
\omega_1\omega_4+\omega_2\omega_3$, $\alpha_2 =
\omega_1\omega_3+\omega_2\omega_4$, $\alpha_3 =
\omega_1\omega_2+\omega_3\omega_4$, $\beta_1
=\sqd_1\sqd_4+\sqd_2\sqd_3$, $\beta_2
=\sqd_1\sqd_3+\sqd_2\sqd_4$, and $\beta_3
=\sqd_1\sqd_2+\sqd_3\sqd_4$. Then
$k_g' = k_g(\omega_5+\omega_6,\omega_5\omega_6,\alpha_1,\alpha_2,\alpha_3)$
is the unique $S_3$-extension of
$k_g(\{\cS,\cS'\}) = k_g(\omega_5+\omega_6,\omega_5\omega_6)$ contained in the
$S_4$-extension $k_g(\{\cS,\cS'\})(\omega_1,\omega_2,\omega_3,\omega_4)$.
Set $n_g = k_g'(N)$. Then the field
$k_g([\cS])$ equals $n_g(\beta_1,\beta_2,\beta_3)$ and is an
$S_4$-extension of $k_g(\cS)=k_g(\{\cS,\cS'\})(N)$.
Its unique $S_3$-subextension is $n_g$,
and its unique quadratic subextensions of $n_g$ are generated by the $\beta_i$.
$$
\xymatrix{
&&m_g \ar@{-}[rrd]^{32}\ar@{-}[ddll]_{32}\\
&&&& k_g([\cS])\ar@{-}[rddd]^{S_4}\\
l_g \ar@{-}[d]&& n_g(\beta_1)\ar@{-}[urr]\ar@{-}[dr] & n_g(\beta_2)\ar@{-}[ur]
            \ar@{-}[d] &n_g(\beta_3) \ar@{-}[u]\ar@{-}[dl]\\
k_g(\{\cS,\cS'\})(\omega_1,\omega_2,\omega_3,\omega_4)\ar@{-}[rrd]
  \ar@/_6mm/@{-}[rrrrdd]_{S_4} & & & n_g \ar@{-}[ld]\ar@{-}[rrd]^{S_3}\\
& & k_g' \ar@{-}[rrd]^{S_3} & & & k_g(\cS)\ar@{-}[ld]_2 \\
&&&& k_g(\{\cS,\cS'\}) \\
}
$$
\end{lemma}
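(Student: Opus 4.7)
The plan is to identify $k_g([\cS])$ as the fixed field of $G_{[\cS]}$ and to exploit the Galois group $G_\cS/G_{[\cS]} \isom S_4$ from Lemma \ref{GcS} via its natural action on the four $4$-gons of $\cS$. For the first assertion, the extension $k_g(\{\cS,\cS'\})(\omega_1,\omega_2,\omega_3,\omega_4)/k_g(\{\cS,\cS'\})$ is generically $S_4$-Galois; the unique normal subgroup $V_4 = \{e,(12)(34),(13)(24),(14)(23)\}$ of index $6$ produces a unique $S_3$-subextension, and $\alpha_1,\alpha_2,\alpha_3$ are precisely the three $V_4$-orbit sums of the degree-two monomials $\omega_i\omega_j$ (for instance $V_4$ exchanges $\omega_1\omega_4$ with $\omega_2\omega_3$, so $\alpha_1 = \omega_1\omega_4 + \omega_2\omega_3$), and together they generate this $S_3$-subextension over $k_g(\{\cS,\cS'\})$.

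Next I would verify that $N, \alpha_1,\alpha_2,\alpha_3, \beta_1,\beta_2,\beta_3 \in k_g([\cS])$ by checking invariance under the generators of $G_{[\cS]}$ listed in Lemma \ref{KcS}. The elements $s_{\omega_5}, s_{\omega_6}$ negate only $\sqd_5$ or $\sqd_6$, neither of which appears in any of the seven expressions; $t_{(\omega_5\,\omega_6)}$ merely swaps the indices $5$ and $6$; and $t_{(\omega_1\,\omega_2)(\omega_3\,\omega_4)}, t_{(\omega_1\,\omega_3)(\omega_2\,\omega_4)}$ act as elements of $V_4$ on the indices $\{1,2,3,4\}$, fixing $N$, each $\alpha_i$, and each $\beta_i$ by direct inspection. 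Hence $n_g(\beta_1,\beta_2,\beta_3) \subseteq k_g([\cS])$.

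The decisive step is matching degrees. Because $L_0$ lies in a $4$-gon of $\cS$, the four $4$-gons of $\cS$ are the $\Phi_{\omega_5\omega_6}$-orbits of $L_\emptyset$, $L_{\{\omega_1,\omega_4\}}$, $L_{\{\omega_1,\omega_3\}}$, $L_{\{\omega_1,\omega_2\}}$, naturally labelled by $\emptyset$ together with the three pairings of $\{\omega_1,\omega_2,\omega_3,\omega_4\}$ into two pairs. The three double transpositions generating the normal $V_4 \subset S_4 = G_\cS/G_{[\cS]}$ are lifted modulo $G_{[\cS]}$ by $s_{\{\omega_1,\omega_4\}}, s_{\{\omega_1,\omega_3\}}, s_{\{\omega_1,\omega_2\}}$ respectively; each such lift fixes every $\alpha_i$ (it does not touch the $\omega_i$) and fixes $N$ (as each negates exactly two of $\sqd_1,\ldots,\sqd_4$), confirming that $n_g = k_g'(N)$ is the unique $S_3$-subextension of $k_g([\cS])/k_g(\cS)$. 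A direct computation shows that $s_{\{\omega_a,\omega_b\}}$ fixes $\beta_i$ exactly when $\{a,b\}$ is one of the two pairs in the partition indexing $\beta_i$, and negates $\beta_i$ otherwise. Consequently the three $\beta_i$ generate the three distinct index-$2$ subgroups of $\Gal(k_g([\cS])/n_g) \isom V_4$, so $n_g(\beta_1,\beta_2)$ already has degree $4$ over $n_g$ and equals $k_g([\cS])$; the explicit symmetric-function identity $\beta_1\beta_2\beta_3 = e_3(\delta_1,\delta_2,\delta_3,\delta_4) + N\cdot e_1(\delta_1,\delta_2,\delta_3,\delta_4) \in k_g(\cS)$ confirms moreover that $\beta_3 \in n_g(\beta_1,\beta_2)$.

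The main obstacle is really the combinatorial bookkeeping: identifying the correct coset representatives in $G_\cS$ for the three double transpositions in $\Sym(\cS)$ and matching them with the three $\beta_i$. Once this is in place, the full subfield lattice in the displayed diagram drops out from the subgroup structure of $S_4$ and its normal $V_4$.
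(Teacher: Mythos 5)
Your proof is correct and follows essentially the same route as the paper: establish the inclusion $n_g(\beta_1,\beta_2,\beta_3)\subseteq k_g([\cS])$ by checking invariance under the generators of $G_{[\cS]}$ from Lemma~\ref{KcS}, identify $n_g$ as the fixed field of the normal $V_4\subset S_4 = G_{\cS}/G_{[\cS]}$, show the $\beta_i$ generate distinct quadratic extensions of $n_g$ by tracking how the double transpositions $s_{\{\omega_a,\omega_b\}}$ act on them, and conclude by comparing degrees. Your labeling of the four $4$-gons by partitions of $\{\omega_1,\dots,\omega_4\}$ into pairs and the explicit identity $\beta_1\beta_2\beta_3 = e_3 + N e_1$ are pleasant supplementary checks, but the underlying argument is the paper's.
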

\begin{proof}
The first statement is elementary Galois theory. The field
$k_g([\cS])$ is the fixed field of the group
$\rho_g^{-1}(G_{[\cS]})$. By Lemma \ref{GcS} this field is an
$S_4$-extension of the fixed field $k_g(\cS)$ of $G_\cS$, which equals
$k_g(\omega_5+\omega_6,\omega_5\omega_6,N)=k_g(\{\cS,\cS'\})(N)$ by Lemma
\ref{gonquadfields}. Using Lemma \ref{KcS} one checks that the group
$\rho_g^{-1}(G_{[\cS]})$ acts trivially on
$n_g(\beta_1,\beta_2,\beta_3)$, so we conclude
$n_g(\beta_1,\beta_2,\beta_3)\subset k_g([\cS])$.
Since $k_g'$ and $k_g(\cS)$ intersect in $k_g(\{\cS,\cS'\})$, we
find that the compositum $n_g$ is an $S_3$-extension of $k_g(\cS)$,
and therefore the unique $S_3$-extension contained in $k_g([\cS])$.
By elementary Galois theory and group theory this implies that there
are three quadratic extensions of $n_g$ contained in $k_g([\cS])$.
Note that $\rho_g^{-1}(s_{\omega_1\omega_4})$ and
$\rho_g^{-1}(s_{\omega_1\omega_3})$ act trivially on $n_g(\beta_1)$ and
$n_g(\beta_2)$ respectively, but nontrivially on $\beta_2$ and
$\beta_1$ respectively. We conclude
that $\beta_1$ and $\beta_2$ generate two different quadratic
extensions of $n_g$. By symmetry, $\beta_3$ generates a third.
This implies $[n_g(\beta_1,\beta_2,\beta_3) : n_g]\geq
4 = \big[k_g([\cS]) : n_g\big]$, so we deduce that
$k_g([\cS]) = n_g(\beta_1,\beta_2,\beta_3)$.
\end{proof}

The generators of the field $k([\cS])$ in any other special
case follow immediately.

\begin{corollary}
Let the notation be as in Lemma \ref{KcS}. Set $N =
\sqrt{\delta_1\delta_2\delta_3\delta_4}$, $\alpha_1 =
\theta_1\theta_4+\theta_2\theta_3$, $\alpha_2 =
\theta_1\theta_3+\theta_2\theta_4$, $\alpha_3 =
\theta_1\theta_2+\theta_3\theta_4$, $\beta_1
=\sqrt{\delta_1\delta_4}+\sqrt{\delta_2\delta_3}$, $\beta_2
=\sqrt{\delta_1\delta_3}+\sqrt{\delta_2\delta_4}$, and $\beta_3
=\sqrt{\delta_1\delta_2}+\sqrt{\delta_3\delta_4}$. Then
$k_g([\cS])$ equals
$$
k(\omega+\psi,\omega\psi,N,\alpha_1,\alpha_2,\alpha_3,\beta_1,\beta_2,\beta_3).
$$
\end{corollary}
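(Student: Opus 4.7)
The plan is to deduce the corollary as a direct specialization of Lemma~\ref{gen}, using the specialization machinery established in Lemma~\ref{specialization}. First, I would set up the correspondence between the generic case of Example~\ref{generic} (as used in Lemma~\ref{gen}) and the present situation of Lemma~\ref{KcS}. Specifically, one specializes the generic transcendentals via $\omega_5 \mapsto \omega$, $\omega_6 \mapsto \psi$, $\omega_j \mapsto \theta_j$ for $j=1,2,3,4$, together with $\sqd_j \mapsto \sqrt{\delta_{\theta_j}}$. Under this specialization, the generic quantities named $N$, $\alpha_1, \alpha_2, \alpha_3$, and $\beta_1, \beta_2, \beta_3$ in Lemma~\ref{gen} map term-by-term to the $N$, $\alpha_i$, $\beta_i$ defined in the statement of the corollary, and the orbits under $\Phi_{\omega_5 \omega_6}$ specialize to the orbits under $\Phi_{\omega\psi}$ of Lemma~\ref{KcS}.

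Next, I would apply Lemma~\ref{specialization} with the subgroup $H = G_{[\cS]} \subset \Aut \Lambda$. By construction, $k([\cS])$ is the fixed field of $\rho^{-1}(G_{[\cS]})$, and similarly $k_g([\cS])$ is the fixed field of $\rho_g^{-1}(G_{[\cS],g})$ in the generic case. Lemma~\ref{specialization} then says that $k([\cS])$ is obtained from $k_g([\cS])$ by the specialization above.

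Finally, Lemma~\ref{gen} gives the explicit description
\[
k_g([\cS]) = n_g(\beta_1,\beta_2,\beta_3), \quad n_g = k_g'(N), \quad k_g' = k_g(\omega_5+\omega_6, \omega_5\omega_6, \alpha_1, \alpha_2, \alpha_3),
\]
so $k_g([\cS])$ is generated over $k_g$ by $\omega_5+\omega_6$, $\omega_5\omega_6$, $N$, $\alpha_1, \alpha_2, \alpha_3$, and $\beta_1, \beta_2, \beta_3$. Specializing this list of generators yields the generators $\omega+\psi$, $\omega\psi$, $N$, $\alpha_1, \alpha_2, \alpha_3$, $\beta_1, \beta_2, \beta_3$ of $k([\cS])$ claimed in the corollary.

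The main obstacle, which is really only bookkeeping rather than a genuine difficulty, is to verify that the specialization does not collapse any of the extensions predicted by Lemma~\ref{gen}; but since Lemma~\ref{specialization} already transfers the fixed field under $\rho^{-1}(G_{[\cS]})$ cleanly, no degree calculation is actually required beyond matching the generators.
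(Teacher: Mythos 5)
Your proposal is correct and follows exactly the paper's route: invoke Lemma~\ref{specialization} to reduce to the generic case, then read off the generators from Lemma~\ref{gen} and specialize. You simply spell out the specialization map and the matching of generators that the paper leaves implicit.
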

\begin{proof}
Since $k_g([\cS])$ is the fixed field of the group
$\rho^{-1}(G_{[\cS]})$, it follows from Lemma \ref{specialization} that
it suffices to do this in the generic case.  This is dealt with in
Lemma \ref{gen}.
\end{proof}

\subsection{The elliptic fibrations}\label{sectionellfibs}

Let $\cS=\{S_1,S_2,S_3,S_4\}$ and $\cS' =
\{S_5,S_6,S_7,S_8\}$ be complementary \gonquads.
By Lemma \ref{Vbarell} there is an elliptic
fibration $\Vbar \ra \P^1$ such that the $4$-gons in $\cS$ are some of
the fibers. For any $S \in \cS$ this fibration can be defined over the
field $k(S)$. It is possible, however, that none of the
fibers is defined over the field $k(\cS)$. This implies that the base
of the family of fibers is not isomorphic to $\P^1$ over
$k(\cS)$. As the base curve does become isomorphic to $\P^1$ over some
extension field, it is isomorphic to a conic. In this section we will
give explicit
equations, both for such a conic and for the fibration map in the generic
case of Example \ref{generic}. We will use the notation introduced in
that example. The equations for any special case follow by specialization.
Although the expressions involved become quite large, all computations 
in this section can still be checked by hand. We recommend, however, 
to check them with the {\sc magma} script provided \cite{elec}. 
We will first give the elliptic fibration over
over the field $k_g([\cS])$, over which the base curve can be taken to
be the projective line.

\subsubsection{A fibration over the projective line}

After renumbering the elements of $\Omega$, we
may assume that the $4$-gons of $\cS$ are orbits under
$\Phi_{\omega_5\omega_6}$. After applying an automorphism
that sends some of the $\epsilon_i$ to $-\epsilon_i$ (for notation,
see Example \ref{generic}), we may also assume that $L_0$ is contained in
one of the $4$-gons of $\cS$. We renumber $S_1,\ldots,S_4$ and
$S_5,\ldots,S_8$, so that
$S_1,\ldots, S_8$ contain the lines $L_{14}$, $L_{24}$, $L_{34}$,
$L_{0}$, $L_1$, $L_2$, $L_3$, and $L_4$ respectively. In particular
this means
$$
\begin{array}{ll}
S_1 = \{L_{14},L_{23},L_{145},L_{235}\}, &
S_5 = \{L_{1},L_{15},L_{16},L_{156}\}, \cr
S_2 = \{L_{13},L_{24},L_{135},L_{245}\}, &
S_6 = \{L_{2},L_{25},L_{26},L_{256}\}. \cr
\end{array}
$$

For notational convenience, we let $N$,
$\alpha_i$ and $\beta_i$ be as in Lemma \ref{gen}. We also set
$$
\begin{array}{lcll}
\gamma_1 = \alpha_3-\alpha_2, &\qquad& \Delta_4 = \prod_{1 \leq i < j
  \leq 4} (\omega_i - \omega_j), \cr
\gamma_2 = \alpha_1-\alpha_3, && \cr
\gamma_3 = \alpha_2-\alpha_1, && \kappa_j = \prod_{1\leq i \leq 4}^{i
  \neq j} (\omega_j -\omega_i),&\quad 1\leq j \leq 4,\cr
\eta = \sum_{i=1}^4 \sqd_i,&&  \cr
\multicolumn{4}{l}{
c_r = \mbox{elementary symmetric polynomial in the $\omega_j$ ($1\leq
  j \leq 4$) of degree $r$.}}
\end{array}
$$
Note that for $1\leq j \leq 4$ and
$J \subset \Omega$ we have $\varphi_j(\sqdelta_J) = \pm \sqd_j$,
where the sign is negative if and only if we have $j \in J$.
For the evaluation of various linear forms at the
intersection points of the lines in $\Lambda$, it will also be
convenient to notice that we have
\begin{equation}\label{usefulus}
\sum_{j=1}^4 \omega_j^r \kappa_j^{-1} =
\left\{\begin{array}{ll}
-c_4^{-1} & r =-1, \cr
0 &  r=0,1,2, \cr
1 & r=3. \cr
\end{array}
\right.
\end{equation}

It will turn out that the elliptic fibration associated to $\cS$
factors through the projection of $\P(\Abar)$ to $\P^3$ by the
coordinates $\varphi_i$ for $1\leq i \leq 4$, i.e., the projection
away from the line given by $\varphi_i=0$ for $1\leq i \leq 4$.
The image of $\Vbar$ under this projection is the nonsingular quadric
$D_{\omega_5\omega_6}$ of Remark \ref{coneovercone}. Note that $\Vbar$
is contained in the inverse image of $D_{\omega_5\omega_6}$ under the
indicated projection, which is the
cone over the cone over $D_{\omega_5\omega_6}$ in $\P(\Abar)$, given by
$Q=0$ with
\begin{equation}\label{B}
Q =\omega_5\omega_6Q_0-(\omega_5+\omega_6)Q_1+Q_2
   = \sum_{j=1}^4\kappa_j^{-1} \delta_j \varphi_j^2,
\end{equation}
as was pointed out in Remark \ref{coneovercone}. Consider the linear forms
$$
\begin{array}{lcl}
l_1 = \sum_{j=1}^4 \kappa_j^{-1}\sqd_j\varphi_j, &\quad& m_1=\sum_{j=1}^4
            \omega_j(2\omega_j-c_1)\kappa_j^{-1}\sqd_j\varphi_j,\cr
l_2=\sum_{j=1}^4\omega_j\kappa_j^{-1}\sqd_j\varphi_j,&\quad&m_2=\sum_{j=1}^4
            (2c_4\omega_j^{-1}-c_3)\kappa_j^{-1}\sqd_j\varphi_j.\cr
\end{array}
$$

\begin{lemma}\label{lm}
On $\Vbar$ we have $l_1m_2 = l_2m_1$. The map $\chi\colon \,
\Vbar \ra \P^1$ that sends $x$ to $[l_1(x):m_1(x)]=[l_2(x):m_2(x)]$ is
an elliptic fibration, defined over $k_g([\cS])$. The
$4$-gons $S_1,S_2,S_3,S_4$ are fibers above $[-1:\alpha_1]$,
$[-1:\alpha_2]$, $[-1:\alpha_3]$, and $[0:1]$ respectively.
\end{lemma}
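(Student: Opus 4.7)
My approach is to verify the four assertions of the lemma by direct symbolic computation in the generic setup of Example \ref{generic}, exploiting the symmetries of $\Vbar$ under $\Phi_{\omega_5\omega_6}$.

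For the identity $l_1 m_2 = l_2 m_1$ on $\Vbar$, I plan to expand $l_1 m_2 - l_2 m_1$ as a quadratic form in $\varphi_1,\ldots,\varphi_4$ and show it equals $-Q$, which vanishes on $\Vbar$ by (\ref{B}). Writing $p=\omega_j+\omega_{j'}$ and $q=\omega_j\omega_{j'}$ for $j\neq j'$ in $\{1,2,3,4\}$, the coefficient of $\varphi_j\varphi_{j'}$ comes out proportional to $p(c_4-q^2)-q(c_3-c_1 q)$. Factoring $P_4(x):=\prod_{i=1}^{4}(x-\omega_i)=(x^2-px+q)(x^2-p'x+q')$ yields $c_1=p+p'$, $c_3=pq'+p'q$, $c_4=qq'$, so that both $p(c_4-q^2)$ and $q(c_3-c_1 q)$ reduce to $pq(q'-q)$ and the off-diagonal coefficient vanishes. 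The diagonal coefficient of $\varphi_j^2$, namely $\kappa_j^{-2}\delta_j\cdot[(2c_4\omega_j^{-1}-c_3)-\omega_j^2(2\omega_j-c_1)]$, simplifies to $-\kappa_j^{-1}\delta_j$ after applying the identity $\omega_j\kappa_j=c_1\omega_j^3-2c_2\omega_j^2+3c_3\omega_j-4c_4$, which comes from evaluating $\frac{d}{dx}[xP_4(x)]$ at $x=\omega_j$. This matches the coefficient of $\varphi_j^2$ in $-Q$, so $l_1 m_2 - l_2 m_1 = -Q$ identically.

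Next, because the linear forms $l_1,l_2,m_1,m_2$ involve only $\varphi_1,\ldots,\varphi_4$, they are pointwise fixed by the involutions $[\muvar_{\omega_5}]$ and $[\muvar_{\omega_6}]$ generating $\Phi_{\omega_5\omega_6}$. Hence $\chi$ is constant on every orbit of $\Phi_{\omega_5\omega_6}$; by Lemma \ref{gonquads} these orbits are exactly the $4$-gons of $\cS\cup\cS'$, so each $S_i$ is contained in a fiber of $\chi$. By the uniqueness clause in Lemma \ref{Vbarell}, $\chi$ then coincides, up to an automorphism of $\P^1$, with the elliptic fibration associated to $\cS$, and is therefore itself an elliptic fibration. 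The same reasoning, combined with the symmetry of $l_i,m_i$ under permutations of the index set $\{1,2,3,4\}$ (the coefficients $c_1,c_3,c_4$ are symmetric and the indexed summands are summed over all $j$), shows that each of $l_i,m_i$ is fixed by every generator of $G_{[\cS]}$ listed in Lemma \ref{KcS}; hence $\chi$ is defined over $k_g([\cS])$.

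Finally, to identify the images, I evaluate $l_r,m_r$ on a typical line $L_J$ by substituting $\sqd_j\varphi_j=\epsilon_j^J(s\omega_j+t)$ with $\epsilon_j^J=-1$ iff $j\in J$; each of $l_r,m_r$ becomes a linear form in $s,t$ whose coefficients are signed sums $\sum_{j=1}^{4}\epsilon_j^J g(\omega_j)/\kappa_j$ for certain polynomials $g$ of low degree. For $J=\emptyset$ the identities (\ref{usefulus}) yield $l_1=l_2=0$, $m_1=2s$, $m_2=-2t$, so $L_0\subset S_4$ maps to $[0:1]$. For $J=\{1,4\}$, a longer but analogous computation—evaluating the signed sums via the partial-fraction decomposition of $x^a/P_4(x)$ relative to the factorization $P_4=(x-\omega_1)(x-\omega_4)\cdot(x-\omega_2)(x-\omega_3)$—shows that $m_1+\alpha_1 l_1=0$, so $L_{14}\subset S_1$ maps to $[-1:\alpha_1]$; the cases $S_2,S_3$ follow by permuting indices among $\{\omega_1,\omega_2,\omega_3\}$. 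The most delicate step is exactly this final signed-sum identity, which I would verify by the partial-fraction manipulation just indicated or, as the text suggests, with the {\sc magma} script of \cite{elec}.
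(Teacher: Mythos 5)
Your verification that $l_1m_2-l_2m_1=-Q$ is correct and, while you organize the algebra around the sub-factorization $P_4=(x^2-px+q)(x^2-p'x+q')$ rather than invoking \eqref{usefulus} directly, it amounts to the same computation as the paper's. The explicit evaluations in your last paragraph (e.g.\ $L_0\mapsto[0:1]$ via \eqref{usefulus}) also match the paper's method.

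The middle paragraph, however, has a genuine gap. From the fact that $l_i,m_i$ are $\Phi_{\omega_5\omega_6}$-invariant you conclude that ``each $S_i$ is contained in a fiber of $\chi$,'' but this does not follow. What the invariance gives is that $\chi$ is constant on $\Phi_{\omega_5\omega_6}$-orbits of \emph{points}, hence that for two lines $L,L'$ in the same $4$-gon the \emph{image sets} $\chi(L)$ and $\chi(L')$ coincide; it does not say that $\chi|_L$ is constant. For the $4$-gons to lie in fibers you must additionally show each such line is contracted --- which you do verify for $L_0$ in your last paragraph, but only after the conclusion is used. More seriously, the appeal to the uniqueness clause of Lemma~\ref{Vbarell} to deduce that $\chi$ ``is therefore itself an elliptic fibration'' is circular: that uniqueness compares two \emph{elliptic fibrations} with the prescribed fibral $4$-gons; it cannot upgrade a map that has not yet been shown to be an elliptic fibration. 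You still need to verify, as the paper does, that the base locus of $[l_i:m_i]$ is empty (by combining $l_1=l_2=m_1=m_2=0\Leftrightarrow\varphi_1=\cdots=\varphi_4=0$ with $Q_0=Q_1=Q_2=0$) and that the generic fiber is a smooth genus-$1$ curve (because inside the $3$-plane $\{bl_i=am_i\}$ the quadric $Q$ vanishes, so the fiber is the complete intersection $Q_0=Q_1=0$ in $\P^3$). Without those two steps the assertion that $\chi$ is an elliptic fibration is not established.

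One smaller point on the field-of-definition claim: the forms $l_i,m_i$ have coefficients $\kappa_j^{-1}\sqd_j\in m_g'$, not in $m_g$, so saying they are ``fixed by every generator of $G_{[\cS]}$'' requires interpretation. What is true is that a lift to $\Gal(m_g'/k_g)$ of any element of $\rho_g^{-1}(G_{[\cS]})$ sends each $l_i,m_i$ to $\pm l_i,\pm m_i$ with a common sign, so the \emph{map} $[l_i:m_i]$ is fixed; the paper resolves the same issue cleanly by passing to $\eta l_i,\eta m_i$, which are honestly defined over $m_g$.
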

\begin{proof}
From (\ref{usefulus}) one easily works out that $m_1l_2-l_1m_2 =
Q$, so on $\Vbar$ we find $l_1m_2 = l_2m_1$. The four equations
$l_1=m_1=l_2=m_2=0$ are linearly independent, so the base locus of
the map $\chi$ is given by $\varphi_i=0$ on $\Vbar$, for $1\leq i
\leq 4$. Together with the equations $Q_0=Q_1=Q_2=0$ (see
Proposition \ref{threequad}), this implies that the base locus of
$\chi$ is empty. The fiber $F_0$ of $\chi$ above $[a:b]$ is the
intersection of $\Vbar$ with the three-space given by $bl_i=am_i$
for $i=1,2$. The quadric $Q$ vanishes on this three-space, in which
the fiber $F_0$ is therefore given by $Q_0=Q_1=0$. Since every
smooth intersection of two quadrics in $\P^3$ is a curve of genus
$1$, we deduce that $\chi$ is an elliptic fibration, whose fibers
all have degree $4$. The intersection points of the lines in $S_4$
are $P_{\sqdelta,\omega_r}$ with $\sqdelta\in \{\sqdelta_0,
\sqdelta_{56}\}$, and $r \in \{5,6\}$. From (\ref{usefulus}) and
the identity
\begin{equation}\label{eval}
\sqd_j\varphi_j\big(\sqdelta_I^{-1}(X-\omega_r)\big) =
\pm (\omega_j-\omega_r),
\end{equation}
where the sign is positive if and only if $j \not\in I$,
we find that the $l_i$ vanish on these points, and thus on the
lines in $S_4$. This implies that $S_4$ is contained in the fiber
above $[0:1]$. Since all fibers have degree $4$, the union of the
lines in $S_4$ is a whole fiber. The lines in $S_1\cup S_2 \cup
S_3$ do not intersect any line in $S_4$, so they are fibral as
well, which implies that all $S\in \cS$ are fibers of $\chi$.
Their images are easily computed by evaluating the $l_i/m_i$ on
the appropriate intersection points $P_{\sqdelta,\omega_5}$ of two
lines in the $4$-gons, using (\ref{eval}) and perhaps a computer
algebra package to verify that for instance the ratio
$l_1(P_{\sqdelta_{24},\omega_5}) :m_1(P_{\sqdelta_{24},\omega_5})$,
which is the ratio between
$$
\frac{\omega_1-\omega_5}{\kappa_1}-
\frac{\omega_2-\omega_5}{\kappa_2}+
\frac{\omega_3-\omega_5}{\kappa_3}-
\frac{\omega_4-\omega_5}{\kappa_4}
$$
and
$$
\frac{\omega_1(2\omega_1-c_1)(\omega_1-\omega_5)}{\kappa_1}-
\frac{\omega_2(2\omega_2-c_1)(\omega_2-\omega_5)}{\kappa_2}+
\frac{\omega_3(2\omega_3-c_1)(\omega_3-\omega_5)}{\kappa_3}-
\frac{\omega_4(2\omega_4-c_1)(\omega_4-\omega_5)}{\kappa_4},
$$
does indeed equal $-1:\alpha_2$ (see \cite{elec}).
Since $\chi$ is also given by $[\eta
l_i:\eta m_i]$, and the polynomials $\eta l_i$ and $\eta m_i$ are
fixed by $\rho_g^{-1}(G_{[\cS]})$, we find that $\chi$ is defined
over $k_g([\cS])$.
\end{proof}

\begin{remark}
The map $\chi$ of Lemma \ref{lm} is in fact defined over $k_g(S_1)$.
We found the linear forms $l_i$ and $m_i$ as follows. Using simple
linear algebra we found linear forms $h_1,h_2,h_3,h_4$ vanishing
on the lines in $S_4 \cup S_5$, $S_4\cup S_6$, $S_3 \cup S_5$, and
$S_3 \cup S_6$ respectively. The elliptic fibrations given by $[h_1:h_2]$
and $[h_3:h_4]$ both have the $4$-gons in $\cS'$ as fibers, so they
differ by an automorphism of the base, which fixes the points $[0:1]$
and $[1:0]$ as both fibrations have the same fibers $S_5$ and $S_6$
there. This implies that after the appropriate scaling of the $h_i$ we
may assume $h_1h_4 = h_2h_3$. The space of linear forms vanishing on
$S_4$ is spanned by $h_1$ and $h_2$. We can pick a
$k(S_4)$-basis $l_1' = ah_1+bh_2$ and $l_2' = ch_1+dh_2$ for some
$a,b,c,d\in m_g$. The fibers $F_1$ and $F_2$ of the fibration
$[h_1:h_2]=[h_3:h_4]$ above the points $[-b:a]$ and $[-d:c]$
respectively are then defined over $k(S_4)$, as they are the
complement of $S_4$ inside the hyperplane section given by $l_1'$ and
$l_2'$ respectively. The space of linear forms
vanishing on $F_1$ is spanned by $l_1'$ and $m_1''=ah_3+bh_4$.
For some $p,q$, the form $m_1'=pl_1'+qm_1''=pl_1'+aqh_3+bqh_4$ is also
defined over $k(S_4)$. Set $m_2' = pl_2'+cqh_3+dqh_4$. Then from
$h_1h_4 = h_2h_3$ we also find $l_1'm_2'=l_2'm_1'$ on $\Vbar$.
For some choice of $a,b,c,d,p,q$,
the given $l_i$ and $m_i$ satisfy $l_i' = \eta l_i$, and $m_i' = \eta m_i$.
\end{remark}

\subsubsection{The fibration over a conic}

Let $\tau \in \Gal(m_g/k_g)$ denote the automorphism that fixes all the
$\omega_j$ and the $\sqd_j$ for $j \geq 3$, and sends $\sqd_i$ to
$-\sqd_i$ for $i=1,2$. Then $\tau$ induces the nontrivial automorphism
of the quadratic extension $k_g([\cS])/n_g(\beta_3)$,
generated by $\beta_2$. Since $\tau$ permutes the $4$-gons in $\cS$,
the elliptic fibration ${}^\tau\!\chi$ differs from $\chi$ by some
automorphism $\psi$ of the base curve $\P^1$ by Proposition
\ref{K3ell}, i.e., we have ${}^\tau\!\chi
= \psi \circ \chi$. This implies that the image of the map
$(\chi,{}^\tau\!\chi)\colon \Vbar \ra \P^1\times \P^1$ is contained in the
graph of $\psi$. Under the Segre embedding $\P^1\times \P^1 \ra \P^3$
this graph maps to a conic that we can embed in $\P^2$. We will now
make this explicit. For $i=1,2$ we set
$$
p_i = 2({}^\tau\!l_i)l_i, \qquad q_i =
({}^\tau\!l_i)m_i+({}^\tau\!m_i)l_i+2\alpha_3 l_i{}^\tau\!l_i, \qquad
r_i=\beta_2^{-1}(
({}^\tau\!m_i)l_i-({}^\tau\!l_i)m_i).
$$
Let $C_1\subset \P^2$ be the conic given by $\gamma_1\gamma_2p^2 +
q^2 =\beta_2^2r^2$.

\begin{lemma}\label{pqr}
There is an elliptic fibration $\nu_1\colon \,\Vbar \ra C_1$, defined over
$n_g(\beta_3)$, given by $x \mapsto [p_i(x):q_i(x):r_i(x)]$ for
$i=1,2$, such that the $4$-gons $S_1,S_2,S_3,S_4$ are fibers above
$[2:\gamma_1-\gamma_2:-\gamma_3\beta_2^{-1}]$,
$[2:\gamma_1-\gamma_2:\gamma_3\beta_2^{-1}]$,
$[0:\beta_2:1]$, and $[0:\beta_2:-1]$ respectively.
\end{lemma}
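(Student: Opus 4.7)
The plan is to exploit the uniqueness part of Proposition \ref{K3ell}. Since $\tau$ permutes the $4$-gons of $\cS$ (one checks via Lemma \ref{whichI} that ${}^\tau\!L_{14}=L_{24}$ and ${}^\tau\!L_{34}=L_{56}$, so $\tau$ swaps $S_1\leftrightarrow S_2$ and $S_3\leftrightarrow S_4$), the fibration ${}^\tau\!\chi$ has the same fibers as $\chi$, whence ${}^\tau\!\chi=\psi\circ\chi$ for the unique automorphism $\psi$ of $\P^1$ introduced just above the lemma. Using the transformation rule $({}^\tau\!\chi)(x)=\tau(\chi(\tau^{-1}x))$ together with $\tau$-fixedness of $\alpha_1,\alpha_2,\alpha_3$, one reads off the four pairs $\psi(-\alpha_1)=-\alpha_2$, $\psi(-\alpha_2)=-\alpha_1$, $\psi(-\alpha_3)=\infty$, $\psi(\infty)=-\alpha_3$. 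The unique M\"obius transformation matching these values is $\psi(t)+\alpha_3=-\gamma_1\gamma_2/(t+\alpha_3)$; equivalently, its graph in $\P^1\times\P^1$ is cut out by the bilinear equation $(t+\alpha_3)({}^\tau\! t+\alpha_3)+\gamma_1\gamma_2=0$, where $t,{}^\tau\! t$ denote affine coordinates on the two factors.

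Substituting $t=m_i/l_i$ and ${}^\tau\! t={}^\tau\!m_i/{}^\tau\!l_i$, clearing denominators, and abbreviating $u=l_i\,{}^\tau\!l_i$, $v=m_i\,{}^\tau\!l_i$, $w=l_i\,{}^\tau\!m_i$, $z=m_i\,{}^\tau\!m_i$, this translates into the identity $z+\alpha_3(v+w)+(\alpha_3^2+\gamma_1\gamma_2)u=0$ on $\Vbar$. Combined with the Segre relation $uz=vw$ (which holds tautologically since $(\chi,{}^\tau\!\chi)$ factors through the Segre embedding of $\P^1\times\P^1$ into $\P^3$), a direct expansion shows that the combinations $p_i=2u$, $q_i=v+w+2\alpha_3 u$, and $r_i=\beta_2^{-1}(w-v)$ satisfy $\gamma_1\gamma_2 p_i^2+q_i^2=\beta_2^2 r_i^2$ identically on $\Vbar$. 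The four fiber images then follow by substituting the pairs $(t,{}^\tau\! t)\in\{(-\alpha_1,-\alpha_2),(-\alpha_2,-\alpha_1),(-\alpha_3,\infty),(\infty,-\alpha_3)\}$ into $[p_i:q_i:r_i]$, where the $\infty$ entries mean letting the appropriate $l_i$ or ${}^\tau\!l_i$ vanish.

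It remains to verify that $\nu_1$ is a well-defined elliptic fibration defined over $n_g(\beta_3)$, and that the two coordinate choices $i=1,2$ agree. Agreement follows from $l_1m_2=l_2m_1$ (Lemma \ref{lm}) and its $\tau$-conjugate, which force $(p_1,q_1,r_1)$ and $(p_2,q_2,r_2)$ to be proportional on $\Vbar$. Emptiness of the base locus of $\nu_1$ reduces to showing that $l_i$ and ${}^\tau\!l_i$ do not simultaneously vanish at a point of $\Vbar$ outside $S_3\cup S_4$, which follows from the empty base loci of $\chi$ and ${}^\tau\!\chi$ (Lemma \ref{lm}) together with the disjointness of $S_3$ and $S_4$. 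For the field of definition, swapping $l_i\leftrightarrow{}^\tau\!l_i$ and $m_i\leftrightarrow{}^\tau\!m_i$ visibly fixes $p_i$ and $q_i$ and negates the parenthesized expression in $r_i$; since $\tau$ also sends $\beta_2\mapsto-\beta_2$ while fixing $\alpha_3$, each of $p_i,q_i,r_i$ is $\tau$-invariant and therefore lies in the fixed field $n_g(\beta_3)$. Finally, the ratio $[p_i:q_i:r_i]$ depends only on $(t,\psi(t))$, hence only on $t$, so $\nu_1$ factors through $\chi$ via a birational map $C_1\to\P^1$ and its generic fibers coincide with the genus-$1$ fibers of $\chi$. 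The main obstacle is the algebraic bookkeeping in the expansion proving the conic equation and verifying the four fiber images, which is best done by machine as the authors suggest.
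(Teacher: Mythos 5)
Your construction follows the same strategy as the paper's: determine the M\"obius transformation $\psi$ from the action of $\tau$ on the four points $\chi(S_i)$, map $(\chi,{}^\tau\!\chi)$ into $\P^3$ via the modified Segre embedding, project to $\P^2$, and verify the conic equation and the fiber images by direct substitution. That part of the algebra is correct and matches the paper.

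The field-of-definition step, however, contains a genuine error. You assert that because $p_i,q_i,r_i$ are $\tau$-invariant, they ``lie in the fixed field $n_g(\beta_3)$.'' That inference is not valid: $\tau$-invariance only places these forms in the fixed field of $\tau$ inside $m_g$, which is far larger than $n_g(\beta_3)$, and in fact the polynomials are \emph{not} defined over $n_g(\beta_3)$. For instance, take $\sigma\in\Gal(m_g/n_g(\beta_3))$ to be the automorphism induced by the permutation $(\omega_1\,\omega_3)(\omega_2\,\omega_4)$ together with the corresponding permutation of the $\sqd_j$. Then $\sigma$ fixes $\alpha_1,\alpha_2,\alpha_3$, $N$, $\beta_2$, $\beta_3$ (so it does lie over $n_g(\beta_3)$), it fixes $l_i$ and $m_i$, but it sends ${}^\tau\!l_i \mapsto -{}^\tau\!l_i$ and ${}^\tau\!m_i\mapsto -{}^\tau\!m_i$. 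Consequently $\sigma$ negates all three of $p_i,q_i,r_i$ simultaneously; the \emph{map} $[p_i:q_i:r_i]$ is $\sigma$-invariant, but the individual forms are not. (This is exactly the phenomenon the paper confronts for $\nu_3$ in Remark~\ref{polyinv}: the forms defining these fibrations are generally not defined over the field of definition of the fibration itself.) The correct argument is at the level of the map: by Lemma~\ref{lm}, $\chi$ is defined over $k_g([\cS])$, and $\tau$ preserves $k_g([\cS])$, so ${}^\tau\!\chi$ is as well; the modified Segre map $h$ and the projection $\pi$ to $\P^2$ involve only $\alpha_3$, $\gamma_1\gamma_2$, $\beta_2\in k_g([\cS])$; hence $\nu_1=\pi\circ h\circ(\chi,{}^\tau\!\chi)$ is defined over $k_g([\cS])$, and being additionally $\tau$-invariant, it descends to the fixed field $n_g(\beta_3)$ of $\tau$ inside $k_g([\cS])$.
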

\begin{proof}
Note that the images of the $S_i$ under $\chi$, given in Lemma
\ref{lm}, are $\tau$-invariant. This implies $({}^\tau \!\chi)(S_i) =
{}^\tau\!(\chi({}^{\tau^{-1}}\! S_i)) = \chi({}^\tau\! S_i)$. Note
also that $\tau$ acts on the $S_i$ as the permutation $(S_1\, S_2)(S_3\, S_4)$.
By Proposition \ref{K3ell} the elliptic fibrations ${}^\tau \!\chi$
and $\chi$ differ by an automorphism of $\P^1$.
As an automorphism of $\P^1$ is determined by its action on
the $\chi(S_i)$, this
allows us to check that the automorphism $\psi \colon\, [s:t] \mapsto
[-\alpha_3s-t:(\alpha_3^2+\gamma_1\gamma_2)s+\alpha_3t]$ of $\P^1$
satisfies ${}^\tau\!\chi = \psi \chi$. By Lemma \ref{lm}
it suffices to check that $\psi$ switches the points $[-1:\alpha_1]$
and $[-1:\alpha_2]$ and also the points $[-1:\alpha_3]$ and
$[0:1]$ (see \cite{elec}). We conclude that
$(\chi,{}^\tau\!\chi) \colon \, \Vbar \ra \P^1\times \P^1$ is an
elliptic fibration over the graph of $\psi$.

Let $h \colon \,\P^1\times \P^1 \ra \P^3$ denote the modified Segre
embedding that sends $([a:b],[c:d])$ to
$[x:y:z:w]=[ac:ad+bc:\beta_2^{-1}(ad-bc):bd]$.  Then the composition
$g = h \circ(\chi,\chi^\tau) \colon \, \Vbar \ra \P^3$ is
$\tau$-invariant, so it is defined over $n_g(\beta_3)$. The image of
the graph of $\psi$ under $h$ is the conic given by $y^2-\beta_2^2z^2=4xw$ and
$(\alpha_3^2+\gamma_1\gamma_2)x+\alpha_3y+w=0$. The image of this conic
under the projection $\pi \colon \, \P^3 \ra \P^2 , [x,y,z,w] \mapsto
[2x:y+2\alpha_3x:z]$ is $C_1$, so the composition $\nu_1 = \pi \circ g$ is an
elliptic fibration of $\Vbar$ over $C_1$. Since $\pi$ and $g$ are defined over
$n_g(\beta_3)$, so is $\nu_1$. As $\chi$ is given by
$[l_i:m_i]$, for $i=1,2$, one checks easily that
the fibration $\nu_1$ is given by $[p_i:q_i:r_i]$. The images of
the fibers are easily computed using the images given in Lemma \ref{lm}
and the fact that we have $\big({}^\tau(l_i/m_i)\big)(S_i) =
(l_i/m_i)\big({}^\tau S_i\big)$ as noted above.
\end{proof}

We will construct an automorphism $\psi$ of $\P^2$ such that $\psi
\circ \nu_1$ is an elliptic fibration from $\Vbar$ to a conic $C$, such that
both $C$ and the fibration are defined over $k_g(\cS)$, the field of
definition of the fibration. We know that there is an
elliptic fibration over a conic defined over $k_g(\cS)$
whose fibers include the $4$-gons of $\cS$.
By Proposition \ref{K3ell} it is unique up to an isomorphism of
the conic, so we know such a $\psi$ exists. We will
do this in two steps by first descending to $n_g$ and then to $k_g(\cS)$.
Suppose at some step we have a fibration $\nu_i\colon \, \Vbar \ra
C_i$, with $C_i$ a conic, defined over a field $K_i$ that is
Galois over $K_{i+1}$ with Galois group $G_i$. We
are looking for an automorphism $\psi_i$ of $\P^2$ such that
$\nu_{i+1} =\psi_i \circ \nu_i \colon \, \Vbar \ra C_{i+1}$ with
$C_{i+1} = \psi_i(C_i)$ is defined over $K_{i+1}$.

For all $g\in G_i$ there is an isomorphism $\sigma(g) \colon \,
{}^g\! C_i \ra C_i$ such that $\nu_i = \sigma(g) \circ {}^g\!\nu_i$. Since
$\sigma(g)^*\colon \Pic C_i \ra \Pic {}^g\!C_i$
sends the canonical divisor of $C_i$ to that of
${}^g\!C_i$, the automorphism $\sigma(g)$ is induced by a unique
automorphism of $\P^2$, which
we will also denote by $\sigma(g)$. These automorphisms satisfy the
cocycle condition $\sigma(hg) = \sigma(h)\circ {}^h\!\sigma(g)$.
The automorphism $\psi_i$ that we seek satisfies $\psi_i \circ
\sigma(g) = {}^g\!\psi_i$, so $\sigma$ is a coboundary with values in
$\Aut \P^2_{K_i}$.
$$
\xymatrix{
&& {}^g\!C_i \ar[rrd]^{{}^g\!\psi_i} \ar[dd]^(0.35){\sigma(g)} \\
\Vbar\ar[rru]^{{}^g\!\nu_i}\ar[rrd]_{\nu_i}\ar'[rr]_(0.7){\nu_{i+1}}[rrrr]&&&&C_{i+1}\\
&&C_i\ar[rru]_{\psi_i}\\
}
$$
We can find $\psi_i$ as follows. Consider the homomorphism
$\GL_3(K_i) \ra \Aut \P^2_{K_i}$ that maps a matrix $M\in \GL_3(K_i)$
to the automorphism that sends $[x:y:z]$ to $[x':y':z']$ with
$(x'\,y'\,z')^t=M(x\,y\,z)^t$. Through this homomorphism we may
identify $\Aut \P^2_{K_i}$ with $\PGL_3(K_i)$.

Our first step will be to lift $\sigma$ to a cocycle for $\GL_3(K_i)$.
The map $\det \colon\, \GL_3(K_i)\ra K_i^*,\,\,M \mapsto \det(M)$
induces a homomorphism $\PGL_3(K_i) \ra K_i^*/{K_i^*}^3$.
For any $G_i$-set $M$, let $Z^1(M)$ denote the corresponding set of
$1$-cocycles with coefficients in $M$. We obtain the following
diagram.
$$
\xymatrix{
&1\ar[d] &&1\ar[d] && 1 \ar[d]\cr
1 \ar[r]&Z^1(\mu_3(K_i))\ar[d]\ar[rr] && Z^1(\SL_3(K_i))\ar[d]\ar[rr] &&
             Z^1(\PSL_3(K_i))\ar[d] \ar[rr]&&H^2(\mu_3(K_i))\ar[d]\cr
1\ar[r]&Z^1(K_i^*)\ar[d]_{[3]}\ar[rr]&& Z^1(\GL_3(K_i))\ar[d]_{\det}\ar[rr]&&
             Z^1(\PGL_3(K_i))\ar[d]_{\det} \ar[rr]&&H^2(K_i^*)\ar[d]\cr
1 \ar[r]&Z^1({K_i^*}^3)\ar[rr] && Z^1(K_i^*)\ar[rr]
            && Z^1(K_i^*/{K_i^*}^3) \ar[rr]&&H^2({K_i^*}^3)\cr
}
$$
Since $\sigma\in Z^1(\PGL_3(K_i))$ is in fact a coboundary, it maps to zero
in $H^1(\PGL_3(K_i))$, so it also maps to zero in $H^2(K_i^*)$. Therefore
we can lift $\sigma$ to an element $\sigma' \in Z^1(\GL_3(K_i))$.
To do this in practice we
note that we are working over a generic field, so we may assume $K_i$
has no cube roots of unity. This implies that $\SL_3(K_i)$ is
isomorphic to the subgroup $\PSL_3(K_i)$ of $\PGL_3(K_i)$, so an
element $M \in \GL_3(K_i)$ is uniquely determined by its determinant
and its image in $\PGL_3(K_i)$. Thus $\sigma' \in
Z^1(\GL_3(K_i))$ is uniquely determined by its image $\sigma$ and $\det
\sigma' \in Z^1(K_i^*)$, which is a lift of $\det \sigma \in
Z^1(K_i^*/{K_i^*}^3)$. To find $\sigma'$ it therefore suffices to find
a lift of $\det\sigma \in Z^1(K_i^*/{K_i^*}^3)$ to $Z^1(K_i^*)$. In our case
$\det \sigma$ will always be trivial, so this step is easy.

The second step is to write $\sigma' = Z^1(\GL_3(K_i))$ as a
coboundary, which is possible as we have $H^1(\GL_3(K_i))=1$ by a
generalization of Hilbert 90. For this we use a standard trick,
discussed in \cite{ser}, Proposition X.3. For any matrix $M$ we set
$$
b_M = \sum_{g \in G} \sigma'(g)({}^g\!M).
$$
One checks that we have $\sigma'(h) \cdot {}^h b_M = b_M$. Choosing $M$
carefully so that $b_M$ is invertible, we may take $\psi_i$ to be the
automorphism associated to the matrix $b_M^{-1}$. After some
simplifications of these automorphisms we get the following lemma,
which can in fact be checked without knowing how we obtained the
equations. Let $C_2\subset \P^2(u,v,w)$ be the conic given by
$\gamma_1\beta_1^2u^2+\gamma_2\beta_2^2v^2+ \gamma_3\beta_3^2w^2$ and set
$$
\eqalign{
u_i & = \beta_1^{-1}(q_i+\gamma_2p_i) =
 \beta_1^{-1}({}^\tau\!l_im_i+{}^\tau\!m_il_i+2\alpha_1l_i{}^\tau\!l_i),\cr
v_i & = \beta_2^{-1}(q_i-\gamma_1p_i)
 = \beta_2^{-1}({}^\tau\!l_im_i+{}^\tau\!m_il_i+2\alpha_2l_i{}^\tau\!l_i),\cr
w_i & = -\beta_2\beta_3^{-1}r_i
    = \beta_3^{-1}({}^\tau\!l_im_i-{}^\tau\!m_il_i).\cr
}
$$

\begin{lemma}\label{uvw}
There is an elliptic fibration $\nu_2\colon \,\Vbar \ra C_2$, defined over
$n_g$, given by $x \mapsto [u_i(x):v_i(x):w_i(x)]$ for
$i=1,2$, such that the $4$-gons $S_1,S_2,S_3,S_4$ are fibers above
$[-\beta_1^{-1} :  \beta_2^{-1} :  \beta_3^{-1}]$,
$[ \beta_1^{-1} : -\beta_2^{-1} :  \beta_3^{-1}]$,
$[ \beta_1^{-1} :  \beta_2^{-1} : -\beta_3^{-1}]$, and
$[\beta_1^{-1} : \beta_2^{-1} : \beta_3^{-1}]$ respectively.
\end{lemma}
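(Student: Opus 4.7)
The plan is to recognize $(u_i, v_i, w_i)$ as the image of $(p_i, q_i, r_i)$ under an invertible linear change of coordinates on $\P^2$ defined over $k_g([\cS])$, and then bootstrap from Lemma \ref{pqr}. Specifically, reading off the definitions, the map $\psi_1 \colon \P^2 \ra \P^2$ given by $[p:q:r] \mapsto [\beta_1^{-1}(q+\gamma_2 p) : \beta_2^{-1}(q-\gamma_1 p) : -\beta_2\beta_3^{-1} r]$ is a projective linear automorphism, so the composition $\nu_2 = \psi_1 \circ \nu_1$ is a morphism from $\Vbar$ to $\P^2$ a priori defined over $n_g(\beta_3)(\beta_1,\beta_2) = k_g([\cS])$. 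The identity $u_1 v_2 = u_2 v_1$ (and the analogous identities in $w$) on $\Vbar$ is inherited immediately from the corresponding identity for the $p_i, q_i, r_i$ established in Lemma \ref{pqr}, so the formula does give a well-defined map, and since $\psi_1$ is invertible, $\nu_2$ is again an elliptic fibration.

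Next I would verify that the image lies on $C_2$ and that the fibers land at the asserted points. For the image, substituting and using the equation $\gamma_1\gamma_2 p^2 + q^2 = \beta_2^2 r^2$ of $C_1$, one computes
$$\gamma_1\beta_1^2 u^2 + \gamma_2\beta_2^2 v^2 + \gamma_3\beta_3^2 w^2 = (\gamma_1+\gamma_2)(q^2+\gamma_1\gamma_2 p^2) + \gamma_3 \beta_2^2 r^2,$$
which vanishes on $C_1$ by the identity $\gamma_1+\gamma_2+\gamma_3 = 0$ (immediate from $\gamma_1 = \alpha_3-\alpha_2$, etc.). For the four fibers, I would plug the images given in Lemma \ref{pqr} into the formulas for $u, v, w$; for instance, the image $[2:\gamma_1-\gamma_2:-\gamma_3\beta_2^{-1}]$ of $S_1$ maps to $[-\gamma_3\beta_1^{-1} : \gamma_3\beta_2^{-1} : \gamma_3\beta_3^{-1}]$, which after rescaling by $\gamma_3^{-1}$ gives the claimed $[-\beta_1^{-1}:\beta_2^{-1}:\beta_3^{-1}]$, and the three remaining cases are analogous.

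The main obstacle is showing that $\nu_2$ is in fact defined over $n_g$, not just over $k_g([\cS])$. By Lemma \ref{gen}, $\Gal(k_g([\cS])/n_g) \cong V_4$ is generated by three involutions $\tau_{ij}$ that flip signs of $\sqd_i$ and $\sqd_j$ (for $\{i,j\} \subset \{1,2,3,4\}$), and my plan is to verify directly that each $\tau_{ij}$ scales the triple $(u_i,v_i,w_i)$ by a common nonzero factor. The point is that the construction of $u_i, v_i, w_i$ has been engineered so that the sign changes of $\beta_1,\beta_2,\beta_3$ under $\tau_{ij}$ cancel exactly the sign changes of the linear forms $l_i, m_i, {}^\tau l_i, {}^\tau m_i$ (and hence of $p_i, q_i, r_i$) induced by the action on $\sqd_1,\ldots,\sqd_4$. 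For example, for $\tau=\tau_{12}$, the forms $p_i, q_i, r_i$ are $\tau$-invariant (as in the proof of Lemma \ref{pqr}) while $\tau$ sends $\beta_1, \beta_2 \to -\beta_1, -\beta_2$ and fixes $\beta_3$, producing overall sign $-1$ on each of $u_i, v_i, w_i$; the analysis for $\tau_{13}$ and $\tau_{14}$ is similar bookkeeping. This will confirm $V_4$-invariance, hence descent of $\nu_2$ to $n_g$, completing the proof. As a sanity check, the descended conic $C_2$ has coefficients $\gamma_i\beta_i^2$ all lying in $n_g$, consistent with the field of definition.
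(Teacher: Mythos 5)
The first part of your argument (recognizing $\psi_1$ as a projective change of coordinates, verifying the image lands on $C_2$ via $\gamma_1+\gamma_2+\gamma_3=0$ and the equation of $C_1$, and computing the images of the four $4$-gons) is correct and coincides with what the paper does. The gap is in the descent from $n_g(\beta_3)$ to $n_g$: your proposed ``similar bookkeeping'' fails for $\tau_{13}$ and $\tau_{14}$. Unlike $\tau_{12}$ (which is the paper's $\tau$, and which merely swaps $l_i \leftrightarrow {}^\tau l_i$ and $m_i \leftrightarrow {}^\tau m_i$ so that $p_i,q_i,r_i$ are fixed and the $\beta$-sign flips produce a uniform factor $-1$), the automorphism $\tau_{13}$ sends $l_i$, which has sign pattern $(+,+,+,+)$ on $\sqd_1\varphi_1,\dots,\sqd_4\varphi_4$, to the form with pattern $(-,+,-,+)$, and sends ${}^\tau l_i$ with pattern $(-,-,+,+)$ to the form with pattern $(+,-,-,+)$. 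Neither is $\pm l_i$ nor $\pm {}^\tau l_i$. Consequently already $p_i = 2\,l_i{}^\tau l_i$ is not sent to $\pm p_i$ as a polynomial: one checks, for instance, that the coefficient of $\varphi_1\varphi_2$ in ${}^{\tau_{13}}p_i$ has the opposite sign relative to the $\varphi_j^2$ coefficients compared to $p_i$. So $u_i, v_i, w_i$ are \emph{not} scaled by a common factor under $\tau_{13}$, and the invariance you need holds only as an equality of rational maps on $V$, not of polynomials.

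This is exactly why the paper's proof uses an indirect geometric argument rather than a direct polynomial computation. Since $\tau_{12}$-invariance is automatic (it already follows from Lemma \ref{pqr} that $\nu_2$ is defined over $n_g(\beta_3)$), only the nontrivial automorphism $\pi$ of $n_g(\beta_3)/n_g$ (lifting to $\tau_{13}$) remains. The paper observes that $\pi$ acts on $\cS$ as $(S_1\,S_3)(S_2\,S_4)$, so $\nu_2$ and ${}^\pi\!\nu_2$ are elliptic fibrations with the same class of fibers; by the uniqueness statement in Proposition \ref{K3ell} they differ by an automorphism $h$ of $\P^2$ preserving $C_2$. Evaluating both maps at the four $4$-gons $S_i$ and using that $\pi$-conjugation and the computed images are compatible, one sees that $h$ fixes the four points $\nu_2(S_i)$ on $C_2$; since no three points of a smooth conic are collinear, $h$ is the identity, giving ${}^\pi\!\nu_2 = \nu_2$. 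You should replace the bookkeeping argument with this uniqueness-plus-fixed-points argument.
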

\begin{proof}
The nontrivial automorphism $\pi$ of the extension $n_g(\beta_3)/n_g$
is induced by the automorphism that fixes all $\omega_j$ and all
$\sqd_j$, except for $\sqd_1$ and $\sqd_3$, which are sent to their
negatives. It acts on the $S_i$ as the permutation $(S_1\,\,S_3)(S_2\,\,S_4)$.
The composition $\nu_2\colon \Vbar \ra \P^2$ of $\nu_1$ and
$$
\psi\colon \, \P^2 \ra \P^2,\,\, [p:q:r]\mapsto [\beta_1^{-1}(q+\gamma_2 p)
  : \beta_2^{-1}(q-\gamma_1 p) : -\beta_2\beta_3^{-1}r]
$$
is given by $x \mapsto [u_i(x):v_i(x):w_i(x)]$. The inverse of $\psi$
is given by
$$
[u:v:w] \mapsto [ \beta_1u-\beta_2v:
\gamma_1\beta_1u+ \gamma_2\beta_2v: \gamma_3\beta_2^{-1}\beta_3w  ].
$$
Substituting this into the equation for $C_1$, we find that the conic
$\psi(C_1)$ is equal to $C_2$. Alternatively, one checks that we
have
$$
\gamma_1\beta_1^2u_i^2+\gamma_2\beta_2^2v_i^2+ \gamma_3\beta_3^2w_i^2 =
4\gamma_3b_i{}^\tau\!l_i l_iQ,
$$
with $b_1 = \omega_1+\omega_2-\omega_3-\omega_4$ and  $b_2 =
-c_4(\omega_1^{-1}+\omega_2^{-1}-\omega_3^{-1}-\omega_4^{-1})$ (see
\cite{elec}).
Since $Q$ vanishes on $V$, this also shows that $\nu_2$ maps $\Vbar$
to $C_2$. As $\pi$ permutes $\cS$, the $4$-gons in $\cS$
are also fibers of ${}^\pi\!\nu_2$, so $\nu_2$ and ${}^\pi\!\nu_2$
differ by an isomorphism on the base by Proposition \ref{K3ell}.
Therefore, there is a unique isomorphism $h\colon \,C_2 \ra
{}^\pi\!C_2=C_2$ such that ${}^\pi\!\nu_2 = h \circ \nu_2$. Since $h$
fixes the anticanonical divisor on $C_2$, which determines the
embedding of $C_2$ in $\P^2$, the isomorphism $h$ comes from a unique
automorphism of $\P^2$, which we will also denote by $h$.
With the points 
$\nu_1(S_i)$ given in Lemma \ref{pqr}, one easily computes
$\nu_2(S_i) = \psi(\nu_1(S_i))$ to be as claimed. With the identity
$({}^\pi\!\nu_2)(S_i) = {}^\pi\!(\nu_2({}^{\pi^{-1}}S_i))$ we check that
we have $({}^\pi\!\nu_2)(S_i) = \nu_2(S_i)$ for $1\leq i \leq 4$, so
$h$ fixes the four points $\nu_2(S_i)$. As these points all lie on $C_2$,
no three of them are collinear. This implies that the action of $h$ on
the four points determines $h$ uniquely, which means that $h$ is the
identity, so ${}^\pi\!\nu_2 =\nu_2$. We conclude that $\nu_2$ is
defined over the fixed field $n_g$ of $\pi$.
\end{proof}

Finally, we let $C_3 \subset \P^2(x,y,z)$ be the conic given by
$$
\gamma_1\beta_1^2(x+\gamma_1\Delta_4y+(\alpha_2+\alpha_3)z)^2+
\gamma_2\beta_2^2(x+\gamma_2\Delta_4y+(\alpha_1+\alpha_3)z)^2+
\gamma_3\beta_3^2(x+\gamma_3\Delta_4y+(\alpha_1+\alpha_2)z)^2=0,
$$
and we set
$$
\eqalign{
x_i&=2\Delta_4\big((\alpha_2\alpha_3-\alpha_1^2)u_i+
(\alpha_1\alpha_3-\alpha_2^2)v_i+(\alpha_1\alpha_2-\alpha_3^2)w_i \big),\cr
y_i&=-\gamma_1u_i-\gamma_2v_i-\gamma_3w_i, \cr
z_i&=\Delta_4\big((\gamma_2-\gamma_3)u_i+ (\gamma_3-\gamma_1)v_i+
(\gamma_1-\gamma_2)w_i\big).\cr
}
$$
\begin{lemma}\label{xyz}
There is an elliptic fibration $\nu_3\colon \,\Vbar \ra C_3$, defined over
$k_g(\cS)$, given by $x \mapsto [x_i(x):y_i(x):z_i(x)]$ for
$i=1,2$, such that the $4$-gons of $\cS$ are fibers of $\nu_3$.
\end{lemma}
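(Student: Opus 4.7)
The plan follows the pattern of Lemma \ref{uvw}, carrying out the third and final descent step from $n_g$ down to $k_g(\cS)$ across the Galois group $G := \Gal(n_g/k_g(\cS)) \isom S_3$ (see Lemma \ref{gen}).

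First, I would define $\psi\colon \P^2 \ra \P^2$ to be the linear automorphism given by the formulas for $(x,y,z)$ in terms of $(u,v,w)$ that appear in the statement, and check by direct substitution that $\psi$ sends $C_2$ onto $C_3$. Setting $\nu_3 = \psi \circ \nu_2$ then gives an elliptic fibration $\Vbar \ra C_3$, initially defined over $n_g$, whose fibers include the $4$-gons of $\cS$ (since $\psi$ is merely an isomorphism of the base conics of $\nu_2$ and $\nu_3$).

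Second, I would show that $\nu_3$ is defined over $k_g(\cS)$, i.e., ${}^\sigma\!\nu_3 = \nu_3$ for all $\sigma \in G$. The group $G$ acts on $n_g$ by the obvious permutation of the triple $(\alpha_1,\alpha_2,\alpha_3)$, and therefore (since $\gamma_i$ is a difference of two $\alpha_j$'s, and $\Delta_4$ changes sign under odd permutations) on $(\gamma_1,\gamma_2,\gamma_3)$ up to an overall sign of the permutation. The conic $C_3$, being a symmetric sum of three terms indexed by $\{1,2,3\}$, is manifestly $G$-stable. The action of $G$ on the set $\{S_1, S_2, S_3, S_4\}$ is well defined even though $n_g$ does not contain the full field of definition of the individual $S_i$: using any lift of $\sigma \in G$ to $\Gal(k_g([\cS])/k_g(\cS)) = S_4$ gives the same permutation, because the kernel $V_4$ of the quotient $S_4 \twoheadrightarrow S_3$ fixes each $S_i$ individually. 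Under this action $S_4$ is fixed and $S_1,S_2,S_3$ are permuted as the indices $1,2,3$. For each $\sigma \in G$, the twisted fibration ${}^\sigma\!\nu_3 \colon \Vbar \ra C_3$ therefore also has the $4$-gons of $\cS$ among its fibers, so by Proposition \ref{K3ell} there is a unique automorphism $h_\sigma$ of $C_3$ with ${}^\sigma\!\nu_3 = h_\sigma \circ \nu_3$. A direct computation of the images $\nu_3(S_i)$ shows that $\sigma$ permutes the four points $\nu_3(S_i)$ in exactly the same way it permutes the $S_i$, so $h_\sigma$ fixes all four points; as $C_3$ is a smooth conic, no three of these points are collinear, forcing $h_\sigma$ to be the identity. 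Hence ${}^\sigma\!\nu_3 = \nu_3$ and $\nu_3$ descends to $k_g(\cS)$.

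The main obstacle is the explicit polynomial identity $\psi(C_2) = C_3$, together with the computation of the $\nu_3(S_i)$ and the verification of their transformation under $G$ (where one must keep careful track of the sign contributions from $\Delta_4$ under odd permutations, which is exactly what makes the projective points $\nu_3(S_i)$ $G$-invariant even when individual coordinates pick up signs). While the underlying ideas are straightforward, the intermediate expressions are large enough that I would verify them with a computer algebra system, as at the previous steps, via the \textsc{magma} script at \cite{elec}.
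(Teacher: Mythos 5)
Your overall strategy matches the paper's: define $\psi$ by the displayed formulas, check $\psi(C_2)=C_3$ by substitution, set $\nu_3 = \psi\circ\nu_2$, and then descend from $n_g$ to $k_g(\cS)$ by showing for each $\sigma$ in $\Gal(n_g/k_g(\cS))\isom S_3$ that the automorphism $h_\sigma$ of $C_3$ with ${}^\sigma\!\nu_3 = h_\sigma\circ\nu_3$ fixes the four points $\nu_3(S_i)$ (no three collinear on a smooth conic, so $h_\sigma = \mathrm{id}$). This is exactly the route taken in the paper.

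However, there is a genuine error in your justification of why the action of $G=\Gal(n_g/k_g(\cS))$ on $\{S_1,\dots,S_4\}$ is well defined. You claim that the kernel $V_4$ of $S_4 \twoheadrightarrow S_3$ ``fixes each $S_i$ individually.'' This is false: the identification $\Gal(k_g([\cS])/k_g(\cS))\isom \Sym(\cS)\isom S_4$ \emph{is} given by the permutation action on the four $4$-gons (Lemma \ref{GcS}), and the kernel $\Gal(k_g([\cS])/n_g)$ is the Klein-four normal subgroup $\{e,(12)(34),(13)(24),(14)(23)\}$, whose nontrivial elements act \emph{fixed-point-freely} on $\{S_1,\dots,S_4\}$. (You seem to be conflating the normal $V_4 = \ker(S_4\to S_3)$ with a point stabilizer, which is a non-normal $S_3$.) Consequently different lifts of a given $\sigma\in G$ genuinely give different permutations of the $S_i$, and the action of $G$ on $\cS$ is \emph{not} well defined. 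The way to make the argument precise — as the paper does — is to fix concrete lifts $\pi_1,\pi_2$ of generators of $G$, namely the automorphisms of $m_g$ induced by the permutations $(\omega_1\,\omega_2\,\omega_3)$ and $(\omega_1\,\omega_2)$ of the $\omega_j$ together with the corresponding permutations of the $\sqd_j$, and verify ${}^{\tilde\sigma}\!\bigl(\nu_3({}^{\tilde\sigma^{-1}}\!S_i)\bigr) = \nu_3(S_i)$ for these lifts; these particular lifts do act on $\cS$ by $(S_1\,S_2\,S_3)$ and $(S_1\,S_2)$ respectively. The computation is in fact independent of the chosen lift because the left-hand side $({}^\sigma\!\nu_3)(S_i)$ depends only on $\sigma|_{n_g}$, but that independence is not for the reason you give, and your claim about $V_4$ should be removed or corrected.
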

\begin{proof}
Let $\pi_1,\pi_2 \in \Gal(n_g/k_g(\cS))$ denote the automorphisms induced by the
permutations $(\omega_1 \,\,\omega_2 \,\,\omega_3)$ and
$(\omega_1 \,\,\omega_2)$ on the $\omega_j$ respectively
and the corresponding permutation on the $\sqd_j$. Then $\pi_1$ and
$\pi_2$ induce generators of $\Gal(n_g/k_g(\cS))$. They act on $\cS$,
the $\alpha_j,\beta_j,\gamma_j$, and the $\phi_j$
as the permutations $(1\,\,2\,\,3)$ and $(1\,\,2)$ on the indices,
except that $\pi_2$ also negates the $\gamma_j$. Note also that we have
$\pi_1(\Delta_4) = \Delta_4$ and $\pi_2(\Delta_4) = -\Delta_4$.
The composition $\nu_3\colon \Vbar \ra \P^2$ of $\nu_2$ and
$\psi\colon \, \P^2 \ra \P^2,\,\, [u:v:w]\mapsto [x:y:z]$ with
$$
\eqalign{
x&=2\Delta_4\big((\alpha_2\alpha_3-\alpha_1^2)u+
(\alpha_1\alpha_3-\alpha_2^2)v+(\alpha_1\alpha_2-\alpha_3^2)w \big),\cr
y&=-\gamma_1u-\gamma_2v-\gamma_3w, \cr
z&=\Delta_4\big((\gamma_2-\gamma_3)u+ (\gamma_3-\gamma_1)v+
(\gamma_1-\gamma_2)w\big).\cr
}
$$
is given by $P \mapsto [x_i(P):y_i(P):z_i(P)]$. The inverse of $\psi$
is given by
$$
[x:y:z] \mapsto [x+\gamma_1\Delta_4y+(\alpha_2+\alpha_3)z:
  x+\gamma_2\Delta_4y+(\alpha_1+\alpha_3)z:
  x+\gamma_3\Delta_4y+(\alpha_1+\alpha_2)z].
$$
Substituting this in the equation for $C_2$, we find that the conic
$\psi(C_2)$ is equal to $C_3$, so $\nu_3$ maps $\Vbar$ to $C_3$.
Note that for all $g \in \Gal(n_g/k_g(\cS))$ we have ${}^g\!C_3 = C_3$.
As in the
proof of Lemma \ref{uvw}, for all $g \in \Gal(n_g/k_g(\cS))$ there is an
automorphism $h_g$ of $\P^2$ such
that we have ${}^g\!\nu_3 = h_g \circ \nu_3$. Evaluating $\psi$ at
the points $\nu_2(S_i)$ given in Lemma \ref{uvw} and using the
identity $({}^g\!\nu_3)(S_i) =
{}^g\!(\psi(\nu_2({}^{g^{-1}}S_i)))$, we check that for all
$g\in \Gal(n_g/k_g(\cS))$ we have $({}^g\!\nu_3)(S_i)=\nu_3(S_i)$ for
$1\leq i \leq 4$. It suffices to check this for $g=\pi_1,\pi_2$.
As in the proof of Lemma \ref{uvw} this implies that
for all $g\in \Gal(n_g/k_g(\cS))$ the automorphism $h_g$ is the
identity, so ${}^g\!\nu_3 = \nu_3$. We conclude that $\nu_3$
is defined over $k_g(\cS)$.
\end{proof}

\begin{remark}\label{polyinv}
Even though the fibration $\nu_3$ is defined over $k_g(\cS)$, the
polynomials $x_i,y_i,z_i$ that describe $\nu_3$ are not, and neither is
the defining equation of $C_3$. The latter issue is easily resolved by
multiplying the given equation for $C_3$ by $\Delta_4$ to obtain an
equation defined over
$k(\cS)$. To settle the former, we can replace $x_i,y_i,z_i$ with
Galois-invariant polynomials as follows. Again we
descend from the field $k_g([\cS])$ to $k_g(\cS)$ in
steps. The polynomials $l_i$ and $m_i$ are not defined over
$k_g([\cS])$, but $\eta l_i$ and $\eta m_i$ are, as they
are fixed by the elements of $G_{[\cS]}$, see Lemma \ref{KcS}.
Scaling by $\beta_2{}^\tau\!\eta\eta$, we find that the polynomials
$u_i'=\beta_2{}^\tau\!\eta\eta u_i$,
$v_i'=\beta_2{}^\tau\!\eta\eta v_i$,
and $w_i'=\beta_2{}^\tau\!\eta\eta w_i$ are also defined
over $n_g(\beta_3)$ and define $\nu_2$ as well.
Since $\nu_2$ is defined over $n_g$, the polynomials ${}^\pi\!u_i',
{}^\pi\!v_i'$, and ${}^\pi\!w_i',$ define $\nu_2$ as well,
where $\pi$ is the nontrivial
automorphism of $n_g(\beta_3)/n_g$ as in the proof of Lemma
\ref{uvw}. This implies that $\nu_2$ can
also be defined by $[u_i'':v_i'':w_i'']$ with
$u_i''=u_i'+{}^\pi\!u_i'$, $v_i''=v_i'+{}^\pi\!v_i'$, and
$w_i''=w_i'+{}^\pi\!w_i'$ unless these polynomials vanish on $V$, which
they turn out not to. One checks for instance that we have
$$
\eqalign{
\frac{\beta_1u_1''}{\beta_2} = &8(\sqd_1\sqd_2-\sqd_3\sqd_4)
  \left(\frac{\kappa_1^{-1}\delta_1\varphi_1^2+\kappa_4^{-1}\delta_4\varphi_4^2}{\omega_1-\omega_4}
  +\frac{\kappa_2^{-1}\delta_2\varphi_2^2+\kappa_3^{-1}\delta_3\varphi_3^2}{\omega_2-\omega_3}\right)\cr
  &+4(\delta_1+\delta_2-\delta_3-\delta_4)(\omega_1-\omega_2-\omega_3+\omega_4)
  \left((\omega_1-\omega_2)\kappa_1^{-1}\kappa_2^{-1}\sqd_1\sqd_2\varphi_1\varphi_2+
        (\omega_3-\omega_4)\kappa_3^{-1}\kappa_4^{-1}\sqd_3\sqd_4\varphi_3\varphi_4
  \right)
}
$$
(see \cite{elec}). With the same trick we descend to
$k_g(\cS)$. The polynomials
$$
\eqalign{
x_i'&=2\Delta_4\big((\alpha_2\alpha_3-\alpha_1^2)u_i''+
(\alpha_1\alpha_3-\alpha_2^2)v_i''+(\alpha_1\alpha_2-\alpha_3^2)w_i'' \big),\cr
y_i'&=-\gamma_1u_i''-\gamma_2v_i''-\gamma_3w_i'', \cr
z_i'&=\Delta_4\big((\gamma_2-\gamma_3)u_i''+ (\gamma_3-\gamma_1)v_i''+
(\gamma_1-\gamma_2)w_i''\big).\cr
}
$$
are defined over $n_g$ and $\nu_3$ is defined by
$[x_i':y_i':z_i']$. Since $\nu_3$ is defined over $k_g(\cS)$, it is
also defined by $[{}^g\!x_i':{}^g\!y_i':{}^g\!z_i']$ for any
$g\in \Gal(n_g/k_g(\cS))$ and therefore by
$[x_i'':y_i'':z_i'']$ with
$x_i'' = \sum_{g\in \Gal(n_g/k_g(\cS))} {}^g\!x_i'$,
$y_i'' = \sum_{g\in \Gal(n_g/k_g(\cS))} {}^g\!y_i'$, and
$z_i'' = \sum_{g\in \Gal(n_g/k_g(\cS))} {}^g\!z_i'$, unless these
polynomials, defined over $k(\cS)$, vanish on $V$. An explicit
computation shows that they do not. Over $k_g(\cS)$ we can
write $f=f_2f_4$ with $f_4 = X^4-c_1X^3+c_2X^2-c_3X+c_4$, where the
$c_r$ are the symmetric polynomials in the $\omega_j$ ($1\leq j \leq
4$) of degree $r$. The image in $A_4=k_g[X]/f_4$ of $\delta \in A$ can be
written as $\delta'=d_3X^3+d_2X^2+d_1X+d_0$ with $d_i \in
k_g(c_1,c_2,c_3,c_4)$. We may also consider the $d_i$ to be
independent transcendentals contained in $k_g$ (here the $d_i$
are not the same as in Example \ref{generic}).
Let $N$ denote a square root of the norm
$N_{A_4/k_g}(\delta')$ of $\delta'$. Note that as always we have
$\varphi_j = \sum_{i=0}^5 \omega_j^i a_i$, where the $a_i$ are
the original coordinates of $\P(A)$. The polynomials
$x_i'',y_i''$, and $z_i''$ are quadratic in the $a_i$ with
coefficients in the field $\Q(c_1,c_2,c_3,c_4,d_0,d_1,d_2,d_3,N)$.
Expressing them as such takes a file that has size about one
megabyte \cite{elec}.
Note also that the equations for the fibration do not
depend on $\omega_5$
and $\omega_6$. This shows that the fibration does indeed factor
through the quadric $D_{\omega_5\omega_6}$ in $\P^3$ of
Remark \ref{coneovercone}, because that is the image of $\Vbar$
under the projection from $\P(\Abar)$ to $\P^3$ using only the
coordinates $\varphi_1,\ldots,\varphi_4$.
\end{remark}

\begin{remark}
In any specific example, we can consider the specialization of
the equations for $C_3$ and the fibration $\nu_3$
in Lemma \ref{xyz}, or better, Remark \ref{polyinv}. For a proper
closed subset in the family of all curves of genus $2$ and choices of
$\delta$ these equations may vanish. Outside this subset, this
specialization gives us an elliptic
fibration $\nu$ of a surface $V$ over a conic $C$.
If $V$ is everywhere locally solvable,
then so is $C$. Since $C$ satisfies the Hasse principle, this implies
that $C$ has a rational point, which can be found by standard
algorithms. This gives us a rational fiber $F_0$ on $V$. The linear system
of hyperplanes through $F_0$ is $2$-dimensional and determines an
elliptic fibration $\nu'$ of $V$ over $\P^1$, given by linear
polynomials. The $4$-gons that are fibers of $\nu'$ belong to the
complementary \gonquad{} of the \gonquad{} whose $4$-gons are fibers
of $\nu$. Applying the same trick again, we also obtain an elliptic
fibration over $\P^1$ that is equivalent to $\nu$.  On the other hand,
even if $C$ is everywhere locally solvable,
$V$ may not be.
\end{remark}

\newcommand\Br{\mathop{{\rm Br}}}
\newcommand\Jac{\mathop{{\rm Jac}}}
\newcommand\End{\mathop{{\rm End}}}
\newcommand\im{\mathop{{\rm im}}}
\newcommand\bralg{\Br_1}
\newcommand\cores{\mathop{{\rm cores}}}
\newcommand\p{{\mathfrak p}}
\newcommand\q{{\mathfrak q}}
\newcommand\G{\mathbb{G}}
\newcommand\C{\mathbb{C}}
\newcommand\F{\mathbb{F}}
\newcommand\A{\mathbb{A}}
\newcommand\N{\mathbb{N}}
\newcommand\inv{\mathop{{\rm inv}}\nolimits}

\section{Arithmetical applications}\label{sectionsha}
In this section we apply the theory of the previous sections, combined with
the idea of the Brauer-Manin obstruction, to give an example of a K3
surface arising from $2$-descent on a family of curves of genus $2$ that
has rational points everywhere locally but not globally, and hence a family
of curves of genus $2$ all having nontrivial Tate-Shafarevich group.

\subsection{The Brauer group and the Brauer-Manin obstruction}
To explain the Brauer-Manin obstruction, let $V$ be a variety over a number
field $K$.  If there is a place
$\p$ of $K$ at which $V$ has no points, then of course $V$ has no
$K$-rational points.  But if $V$ has points everywhere locally, we can
sometimes use the Brauer group to prove that it does not have any points
defined over $K$.  We begin by defining the Brauer group of a scheme;
this material is taken from the beginning of \cite{Mi}, chapter 4.

\begin{definition}
Let $R$ be a ring.  Then an {\em Azumaya algebra} $A$ over $R$ is a free
$R$-algebra of finite rank as an $R$-module such that $A \otimes_R A^{\rm op}$
is isomorphic to $\End(A)$ by the map taking $a \otimes a'$ to the endomorphism
$x \ra axa'$.
\end{definition}

\begin{definition}
Let $V$ be a scheme.  An {\em Azumaya algebra} $\cal A$ on $V$ is a coherent
sheaf
of $R$-algebras whose stalk at every point $x$ of $V$ is an Azumaya algebra over
the local ring of $V$ at $x$.  The {\em Brauer group} of $V$ is the
semigroup of Azumaya algebras on $V$ under tensor product
modulo the subsemigroup of endomorphism
algebras of locally free sheaves.
\end{definition}

The Brauer group of a field $K$ is often defined as
the semigroup of finite-dimensional central simple algebras over $K$ under
the operation of tensor product modulo the subsemigroup $\{M_n(K): n \in \N\}$.
This is a special case of the definition above.
Alternatively, $\Br K$ can be thought of as $H^2(K,{K^{\rm sep}}^*)$,
which can be rewritten as $H^2_{\hbox{\small \rm \'et}}(\Spec K, {K^{\rm sep}}^*)$
using the standard equivalence (\cite{Mi}, Example III.1.7)
between \'etale cohomology of $\Spec K$ and Galois cohomology of
$\Gal(K^{\rm sep}/K)$-modules.  We extend this definition to general schemes
as follows.

\begin{definition}
For a variety $V$ over a field $K$, we use $\bar V$ to denote $V \otimes_K
\bar K$.
\end{definition}

\begin{definition}
The {\em cohomological Brauer group} $\Br V$ of a scheme $V$ is
$H^2_{\hbox{\small \rm \'et}}(V,\G_m)$.  If $V$ is defined over a field $K$,
then the {\em algebraic part of the Brauer group} $\bralg V$
is the kernel of the natural map $\Br V \ra \Br \bar V$.
\end{definition}

We can use the Brauer group to find obstructions to the existence
of points, but it is difficult to compute.
However, we can compute the algebraic part of the
Brauer group.  By \cite{Mi}, Prop.\ IV.2.15, the Brauer group and the
cohomological Brauer group are
isomorphic when $V$ is a smooth variety over a field.  For $V$ defined over
a number field, the algebraic part of
the Brauer group can be computed as $H^1(K,\Pic \bar V)$, as shown in
\cite{brd}, sect.\ 4.1.
As stated in Corollary \ref{VisK3}, we have
$\Pic \bar V = \NS \bar V$, a finitely generated free abelian group.

In this paper, we will only
consider the algebraic part of the Brauer group.  Now we explain how to use
the Brauer group to show that $V$ has no $K$-rational points.

\begin{definition}
Let $K$ be a number field and $\p$ a place of $K$.
The {\em local invariant} $\inv_\p(s)$ of an element $s$ of the Brauer group of
the local field $K_\p$ is its image under the natural homomorphism
$\Br K_\p \ra \Q/\Z$ (\cite{ser},
Proposition XII.6).
\end{definition}

Recall that this homomorphism is always injective,
and that it is surjective if $\p$ is non-archimedean, while its image is
generated by $1/2$ if $\p$ is real and is trivial if $\p$ is complex.

\begin{definition}
For $s \in \Br V$, $\p$ a place of $K$, and $P \in V(K_\p)$, the
{\em local invariant} of $s$ at $P$ is the local invariant $\inv_\p(s_P)$
of the element
$s_P \in \Br K_\p$ obtained by pulling back the cohomology class $s$ by the map
$\Spec K_\p \ra V$ whose image is the point $P$.  More concretely, it is the
local invariant of the Azumaya algebra over $K_\p$ whose multiplication table
is given by evaluating the elements of the multiplication table of an
Azumaya algebra representing $s$ at the coordinates of $P$.
\end{definition}

Every $K$-point $P$ of $V$ corresponds
to a morphism $\Spec K \rightarrow V$ that induces a map
$\Br V \rightarrow \Br K$ by pulling back cohomology.
We denote the image of an element $s \in \Br V$ under this
map by $s(P)$, yielding a map $V(K) \rightarrow \Br K$ that is
also denoted by $s$. Similarly for every place $\p$
(archimedean or non-archimedean) of $K$  we get a
map $s_\p \colon V(K_\p) \rightarrow \Br K_\p$. We obtain the following
commutative diagram, where the top horizontal map is the diagonal
embedding and $\lambda_s$ is defined to be the composition shown.
$$
\xymatrix{
V(K) \ar[rr] \ar[dd]_s && \prod_\p V(K_\p) \ar[dd]_{\prod_\p s_\p}
                                        \ar[rrdd]^{\lambda_s} && \cr
\cr
\Br K \ar[rr] && \bigoplus_v \Br K_\p \ar[rr]_{\sum \inv_\p} && \Q/Z\cr
}
$$
The bottom row is exact by class field theory, so the image of
the top horizontal map is contained in $\lambda_s^{-1}(0)$.
If we show that $\cap_{s \in \Br V} \lambda_s^{-1}(0) = \emptyset$, then
we may conclude that
there is no $K$-rational point on $V$, and we say that {\em the
Brauer-Manin obstruction blocks the existence of rational points
on $V$}.  It suffices to let $s$ run over a set of generators of
$\Br V/\Br K$. Often it is more convenient to consider only
elements of $\Br V/\Br K$ belonging to a subset $B$, and then we
speak of the Brauer-Manin obstruction on $B$ blocking the
existence of rational points.


\begin{proposition}\label{const}
Let $B$ be a subgroup of $\Br V/\Br K$ of order $2$
generated by the element $s$.  Then
the Brauer-Manin obstruction on $B$ blocks the existence of rational points on
$V$ if and only if, for all $\p$, the local invariant of $s$ is constant on
$K_\p$-points of $V$ and the constants do not add to $0$.
\end{proposition}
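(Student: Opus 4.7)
The plan is to exploit the $2$-torsion structure of $s$. Pick any lift $\tilde s \in \Br V$; then $2\tilde s$ represents a class in $\Br V/\Br K$ of order $1$, so $2\tilde s = c$ lies in the image of $\Br K \to \Br V$ for some $c \in \Br K$. For any place $\p$ and any $P \in V(K_\p)$ I would then compute
\begin{equation*}
2 \inv_\p(\tilde s_\p(P)) \;=\; \inv_\p\bigl((2\tilde s)_\p(P)\bigr) \;=\; \inv_\p(c_\p),
\end{equation*}
since evaluating the image of a global class at $P$ gives the local restriction, which is independent of $P$. Hence $P \mapsto \inv_\p(\tilde s_\p(P))$ takes at most two values on $V(K_\p)$, any two of which differ by $\tfrac12$. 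Summing over $\p$ and using global reciprocity $\sum_\p \inv_\p(c_\p) = 0$, it follows that for every adelic point $(P_\p) \in \prod_\p V(K_\p)$ the value $\lambda_s((P_\p)) = \sum_\p \inv_\p(\tilde s_\p(P_\p))$ lies in $\{0,\tfrac12\} \subset \Q/\Z$.

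Since $B = \{0, s\}$ and $\lambda_0$ is identically zero, the Brauer–Manin obstruction on $B$ blocks rational points if and only if $\lambda_s$ never vanishes, equivalently, is identically $\tfrac12$ on $\prod_\p V(K_\p)$.

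For the ``if'' direction, assume each local map $P \mapsto \inv_\p(\tilde s_\p(P))$ is constant and that the constants do not sum to $0$. Their sum is then a fixed element of $\{0,\tfrac12\}$ which is nonzero, hence equals $\tfrac12$, and $\lambda_s$ is identically $\tfrac12$. For the ``only if'' direction I would argue contrapositively. Suppose either some local map $P \mapsto \inv_{\p_0}(\tilde s_{\p_0}(P))$ is non-constant, or all local maps are constant with constants summing to $0$. In the latter case every adelic point yields $\lambda_s = 0$. In the former case, pick any adelic point $(P_\p)$; if $\lambda_s((P_\p)) = 0$ we are done, and otherwise $\lambda_s((P_\p)) = \tfrac12$, so replacing $P_{\p_0}$ by a point where the local invariant differs by $\tfrac12$ produces an adelic point with $\lambda_s = 0$. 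In either case the obstruction fails to block.

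The only substantive step is the opening observation that a $2$-torsion element of $\Br V/\Br K$ has locally constant $2 \inv_\p$ because $2\tilde s$ is pulled back from $\Br K$; everything after that is a finite-case analysis on the possible values of $\lambda_s$, for which the ability to freely modify a single local component (when the local map is non-constant) is the key mechanism.
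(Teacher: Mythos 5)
Your proof is correct, and the underlying mechanism is the same as the paper's: exploit non-constancy at a single place $\p_0$ to flip the total invariant from $\tfrac12$ to $0$. The paper's argument is terse and tacitly assumes all local invariants of $s$ lie in $\{0,\tfrac12\}$ (``choose an arbitrary collection of local points at other places with invariants adding to $\alpha\in\{0,\tfrac12\}$''), which is automatic only when $s$ lifts to a $2$-torsion class in $\Br V$; your opening observation that $2\tilde s$ is pulled back from $\Br K$, so that $2\inv_\p(\tilde s_\p(P))$ is independent of $P$ and $\lambda_s$ is $\{0,\tfrac12\}$-valued by reciprocity, supplies exactly the lemma the paper leaves unstated and makes the flip at $\p_0$ work in general, since the two values the non-constant local map can take are forced to differ by precisely $\tfrac12$. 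So this is not a different route so much as a rigorous completion of the paper's sketch; the paper buys brevity by deferring to the concrete quaternion-algebra setting of the application, while your version isolates the only structural fact about $B$ (order $2$ in $\Br V/\Br K$) that is actually used.
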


\begin{proof}
The sufficiency of this condition is clear.  For the necessity, note that,
if the local invariant is not constant on $K_\p$-points for some $\p$, we
may choose an arbitrary collection of local points at other places with
invariants adding to $\alpha \in \{0,1/2\}$ and then choose a $K_\p$-point
whose invariant is equal to $\alpha$, thus obtaining a system of points with
local invariants adding to $0$.  In the case where the local invariant
is constant on $K_\p$-points for all $\p$, the necessity of the condition that the
constants not add to $0$ is obvious.
\end{proof}

Now let us explain how to construct K3 surfaces with elements of the
Brauer group that are likely to give nontrivial obstructions.
Although it is usually easy to calculate $H^1(K, \Pic \bar V)$, at least
when $H^1(V,{\cal O}_V) = 0$ and so $\Pic \bar V$ is finitely generated,
it is difficult to find Azumaya algebras corresponding to nontrivial
cohomology classes.  Doing so requires finding
rational divisors in rational divisor classes.  Such divisors always exist if
$V$ has points everywhere locally, but in practice it is not easy to
find them.
Over an algebraically closed field the effective divisors in a linear
equivalence class constitute a $\P^n$, so the problem reduces to finding
a rational point on a locally solvable Brauer-Severi variety.  This can
be reduced to solving a norm equation from a field over which all elements
of $\Pic \bar V$ are defined, but even when the splitting field of $f$ is
fairly small this is likely to be impractical.

We will follow \cite{brd}, section 4.4,
in using elliptic fibrations on varieties to construct nontrivial elements
of the Brauer group.  For a variety with a fibration $\phi$,
we define the {\em vertical Picard group} $\Pic_\phi \bar V$ to be the subgroup of
$\Pic \bar V$ spanned by the classes of components of fibers.  The {\em vertical
Brauer group} $\Br_\phi V$ can then be defined as $H^1(K,\Pic_\phi \bar V)$.
(We deviate from the standard notations $\Pic_{\rm vert}$ and $\Br_{\rm vert}$
used in \cite{brd}
because it is necessary for us to distinguish between vertical Picard and
Brauer groups coming from different fibrations on the same variety.)
The inclusion of $\Pic_\phi \bar V$ into $\Pic \bar V$ gives a natural map
$\Br_\phi V \ra H^1(K, \Pic \bar V)$,
which need not be either injective nor surjective.

It is shown in \cite{brd}, Prop.\ 4.21,
that elements of $\Br_\phi V$
are represented by Azumaya algebras that are pulled back
from central simple algebras on the function field of
the target of $\phi$.  The stalk of such an Azumaya algebra is constant on all
fibers of $\phi$, so the local invariants of such algebras are too.

\begin{definition}\label{quat}
Let $F$ be any field, and $a$ and $b$ nonzero elements of $F$.  The symbol
$(a,b)$ denotes the central simple $F$-algebra of rank $4$ with basis $1, i, j, k$
and multiplication given by $i^2 = a, j^2 = b, ij = k, ji = -k$.
\end{definition}

\begin{proposition}\label{odd}
If $F$ is a local field, the local invariant of $(a,b)$ is $0$ if and only if
the quadratic form $x^2 -ay^2 - bz^2$ represents $0$ nontrivially in $F$;
otherwise it is $1/2$.
In particular, if the residue characteristic of $F$ is odd, then the invariant
is $0$ if and only if at least
one of the following conditions holds: $a$ and $b$ both have even valuation;
one of $a$ and $b$ is a square; or $ab$ is a square.
\end{proposition}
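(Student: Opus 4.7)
The plan is to handle the two assertions of Proposition~\ref{odd} separately, in each case reducing to classical Hilbert-symbol calculations.

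For the first assertion, I would begin by observing that the local invariant of a Brauer class vanishes precisely when the class is represented by $M_2(F)$, so the question reduces to showing that the quaternion algebra $(a,b)$ is split if and only if $x^2-ay^2-bz^2$ represents zero nontrivially over $F$. The algebra $(a,b)$ is split precisely when it contains a nonzero zero divisor, equivalently when its reduced norm form is isotropic. Computing the reduced norm with respect to the basis $\{1,i,j,k\}$ of Definition~\ref{quat} (using the standard involution $w+xi+yj+zk \mapsto w-xi-yj-zk$, which one checks to be multiplicative) produces the quaternary form $w^2-ax^2-by^2+abz^2$. I would then verify that its isotropy is equivalent to that of the ternary form $x^2-ay^2-bz^2$: a nontrivial zero $(x_0,y_0,z_0)$ of the ternary form with $z_0\neq 0$ exhibits $b$ as a norm from $F(\sqrt{a})$, while a zero with $z_0=0$ forces $a$ to be a square, so in either case the quaternion algebra splits; the converse direction is similarly direct from the quaternary form.

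For the second assertion, I would use that in odd residue characteristic the group $F^*/F^{*2}$ has order $4$, generated by a uniformizer $\pi$ and a nonsquare unit $u$, and that the Hilbert symbol is bilinear modulo squares. The core step is the case where both $a$ and $b$ are units: reducing $x^2-ay^2-bz^2$ modulo $\pi$ yields a nondegenerate ternary quadratic form over the finite residue field, which is isotropic by an elementary counting argument, and Hensel's lemma lifts any resulting smooth zero back to $F$; this establishes the first listed condition. The sufficiency of the other two conditions (one of $a,b$ is a square, or $ab$ is a square) is then immediate from bilinearity, together with the standard identities $(a,-a)=0$ and $(a,a)=(a,-1)$ that turn products into symbols already handled.

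The hardest part will be the converse, showing that if none of the three listed conditions holds then the invariant is nontrivial. My plan is to use bilinearity and the structure of $F^*/F^{*2}$ to reduce to the representative case $a=u$, $b=\pi$ with $u$ a nonsquare unit, and then verify by a descent on valuations that $x^2-uy^2-\pi z^2=0$ admits no primitive solution: reducing modulo $\pi$ forces $u$ to be a square in the residue field, contradicting the choice of $u$. Carrying out this bookkeeping cleanly across the four classes in $F^*/F^{*2}$ is the only real work remaining.
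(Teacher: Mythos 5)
Your treatment of the first assertion via the reduced norm form, and the equivalence of isotropy of the quaternary norm form $w^2-ax^2-by^2+abz^2$ with that of the conic $x^2-ay^2-bz^2$, is sound and is essentially the standard argument the paper invokes by citation.

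For the second assertion there is a genuine gap. You claim the sufficiency of ``$ab$ is a square'' is immediate from $(a,a)=(a,-1)$ ``turning the product into a symbol already handled,'' but if $v(a)$ is odd then $(a,-1)$ is the symbol of a uniformizer against the unit $-1$, which is not the both-units case your core lemma treats. Concretely, $(\pi,-1)$ is trivial exactly when $-1$ is a square in the residue field $\F_q$; so when $q\equiv 3\pmod 4$ one has $(\pi,\pi)=(\pi,-1)\neq 1$ even though $\pi\cdot\pi$ is a square (over $\Q_3$, the form $x^2-3y^2-3z^2$ is anisotropic). The same issue undermines your converse: reducing ``to the representative case $a=u$, $b=\pi$'' omits the cases where both $a$ and $b$ have odd valuation, e.g.\ $a\equiv\pi$, $b\equiv u\pi$ modulo squares, where $(\pi,u\pi)=(u,\pi)(\pi,-1)$ \emph{is} trivial when $q\equiv 3\pmod 4$ despite none of the three listed conditions holding. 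Working through these cases with the explicit tame-symbol formula (Serre, \emph{Local Fields}, XIV.4) shows the third condition in the proposition should read ``$-ab$ is a square''---that is what $(a,-a)=1$ actually buys you---so the printed statement has a sign slip when both valuations are odd (immaterial to the paper's use of the proposition, which is always in the both-units case). The paper disposes of both parts purely by citation and never carries out this bookkeeping; your direct approach is reasonable, but as sketched it does not close, and closing it requires both covering the omitted valuation pattern and correcting the sign.
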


\begin{proof} The first statement is well-known (note in particular that if
$a$ or $b$ is a square then the local invariant is $0$).
The second follows from the discussion at the beginning of \cite{ser},
section XIV.4, which applies just as well to any finite extension of $\Q_p$
as to $\Q_p$ itself.
\end{proof}

\begin{proposition}\label{hasbr}
Let $\phi: V \ra \P^1$ be an elliptic fibration, and suppose that $V$ has bad
fibers of type $I_4$ over $(\alpha:1)$, where $[\Q(\alpha):\Q] = 4$.  Suppose
further that the field of definition of the components of the fiber at
$(\alpha:1)$ is
$\Q(\alpha,\sqrt c)$, where $c \in \Q(\alpha)$ is of square norm, and that
the $\Q(\alpha)$-components of this fiber consist of two disjoint lines.
Then the
pullback of the algebra $\cores_{\Q(\alpha)/\Q}(c,t-\alpha) \in \Br \Q(t)$
to $V$, where $t$ is the coordinate on the standard affine patch of $\P^1$,
is an element of $\Br_\phi V$.
\end{proposition}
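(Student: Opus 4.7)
The plan is to show that the pullback $\phi^* s$ of $s := \cores_{\Q(\alpha)/\Q}(c,t-\alpha) \in \Br \Q(t)$ extends from a class in $\Br \Q(V)$ to an Azumaya algebra on all of $V$; once that is established, the fact that $s$ is by construction pulled back from a central simple algebra on the function field of the base of $\phi$ places the resulting class in $\Br_\phi V$, as recalled after the definition of the vertical Brauer group.

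First I would compute the ramification of $s$ on $\P^1_\Q$. The symbol $A := (c,t-\alpha) \in \Br \Q(\alpha)(t)$ has $c$ a constant, hence a unit at every closed point, so its only possible ramification is where $t-\alpha$ has a zero or a pole. A direct tame-symbol computation gives residue equal to the class of $c$ modulo $\Q(\alpha)^{*2}$ at each of $t=\alpha$ and $t=\infty$, and trivial residue elsewhere. By compatibility of corestriction with residues, this yields residue $c \in \Q(\alpha)^*/\Q(\alpha)^{*2}$ for $s$ at the closed point $P_\alpha$ of $\P^1_\Q$, and residue $N_{\Q(\alpha)/\Q}(c) \in \Q^*/\Q^{*2}$ at infinity. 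The latter vanishes by the square-norm hypothesis on $c$, so on $\P^1_\Q$ the class $s$ is ramified only at $P_\alpha$.

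Next, by purity for the Brauer group of the smooth surface $V$, the pullback $\phi^* s \in \Br \Q(V)$ extends to $\Br V$ as soon as its residue at every codimension-one point of $V$ is trivial. Since $s$ is unramified off $P_\alpha$, the only points to check are the generic points $\xi$ of the irreducible components of $\phi^{-1}(P_\alpha)$. The functoriality of residues under pullback gives $\partial_\xi(\phi^* s) = c^{e_\xi}$ in $\kappa(\xi)^*/\kappa(\xi)^{*2}$, where $e_\xi$ is the multiplicity of the corresponding component in the Cartier divisor $\phi^* P_\alpha$. An $I_4$-fiber is reduced, so $e_\xi = 1$ and this residue is just the image of $c$ in $\kappa(\xi)^*/\kappa(\xi)^{*2}$.

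Finally, by hypothesis the fiber has two $\Q(\alpha)$-irreducible components, each of which splits into a disjoint pair of lines only after passage to $\Q(\alpha,\sqrt c)$; hence the function field of each such component contains $\sqrt c$, the class of $c$ in $\kappa(\xi)^*/\kappa(\xi)^{*2}$ is trivial, and all residues vanish. Purity then promotes $\phi^* s$ to an Azumaya algebra on $V$, finishing the argument. The main subtle point is the residue bookkeeping: the square-norm condition on $c$ is exactly what is needed to kill the residue at infinity on $\P^1_\Q$, and the condition on the geometry of the $\Q(\alpha)$-components of the $I_4$-fiber is exactly what kills the residues on $V$.
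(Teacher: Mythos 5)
The paper's own ``proof'' of this proposition is a single citation to Proposition 4.28 of Bright's thesis, so the argument you have written out is exactly the content that the paper leaves implicit. Your argument is correct and is the standard one: compute the Faddeev residues of $\cores_{\Q(\alpha)/\Q}(c,t-\alpha)$ on $\P^1_\Q$, noting that because $c$ is a constant the only possibly nonzero residues are at $P_\alpha$ (where the residue is $c$) and at $\infty$ (where, after corestriction, it is $N_{\Q(\alpha)/\Q}(c)$, killed by the square-norm hypothesis); then pull back to $V$, use purity for the Brauer group of a smooth surface, and observe that the only codimension-one points of $V$ where ramification could occur are the generic points of the two $\Q(\alpha)$-components of the $I_4$-fiber, whose function fields contain $\sqrt c$ by hypothesis, so the residue $c$ becomes a square and vanishes. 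The reducedness of the $I_4$-fiber is what makes the pulled-back multiplicity $e_\xi = 1$, and this is used exactly as you say.

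The only place where you lean on the paper rather than argue yourself is the very last inference, that a pullback of a class from $\Br\Q(t)$ that extends to $\Br V$ lies in (the image of) $\Br_\phi V = H^1(\Q,\Pic_\phi\bar V)$. The paper's remark after defining $\Br_\phi V$ (again citing Bright) gives only the forward direction of that equivalence, and you need the converse; it is true because the class is algebraic ($c$ becomes a square over $\bar\Q$, so the pullback dies in $\Br\bar V$) and because its image in $H^1(\Q,\Pic\bar V)$ under the Hochschild--Serre boundary is represented by a cocycle supported on vertical divisor classes, but a sentence to that effect would close the loop. Since the paper itself buries this step in a reference, this is an accurate rendering of the cited argument rather than a gap in your reasoning.
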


\begin{proof}
This follows immediately from \cite{brd}, Proposition 4.28.
\end{proof}

\begin{remark}
Note that the hypotheses entail that the fiber consists of two pairs of disjoint
lines $L_1,M_1$ and $L_2,M_2$, where all the $L_i$ and $M_i$ are defined and conjugate over
one and the same quadratic extension $\Q(\alpha,\sqrt c)$ of $\Q(\alpha)$.
\end{remark}

For this to be useful to us, we need to know how to compute the local invariants
of the algebra $\cores_{\Q(\alpha)/\Q}(c,t-\alpha) \in \Br \Q$.
The following proposition addresses this question.

\begin{proposition}\label{coresinv}
(\cite{bre}, Lemma 5 (i)) The local invariant of
$\cores_{\Q(\alpha)/\Q} (c,t-\alpha)$ at $p$ is the sum of those of $(c,t-\alpha)$
at places of $\Q(\alpha)$ lying above $p$.
\end{proposition}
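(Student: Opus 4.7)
The plan is to prove this as a purely general statement about corestriction in the Brauer group: for any finite separable extension $L/K$ of number fields and any $\beta \in \Br L$, one has $\inv_p(\cores_{L/K}(\beta)) = \sum_{\mathfrak{p}\mid p} \inv_{\mathfrak{p}}(\beta)$. Applied to $L = \Q(\alpha)$, $K = \Q$ and $\beta = (c, t-\alpha) \in \Br \Q(\alpha)$ (viewed after pullback to a point of the standard affine patch), this yields the proposition.

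First I would exploit the canonical decomposition $L \otimes_K K_p \isom \prod_{\mathfrak{p}\mid p} L_{\mathfrak{p}}$ of $K_p$-algebras, which induces an identification of Brauer groups $\Br(L\otimes_K K_p)\isom \bigoplus_{\mathfrak{p}\mid p}\Br L_{\mathfrak{p}}$, sending a class to the tuple of its completions. Under this identification the localization map $\Br L \ra \Br(L\otimes_K K_p)$ is simply $\beta \mapsto (\beta_{\mathfrak{p}})_{\mathfrak{p}\mid p}$. Next I would invoke the compatibility of corestriction with localization, expressed by the commutative diagram
$$
\xymatrix{
\Br L \ar[rr]^{\cores_{L/K}} \ar[d] && \Br K \ar[d] \\
\bigoplus_{\mathfrak{p}\mid p} \Br L_{\mathfrak{p}} \ar[rr]^-{\sum \cores_{L_{\mathfrak{p}}/K_p}} && \Br K_p
}
$$
where the vertical maps are the localizations at $p$. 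This is a standard property of corestriction that can be derived from the cohomological definition by choosing coset representatives of $\Gal(\overline{K}/K)/\Gal(\overline{K}/L)$ compatibly with the decomposition $L\otimes_K K_p \isom \prod L_{\mathfrak{p}}$; see, e.g., \cite{ser}, Chapter VII.

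Finally I would appeal to the local fact that for any finite extension $L_{\mathfrak{p}}/K_p$ of local fields, corestriction preserves local invariants, i.e., $\inv_p \circ \cores_{L_{\mathfrak{p}}/K_p} = \inv_{\mathfrak{p}}$; this is a standard consequence of local class field theory (\cite{ser}, Chapter XI). Chaining the two facts, for any $\beta\in \Br L$ one gets
$$\inv_p(\cores_{L/K}(\beta)) = \inv_p\Bigl(\sum_{\mathfrak{p}\mid p}\cores_{L_{\mathfrak{p}}/K_p}(\beta_{\mathfrak{p}})\Bigr) = \sum_{\mathfrak{p}\mid p}\inv_{\mathfrak{p}}(\beta_{\mathfrak{p}}),$$
which is exactly the claim. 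The whole argument is thus a matter of assembling three standard ingredients rather than of overcoming any genuine obstacle; the only delicate point is verifying the base-change compatibility of corestriction, which is where one actually uses that the completions partition the places above $p$.
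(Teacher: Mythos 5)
Your proof is correct and is the standard argument. The paper itself gives no proof at all---it simply cites \cite{bre}, Lemma~5~(i) as the source of this fact---so you have supplied a proof where the paper relies on a citation. Your three ingredients (the decomposition $L\otimes_K K_p \isom \prod_{\p \mid p} L_\p$, the compatibility of corestriction with this localization, and the invariance of local invariants under corestriction between local fields) are exactly the standard class-field-theoretic facts one would assemble to prove the cited lemma, and the chaining of them is carried out correctly. Nothing in the argument is specific to the quaternion algebra $(c, t-\alpha)$; you correctly reduce to the general statement for an arbitrary class in $\Br L$, which is the right level of generality.
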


Now we are ready to show how we chose $f$ and $\delta$ so that there would be
a nontrivial element of the Brauer group arising from an elliptic fibration.

\begin{definition}\label{greek}
From now on, $f$ will always denote a polynomial which is
the product of three irreducible
quadratic polynomials $f_1, f_2, f_3$ and
$\delta$ will be an element of $(\Q[X]/(f))^*$.
For given $f$ and $\delta$,
let $\iota$ be an isomorphism from $\Q[X]/(f)$ to $\oplus_{i=1}^3 \Q[X]/(f_i)$
(which exists by the Chinese Remainder Theorem),
fix isomorphisms $\kappa_i$ from $\bar \Q[X]/(f_i)$ to $\bar \Q \oplus \bar \Q$
(again by the Chinese Remainder Theorem),
let $\upsilon_i$ be the component of $\iota(\delta)$ in $\Q[X]/(f_i)$,
and let $\upsilon_{ik}$ be the $k$th component of $\kappa_i(\upsilon_i)$.
For convenience define $\delta_j$ so that $\delta_{2(i-1)+k} =
\upsilon_{ik}$ (this notation coincides with the $\delta_j$ of Example
\ref{generic}).
Also let $\sigma_i$ be the nontrivial automorphism of $\Q[X]/(f_i)$
and let $r_i$ be a fixed root of $f_i$ in $\bar \Q$.
\end{definition}

\begin{theorem}\label{whichgp}
With $f$ and $\delta$ as above, let $V$ be the K3 surface
constructed from $f, \delta$. Suppose further that the splitting
field of $f$ is of degree $8$; that the norm of $\upsilon_1$ is a
square; that the norms of $\upsilon_2$ and $\upsilon_3$ multiplied
by the discriminant of $f_1$ are squares; and that the
$\upsilon_i$ are otherwise generic.  Then the field of definition
of the lines of $V$ has degree $32$, both elliptic fibrations
associated to the factorization $f = (f_1)(f_2 f_3)$ in Remark
\ref{factorell} satisfy the conditions of Proposition \ref{hasbr},
and the elements of the respective vertical Brauer groups
constructed in that proposition map to the same element of
$H^1(\Q,\Pic \bar V)$ and hence to the same element of $\Br V/\Br
K$.
\end{theorem}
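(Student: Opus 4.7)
The proof splits into three parts of increasing difficulty: (a) proving $[k([\Lambda]):\Q]=32$; (b) verifying the hypotheses of Proposition \ref{hasbr} for each of the two fibrations; and (c) showing the two resulting Brauer classes coincide in $H^1(\Q,\Pic\bar V)$. Throughout I will index the roots so that $\omega_1,\omega_2$ are the roots of $f_1$, $\omega_3,\omega_4$ of $f_2$, and $\omega_5,\omega_6$ of $f_3$, so that $\delta_{2i-1}\delta_{2i}=N(\upsilon_i)$.

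\textbf{For (a)}, Lemma \ref{kLambdam} reduces the claim to $[m:l]=4$, since $[l:\Q]=8$. Generically $[m:l]=2^5=32$. The three norm hypotheses impose Kummer relations in $l^\times/l^{\times 2}$: $\delta_1\delta_2=N(\upsilon_1)$ is a square in $\Q\subset l$; and since $\disc(f_1)=(\omega_1-\omega_2)^2$ is a square in $l$, the hypotheses $N(\upsilon_i)\disc(f_1)\in\Q^{\times 2}$ for $i=2,3$ force $\delta_3\delta_4$ and $\delta_5\delta_6$ to be squares in $l$ as well. These three relations are $\F_2$-independent in the subgroup of $l^\times/l^{\times 2}$ generated by the $\delta_i\delta_j$'s, since their product is $N(\delta)$, automatically a square from the three hypotheses. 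Hence $|\Gal(m/l)|=32/2^3=4$, giving $[k([\Lambda]):\Q]=4\cdot 8=32$.

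\textbf{For (b)}, take $\omega=\omega_1$, $\psi=\omega_2$. Remark \ref{factorell} says the fibrations descend to $\Q(\sqrt{N(\upsilon_2)N(\upsilon_3)})$; by the norm hypotheses $N(\upsilon_2)\disc(f_1)\cdot N(\upsilon_3)\disc(f_1)$ is a square, so $N(\upsilon_2)N(\upsilon_3)$ is a square in $\Q$ and both fibrations are $\Q$-rational (I take the base conic to be $\P^1$, as is implicit from the everywhere-local solvability needed to apply Proposition \ref{hasbr}). For the Galois action on the four $4$-gons of $\cS$: by the proof of (a), the elements $\tmuvar_{\{\omega_3,\omega_4\}}$ and $\tmuvar_{\{\omega_5,\omega_6\}}$ lie in $\Gal(m/l)$ and act on $\cS$ as a common double transposition, yielding two orbits of size $2$. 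To merge them into a single transitive orbit I would exhibit a lift $\sigma\in\Gal(m/\Q)$ of $(\omega_1\,\omega_2)\in\Gal(l/\Q)$ whose sign pattern is forced by $\sigma(\sqrt{\disc f_1})=-\sqrt{\disc f_1}$ to satisfy $\epsilon'_{\omega_3}\epsilon'_{\omega_4}=-1$; Lemma \ref{whichI} then makes $\sigma$ send $L_0$ to a line of an ``orthogonal'' $4$-gon. Thus $[\Q(\alpha):\Q]=4$. The stabilizer $H$ of one bad fiber contains the rotation $\tmuvar_{\{\omega_1,\omega_2\}}$, acting on $S(L_0)$ as $(L_0\,L_{\{\omega_1,\omega_2\}})(L_{\{\omega_1\}}\,L_{\{\omega_2\}})$, producing exactly the two $\Q(\alpha)$-orbits, each a pair of opposite (hence disjoint) lines. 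Taking $\sqrt c=\sqrt{\delta_1}(\sqrt{\delta_3}+\sqrt{\delta_4})$ as a quadratic generator of $\Q(L_0)/\Q(\alpha)$, the four Galois conjugates of $c=\delta_1(\delta_3+\delta_4+2\sqrt{N(\upsilon_2)})$ coincide pairwise, so $N_{\Q(\alpha)/\Q}(c)$ is a perfect square; explicitly it equals $(N(\upsilon_1)(\delta_3-\delta_4)^2)^2\in\Q^{\times 2}$. The analysis for $\phi_{\cS'}$ is symmetric.

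\textbf{For (c)}, the principal obstacle: following \cite{brd}, Prop.\ 4.28, the image of the Brauer element in $H^1(\Q,\Pic\bar V)$ is represented by the cocycle $\sigma\mapsto D^\sigma-D$, where $D\in\Pic\bar V$ is one $\Q(\alpha,\sqrt c)$-component of the bad fiber---a pair of opposite lines in $S\in\cS$. Lemma \ref{hyp} gives the relation $D_S+D_{S'}=H$ (the hyperplane class) in $\Pic\bar V$ for the complementary $S'\in\cS'$. My plan is to show that, under this linear equivalence combined with an explicit matching of which opposite-line pair in $S'$ is the complement in $H$ of the chosen pair in $S$, the two cocycles (one for each fibration) differ by the coboundary of a Galois-invariant divisor, hence are cohomologous in $H^1(\Q,\Pic\bar V)$. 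Executing this requires detailed bookkeeping of the action of $\Gal(m/\Q)$ on $\Lambda$ via the semidirect product $\tmu\rtimes S_6$, intertwined with the linear equivalences among the $D_S$; this is where the bulk of the technical work lies. Once the cohomologousness is established, the equality in $\Br V/\Br K$ follows via the Hochschild--Serre spectral sequence.
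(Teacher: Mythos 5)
Your proposal for parts (a) and (b) closely tracks the paper's own proof: reducing $[k([\Lambda]):\Q]=32$ to $[m:l]=4$ via the three norm conditions, descending the fibrations to $\Q$ via $N(\upsilon_2)N(\upsilon_3)\in\Q^{*2}$, lifting $\sigma_1=(\omega_1\,\omega_2)$ to an element that merges the two $\Gal(m/l)$-orbits of $4$-gons, and using $c\in\Q(r_1)$ to get a square norm. Two small cautions. Your $\F_2$-independence sentence is confusing (the fact that $N(\delta)$ is automatically a square is a \emph{consequence}, not the reason for independence; the real reason is that $\delta_1\delta_2$, $\delta_3\delta_4$, $\delta_5\delta_6$ involve disjoint index sets, so they impose $3$ independent conditions on the rank-$5$ generic subgroup). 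And in the transitivity step you discuss only the forced sign change $\epsilon'_{\omega_3}\epsilon'_{\omega_4}=-1$, but you also need $\epsilon'_{\omega_5}\epsilon'_{\omega_6}=-1$ for the same reason -- otherwise the image of $L_0$ would have $|I\cap\{3,4,5,6\}|$ odd and land in the complementary exhibit $\cS'$ rather than $\cS$; the paper is careful to say the lift of $\sigma_1$ is $t_{(12)}s_{ij}$ with $i\in\{3,4\}$ \emph{and} $j\in\{5,6\}$. Your explicit $c=\delta_1(\delta_3+\delta_4+2\sqrt{N(\upsilon_2)})$ and the norm computation are correct and are a welcome concretization of the paper's abstract ``some $c\in\Q(r_1)$'' argument.

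The genuine gap is part (c). The paper itself provides no hand argument here: it states that the agreement of the two Brauer classes ``is proved by a calculation using {\sc magma}'' and refers to \cite{elec}. Your sketch is not only unexecuted but, as written, starts from a problematic premise: the cocycle ``$\sigma\mapsto D^\sigma-D$'' with $D\in\Pic\bar V$ a fiber component is a $1$-coboundary in $H^1(\Q,\Pic\bar V)$ (indeed already in $H^1(\Q,\Pic_\phi\bar V)$), hence represents the trivial class. The nontrivial element of $\Br_\phi V$ from Proposition \ref{hasbr} arises via the Hochschild--Serre spectral sequence and the residue at the $I_4$ fiber, and its image in $H^1(\Q,\Pic\bar V)$ is not of that naive form. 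Your plan to exploit Lemma \ref{hyp} ($D_S+D_{S'}\sim H$) to relate the two fibrations is a reasonable direction, but without a correct starting description of the cohomology class the bookkeeping cannot even begin. So (c) remains open in your proposal, as it does in the paper itself unless one accepts the machine verification.
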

\begin{proof}
The condition on $\delta$ shows that the norm of the projection of
$\delta$ to $\Q[X]/(f_2 f_3)$ is a square, so the elliptic
fibrations are defined over $\Q$ by Remark \ref{factorell}.  As we
did in the discussion just after Lemma \ref{lkLkLambda}, fix a
square root $\sqrt{\delta_j}$ of $\delta_j$ for $j\in
\{1,\ldots,6\}$ (corresponding to $\epsilon_j$ in Example
\ref{generic}). Let us now determine the action of the absolute
Galois group of $\Q$ on the lines. By Lemma \ref{kernelrho}, it
factors through the extension $m$ of the splitting field
$l=\Q(r_1,r_2,r_3)$ of $f$ obtained by adjoining all elements of
the form $\sqrt{\delta_i} \sqrt{\delta_j}$. Note that $\delta_1
\delta_2 = \upsilon_{11}\upsilon_{12} =
N_{(\Q[X]/f_1)/\Q}(\upsilon_1)$ is a square by hypothesis.
Similarly, letting $\Delta_1$ denotes the discriminant of $f_1$,
we find that $\Delta_1\delta_3\delta_4$ and
$\Delta_1\delta_5\delta_6$ are squares, and thus that
$\delta_3\delta_4$ and $\delta_5\delta_6$ are squares in
$\Q(\sqrt{\Delta_1})=\Q(r_1)$. It follows that up to square
factors in $\Q(r_1)$ every element of the form $\sqrt{\delta_i}
\sqrt{\delta_j}$ is equivalent to $\delta_1 \delta_3$, $\delta_1
\delta_5$, or $\delta_3\delta_5$. We conclude that
$m=l(\sqrt{\delta_1}\sqrt{\delta_3},\sqrt{\delta_1}\sqrt{\delta_5})$.
As the $\upsilon_i$ are otherwise generic, we find that the field
$m$ of definition of the lines has degree $4$ over $l$ and
therefore degree $32$ over $\Q$.

We now describe the Galois group of $m$ over $\Q$
in terms of the $s, t$ described in the
discussion following Proposition \ref{rhosurj}.
The $\upsilon_i$ are generic aside from their given properties,
so the field  $m'= l(\sqrt{\delta_1},\ldots,\sqrt{\delta_6}) =
m(\sqrt{\delta_1})$ has degree $2$ over $m$ (cf. discussion
following Lemma \ref{lkLkLambda} and Example \ref{generic}).
Recall that for a permutation
$p \in S_6$, the automorphism $t_p$ of $m$ is induced from $m'$
by sending $\delta_{j}$ and $\sqrt{\delta_j}$ to
$\delta_{p(j)}$ and $\sqrt{\delta_{p(j)}}$ respectively.
For a subset $I \subseteq \{1,2,\dots,6\}$, the automorphism
$s_I$ of $m$ is induced from $m'$ by fixing all $\delta_j$ and
sending $\sqrt{\delta_j}$ to $\pm \sqrt{\delta_j}$ where the sign
is negative if and only if $j \in I$. We will also refer to the
$32$ lines on $V$ using the notation $L_I$ introduced before
Lemma \ref{whichI}.

The automorphism $\sigma_1$ lifts to an automorphism of $m'$ that
fixes $r_2$ and $r_3$, and therefore all $\delta_j$ for $3 \leq j
\leq 6$. Because $\sigma_1$ sends $\sqrt{\Delta_1}$ to
$-\sqrt{\Delta_1}$, and $\delta_3\delta_4$ and $\delta_5\delta_6$
are squares up to a factor $\Delta_1$, this lift changes the signs
of the square roots of one of $\delta_3$ and $\delta_4$ and of one
of $\delta_5$ and $\delta_6$. The induced automorphism of $m$ is
then $t_{(12)} s_{i,j}$ for some $i \in \{3,4\}$ and $j \in
\{5,6\}$. The automorphisms $\sigma_2$ and $\sigma_3$ lift to
$t_{(34)}$ and $t_{(56)}$ respectively. Together with
$s_{\{1,2\}}$ and $s_{\{3,4\}}$ these elements generate a subgroup
of $\Gal(m/\Q)$ of order $32$, so they generate the full Galois
group. The orbits of this group on the lines are of order $8$ and
each orbit contains two nonintersecting lines from each of the
four fibers of one fibration. In particular, all the four $I_4$
fibers are conjugate, so they are indeed defined over a field of
degree $4$.

By Remark \ref{factorell} the fibers of the elliptic fibrations are orbits
of $\Lambda$ under the group generated by $s_1$ and $s_2$
in $\Aut \Lambda$ (only their product $s_1s_2$ is in the subgroup of
$\Aut \Lambda$
induced by Galois). Consider first the fibration associated to
the \gonquad{} $\cS$ that contains a $4$-gon $S$ containing $L_0$. Then
that $4$-gon is $S = \{L_0,L_1,L_2,L_{12}\}$. One easily checks
that the subgroup $\Gal(m/k(S))$ is generated by $t_{(34)}$,
$t_{(56)}$, and $s_{\{1,2\}}$. This group fixes $\Q(r_1)$, so we
have $\Q(r_1) \subset k(S)$, and in fact the group
$\Gal(m/\Q(r_1))$ is generated by $\Gal(m/k(S))$ and $s_{\{3,4\}}$.
Under $\Gal(m/k(S))$ the $4$-gon $S$ breaks up into the orbits
$\{L_0,L_{12}\}$ and $\{L_1,L_2\}$, each consisting of two disjoint lines.

Let $L$ be any line in $S$.
The subgroup $\Gal(m/k(L))$ is generated by $t_{(34)}$ and
$t_{(56)}$, which is normal in $\Gal(m/\Q(r_1))$. Therefore,
$k(L)$ is Galois over $\Q(r_1)$ and the corresponding Galois group
is $(\Z/2\Z)^2$. Since $k(S)$ is one of the quadratic subfields, it
follows from elementary Galois theory that $k(L)$ can be obtained
from $k(S)$ by adjoining the square root of an element $c \in\Q(r_1)$.
As $k(S)$ has degree $4$ over $\Q$ and $\Q(r_1)$ is a quadratic
subextension, the norm of $c$ from $k(S)$ to $\Q$ is indeed a square.
The arguments for the opposite fibration are completely similar.

The final statement of the proposition,
that the elements of the vertical Brauer group obtained in this
way give the same element of $H^1(\Q,\Pic \bar V)$, is proved by a
calculation using {\sc magma}.  The point is to verify that the
images of the nontrivial elements of $H^1(\Q,\Pic_\phi V)$ and
$H^1(\Q,\Pic_{\phi'} V)$ constructed in Proposition \ref{hasbr} in
$H^1(\Q,\Pic \bar V)$ are equal, where $\phi$ and $\phi'$ are the
two fibrations associated to the factorization $f = (f_1)(f_2
f_3)$ as in Remark \ref{factorell}.  See \cite{elec} for details.
\end{proof}

\begin{remark}
In special cases it is possible for the Picard group of $V$ to have higher
rank than in the generic case (in fact this happens in the example that
we present immediately below).  Thus we may have constructed elements,
not of $H^1(\Q,\Pic \bar V)$, but only of $H^1(\Q,P)$, where $P$, the subgroup
of $\Pic \bar V$ generated by the classes of the lines, is a proper subgroup of
$\Pic \bar V$.  The inclusion $P \ra \Pic \bar V$ gives a map
from $H^1(\Q,P)$ to $H^1(\Q,\Pic \bar V)$, which allows us to consider
the elements we have constructed as elements of the Brauer group.
It is possible that our elements could be in the kernel of this map in
some situations.
However, they are well-defined Brauer classes,
and so they may be used to attempt to prove that $V$ has no rational points.
If their local invariants do not add to $0$, it indicates that they are
not in the kernel.
\end{remark}

\begin{remark} It is not essential for the method that the element of
$H^1(\Q,\Pic \bar V)$ be obtainable from two different fibrations.  However,
as was first pointed out by Swinnerton-Dyer, this means that the local invariants
are constant on the fibers of two different fibrations and is therefore much
more likely to be constant than would otherwise be the case.  This greatly
facilitated finding the example below.
\end{remark}

\subsection{An explicit example}\label{sectionexample}
Now let us present our example, which was found by searching over various
choices of $f_1, f_2, f_3$ with small splitting fields $\Q(i), \Q(\sqrt 2),
\Q(\sqrt 5)$ such that the discriminant of $f_1f_2f_3$ is small
and $\delta$ such that the field of definition of the
lines would not introduce new bad primes.
For the rest of this paper, let $f = (x^2+1)(x^2-2x-1)(x^2+x-1) = f_1(x)
f_2(x)f_3(x)$, let $A_f = \Q[X]/f(X)$,
let $E$ be the algebra $ \Q(i) \oplus \Q(\sqrt 2) \oplus \Q(\sqrt 5)$,
fix an isomorphism $\iota: A_f \ra E$ as in Definition \ref{greek},
and let
\begin{equation}\label{deltais}\delta = (-2X^5 + 3X^4 + 5X^3 - 8X^2 + 7X + 7)/6
\end{equation}
be the element of $A_f$, so that $\iota(\delta) =
(3,-(1 + \sqrt 2), (1+\sqrt 5)/2)$.
Then $f,\delta$ satisfy the conditions of Theorem \ref{whichgp}.
For nonzero rational $t$ let $C_t$ be the curve $y^2 = tf(x)$.
Let $V$ be the K3 surface $V_{f,\delta}$.
Our goal is to prove the main theorem, Theorem \ref{main}, restated and
slightly reworded here for ease of reading:

\begin{theorem}\label{mainnew}
Let $S$ be the union of $\{5\}$
with the set of primes that split completely in the field of definition
of the lines of $V$, which is
$$F = \Q\left(\sqrt {-1}, \sqrt 2, \sqrt 5, \sqrt {-3(1 + \sqrt 2)}, \sqrt {6(1 + \sqrt 5)}\right).$$
Then for all $n$ which are products of elements of $S$, the $2$-part of the
Tate-Shafarevich
group of the Jacobian of the curve $C_{-6n}$ is nontrivial.
\end{theorem}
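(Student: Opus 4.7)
For each admissible $n$ my goal is to exhibit a $2$-covering $X_n$ of $J_{-6n}=\Jac(C_{-6n})$ which represents a nontrivial class in $\Sha(J_{-6n}/\Q)[2]$. I will take $V_n = X_n/\iota$ to be the associated twist of the Kummer surface and produce a Brauer--Manin obstruction on $V_n$, following the strategy of the introduction. For the base case $n=1$, the hypotheses of Theorem \ref{whichgp} hold for $f = (x^2+1)(x^2-2x-1)(x^2+x-1)$ and $\delta$ as in \eqref{deltais}: the splitting field of $f$ has degree $8$; the norm of the $\Q(i)$-component of $\iota(\delta)$ is $9$, a square; and the norms of the other two components, namely $-(1+\sqrt 2)$ and $(1+\sqrt 5)/2$, are each $-1$, which when multiplied by $\disc(x^2+1)=-4$ give the square $4$. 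Theorem \ref{whichgp} then supplies a Brauer class $\mathcal A \in \Br V / \Br \Q$ arising from the two elliptic fibrations associated to the factorization $f = f_1\cdot(f_2 f_3)$, and the field of definition of the $32$ lines is $F$ exactly as in the statement.

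\textbf{Twisting and local solvability.} For general admissible $n$, the curve $y^2 = -6n\cdot f(x)$ corresponds to multiplying $\delta$ by $-6n$ in $A_f^\ast$; the resulting element $\delta_n$ still satisfies the hypotheses of Theorem \ref{whichgp} up to squares in the relevant completions, since $-6n \in \Q^\ast$ is a product of primes from $S$. This yields a twisted K3 surface $V_n$ with a Brauer class $\mathcal A_n$ constructed in the same manner. I then verify that $X_n$, and hence $V_n$, is everywhere locally solvable: the archimedean check is immediate from the sign of the defining polynomial; at primes of good reduction Hensel's lemma together with a smooth-point count on the reduction suffices; at the primes $\{2,3\}$ and the primes dividing $n$ (which lie in $S$ and split completely in $F$, placing us essentially in the split-torus situation of the generic case), explicit local points are exhibited, and the verification that $\delta_n$ lies in the image of the local $2$-descent map shows that $X_n$ itself has $\Q_p$-points.

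\textbf{Brauer--Manin computation.} The technical core is to compute $\inv_p(\mathcal A_n(P))$ as $P$ runs over $V_n(\Q_p)$ for each place $p$ and to check that the total sum equals $1/2$. By Propositions \ref{odd} and \ref{coresinv} this reduces to Hilbert-symbol computations for the symbols $(c, t-\alpha)$ at the primes of $\Q(\alpha)$ above $p$. The invariant should be constant on $V_n(\Q_p)$ at every $p$, so that Proposition \ref{const} applies. For a prime $p \in S$ dividing $n$, the decisive point is that $p$ splits completely in $F$: over $\Q_p$ the elliptic fiber defining the Brauer class is then presented by rational data with both $c$ and $t-\alpha$ of even valuation, so Proposition \ref{odd} gives local invariant zero, independent of the multiplicity of $p$ in $n$. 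At the finitely many remaining bad primes in $\{2,3,5\}$, the invariants are to be computed explicitly (e.g.\ via the \textsc{Magma} script of \cite{elec}) and shown to be independent of the admissible $n$ and to sum to $1/2$ together with any archimedean contribution.

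\textbf{Conclusion and main obstacle.} Since $\sum_p \inv_p(\mathcal A_n(P)) = 1/2 \ne 0$ for every adelic point, exactness of the Brauer--Manin sequence forces $V_n(\Q)=\emptyset$; a fortiori $X_n(\Q)=\emptyset$, because any rational point on $X_n$ would descend to one on $V_n$. Together with the everywhere local solvability of $X_n$, this realises $X_n$ as a nontrivial class in $\Sha(J_{-6n}/\Q)[2]$, proving the theorem. The main obstacle is the local-invariant computation at the small primes together with the uniform argument for the primes $p \in S$: showing that the invariants really do stabilise across the infinite family of twists hinges on the splitting condition in $F$, which forces the Hilbert symbols to vanish for all admissible $n$ simultaneously. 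This is precisely the role played by the set $S$ in the statement.
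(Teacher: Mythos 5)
Your proposal has the right overall shape, but it rests on a structural misunderstanding that is precisely what the paper's argument is designed to avoid. You propose a family of surfaces $V_n$ with Brauer classes $\mathcal{A}_n$, together with a ``uniform'' Brauer--Manin computation across the infinite family of twists. In fact the K3 surface $V = V_{f,\delta}$ is \emph{identical} for every twist $C_{-6n}$, because $V_{cf,\delta} = V_{f,\delta}$ for any $c \in \Q^\ast$ (and the defining condition $\delta q^2 = c_2 X^2 + c_1 X + c_0$ is likewise invariant under scaling $\delta$ by a scalar). There is no family: the entire Brauer--Manin analysis is carried out once, on the single surface $V$ (Proposition \ref{obstr}), and that is what makes the result hold for infinitely many $n$. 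The dependence on $n$ lives entirely in the arithmetic of the torsor, namely whether the fixed $\delta$ lies in the fake Selmer group of $\Jac(C_{-6n})$.

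That is where the set $S$ enters, not where you place it. You claim the splitting condition in $F$ forces the Hilbert symbols $(c, t-\alpha)$ to vanish at primes $p \mid n$; but $V$ has good reduction at such primes (its bad primes are $2,3,5$ and do not depend on $n$), so the local invariants there are automatically $0$ by Lemma \ref{goodp}, independently of any splitting condition. The splitting of $p$ in $F$ is used instead to guarantee that $\iota(\delta)$ has the form $(3, 3a^2, 3b^2)$ in $E \otimes \Q_p$ and so is trivial in $H_{\Q_p}$; hence $\delta$ lies automatically in the image of the local $2$-descent map at $p \mid n$, which is the content of Proposition \ref{selmer}. The remaining local-solvability checks then reduce to the finitely many cases $n = 1, 5$ at $p \in \{2,3,5,\infty\}$, handled computationally. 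A secondary gap: your final step ``realises $X_n$ as a nontrivial class in $\Sha$'' glosses over the relation between membership of $\delta$ in the \emph{fake} Selmer group and an actual nontrivial $2$-covering; the paper handles this with Propositions \ref{usev} and \ref{nontriv}, the latter invoking Stoll's condition $(\dagger)$ (automatic in genus $2$) to compare fake and true Selmer groups before concluding that $\Sha[2]$ is nontrivial.
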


To do so, we will follow the strategy outlined in the introduction:
first we will show that $\delta$ gives an element of the
fake Selmer group of the curve in question by showing that the corresponding
principal homogeneous space of the Jacobian of $C_{-6n}$ has points everywhere
locally. Then we will use the element of the Brauer group described
above to prove that $V_{f,\delta}$ has no rational points and conclude 
that the principal homogeneous space has no rational points either.  
We begin by summarizing some results
from \cite{S} on the fake Selmer group of a hyperelliptic curve of genus~$2$.

\begin{definition}\label{descent}
Let $g$ be a squarefree polynomial of degree $6$ over $\Q$ and $C$ the curve
$y^2 = g(x)$.
For any field $K$ of characteristic $0$, let $H_K = \ker(N:
(A_g \otimes K)^*/{(A_g \otimes K)^*}^2K^* \ra K^*/{K^*}^2)$
with $A_g = \Q[X]/g(X)$.
(Note that the norm is well-defined,
because $\deg g$ is even and so $N(K^*) \subset {K^*}^2$.)
Write $H$ instead of $H_\Q$.  As in
\cite{S}, Prop.\ 5.5, let the {\em fake Selmer group}
$\Sel^{(2)}_{\rm fake}(\Q,\Jac C)$ of $C$ be the
subgroup of $H$ consisting of elements that are everywhere
locally in the image of the
$2$-descent map on the Jacobian of $C$.  Define $\Delta_K$,
the {\em descent map over $K$},
to be the function from the set of $K$-rational points of $C$ which are
neither Weierstrass points nor points at infinity
to $H_K$ such that $\Delta_K(x_0,y_0) = 1 \otimes x_0 - X \otimes 1$.
\end{definition}

\begin{proposition}\label{descjac}
The function $\Delta_K$ may be extended
multiplicatively to a function from $J(K)$ to $H_K$.
\end{proposition}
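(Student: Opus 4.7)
The plan is to define the extension by the obvious multiplicative formula on divisors, and then to verify that the result depends only on the divisor class. Concretely, for a degree-zero $K$-rational divisor $D=\sum_i n_i(P_i)$ on $C$ whose support avoids the Weierstrass points and the points at infinity, I would set
$$
\Delta_K(D) \;:=\; \prod_i \Delta_K(P_i)^{n_i}\;\in\;H_K.
$$
By Riemann--Roch, every class in $J(K)$ is represented by a divisor of the form $[(P_1)+(P_2)-D_\infty]$ with $D_\infty$ a fixed $K$-rational degree-two divisor at infinity; using the freedom to translate by a principal divisor, one may always arrange the support to lie in the allowed locus. The points $P_i$ form a Galois-stable multiset, so the product is automatically Galois-invariant.

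First I would check the norm condition for a single non-Weierstrass affine point $P=(x_0,y_0)$: the norm of $1\otimes x_0-X\otimes 1$ from $A_g\otimes K$ to $K$ is $\prod_\alpha(x_0-\alpha)=g(x_0)=y_0^2$, a square, so $\Delta_K(P)\in H_K$. Multiplicativity then forces $\Delta_K(D)\in H_K$ for any $D$ as above.

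The main step, and the principal obstacle, is independence of the representative divisor --- equivalently, triviality of $\Delta_K(\mathrm{div}(h))$ in $H_K$ for any $h\in K(C)^\ast$ whose divisor is supported in the allowed locus. Writing $h=h_0(x)+y\,h_1(x)$ with $h_0,h_1\in K[x]$, the norm from $K(C)$ to $K(x)$ equals $N(h)(x)=h_0(x)^2-g(x)h_1(x)^2$, whose value at any root $\alpha$ of $g$ is the square $h_0(\alpha)^2$. The identification I need is
$$
\prod_P(\alpha-x_P)^{\mathrm{ord}_P(h)} \;\equiv\; N(h)(\alpha)\cdot(\text{element of }K^\ast) \pmod{(K(\alpha)^\ast)^2},
$$
obtained by comparing the divisor of $h$ on $C$ with that of $N(h)$ on $\P^1$ and accounting for the contribution from infinity. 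Running this over every residue field of $A_g$ shows simultaneously that $\Delta_K(\mathrm{div}(h))$ lies in the kernel of the norm and is trivial modulo squares times $K^\ast$, hence is trivial in $H_K$.

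Finally, the degenerate cases in which the Mumford-style representative unavoidably meets the Weierstrass locus or infinity are dealt with by first adding a generic principal divisor to sweep the support into the allowed locus; the invariance just established ensures the value is unchanged. Multiplicativity on $J(K)$ is then immediate from the definition as a product over a representative divisor. The crux is the residue calculation on $\P^1$ underlying the displayed identity, which is the content of the corresponding statement in \cite{S}.
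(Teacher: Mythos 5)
The paper's ``proof'' of this proposition is nothing more than a citation to Schaefer (\cite{Sc}, Lemma 2.1, and the discussion in his section 2.5), with a pointer to \cite{S} for explicit formulas at Weierstrass points. Your proposal reconstructs the content of that cited lemma, and the reconstruction is essentially correct and follows the standard route: define $\Delta_K$ multiplicatively on divisors in general position, then prove that it is trivial on principal divisors. Your mechanism for the key step --- comparing $\prod_P(\alpha-x_P)^{\mathrm{ord}_P(h)}$ with the value at $\alpha$ of the norm $N_{K(C)/K(x)}(h)=h_0^2-g\,h_1^2$ via pushforward of $\mathrm{div}(h)$ along $x:C\to\P^1$ --- is a legitimate and direct way to establish the required identity. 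Schaefer's own argument phrases this step as an application of Weil reciprocity, computing $\prod_P h(P)^{\mathrm{ord}_P(x-\alpha)}=h((\alpha,0))^2/(h(\infty_+)h(\infty_-))$, which gives the same ``square times scalar'' conclusion; your norm/pushforward version and the Weil-reciprocity version are two faces of the same reciprocity fact. Your handling of the degenerate Mumford representatives (translate by a generic principal divisor, then invoke the invariance just proved) also matches the discussion the paper defers to \cite{S}.

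Two small imprecisions worth noting, though neither is fatal. First, $h_0,h_1$ should lie in $K(x)$, not $K[x]$, since $h$ is an arbitrary rational function. Second, the claim that $\Delta_K(P)\in H_K$ for a \emph{single} point $P$ relies on $g$ being monic: in general $N_{A_g/K}(x_0-X)=g(x_0)/c=y_0^2/c$ where $c$ is the leading coefficient of $g$, which is a square in $K^*$ only if $c$ is. The norm condition does hold for the degree-zero combinations that actually represent elements of $J(K)$, since the $c$'s cancel in pairs, so the extended map genuinely lands in $H_K$; but the pointwise statement as you wrote it is cleaner than it is true. This same slight looseness appears in the paper's own Definition \ref{descent}, so you are in good company, but it is worth being aware of in the non-monic twists $C_{-6n}$ that the paper ultimately cares about.
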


\begin{proof}
This is essentially \cite{Sc}, Lemma 2.1, as modified by the discussion
in section 2.5; see also
\cite{S}, sect.\ 6 for explicit formulas for Weierstrass points.
\end{proof}

%

\begin{proposition}\label{selmer}
Let $S$ be the set of primes introduced in Theorem \ref{mainnew}.
Then for all $n$ that are products of elements of $S$,
the fake Selmer group of the Jacobian of $C_{-6n}$ contains $\delta$.
Equivalently, the principal homogeneous space of $\Jac(C_{-6n})$
corresponding to $\delta$ is everywhere locally solvable.
\end{proposition}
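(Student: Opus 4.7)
The plan is to verify, at each place $v$ of $\Q$, that the image $\delta_v$ of $\delta$ in $H_{\Q_v}$ lies in the image of the local $2$-descent map $\Jac(C_{-6n})(\Q_v)/2\Jac(C_{-6n})(\Q_v) \to H_{\Q_v}$; by Definition \ref{descent} this gives $\delta \in \Sel^{(2)}_{\rm fake}(\Q, \Jac(C_{-6n}))$, equivalent to everywhere-local solvability of the principal homogeneous space corresponding to $\delta$. First I reduce to a finite set of places. If $p$ is a prime not dividing $6 \cdot 5 \cdot n$, then $C_{-6n}$ has good reduction at $p$ and each component $\delta_i \in \{3,\,-(1\pm\sqrt 2),\,(1\pm\sqrt 5)/2\}$ is a local unit. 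A standard argument---smooth $\mathbb F_p$-points on the reduction by the Weil bound, Hensel-lifted to $\Q_p$-points, together with the fact that at such $p$ the descent map surjects onto the unramified classes of $H_{\Q_p}$---shows $\delta_v$ is automatically in the local image. This leaves the set $\{\infty,\,2,\,3,\,5\} \cup \{p \mid n\}$.

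For a prime $p \mid n$ with $p \neq 5$, the hypothesis $p \in S$ says $p$ splits completely in $F$. By Lemma \ref{kLambdam} together with the explicit description of $F$ in the statement, this is equivalent to $\sqrt{\delta_i \delta_j} \in \Q_p^*$ for every pair $(i,j)$. Since $\delta_1 = 3 \in \Q_p^*$, each $\delta_i$ lies in the class of $3$ modulo $(\Q_p^*)^2$. Therefore the image of $\delta$ in $(A_f \otimes \Q_p)^*/\bigl((A_f \otimes \Q_p)^{*2}\cdot\Q_p^*\bigr)$ equals that of the constant tuple $(3,3,\ldots,3)$, which becomes the trivial class once we divide by $3 \in \Q_p^*$. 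The trivial class is the image of $0 \in \Jac(C_{-6n})(\Q_p)$ under descent, so local solvability is immediate at such $p$.

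It remains to treat the fixed set $\{\infty,\,2,\,3,\,5\}$. The analysis at $p \in \{2,3,5\}$ depends on $n$ only through its class modulo $(\Q_p^*)^2$, and the splitting hypothesis forces every $p' \in S\setminus\{5\}$ to be $\equiv 1 \pmod 8$ (splitting in $\Q(i)\cdot\Q(\sqrt 2)$) and $\equiv \pm 1 \pmod 5$ (splitting in $\Q(\sqrt 5)$), so $p'$ is a square in $\Q_2^*$ and in $\Q_5^*$. Including the prime $5$, this restricts the class of $n$ at each of $p=2,3,5$ to a short list. At the archimedean place, a sign analysis of $f$ shows that $f$ is negative on $\bigl((-1+\sqrt 5)/2,\,1+\sqrt 2\bigr)$, so $C_{-6n}$ has real points; one exhibits a suitable real $x_0$ with $\Delta_{\mathbb R}(x_0,y_0) = \delta_v$. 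For each of the finitely many remaining cases at $p\in\{2,3,5\}$ one uses Hensel's lemma to produce an explicit $\Q_p$-rational point of $C_{-6n}$ (or a rational degree-$2$ divisor on it) whose descent image matches $\delta_v$.

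The main obstacle will be the $p=2$ analysis: $\Q_2^*/(\Q_2^*)^2$ has order $8$, the Hensel precision required is higher, and one must verify that both classes of $n$ modulo $(\Q_2^*)^2$ (namely $n \equiv 1$ and $n \equiv 5 \bmod 8$, the possibilities arising from products of primes in $S$) admit a $2$-adic point of $C_{-6n}$ hitting $\delta$. Once these finitely many explicit local checks are carried out, the proposition follows.
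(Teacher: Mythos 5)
Your reduction follows the paper's proof almost exactly: good primes are handled automatically, primes $p\mid n$ (with $p\neq 2,3,5$) are handled by observing that complete splitting in $F$ forces each $\delta_i$ into the square class of $\delta_1=3$ so that $\delta$ becomes trivial in $H_{\Q_p}$, and the remaining check is reduced to finitely many $p$-adic square classes of $n$ at the bad places $\{2,3,5,\infty\}$. Up to this point the argument is correct and is the same argument as in the paper, where the observation that $n\sim 1$ or $n\sim 5$ in $\Q_p^*/(\Q_p^*)^2$ for $p=2,5$ (and that $n$ is a $3$-adic unit) reduces everything to the cases $n=1$ and $n=5$.

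The genuine gap is the final step. You state a \emph{plan} to ``use Hensel's lemma to produce an explicit $\Q_p$-rational point of $C_{-6n}$ (or a rational degree-$2$ divisor on it) whose descent image matches $\delta_v$,'' but no such point or divisor is produced; the proposition is not proved until those computations are carried out, and that is precisely where all of the content of the local verification lives. The paper closes this gap by invoking {\sc magma}'s {\tt TwoSelmerGroup} for $C_{-6}$ and $C_{-30}$. Moreover, your plan as stated is slightly too optimistic: the image of $\Delta_{\Q_p}$ is the image of all of $J(\Q_p)/2J(\Q_p)$, which is generally not exhausted by classes of single $\Q_p$-points of $C$ (a single point does not even give a degree-$0$ divisor) or even by degree-$2$ divisors $P_1+P_2-\infty_+-\infty_-$, and determining whether $\delta$ lies in the full image amounts to computing the size of that local image (via, e.g., \cite{Sc}, Prop.\ 2.4--2.5) and matching generators. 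A related sign check shows the same issue already at $\infty$: with $x_0\in\bigl((-1+\sqrt 5)/2,\,1+\sqrt 2\bigr)$ the tuple of signs of $x_0-r_j$ on the real roots $r_j$ is $(-,+,+,+)$, while $\delta$ gives $(-,+,+,-)$ up to a global sign flip, so a single real $x_0$ does \emph{not} hit $\delta$ and one genuinely needs a degree-$2$ divisor with two real points in distinct intervals. So the approach is feasible but incomplete, and the finite local computation must actually be done before the conclusion can be drawn.
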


\begin{proof}
We need only show that $\delta$ is in the image of the local Selmer maps
for primes of bad reduction and $\infty$.  The primes of bad reduction
are $2$, $3$, $5$, and primes dividing $n$.  At a prime $p$ dividing $n$,
we have that $\delta$
corresponds under $\iota$
to an element
of the form $(3,3a^2,3b^2)$ in $E \otimes \Q_p$.  Therefore, $\delta$ is contained
in ${(A_f \otimes \Q_p)^*}^2 \Q_p^*$, so it is the identity element in $H_{\Q_p}$
and is
in the image of every homomorphism.  Thus $\delta$ is in the image of the local
$2$-descent map.  Note also that every product of elements of $S$ is positive,
is equivalent to $1$ or $5$ in $\Q_p^*/{\Q_p^*}^2$ for $p = 2, 5$,
and is a unit locally at $3$.
It follows that we need only check the
assertion for $n = 1$, $p = 2, 3, 5, \infty$, and $n = 5$, $p = 2, 3, 5$.
This can be done by using {\sc magma}'s {\tt TwoSelmerGroup} command
to compute the fake Selmer groups of
$C_{-6}$ and $C_{-6 \cdot 5}$ and verifying that $\delta$ is an element
of both.
The last statement follows by applying the fact that
the principal homogeneous space over a field $K$ corresponding to $\delta$ has points
if and only if $\delta$ is in the image of $\Delta_K$ to all completions of
$\Q$.
\end{proof}


\begin{proposition}\label{nontriv}
Assume Stoll's condition $(\dagger)$ (this is automatic
for curves of even genus).  If the image of $\Jac(C_t)/2\Jac(C_t)$ under
the $2$-descent map $\Delta_\Q$ is properly
contained in the fake Selmer group, then $\Jac(C_t)$ has nontrivial Tate-Shafarevich
group.
\end{proposition}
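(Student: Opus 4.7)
The plan is to reduce the statement to the fundamental short exact sequence
$$
0 \longrightarrow J(\Q)/2J(\Q) \longrightarrow \Sel^{(2)}(J/\Q) \longrightarrow \Sha(J/\Q)[2] \longrightarrow 0
$$
from the introduction, together with a comparison between the genuine $2$-Selmer group and Stoll's fake Selmer group. First, I would observe that by Proposition \ref{descjac} the descent map $\Delta_\Q$ is defined on all of $J(\Q)$ and factors through $J(\Q)/2J(\Q)$; since a global class restricted to each completion automatically lies in the local image of the descent map, $\Delta_\Q(J(\Q)/2J(\Q))$ is contained in $\Sel^{(2)}_{\text{fake}}(\Q,J)$ by definition.

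Next, I would invoke the results of \cite{S} to compare fake Selmer with the true $2$-Selmer group. Under Stoll's condition $(\dagger)$, which is automatic for curves of even genus, the natural forgetful map $\Sel^{(2)}(J/\Q) \to \Sel^{(2)}_{\text{fake}}(\Q,J)$ is an isomorphism onto $\Sel^{(2)}_{\text{fake}}(\Q,J)$, and this isomorphism carries the image of $J(\Q)/2J(\Q)$ in $\Sel^{(2)}(J/\Q)$ exactly onto $\Delta_\Q(J(\Q)/2J(\Q))$. Granting this identification, a strict inclusion $\Delta_\Q(J(\Q)/2J(\Q)) \subsetneq \Sel^{(2)}_{\text{fake}}(\Q,J)$ pulls back to a strict inclusion of $J(\Q)/2J(\Q)$ inside $\Sel^{(2)}(J/\Q)$, and the quotient in the exact sequence above is nonzero. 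Thus $\Sha(J/\Q)[2]$ is nontrivial, which in particular makes the $2$-part of $\Sha(J/\Q)$ nontrivial.

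The main obstacle is the bookkeeping in the fake-versus-genuine Selmer comparison: the fake Selmer group lives in $H = \ker(N\colon (A_g\otimes\Q)^*/(A_g\otimes\Q)^{*2}\Q^* \to \Q^*/\Q^{*2})$, where one has quotiented by $\Q^*$, whereas the genuine Selmer group sits in a slightly larger space on which $\Q^*$ has not yet been killed. Condition $(\dagger)$ is precisely what guarantees that the induced map from Selmer to fake Selmer is an isomorphism, so that no Selmer information is lost by passing to the fake version and the preimage of $\Delta_\Q(J(\Q)/2J(\Q))$ in $\Sel^{(2)}(J/\Q)$ is exactly $J(\Q)/2J(\Q)$. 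Once this formal comparison is in hand, the argument is immediate from the standard short exact sequence.
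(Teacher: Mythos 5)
Your proposal has a genuine gap in the step where you assert that condition $(\dagger)$ makes the forgetful map $\Sel^{(2)}(J/\Q) \to \Sel^{(2)}_{\text{fake}}(\Q,J)$ an isomorphism carrying the image of $J(\Q)/2J(\Q)$ onto $\Delta_\Q(J(\Q)/2J(\Q))$. That conclusion is governed by the \emph{stronger} condition $(\ddagger)$, not $(\dagger)$. Under $(\dagger)$ alone, the map from the true $2$-Selmer group to the fake one may have a kernel (of $\F_2$-dimension $c\leq 1$), and in parallel the descent map $\Delta_\Q$ on $J(\Q)/2J(\Q)$ may itself have a nontrivial ``Cassels kernel'' (of dimension $d\leq 1$). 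Your argument implicitly assumes $c=d=0$, which is exactly what $(\ddagger)$ gives, but the proposition is stated under the weaker $(\dagger)$ and must cover the case where $(\ddagger)$ fails.

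The paper's proof deals with this by a dimension count rather than by producing an isomorphism. It sets $c = \dim\Sel^{(2)}\Jac(C_t) - \dim\Sel^{(2)}_{\rm fake}\Jac(C_t)$ and $d$ the dimension of the Cassels kernel, notes that under $(\dagger)$ one has $c=0 \iff (\ddagger)$, that $(\ddagger)\Rightarrow d=0$, and that $c,d\leq 1$; hence $c\geq d$. Then, writing $\im \Jac(C_t)$ for the image of $J(\Q)/2J(\Q)$ under $\Delta_\Q$ inside the fake Selmer group, the hypothesis of proper containment gives $\dim\Sel^{(2)}_{\rm fake} > \dim\im\Jac(C_t)$, so
$\dim\Sel^{(2)} = \dim\Sel^{(2)}_{\rm fake} + c > \dim\im\Jac(C_t) + c \geq \dim\im\Jac(C_t) + d = \dim\Jac(C_t)/2\Jac(C_t)$,
and the nontriviality of $\Sha[2]$ follows from the exact sequence. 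The key inequality $c\geq d$ is precisely what lets the defect on the Selmer side absorb the defect on the Mordell--Weil side; if you want to repair your argument, you need to incorporate both defects and compare them rather than assume they vanish.
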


\begin{proof}
Let $c = \dim \Sel^{(2)} \Jac(C_t) - \dim\Sel^{(2)}_{\rm fake} \Jac(C_t)$
and let $d$ be the dimension of the ``Cassels kernel'' of $\Jac(C_t)$,
namely, the kernel of the descent map modulo $2\Jac(C_t)$.  Since we are
assuming condition $(\dagger)$, we have $c = 0$ if and only if condition
$(\ddagger)$, and condition $(\ddagger)$ implies that $d = 0$.  Both $c$ and $d$
are at most $1$, so $c \ge d$.  Under the assumptions of the proposition,
it follows that
$$\eqalign{\dim\Sel^{(2)} \Jac(C_t) &= \dim\Sel^{(2)}_{\rm fake} \Jac(C_t) + c\cr
&> \dim \im \Jac(C_t) + c \cr
& \ge \dim \im \Jac(C_t) + d\cr
& = \dim \Jac(C_t)/2\Jac(C_t),}$$
from which the conclusion is immediate.
\end{proof}

Now we indicate the relation between the descent map and the K3 surface
$V_{f,\delta}$ that we have defined.

\begin{proposition}\label{usev}
With $f$ and $\delta$ as above, let $V$ be the K3 surface constructed from
$f, \delta$.  Suppose that $V$ has no rational points.  Then $\delta$ is
not in the image of the $2$-descent map for the Jacobian of the curve $y^2
= f(x)$.
\end{proposition}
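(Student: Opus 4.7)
The plan is to prove the contrapositive: if $\delta$ lies in the image of the $2$-descent map $\Delta_\Q \colon J(\Q) \to H$, then $V_{f,\delta}$ has a $\Q$-rational point. This matches the strategy sketched in the introduction, where $V$ is (a desingularization of) $X/\iota$ for $X$ the $2$-covering of $J$ corresponding to $\delta$; a rational divisor class mapping to $\delta$ should produce a rational point on $X$, and hence on $V$.

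First I would translate the condition of having a rational point on $V_{f,\delta}$ into an algebraic condition on $\delta$. By the very definition of $\cV_{f,\delta}$, a $\Q$-rational point on $V_{f,\delta}$ is the class of some nonzero $q \in A_f$ for which $\delta q^2$ lies in $\Q \oplus \Q X \oplus \Q X^2 \subset A_f$. Equivalently, $V_{f,\delta}(\Q) \neq \emptyset$ if and only if the class of $\delta$ in $A_f^*/(\Q^* \cdot (A_f^*)^2)$ admits a representative that is a polynomial in $X$ of degree at most $2$. So the task reduces to showing that every element in the image of $\Delta_\Q$ has such a representative.

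Next I would invoke Riemann--Roch on the genus-$2$ curve $C\colon y^2 = f(x)$: every rational divisor class $D \in J(\Q) = \Pic^0(C)(\Q)$ can be represented by a divisor of the form $(P_1) + (P_2) - (\infty^+) - (\infty^-)$, where $\{P_1,P_2\}$ is a Galois-stable pair of closed points of $C$ (possibly coinciding, at infinity, or Weierstrass). Using the explicit formula for $\Delta_\Q$ from \cite{Sc} and \cite{S} (see Definition \ref{descent} and Proposition \ref{descjac}), one has
$$
\Delta_\Q(D) \equiv (x_1 - X)(x_2 - X) \pmod{\Q^* \cdot (A_f^*)^2},
$$
a polynomial of degree at most $2$ in $X$. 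Therefore, given $\delta = \Delta_\Q(D)$ in $H$, there exist $c \in \Q^*$ and $q_0 \in A_f^*$ with $\delta = c\,q_0^2\,(x_1 - X)(x_2 - X)$ in $A_f^*$. Setting $q = q_0^{-1}$, we obtain $\delta q^2 = c(x_1 - X)(x_2 - X)$, which has degree at most $2$ in $X$, so $q \in \cV_{f,\delta}$ and $[q] \in V_{f,\delta}(\Q)$ is the desired rational point.

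The main obstacle I anticipate is the careful treatment of the degenerate cases in the formula for $\Delta_\Q$: divisor classes supported at Weierstrass points, at the two points at infinity, or with $P_1=P_2$ require the modified formulas from \cite{S}, sect.\ 6, where $\Delta_\Q$ is extended multiplicatively to all of $J(\Q)$ as in Proposition \ref{descjac}. In each case, however, one verifies directly that the extended formula still yields a representative of degree at most $2$ in $X$ modulo $\Q^* \cdot (A_f^*)^2$, after which the construction of $q$ above goes through unchanged and yields the required $\Q$-point of $V_{f,\delta}$.
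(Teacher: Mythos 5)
Your proof is correct and takes essentially the same approach as the paper: represent the divisor class via Riemann--Roch as $(P_1)+(P_2)$ minus the hyperelliptic/infinity divisor, use the explicit formula $\Delta_\Q \equiv (x_1-X)(x_2-X)$ to exhibit a degree-$\le 2$ representative of $\delta$ modulo $\Q^*\cdot(A_f^*)^2$, and read off the corresponding $q$ as a rational point of $V_{f,\delta}$. The only cosmetic difference is that you argue the contrapositive and flag the degenerate (Weierstrass, infinite, coincident) cases explicitly, which the paper handles implicitly via the multiplicativity of the descent map.
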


\begin{proof}
Suppose, to the contrary, that $\delta$ were in the image; that is, that
$\Delta_\Q(D) = \delta$ for some divisor $D$ of degree $0$.  By
Riemann-Roch, this divisor may be taken to be of the form $(x_1,y_1) +
(x_2,y_2) - H$, where $H$ is the hyperelliptic divisor.  Since
$\Delta_\Q(H) = 1$, this means that $\Delta_\Q(D) =
\Delta_\Q((x_1,y_1)+(x_2,y_2)) = (x_1-X)(x_2-X)$ in $A_f^*/(A_f^*)^2\Q^*$,
where we have identified $A_f \otimes_\Q \Q$ with $A_f$.  In other words,
for some $r \in \Q$ and $q \in A_f$ we have $\delta q^2 = rx_1x_2 -
r(x_1+x_2)X + rX^2$.  In particular, the point of $\P^5$ whose coordinates
are the coefficients of $q$ lies on $V$.
\end{proof}

\begin{remark}
Propositions \ref{nontriv} and \ref{usev} together imply that if $\delta$ 
is an element of the fake Selmer group $\Sel^{(2)}_{\rm fake}(\Q,\Jac C_t)$
and the corresponding $V_{f,\delta}$ has no rational points, then 
$\Jac(C_t)$ has nontrivial Tate-Shafarevich group. This also follows from 
the fact that $V_{f,\delta} = V_{tf,\delta}$ is a quotient of the homogeneous 
space of $\Jac C_t$ corresponding to $\delta$. Indeed, this implies that
if $V_{f,\delta}$ has no rational points, then neither does the homogeneous 
space, which implies its image in the Tate-Shafarevich group is nontrivial.
\end{remark}

It is also worth mentioning that the existence of the nontrivial
element of $\Sha$ that we will exhibit
is not a consequence of the results of
\cite{PS} on odd curves, which we now describe briefly.

\begin{definition}
(\cite{PS})
Let $C$ be a curve over $\Q$ of genus $g$.  We say that $C$ is {\em deficient}
at $p$, where $p$ is a prime or $\infty$, if $C$ has no rational divisor of
degree $g-1$ over $\Q_p$.  We say that $C$ is {\em even} or {\em odd} depending
on the parity of the number of places at which $C$ is deficient.
\end{definition}

Denote the $2$-primary part of $\Sha(\Jac(C))$ by $\Sha(\Jac(C))[2^\infty]$.
Then, if $\Sha(\Jac(C))[2^\infty]$ is finite, its order
is a square if and only if $C$ is even (\cite{PS}, Theorem 11).
It follows that if $C$ is odd that $\Sha(\Jac(C))[2^\infty]$ is nontrivial.
The following proposition proves
that the elements of $\Sha$ that we find are not merely artifacts of the
oddness of our curves.

\begin{proposition}
If $n$ is a product of primes in $S$, then $C_{-6n}$ is even.
\end{proposition}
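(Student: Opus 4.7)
The plan is to show that $C_{-6n}$ is deficient at precisely two places, namely $p=2$ and $p=3$, so that the number of deficient places is $2$, which is even. Since $C_{-6n}$ has the form $y^2=-6nf(x)$ with $\deg(-6nf)=6$, the hyperelliptic divisor $\infty_+ + \infty_-$ is a $\Q_\nu$-rational divisor of degree $2$ at every place $\nu$, so the index of $C_{-6n}$ over $\Q_\nu$ divides $2$. Consequently $C_{-6n}$ is deficient at $\nu$ if and only if every closed point of $C_{-6n}/\Q_\nu$ has even degree, which is equivalent to the assertion that $-6nf(x_0)$ is a non-square in $K$ for every finite extension $K/\Q_\nu$ of odd degree and every $x_0\in K$.

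First I would exhibit a $\Q_\nu$-rational divisor of degree one at every place outside $\{2,3\}$. At the real place, the real roots $1\pm\sqrt 2$ of $x^2-2x-1$ are Weierstrass points. At $p=5$ the factor $x^2+1$ splits over $\Q_5$, since $-1\equiv 2^2\pmod 5$. At any prime $p$ that splits completely in $F$ we have $\sqrt{-1},\sqrt 2,\sqrt 5\in \Q_p$, so all three of $f_1,f_2,f_3$ split over $\Q_p$ and supply Weierstrass points. For the small primes of good reduction $p\in\{7,11,13\}$, each outside $S$, quadratic reciprocity shows that one of $f_2,f_3,f_1$ respectively has roots in $\F_p$, and Hensel's lemma lifts an $\F_p$-Weierstrass point to $\Q_p$. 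Finally, for $p\geq 17$ of good reduction, the Weil bound $\#C_{-6n}(\F_p)\geq p+1-4\sqrt p>0$ combined with Hensel supplies a $\Q_p$-rational point directly.

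Next I would prove deficiency at $p=2$ and $p=3$. Fix such a $p$ and let $K/\Q_p$ be any extension of odd degree $d=ef$. Since $2,3\notin S$ we have $p\nmid n$, so $v_K(-6n)=e$ is odd. It remains to show that $v_K(f(x_0))$ is even for every $x_0\in K$, because then $v_K(-6nf(x_0))$ is odd and hence $-6nf(x_0)$ is not a square in $K$. For $x_0\notin\mathcal O_K$ the leading term dominates and $v_K(f(x_0))=6v_K(x_0)$. For $x_0\in\mathcal O_K$ the residue field $\F_{p^f}$ with $f$ odd contains $\F_p$ but no $\F_{p^2}$. At $p=3$ each $\bar f_i$ is irreducible quadratic over $\F_3$, so has no root in $\F_{3^f}$ and $v_K(f(x_0))=0$. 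At $p=2$ one has $\bar f_1=\bar f_2=(x+1)^2$ and $\bar f_3=x^2+x+1$ (irreducible over $\F_2$), so only $\bar x_0=1$ is interesting; writing $x_0=1+a$ with $v_K(a)=s\geq 1$, the identities $f_1(x_0)=2+2a+a^2$, $f_2(x_0)=-2+a^2$, and $f_3(x_0)=1+3a+a^2$ give $v_K(f_1(x_0))=v_K(f_2(x_0))=\min(e,2s)$ and $v_K(f_3(x_0))=0$, so $v_K(f(x_0))=2\min(e,2s)$ is even.

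The main obstacle is the valuation calculation at $p=2$, where the model has bad reduction and $f$ has repeated roots modulo $2$; in each $f_i(1+a)$ the three competing monomials $\pm 2$, $2a$, $a^2$ must be tracked in the sub-cases $2s<e$ and $2s>e$ (the equality being excluded because $e$ is odd) in order to obtain a valuation formula uniform in all odd ramification indices $e$. Once this bookkeeping is carried out the parity of $v_K(-6nf(x_0))$ is forced to be odd for every $x_0\in K$, the deficiency at $2$ and $3$ follows, and the count of exactly two deficient places shows that $C_{-6n}$ is even.
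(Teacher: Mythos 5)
Your proposal is correct but takes a genuinely more hands-on route than the paper at the crucial step. The paper disposes of deficiency at $p=2,3$ by invoking {\sc magma}'s {\tt IsDeficient}, whereas you replace that machine computation by a direct valuation argument: you observe that a degree-one closed point would force $-6nf(x_0)$ to be a square in some odd-degree extension $K/\Q_p$, and you rule that out by showing $v_K(-6nf(x_0))$ is always odd --- unconditionally at $p=3$ (where each $\overline{f_i}$ is an irreducible quadratic over $\F_3$), and at $p=2$ via the explicit computation $v_K(f_1(x_0))=v_K(f_2(x_0))=\min(e,2s)$, $v_K(f_3(x_0))=0$ when $x_0\equiv 1$ mod the maximal ideal, which gives $v_K(-6nf(x_0))=e+2\min(e,2s)$, odd since $e$ is odd. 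This is a clean, self-contained argument that also explains \emph{why} the curve is deficient at these primes rather than merely reporting it. The other ingredients (Weierstrass points over $\Q_p$ for $p\in S$, the real place, and good-reduction primes) agree with the paper in substance; the paper settles all good-reduction primes at once by noting there are $\F_{p^d}$-points for all large $d$ and hence odd-degree unramified points, whereas you split into the Weil-bound range $p\ge 17$ and explicit checks for $p\in\{7,11,13\}$ --- either works, the paper's being slightly slicker. Two small points worth tightening in a final write-up: (i) your criterion ``deficient iff $-6nf(x_0)$ is a nonsquare for all odd $K$ and all $x_0\in K$'' should also account for the two points at infinity (which have degree $[\Q_p(\sqrt{-6n}):\Q_p]=2$ at $p=2,3$ since $v_p(-6n)=1$ is odd) and should note that $f(x_0)=0$ trivially gives a square, so irreducibility of the $f_i$ over $\Q_p$ is being used there; (ii) the case $\bar x_0\neq 1$ over the residue field $\F_{2^f}$ (where $\overline{f_3}=x^2+x+1$ has no root since $f$ is odd) deserves a sentence, though it is immediate. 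With those noted the argument is complete.
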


\begin{proof}
A curve is never deficient at a prime $p$ of good reduction, for there are
always points over all sufficiently large finite fields of characteristic $p$
and therefore over unramified extensions of $\Q_p$ of all sufficiently large
degrees.  The primes of bad reduction are $2, 3, 5$, and those dividing $n$.
The point $(i,0)$ on $C_{-6n}$
is defined over $\Q_p$ for all $p$ congruent to $1$ mod $4$,
which includes $5$ and all primes dividing $n$.
It is also clear
that there are real points on $C_{-6n}$.  As a result, the only primes that
need to be considered are $2$ and $3$.  One checks (for example, using the
{\tt IsDeficient} command in {\sc magma}) that $C_{-6n}$ is deficient at those primes.
It follows that $C_{-6n}$ is even.
\end{proof}

We now start to show that $V$ has no rational points, from which it follows that
$\delta$ is not in the image of the global $2$-descent map.
First we will limit the set of primes to be considered in the calculation
of the Brauer-Manin obstruction,
then we will explain how to calculate the invariants there, and after that
we will actually compute the invariants.

\begin{lemma}\label{constant}
Let $W$ be a variety over $\Q$
with good reduction at $p$ and let $s \in \Br W$.
Then the local invariant of $s$ at $p$ is constant.
\end{lemma}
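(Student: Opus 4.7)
The plan is to use the good-reduction hypothesis to replace $W$ with a smooth proper model $\mathcal{W} \to \Spec \Z_p$. By the valuative criterion of properness, every local point $P \in W(\Q_p)$ extends uniquely to a section $P'\colon \Spec \Z_p \to \mathcal{W}$, so it will suffice to analyze the evaluation of $s$ on such sections.

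The main step will be to extend $s \in \Br W$ to a class $\tilde{s} \in \Br \mathcal{W}$. The obstruction to such an extension is the residue of $s$ along the special fiber $\mathcal{W}_{\F_p}$, an element of $H^1\bigl(\F_p(\mathcal{W}_{\F_p}),\Q/\Z\bigr)$; by the purity theorem for Brauer groups on regular schemes, combined with good reduction, this residue vanishes. I expect this to be the main obstacle, as verifying that purity applies and that the residue is indeed trivial requires nontrivial input. An alternative for algebraic classes, which is the setting relevant to us, is to work through $\bralg W = H^1(\Q,\Pic \overline{W})$ and use that $\Pic \overline{\mathcal{W}} \to \Pic \overline{W}$ is an isomorphism under good reduction, so any cohomology class on $\Pic \overline{W}$ comes from one on $\Pic \overline{\mathcal{W}}$.

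Once $s$ extends to $\tilde{s} \in \Br \mathcal{W}$, pulling back along $P'$ produces an element of $\Br \Z_p$. Since $\Z_p$ is a henselian local ring with finite residue field, and $\Br \F_p = 0$ by Wedderburn, the base-change isomorphism $\Br \Z_p \cong \Br \F_p$ forces $\Br \Z_p = 0$. Restricting to the generic fiber then gives $s(P) = 0 \in \Br \Q_p$ for every $P \in W(\Q_p)$, and hence $\inv_p(s(P)) = 0$ identically, which is in particular constant.
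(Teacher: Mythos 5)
The paper's proof of this lemma is a one-line citation to Theorem~1 of Bright--Swinnerton-Dyer (\cite{bre}), so there is no in-paper argument to compare against; nonetheless, your proposed direct proof has a genuine error in the key step, and it proves something strictly stronger than, and in fact incompatible with, the lemma.

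You claim that good reduction together with purity forces the residue of $s$ along the special fibre $\mathcal{W}_{\F_p}$ to vanish, hence that $s$ extends to $\Br \mathcal{W}$ and consequently $s(P) = 0$ for every $P \in W(\Q_p)$. Both claims are false. Purity gives the exact sequence $0 \to \Br\mathcal{W} \to \Br W_{\Q_p} \to H^1\bigl(\F_p(\mathcal{W}_{\F_p}),\Q/\Z\bigr)$; it identifies the obstruction to extending $s$ with its residue, but in no way forces that residue to be zero — smoothness of the special fibre is a statement about $\mathcal{W}$, not about $s$. A concrete counterexample: take $W = \P^1_{\Q}$, which has good reduction everywhere, and let $s$ be the pullback to $\Br\P^1_{\Q}\cong\Br\Q$ of a class in $\Br\Q$ ramified at $p$. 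Then $s$ does not extend to $\Br\P^1_{\Z_p}$ (which is $\Br\Z_p = 0$), its residue along the special fibre is the nonzero constant class $\inv_p(s)\in H^1(\F_p,\Q/\Z)\subset H^1(\F_p(t),\Q/\Z)$, and for every $P\in\P^1(\Q_p)$ one has $s(P) = s|_{\Q_p}\neq 0$. So the local invariant is constant but not zero, in agreement with the lemma as stated but contradicting your conclusion. The ``alternative'' via $\bralg W$ and Picard groups does not repair this: it is precisely the image of $\Br\Q$ in $\Br W$ — the constant classes invisible to $H^1(\Q,\Pic\overline W)$ — that produce nonzero constant invariants, and your sketch does not track this part of the Brauer group. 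Your global strategy (specialize integral points, exploit $\Br\Z_p = 0$ after extending the class to the model) is in the same spirit as the Bright--Swinnerton-Dyer argument, but the subtle point they handle, and you elide, is exactly when and after what correction by a class from $\Br\Q_p$ the Brauer element extends to $\mathcal{W}$; this is why the correct conclusion is \emph{constancy}, not \emph{vanishing}, of the local invariant.
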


\begin{proof} This is a weaker version of Theorem 1 of \cite{bre}.
\end{proof}

\begin{lemma}\label{goodp}
Let $W$ be an elliptic surface over $\Q$ with an Azumaya algebra $s$ given by
$\cores(c,t - \alpha)$ as in Proposition \ref{hasbr}.
Let $p$ be a prime of good reduction for $W$
such that the fiber at infinity of $W$ has smooth
$\Q_p$-rational points (in particular, this is true if the fiber at infinity
has good reduction mod $p$).
Then the local invariant of $s$ at $p$ is equal to $0$.
\end{lemma}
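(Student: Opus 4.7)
The plan is to combine Lemma \ref{constant} with an explicit evaluation at a single convenient point. By Lemma \ref{constant}, the local invariant $\inv_p(s_P)$ is constant as $P$ varies over $W(\Q_p)$, so it suffices to compute it at one well-chosen point. The hypothesis supplies a smooth $\Q_p$-rational point $P_0$ lying on the fiber at infinity of $W$, and the goal is to show $\inv_p(s_{P_0}) = 0$ at this $P_0$.

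The main computational idea is to rewrite the symbol in $\Br \Q(\alpha)(t)$ before corestricting. Using bimultiplicativity,
\[
(c, t - \alpha) = (c, t) + (c, 1 - \alpha/t).
\]
Applying $\cores_{\Q(\alpha)/\Q}$ and invoking the projection formula, which gives $\cores(c, t) = (N_{\Q(\alpha)/\Q}(c), t)$ since $t$ lies in the base field $\Q(t)$, together with the hypothesis of Proposition \ref{hasbr} that $N_{\Q(\alpha)/\Q}(c)$ is a square, the symbol $\cores(c, t)$ vanishes in $\Br \Q(t)$. Hence
\[
s = \cores(c, t - \alpha) = \cores(c, 1 - \alpha/t)
\]
as Brauer classes pulled back to $W$.

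The advantage of this second expression is that $1 - \alpha/t$ extends regularly to $t = \infty$ with value $1$, which is a square, so the Brauer class $\cores(c, 1 - \alpha/t)$ extends to a neighborhood of $\infty \in \P^1_\Q$ and its value there is $\cores(c, 1) = 0$. Pulling back along $\pi \colon W \to \P^1$ and specializing at $P_0$ then yields $s_{P_0} = 0 \in \Br \Q_p$, so $\inv_p(s_{P_0}) = 0$, and by constancy the invariant vanishes throughout $W(\Q_p)$.

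The argument is essentially formal once the projection formula is brought in; the main point to verify is that the rewriting remains consistent after corestriction and pullback to $W$, and that the smooth $\Q_p$-rational point on the fiber at infinity really does provide a valid $\Q_p$-evaluation of the Azumaya algebra representing $s$ (the Azumaya algebra being globally defined on $W$ by Proposition \ref{hasbr}).
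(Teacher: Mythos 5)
Your argument is correct, but it takes a genuinely different computational route from the paper's. The paper also begins by invoking Lemma \ref{constant} and then exploiting the smooth $\Q_p$-point on the fiber at infinity, but rather than evaluating directly over $t=\infty$ it uses Hensel's lemma to obtain $\Q_p$-points on nearby fibers over $t = p^{-2k}$ for large $k$, evaluates $s$ there to get $\sum_{\p \mid p} (c, p^{-2k}-\alpha)_\p$, and notes that $p^{-2k}-\alpha = p^{-2k}(1 - \alpha p^{2k})$ is a square in each $\Q(\alpha)_\p$ once $k$ is large enough. That keeps the evaluation entirely over the affine locus where the symbol $\cores(c,t-\alpha)$ is transparently an Azumaya algebra, at the cost of a limiting step. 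Your approach instead absorbs the pole at infinity algebraically: using the projection formula and the hypothesis from Proposition \ref{hasbr} that $N_{\Q(\alpha)/\Q}(c)$ is a square, you rewrite $s = \cores(c,1-\alpha/t)$, an expression that extends regularly over $t=\infty$, and evaluate there to get $\cores(c,1)=0$. This is cleaner in that it avoids any limit, but it leans on the injectivity of $\Br W \hookrightarrow \Br \Q(W)$ for $W$ smooth (so that the pullback of the regularized class near $\infty$ really coincides with the globally defined $s$ from Proposition \ref{hasbr}), a point you correctly flag as needing verification; the paper's choice of evaluation at a finite $t$ sidesteps that concern entirely. Both arguments are valid, and both use the same two ingredients (constancy from good reduction, and the smooth $\Q_p$-point at infinity); the difference is whether the square-norm hypothesis on $c$ is used to cancel $(N(c),t)$ exactly, as you do, or implicitly via $p^{-2k}$ being a square, as the paper does.
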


\begin{proof}
By Lemma \ref{constant}, the local invariant is constant, so it suffices
to evaluate it at one point.  Since the fiber at infinity has smooth
$\Q_p$-rational points, so do all sufficiently near fibers.  In particular,
for all sufficiently large integers $k$ the fiber at $p^{-2k}$ has rational
points, and the local invariant is therefore
$$\cores(p^{-2k}-\alpha,c) =
 \sum_{\p | p} (p^{-2k}-\alpha,c)_\p.$$  But $p^{-2k} - \alpha$ is a
square in all completions at primes above $p$
for all sufficiently large $k$, and hence
the local invariant is $0$ for sufficiently large $k$.
\end{proof}

\begin{lemma}\label{inf}
Let $W$ be an elliptic surface over $\Q$
with an Azumaya algebra $\cores (c,t-\alpha)$,
where $c$ has square norm.  Suppose that the valuation
of $\alpha/4p^n$ is positive at all primes above $p$ at which $c$
is not a square.  Then the invariant at $p$ is $0$ on all fibers of coordinate
$t$ where $v(t) \le n$.
\end{lemma}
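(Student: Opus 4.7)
The plan is to compute $\inv_p(\cores(c,t-\alpha))$ by reducing the symbol $(c,t-\alpha)_\p$ at each prime $\p$ above $p$ to the simpler symbol $(c,t)_\p$, and then use the projection formula together with the square-norm hypothesis on $c$.

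First I would apply Proposition \ref{coresinv} to write
\[
\inv_p\cores(c,t-\alpha) = \sum_{\p\mid p} \inv_\p(c,t-\alpha)_\p.
\]
At any $\p\mid p$ where $c$ is already a square the symbol $(c,t-\alpha)_\p$ is trivial, so only the primes $\p$ at which $c$ is a nonsquare contribute; at these the hypothesis gives $v_\p(\alpha)>v_\p(4p^n)$. Next I would bound $v_\p(t)$: since $t\in\Q$ and $v_p(t)\le n$, we have $v_\p(t)=e_\p v_p(t)\le n e_\p$, where $e_\p$ is the ramification index of $\p$ over $p$. In particular $v_\p(\alpha/t)\ge v_\p(\alpha)-n e_\p$, which strictly exceeds $v_\p(4)=2v_\p(2)$. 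Hensel's lemma in the form ``$1+u$ is a square whenever $v_\p(u)\ge 2v_\p(2)+1$'' then shows that $1-\alpha/t$ is a square in the completion $\Q(\alpha)_\p$ (both the odd-residue and the $p=2$ cases are handled uniformly, which is precisely why the hypothesis uses $\alpha/4p^n$ rather than $\alpha/p^n$).

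Having $1-\alpha/t$ a square, bilinearity of the symbol gives
\[
(c,t-\alpha)_\p = (c,t)_\p + (c,1-\alpha/t)_\p = (c,t)_\p
\]
at every $\p\mid p$ where $c$ is a nonsquare, while both sides vanish at the $\p$ where $c$ is a square. Summing and applying the projection formula $\cores_{\Q(\alpha)/\Q}(c\cup t)=N_{\Q(\alpha)/\Q}(c)\cup t$ (valid because $t$ is pulled back from $\Q$), I obtain
\[
\inv_p\cores(c,t-\alpha) = \sum_{\p\mid p}\inv_\p(c,t)_\p = \inv_p\bigl(N_{\Q(\alpha)/\Q}(c),\,t\bigr)_p.
\]
By hypothesis $N_{\Q(\alpha)/\Q}(c)$ is a square in $\Q^*$, so by Proposition \ref{odd} the symbol on the right is trivial, giving the desired conclusion.

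The main obstacle I anticipate is handling the dyadic case $p=2$ cleanly: Hensel's lemma requires $v_\p(\alpha/t)$ to exceed $2v_\p(2)$, and verifying this from the slightly asymmetric hypothesis ``$v_\p(\alpha/4p^n)>0$'' is where a small but careful computation is needed. Once that inequality is in hand, the remainder of the argument is formal manipulation of symbols plus the projection formula and the square-norm assumption.
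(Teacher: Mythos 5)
Your proof follows exactly the same route as the paper's: reduce $(c,t-\alpha)_\p$ to $(c,t)_\p$ at each $\p\mid p$, then apply the projection formula and the square-norm hypothesis on $c$. The only difference is that you supply the Hensel's-lemma computation justifying that $1-\alpha/t$ (equivalently $t/(t-\alpha)$) is a square in each relevant $\Q(\alpha)_\p$, whereas the paper simply asserts this with ``we notice that.''
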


\begin{proof}
We notice that $t/(t - \alpha)$ is a square for
all $t$ with $v(t) \le n$,
and therefore that the invariants of $(c,t)$ and $(c,t-\alpha)$
are equal at all primes above $p$ where $c$ is not a square.
They are also equal at primes above $\p$ at which $c$ is a square, being
both equal to $0$ there, so they are equal at all primes above
$p$.  Thus the invariant at $p$ of $\cores(c,t-\alpha)$ is equal to that
of $\cores (c,t)$ (see Proposition \ref{coresinv}).
But this is equal to the invariant of
$(N(c),t)$, which is $0$ because $N(c)$ is a square.
\end{proof}

\begin{lemma}\label{constdisc}
Let $t_0 \in \Q_p$ and $n \in \Z$ be such that
$p^n/4(t_0-\alpha)$ has positive valuation in $\Q(\alpha)_\p$
for all primes $\p$ above $p$ where $c$ is not
a square locally.  Then
the local invariant of the Azumaya algebra $\cores(c,t-\alpha)$
is the same for all values of $t$ in the disc $t_0 + p^n \Z_p$.
\end{lemma}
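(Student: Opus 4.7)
The plan is to reduce the problem to showing that for any $t \in t_0 + p^n \Z_p$ and any prime $\p$ of $\Q(\alpha)$ above $p$, the Hilbert symbol $(c, t-\alpha)_\p$ equals $(c, t_0-\alpha)_\p$, since by Proposition \ref{coresinv} the local invariant at $p$ of $\cores(c, t-\alpha)$ is the sum of these symbols over $\p \mid p$.

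First, split the set of primes $\p$ above $p$ into two classes. At a prime $\p$ where $c$ is a square locally, $(c, t-\alpha)_\p = 0$ for every choice of $t$ (as long as $t-\alpha \ne 0$), so those primes contribute the same thing for $t$ as for $t_0$. It therefore suffices to handle the primes $\p$ above $p$ where $c$ is not a square locally; at such a prime the hypothesis asserts that $v_\p(p^n/4(t_0-\alpha)) > 0$.

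The key step is to write $t = t_0 + p^n u$ with $u \in \Z_p$, so that
\[
\frac{t-\alpha}{t_0-\alpha} = 1 + \frac{p^n u}{t_0-\alpha}.
\]
Dividing the correction term by $4$, we see $v_\p\!\left(\frac{p^n u}{4(t_0-\alpha)}\right) \geq v_\p\!\left(\frac{p^n}{4(t_0-\alpha)}\right) > 0$, because $u \in \Z_p$ has nonnegative valuation at $\p$. By the standard criterion for squares in a local field (any element of the form $1+x$ with $v(x) > 2v(2)$ is a square, via Hensel's lemma applied to $T^2 - (1+x)$), it follows that $(t-\alpha)/(t_0-\alpha)$ is a square in $\Q(\alpha)_\p$. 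Bilinearity of the Hilbert symbol then gives $(c, t-\alpha)_\p = (c, t_0-\alpha)_\p$.

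Summing these equalities over all $\p \mid p$ and invoking Proposition \ref{coresinv} yields $\inv_p \cores(c, t-\alpha) = \inv_p \cores(c, t_0-\alpha)$, which is precisely the claim. The only mild subtlety — and the one place where one has to be careful — is the square criterion at the primes above $2$, where the factor of $4$ in the hypothesis is exactly what is needed to accommodate the ramification of squares; once this is handled, the argument is completely elementary.
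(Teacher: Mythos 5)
Your proof is correct and follows the same route as the paper's: reduce via Proposition~\ref{coresinv} to comparing Hilbert symbols at each $\p\mid p$, dismiss the primes where $c$ is a square, and at the remaining primes show $(t-\alpha)/(t_0-\alpha)=1+p^nu/(t_0-\alpha)$ is a square because the correction term has $\p$-valuation strictly greater than $2v_\p(2)$ (this is the ``congruent to $1 \bmod 4\p$'' condition in the paper, and your Hensel's-lemma criterion is exactly that). The only cosmetic difference is that you spell out the $v(x)>2v(2)$ criterion where the paper cites the congruence mod $4\p$; both say the same thing.
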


\begin{proof}
It is sufficient to show that $(t-\alpha)/(t_0-\alpha)$ is a square
for all $t \in t_0 + p^n \Z_p$ at all primes $\p$
above $p$ where $c$ is not a square.
But this is equal to $1 + kp^n/(t_0 - \alpha)$
for some $k \in \Z_p$, which by assumption is congruent to $1$ mod $4\p$.
It is therefore a square by Hensel's lemma.
\end{proof}

\begin{lemma}\label{disc}
It can be effectively determined whether there are $\Q_p$-rational points of
$V$ mapping under a given fibration to a given disc $D$ in $\P^1(\Q_p)$.
\end{lemma}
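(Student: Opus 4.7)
The plan is to combine $p$-adic compactness with Hensel's lemma and iterated subdivision. Since $V$ is projective, $V(\Q_p)$ is compact in the $p$-adic topology, and the preimage $\phi^{-1}(D) \cap V(\Q_p)$ of $D$ under the fibration $\phi$ is a closed, hence compact, subset that we want to test for emptiness.

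First I would parametrize: any $p$-adic disc in $\P^1(\Q_p)$ can be written in suitable affine coordinates as $\{t_0 + p^n u : u \in \Z_p\}$, so substituting into the defining quadrics of $V$ yields an affine scheme $W$ over $\Z_p$ whose $\Z_p$-points in $u$ and the remaining projective coordinates on $V$ are exactly the $\Q_p$-points of $V$ above $D$. Next I would apply the following recursive procedure to $W$ (and to the corresponding sub-schemes obtained on refinement): pick an integer $N$, reduce the equations modulo $p^N$, and either (i) find a smooth $\F_p$-point on the reduction and lift it by Hensel's lemma to an honest $\Q_p$-point of $V$ lying over $D$, or (ii) observe that there is no solution modulo $p^N$ at all and conclude that this sub-disc contributes no $\Q_p$-points. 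If neither (i) nor (ii) applies, subdivide $D$ into the $p$ sub-discs $\{t_0 + p^n u_0 + p^{n+1}u' : u' \in \Z_p\}$ for $u_0 \in \{0,\dots,p-1\}$, and recurse on each, increasing $N$ as needed.

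The main obstacle is termination. If there is a $\Q_p$-point $P$ above $D$, then since $V$ is smooth over $\Q_p$ by Proposition \ref{threequad}, after a suitable affine chart and scaling to integral coordinates $P$ reduces to a smooth point of the special fibre of an integral model of some sub-disc containing $\phi(P)$, and the Hensel step (i) will detect it once the recursion reaches that sub-disc. Conversely, if no $\Q_p$-point exists above $D$, then for each $P \in V(\Q_p)$ there is a $p$-adic open neighbourhood of $P$ whose $\phi$-image is disjoint from $D$; by compactness of $V(\Q_p)$ finitely many such neighbourhoods suffice to cover, and once the subdivision of $D$ is finer than the ``thickness'' of this cover and $N$ is large enough to see the cover mod $p^N$, every remaining sub-disc will be ruled out by step (ii).

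The hard part is making the exit conditions effective in practice; the argument above guarantees halting but not a uniform bound on $N$ or the recursion depth. In an implementation one alternates between deepening the subdivision and increasing $N$, and the process provably terminates. Routine details about passing between the affine charts needed to cover $\P^1$ and between the projective coordinate patches on $V$ are standard and I would not work them out here.
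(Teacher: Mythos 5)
Your approach is essentially the paper's: pass to $\Z_p$-points of an integral model of the graph of the fibration restricted to affine patches, use smoothness over $\Q_p$ together with Hensel's lemma to certify a point, and use compactness to certify nonexistence. The one inessential addition is the subdivision of $D$: simply increasing the modulus $p^N$ on the fixed model $W$ already suffices, since if $W(\Z_p)=\emptyset$ then some $W(\Z/p^N)$ is empty (a projective limit of nonempty finite sets is nonempty), and if $W(\Z_p)\neq\emptyset$ a smooth $\Q_p$-point will eventually satisfy the Hensel criterion mod $p^N$; this is a cleaner way to get the termination you are reaching for with your open-cover argument.
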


\begin{proof}
By changing coordinates, we may assume that the disc is $\{(\Z_p:1)\}$.
Consider the graph of the fibration as a subscheme of
$\P^5 \times \P^1$.  It is sufficient to determine whether there is a
standard affine patch $\A^5$ of $\P^5$ such that the intersection of the
patch $\A^5 \times D$
of $\P^5 \times \P^1$ with the graph of the fibration
has $\Z_p$-points.  Since the graph is smooth, this can
be determined using Hensel's lemma.
\end{proof}

\begin{theorem}\label{calcloc}
Let $V$ be a smooth variety with an elliptic fibration $\phi$ over
$\P^1$ satisfying the hypotheses of Proposition \ref{hasbr}
and let $\cores(c,t-\alpha) \in \Br \phi$ be the Azumaya algebra
constructed there.
Let $p$ be a prime.  Then the set of
values of the local invariant of $\cores(c,t-\alpha)$ on $V(\Q_p)$
can be effectively determined.
\end{theorem}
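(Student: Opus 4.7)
The plan is to reduce the computation to a finite check by partitioning $\P^1(\Q_p)$ into discs on each of which the invariant is constant, and then testing for each disc whether it contains the image under $\phi$ of a $\Q_p$-point of $V$. Since $\cores(c,t-\alpha)$ is pulled back from $\Br \Q(t)$ along $\phi$ (being a vertical Brauer class as in \cite{brd}, Prop.\ 4.21), its local invariant at $P\in V(\Q_p)$ depends only on $\phi(P)\in \P^1(\Q_p)$. Thus the set of invariants on $V(\Q_p)$ equals the set of values of $\inv_p(\cores(c,t-\alpha))$ on $\phi(V(\Q_p))\subseteq \P^1(\Q_p)$.

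First I would produce a finite cover of $\P^1(\Q_p)$ by closed discs on each of which the invariant is constant. Lemma \ref{constdisc} supplies such discs $t_0+p^n\Z_p$ in the standard affine chart, with the required refinement $n$ governed by the $\p$-adic distances from the disc centre to $\alpha$ at the primes $\p$ of $\Q(\alpha)$ above $p$ where $c$ is not a local square, and Lemma \ref{inf} supplies one around $\infty$. Because $\alpha\not\in\Q_p$ (the field $\Q(\alpha)$ has degree $4$ over $\Q$), the valuation $v_\p(t_0-\alpha)$ is bounded above uniformly in $t_0\in\Q_p$ for each such $\p$, so a single radius $p^{-n}$ will satisfy the hypothesis of Lemma \ref{constdisc} at every centre simultaneously. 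Together with compactness of $\P^1(\Q_p)$, this yields a finite cover explicitly.

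Next, for each disc $D$ in the cover I would apply Lemma \ref{disc} to decide whether $V(\Q_p)\cap \phi^{-1}(D)$ is nonempty. If it is, I would pick any $\Q_p$-point $P$ with $\phi(P)=t_0\in D$ and evaluate $\inv_p(\cores(c,t-\alpha))$ at $P$ using Proposition \ref{coresinv}, which expresses it as $\sum_{\p\mid p}\inv_\p(c,t_0-\alpha)$. Each local Hilbert symbol in this sum is effectively computable: for odd $p$ this follows from Proposition \ref{odd}, and for $p=2$ one falls back on the standard case analysis of Hilbert symbols over dyadic local fields. Collecting the values over all discs containing image points produces the desired set.

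The main obstacle is the first step: one must justify that the cover by discs of constancy can be produced explicitly, which requires a quantitative bound on $v_\p(t_0-\alpha)$ for $t_0\in\Q_p$ in terms of the minimal polynomial of $\alpha$ and the ramification data at $\p$. Once the cover is in hand, both the point-existence tests (via Hensel's lemma on the smooth graph of $\phi$, as in Lemma \ref{disc}) and the evaluation of the Hilbert symbols are mechanical, so the only genuinely delicate input is the irrationality of $\alpha$ ensuring that the subdivision process terminates.
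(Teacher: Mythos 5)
Your proposal has a genuine gap, and it lies exactly where you flagged the ``main obstacle.'' You assert that because $[\Q(\alpha):\Q]=4$, the element $\alpha$ is not in $\Q_p$, so $v_\p(t_0-\alpha)$ is uniformly bounded above over $t_0\in\Q_p$ and a single radius $p^{-n}$ yields constancy discs everywhere. This is false: $\Q(\alpha)$ having degree~$4$ over $\Q$ says nothing about whether the minimal polynomial of $\alpha$ has roots in $\Q_p$. For a place $\p$ of $\Q(\alpha)$ above $p$ with $\Q(\alpha)_\p\cong\Q_p$, the image $t_0=\rho(\alpha)$ is in $\Q_p$, and $v_\p(t-\alpha)$ is unbounded as $t\to t_0$. (Indeed, in the paper's own worked example at $p=397$ the minimal polynomial of $\alpha$ splits completely over $\Q_{397}$, so there are four such places.) Near such a $t_0$ no uniform radius can satisfy the hypothesis of Lemma~\ref{constdisc}, and your covering argument breaks down.

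This is exactly the case the paper's proof singles out as requiring a ``more subtle'' argument. The paper handles it by using the specific geometry of the $I_4$ fiber: if the fiber at $t_0$ has a $\Q_p$-point $P$, then $P$ is a smooth point of the $\Q(\alpha)_\p$-component it lies on (smoothness of $\Q(\alpha)$-components is part of the hypotheses of Proposition~\ref{hasbr}), so that geometric component is defined over $\Q_p$, forcing $\rho(c)$ to be a square in $\Q_p\cong\Q(\alpha)_\p$; the local symbol at that particular $\p$ is therefore identically $0$ and can be dropped, after which Lemma~\ref{constdisc} applies at the remaining places $\q$ above $p$ (for which $t_0$ does \emph{not} map to $\alpha$). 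If the fiber at $t_0$ has no $\Q_p$-point, a small neighborhood has none either. Without this case analysis there is no termination guarantee for the subdivision, and, as the paper's closing remark notes, for $I_2$ fibers the local invariant need not even be locally constant near such $t_0$ -- so the conclusion is genuinely using the $I_4$ hypothesis rather than mere irrationality of $\alpha$. The rest of your outline (reduce to a cover by constancy discs, test each disc for points via the smooth graph and Hensel, evaluate symbols via Propositions~\ref{odd} and~\ref{coresinv}) matches the paper's strategy, but the missing step is the crux of the argument.
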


\begin{proof}
Using Hensel's lemma or Lemma \ref{disc} we can check whether
$V$ has any $\Q_p$-points. If not, then we are done, so we may assume that
$V$ has $\Q_p$-points.  If $V$ has good reduction at $p$, then by Lemma
\ref{constant} the local invariant $\cores(c,t-\alpha)$ is constant, so it
suffices to evaluate it at one point of $V$.  Since $V$ is nonsingular, its
$\Q_p$-points are dense, so we can find a point for which $t-\alpha$ is
bounded away from $0$ and $\infty$.  It is then easy to compute the local
invariant there.

We will describe the algorithm for calculating the values of the
local invariant of
$\cores (c,t-\alpha)$ at a prime $p$ of bad reduction for $V$ and then
prove that it terminates.  The point is that this local invariant turns out
to be locally constant. In fact, we will show how to find an explicit
finite covering of $\P^1(\Q_p)$ by discs in the $p$-adic topology, such
that for each disc the invariant is constant on the set of $\Q_p$-points on
$V$ mapping to that disc under the elliptic fibration.

First we show that such a disc exists around each point in $\P^1(\Q_p)$,
starting with the point at infinity.  After a change of variables we may
assume that the fiber at $t = \infty$ is smooth over $\Q_p$.  If the
fiber at $\infty$ does have points over $\Q_p$, then by Lemma \ref{inf} we
can find an $n$ such that the local invariant is $0$ on all points on
fibers above $t$ with $v(t) < n$.  In this case the disc $v(t) < n$ has the
desired property.  If the fiber at $t=\infty$ does not have any
$\Q_p$-points, then there is a $p$-adic neighborhood of $\infty\in
\P^1(\Q_p)$ above which there are no $\Q_p$-points either, so the invariant
is clearly constant above any disc contained in this neighborhood.

Now observe that, by Lemma \ref{constdisc}, every finite $t_0$ has a suitable
neighborhood except for those that are roots of the minimal polynomial
of $\alpha$, so we are reduced to considering the finite set of these $t_0$.
Here the argument will be more subtle.
Fix such $t_0$ and let $\p$ be the corresponding place of $\Q(\alpha)$,
so that the completion map $\rho\colon \Q(\alpha) \hookrightarrow
\Q(\alpha)_\p$ sends $\alpha$ to $t_0$.  Here we used the obvious embedding
$\Q_p \hookrightarrow \Q(\alpha)_\p$, which is an isomorphism as $\alpha$
is contained in $\Q_p$.

First consider the case that the fiber above $t=t_0=\rho(\alpha)$ does have
a point $P$ over $\Q_p \isom \Q(\alpha)_\p$.  Then $P$ must be a smooth
point of the $\Q(\alpha)_\p$-component it lies on, because by hypothesis
each $\Q(\alpha)$-component is smooth. Therefore the geometric component on
which $P$ lies is defined over $\Q(\alpha)_\p\isom \Q_p$, so by hypothesis
all geometric components of the fiber at $t=t_0=\rho(\alpha)$ are defined
over $\Q_p$, which means that $\rho(c)$ is a square. For all other places
$\mathfrak{q}$ above $p$ the embedding $\Q_p \hookrightarrow \Q(\alpha)_\q$
does not send $t_0$ to $\alpha$, so we can find an integer $n$ such that
under each such embedding the element $p^n/4(t_0-\alpha)$ has positive
valuation in $\Q(\alpha)_\q$. By Lemma \ref{constdisc} the invariant is
constant above the disc $t_0+p^n\Z_p$. In the implementation of this
algorithm it will be useful to remember that $c$ is a square in
$\Q(\alpha)_\p$, as this implies that the local invariant corresponding to
the place $\p$ is $0$ globally.  In the case that the fiber above $t=t_0$
does not have any $\Q_p$-points, there is a $p$-adic neighborhood of $t_0
\in \P^1(\Q_p)$ above which there are no $\Q_p$-points either, so the
invariant is constant above any disc contained in this neighborhood.

The algorithm is as follows. First apply a change of coordinates to $\P^1$
if necessary, so that the fiber at infinity is smooth. Then find a
neighborhood of infinity above which the local invariant is constant. This
reduces to considering $t$ in some set of the form $p^n \Z_p$. For each
$t_0$ that maps to $\alpha$ in $\Q(\alpha)_\p$ for some place $\p$ above
$p$, find an appropriate disc around $t_0$. In doing so we may find that
the value of the local invariant corresponding to some $\p$ is $0$
for all $\Q_p$-points of $V$.  Such $\p$ can
be ignored in the remainder of the computation. If there are points above
any of the discs found thus far, compute the corresponding local
invariants.

We now divide the remaining region of $\P^1(\Q_p)$ into discs and place
them in a queue, recording which local invariants we already know to
occur. For each disc, we use Lemma \ref{constdisc} to try to discover that
the local invariant is constant there. If it is, and if the local invariant
is already known to occur, the disc may be ignored, as the conclusions do
not depend on whether there are rational points there. If it is, and if the
local invariant is {\em not} already known to occur, we use
Lemma \ref{disc} to test whether there are rational points in that disc. If
there are rational points on the disc, we record that the invariant on the
disc occurs.   The only possible values of the invariant of an element
of $\Br V/\Br \Q$ of order $2$ are $0,\frac{1}{2} \in \Q/\Z$; if both
of these are now known to occur, we are done.
On the other hand,
if the local invariant is not constant on the disc, we divide
it into $p$ smaller discs and add them to the end of the queue.  In other
words, we perform a breadth-first search.  The reason for using
breadth-first rather than depth-first search is that local invariants are
more likely to be different on relatively distant points of a $p$-adic
disc, so breadth-first search will tend to find different invariants more
quickly. When the queue is empty we are done and have recorded all values
of the local invariant.

Now we show that this algorithm terminates.  It could fail to do so only if
there is an infinite descending sequence of discs above which there are
rational points but in which the local invariant is not constant.  Let
$t_0$ be the unique point in the intersection of the discs. Then we have
shown before that there is a disc around $t_0$ on which the local invariant
of $\cores(c,t-\alpha)$ is constant, which is a contradiction, so the
algorithm does terminate.
\end{proof}

\begin{remark}
This theorem is somewhat special to our situation.  If the
Azumaya algebra were constructed using
fibers of type $I_2$, for example, this proof would not apply, and indeed in
this case the local invariant associated to a place $\p$ above $p$ need not be
locally constant on $\P^1$ around a $t_0$ that maps to $\alpha$ in $\Q(\alpha)_\p$.
\end{remark}

Let us now show how to apply this algorithm in our example.

\begin{proposition}\label{obstr}
The Brauer-Manin obstruction blocks the existence of rational points on
the K3 surface $V$ corresponding to $(f,\delta)$.
\end{proposition}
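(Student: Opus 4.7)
The plan is to apply Theorem \ref{whichgp} and Proposition \ref{hasbr} to produce the desired element $s \in \Br V/\Br \Q$ of order $2$, and then invoke Proposition \ref{const}: it suffices to show that the local invariant $\inv_p(s)$ is constant on $V(\Q_p)$ for every place $p$ and that the sum of these constants is nonzero. Concretely, $s$ is obtained from the elliptic fibration $\phi$ associated to the factorization $f = f_1 \cdot (f_2 f_3)$ (with $f_1 = x^2+1$), and has the form $\cores_{\Q(\alpha)/\Q}(c, t-\alpha)$ where $\alpha$ generates the degree-$4$ field of definition of the $I_4$ fiber and $c \in \Q(\alpha)$ has square norm; the second fibration from Theorem \ref{whichgp} represents the same class and will serve as an independent check.

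The first step is to restrict to a small finite set of places. The K3 surface $V$ has good reduction outside the set $T$ of primes dividing $\disc f$ and the primes that ramify in $F$; inspection of $F = \Q(\sqrt{-1},\sqrt 2,\sqrt 5,\sqrt{-3(1+\sqrt 2)},\sqrt{6(1+\sqrt 5)})$ shows $T \subset \{2,3,5\}$. By Lemma \ref{constant}, $\inv_p(s)$ is constant at every $p \notin T \cup \{\infty\}$; and Lemma \ref{goodp}, applied at such a $p$ after a coordinate change putting a smooth $\Q_p$-fiber at infinity, identifies that constant as $0$. At $p = \infty$, a direct examination of the real points of $V$ (e.g.\ by exhibiting any $\R$-point and evaluating $(c, t-\alpha)_\p$ at each real place of $\Q(\alpha)$ lying above it) should give invariant $0$. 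Thus the Brauer--Manin sum reduces to contributions at $p \in \{2,3,5\}$.

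For each $p \in \{2,3,5\}$ I would run the algorithm of Theorem \ref{calcloc}: after moving a smooth fiber to infinity (Lemma \ref{inf} then eliminates a neighborhood of $\infty$), I handle each $t_0 \in \Q_p$ that is a root of the minimal polynomial of $\alpha$ by choosing $n$ so that Lemma \ref{constdisc} applies on the disc $t_0 + p^n \Z_p$, noting that when the fiber above $t_0$ has a smooth $\Q_p$-point the corresponding local factor $(c, t-\alpha)_\p$ is already trivial because $c$ becomes a square in $\Q(\alpha)_\p$. The remainder of $\P^1(\Q_p)$ is covered by a breadth-first partition into discs, and on each disc one uses Lemma \ref{constdisc} to establish that $\inv_p(s)$ is constant and Lemma \ref{disc} (Hensel) to test whether $V(\Q_p)$ projects there. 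This produces, for each $p$, a finite list of candidate values of $\inv_p(s)$ on $V(\Q_p)$; I would verify that this list is a singleton $\{\iota_p\}$ and compute $\iota_p$ explicitly as a sum over the primes of $\Q(\alpha)$ above $p$ via Proposition \ref{coresinv} and the explicit Hilbert-symbol formulas of Proposition \ref{odd} (with ad hoc dyadic analysis at $p=2$).

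The hard part will be the prime $p = 2$: the field $\Q(\alpha)$ contains $\Q(i)$, so $2$ is ramified, the residue-characteristic-$2$ Hilbert symbols are not governed by the clean valuation criterion of Proposition \ref{odd}, and the discs on which Lemma \ref{constdisc} applies have to be made small enough that $p^n/4(t_0-\alpha)$ is positive-valued at every non-split place. This forces a more delicate, and largely computer-assisted, partition of $\P^1(\Q_2)$, and is the step most prone to error; cross-checking by computing $\inv_2$ independently from the companion fibration (which by Theorem \ref{whichgp} represents the same class in $\Br V/\Br \Q$) provides a useful confirmation. Assuming the three local invariants $\iota_2, \iota_3, \iota_5$ turn out to add to $\tfrac12 \in \Q/\Z$, Proposition \ref{const} then immediately gives the Brauer--Manin obstruction and completes the proof.
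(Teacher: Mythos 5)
Your overall strategy mirrors the paper's: invoke Theorem \ref{whichgp} and Proposition \ref{hasbr} to produce a vertical Brauer class $\cores_{\Q(\alpha)/\Q}(c,t-\alpha)$ from the fibration associated to $f=f_1\cdot(f_2f_3)$, use Lemma \ref{constant} to restrict attention to finitely many places, identify the value at each place by the algorithm of Theorem \ref{calcloc}, and conclude via Proposition \ref{const}. The structure is right, but there are two real gaps.

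The first gap concerns your claim that the only places to check are $\{2,3,5,\infty\}$. Lemma \ref{goodp} requires not merely good reduction of $V$ at $p$ but also that the fiber at $t=\infty$ (in the coordinate for which the algebra is written as $\cores(c,t-\alpha)$) have a smooth $\Q_p$-point. For the coordinate used in the paper, that fiber at $\infty$ has bad reduction at the additional primes $397$ and $449$, and the paper handles these explicitly. You acknowledge the hypothesis by proposing a ``coordinate change putting a smooth $\Q_p$-fiber at infinity,'' but a M\"obius transformation moving $\infty$ does not preserve the form $\cores(c,t-\alpha)$: it produces an algebra differing from the original by a class pulled back from $\Br\Q$. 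Doing this change \emph{per prime} therefore does not compute a well-defined collection of local invariants summing to the Brauer--Manin pairing of the original class; you would have to fix one coordinate globally and then the fiber at $\infty$ will again have bad reduction at some primes ($397$ and $449$ for the paper's choice). So these extra primes cannot be silently dropped; one must show directly (e.g.\ by producing a point and evaluating) that the invariant vanishes there, exactly as the paper does.

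The second gap is related and is your proposed cross-check: you suggest recomputing $\inv_2$ from the companion fibration $F_2$ ``since it represents the same class.'' But Theorem \ref{whichgp} gives equality only in $\Br V/\Br\Q$, so the two explicit Azumaya algebras may (and, in this example, do) differ by a constant class in $\Br\Q$, whose local invariants at individual primes are nonzero. Indeed the paper notes that the $F_1$-algebra has invariant $1/2$ only at $p=2$, while the $F_2$-algebra has invariant $1/2$ at $p=2,3,5$; only the \emph{sums} agree. A prime-by-prime cross-check would therefore report a discrepancy, not a confirmation. Finally, of course, your write-up is an outline of the algorithm rather than the computation itself; the proposition is not proved until one actually carries out the $p$-adic work at $2,3,5$ (and verifies the good-reduction-but-bad-$\infty$-fiber primes), which is the substance of the paper's proof.
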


\begin{proof}
The surface $V$ is defined by the equations $q_1 = q_2 = q_3 = 0$, where
$$\eqalign{
q_1 &= x_1^2 + 2x_1x_3 + 8x_1x_4 + 10x_1x_5 + 24x_1x_6 + x_2^2 + 8x_2x_3 + 10x_2x_4 + 24x_2x_5 + 58x_2x_6 + 5x_3^2 + 24x_3x_4\cr
&+ 58x_3x_5 + 152x_3x_6 + 29x_4^2 + 152x_4x_5 + 346x_4x_6
+ 173x_5^2 + 856x_5x_6 + 1017x_6^2,\cr
q_2 &= x_1x_2 + 2x_1x_3 + 3x_1x_4 + 6x_1x_5 + 17x_1x_6 + x_2^2 + 3x_2x_3 + 6x_2x_4 + 17x_2x_5 + 40x_2x_6 + 3x_3^2 + 17x_3x_4 \cr
&+ 40x_3x_5 + 97x_3x_6 + 20x_4^2 + 97x_4x_5 + 230x_4x_6
+ 115x_5^2 + 563x_5x_6 + 675x_6^2,\cr
q_3 &= 6x_1x_3 + 8x_1x_4 + 12x_1x_5 + 30x_1x_6 + 3x_2^2 + 8x_2x_3 + 12x_2x_4 + 30x_2x_5 + 84x_2x_6 + 6x_3^2 + 30x_3x_4 + 84x_3x_5 \cr
&+ 192x_3x_6 + 42x_4^2 + 192x_4x_5 + 466x_4x_6
+ 233x_5^2 + 1112x_5x_6 + 1360x_6^2.\cr}$$
The primes of bad reduction of $V$ are those dividing the discriminant of $f$,
namely $2,3,5$, and those involved in $\delta$, which are again $2, 3, 5$.
Each of the fibrations is defined by two alternative pairs of linear forms
(see Lemma \ref{xyz} and following remarks).  We let
$$\eqalign{
l_1 &= x_1 + 13x_3 + 17x_4 + 68x_5 + 123x_6,\cr
l_2 &= x_2 - 16x_3 - 19 x_4 -84x_5 -145x_6,\cr
l_3 &= 2x_2 -8x_3 -8x_4 -42x_5 -68x_6,\cr
l_4 &= -x_1 -4x_2 +9x_3 +5x_4 +46x_5 +61x_6,\cr}$$
and then one of the fibrations, which we will denote by $F_1$,
is given by $(l_1:l_2)$ or $(l_3:l_4)$, while
the other, which will be written $F_2$,
is given by $(l_1:l_3)$ or $(l_2:l_4)$.  To verify that the two
sets of defining equations for each $F_i$ give the same map,
note that $l_1l_4-l_2l_3 = -q_1-4q_2+q_3$.
We have already seen that $F_1$ satisfies the hypotheses of
Proposition \ref{hasbr}; now we calculate the local invariant of the
associated Azumaya algebra by means
of the algorithm described in Theorem \ref{calcloc}.
We will show that this Azumaya algebra
has local invariant $1/2$ at $2$ and $0$ at all other primes;
the energetic reader may wish to verify that the Azumaya algebra corresponding
to $F_2$ has local invariant $1/2$ at $2$, $3$, and $5$, and
local invariant $0$ elsewhere.

The $I_4$-fibers of $F_1$ lie over the point $(129r^3+187r^2+285r-1469)/449$,
where $r^4-12r+13 = 0$.  The fact that this extension is totally complex
implies that the local invariant at $\infty$ is 0, because that local invariant
is the sum of local invariants of central simple algebras over $\C$.
The components of these fibers are defined over the
extension $\Q(r,\sqrt{c})$ where $c = -6r^3-9r^2-12r+57$
(note that actually $c$ has minimal polynomial $r^2 - 6r + 18$, so
$c \in \Q(i)$, as expected from the proof of Theorem \ref{whichgp}).

The fiber at $\infty$ is the vanishing locus of the polynomials
$$\eqalign{
x_1 &+ 55x_3 + 71x_4 + 290x_5 + 519x_6,\cr
x_2 &- 16x_3 - 19x_4 - 84x_5 - 145x_6,\cr
x_3^2 &+ 414x_3x_4 - 4538x_3x_5 - 5876x_3x_6 + 448x_4^2 - 3796x_4x_5 -
    4200x_4x_6 - 23754x_5^2 - 73732x_5x_6 - 56083x_6^2,\cr
7596x_3x_4 &- 83956x_3x_5 - 108804x_3x_6 + 8241x_4^2 - 70306x_4x_5 -
    77950x_4x_6 - 438959x_5^2 - 1362754x_5x_6 - 1036791x_6^2,\cr}$$
and it has good reduction outside $2,3,5,397,449$.
This can be checked
either by a Groebner-basis computation or by embedding the fiber in
$\P^3$ as the curve defined by two quadrics $Q_1, Q_2$ and observing that
the curve is nonsingular away from primes dividing the discriminant
of $\det (tM_1 + M_2)$, where $M_1$ and $M_2$ are the symmetric matrices
corresponding to $Q_1, Q_2$.  By Lemma \ref{goodp}, the local invariant
at every other prime is $0$.  We examine these primes in turn.

First we consider $p = 2$.  We find that there are no rational points
in the fibers over $t$ with $v_2(t) \le 1$.  In fact, none of the affine
patches of the graph of the fibration (see Lemma \ref{disc})
has points mod $2^3$ for such $t$.
On the other hand, the point $[1:0:5:3:6:1]$ modulo
$8$ can be lifted to a $2$-adic point on the fiber at $0$.
Let us show that the local
invariant is $1/2$ above all $t$ with $v_2(t) \ge 2$.  Indeed, by Lemma
\ref{constdisc} it suffices to consider $t = 0, 4$.  There is one prime $\p$
of $\Q(r)$ above $2$,
and it is totally ramified.  Therefore it suffices to show that
the conics $x^2 - (t-\alpha) y^2 - c z^2$ are not solvable at
$\p$ for $t = 0,4$.  In both cases this can be checked modulo
$\p^5$.

Next we consider $p = 3$.  This time we find that there are no rational points
in fibers above $t$ with $v_3(t) \le -1$, and that this can be checked modulo
$3^2$.  However, the point $[8:0:7:4:2:1]$ modulo $9$ can be lifted to
a $3$-adic point on the fiber at $4$.
Let us show that the local invariant
is $0$ above all $t$ with $v_3(t) \ge 0$.  Again, by Lemma \ref{constdisc}
it suffices to consider $t = 0, 1, 2$.  There are two primes of $\Q(r)$
lying above $3$, both unramified of degree $2$, and $c$
has valuation $1$ at both.  On the other hand, $\alpha$ has valuation $0$
at both primes and $\alpha$ is not congruent to any integer modulo either
prime.
Furthermore, $2-\alpha$ is a square at both, while $-\alpha$ and $1-\alpha$
are squares at neither.  In particular, it follows from Lemma \ref{odd}
that the local invariant at $t$ is $0$ above all $t \in \Z_3$.

For $p = 5$, the computations are simpler, because $c$ is a
square at all primes of $\Q(r)$ lying above $5$.  Indeed, there is one
ramified prime at which the completion is isomorphic to $\Q_5(\sqrt 5)$
and $c$ is congruent to 4 mod $\sqrt 5$, and one unramified prime
of degree $2$ at which $c$ is congruent to $2$ mod $5$, which is
a square in $\F_{25}$.  Thus the local invariant is $0$ above
all $t \in \P^1(\Q_5)$.

Finally, we need to consider the primes where $V$ has good reduction but
the fiber at infinity has bad reduction, namely $397$ and $449$.
Note that $c$ is a unit at every place above $397$.
Also, for every point $P$ whose image $t_P$ in $\P\sp 1$ is not congruent
to $\alpha$ mod any $\p$ above $397$, the difference $t_P-\alpha$ is a unit
at all $\p$ above 397.
Proposition \ref{odd} then shows that the local invariant of $(c,t_P-\alpha)$
is $0$ at all such $\p$, so that the local invariant of $\cores(c,t_P-\alpha)$
is $0$ by Proposition \ref{coresinv}.
every $\Q_{397}$-point on $V$ whose reduction mod $p$ does not lie
on an $I_4$-fiber will have local invariant $0$.
Over $\F_{397}$, the $I_4$-fibers lie above $(47:1)$, $(144:1)$, $(224:1)$,
$(379:1)$ (the first coordinates are the roots of the minimal polynomial of
$\alpha$ in $\F_{397}$).  Since $V$ has good reduction
at $397$, every $\F_{397}$-point on $V$ lifts to a $\Q_{397}$-point, so it suffices
to verify that the $\F_{397}$-point
$(246 : 16 : 98 : 0 : 1 : 0)$ maps to $(0:1)$ by $F_1$.  But by Proposition
\ref{constant}, it follows that the local invariant at $397$ is identically $0$.

To prove that the local invariant at $449$ is 0, it is enough to check that
$c$ is a square at all places of $\Q(r)$
above $449$: indeed, it is congruent to $204$ or $251$ at all of these places.
Alternatively, it can be verified that the fiber at infinity contains a
$\Q_{449}$-point lying above $(246:105:375:347:1:0)$.  This completes the proof.
\end{proof}

Theorem \ref{mainnew} now follows by
combining Propositions \ref{selmer}, \ref{nontriv}, \ref{usev},
and \ref{obstr}.

\subsection{The Richelot isogeny}
Following a suggestion of Nils Bruin and Victor Flynn, we now study the
interaction of the element of $\Sha$ constructed in Theorem \ref{mainnew}
with the Richelot
isogeny on the Jacobian.  A Richelot isogeny (cf.\ \cite{CF}, chapter 9)
is an isogeny of the Jacobian
of a curve of genus $2$ to that of another curve whose kernel is a maximal
isotropic subgroup of the $2$-torsion.  Given a curve of genus $2$ with
Weierstrass points
$W_1, \dots, W_6$, such subgroups consist of $0$ and three divisors of
the form $W_i - W_j$ such that no $W_i$ appears in more than one of them;
since $W_i - W_j = W_j - W_i$ in the Jacobian, they correspond to
partitions of the Weierstrass points into two pairs.  Letting the equation
of the curve be
$y^2 = f(x)$, where $\deg f = 6$, the Weierstrass points are $(\delta_i,0)$ where
$\delta_i$ is a root of $f$, so such partitions correspond to
factorizations of $f$ as products $f_1f_2f_3$ of three quadratic factors.
The isogenous curve is then defined by the equation
$y^2 = cg_1g_2g_3$, where $g_i = f_{i+1}f'_{i+2} - f'_{i+1}f_{i+2}$,
indices are read mod $3$,
and $c$ is the determinant of the matrix of coefficients of the $f_i$
(\cite{CF}, sect.\ 9.2).
Observe that multiplying one of the $f_i$ by a constant $k$ multiplies
$c$, $g_{i+1}$, and $g_{i+2}$ by $k$.  This is compatible with the evident
fact that if $C$ is isogenous to $C'$ then the twist of $C$ by $k$ is
isogenous to the twist of $C'$ by $k$.
In particular we find that the Jacobian of the curve
$$C_t: y^2 = t(x^2+1)(x^2-2x-1)(x^2+x-1)$$
is isogenous to that of
$$C'_{-t}: y^2 = -t(x^2+1)(x^2+2x-1)(x^2-4x-1) = -tg_1(x)g_2(x)g_3(x) = -tg(x).$$
In addition we observe that there is only one rational Richelot isogeny on each
of these curves, because the $2$-torsion points that are not rational are
defined over extensions of degree $4$.  A Galois-stable subgroup containing such a point
contains four elements other than the identity, so it cannot have order $4$.
It follows that the only rational
maximal isotropic subgroup is the one made up of the four rational points.
From now on, we will denote the Richelot isogeny from $\Jac(C_t)$ to
$\Jac(C'_{-t})$ by $\phi_t$.

First we note that $\Sha(\Jac(C'_{-6n}))$ is also nontrivial
for $n$ in the set $S$ described in Theorem \ref{mainnew}.

\begin{theorem}\label{main2}
Let $S$ be the set of primes described in Theorem \ref{mainnew}.  Then for all $n$
which are products of elements of $S$, the $2$-part of the Tate-Shafarevich
group of the Jacobian of the curve $y^2 = -6ng(x)$ is nontrivial.
\end{theorem}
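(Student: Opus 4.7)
The plan is to deduce Theorem \ref{main2} from Theorem \ref{mainnew} by exploiting the Richelot isogeny $\phi_{6n}\colon \Jac(C_{6n}) \to \Jac(C'_{-6n})$ together with the twist invariance of the K3 surface $V_{f,\delta}$. Theorem \ref{mainnew} gives nontriviality of $\Sha(\Jac(C_{-6n}))[2^\infty]$, but the Richelot isogeny partners $\Jac(C_{6n})$, not $\Jac(C_{-6n})$, with $\Jac(C'_{-6n})$. So I would first produce an analog of Theorem \ref{mainnew} for the twist $C_{6n}$, and then transfer the resulting nontrivial element of Sha across $\phi_{6n}$.

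The first step is almost free. The construction of $V_{f,\delta}$ in Section \ref{geometry} depends on $f$ only up to a scalar (indeed $V_{tf,\delta}=V_{f,\delta}$), and so the Brauer-Manin obstruction established in Proposition \ref{obstr} shows that $V_{f,\delta}$ has no $\Q$-rational points regardless of the twist. By Propositions \ref{nontriv} and \ref{usev}, it therefore suffices to verify the analog of Proposition \ref{selmer} for the curve $C_{6n}$, namely that $\delta$ lies in the fake 2-Selmer group of $\Jac(C_{6n})$ for all products $n$ of elements of $S$. The local argument in Proposition \ref{selmer} carries over at each prime $p \mid n$ unchanged, since the shape of $\iota(\delta)$ as a component-wise square up to a global rational is intrinsic to $\delta$ and independent of the twist; the finitely many remaining verifications at $p \in \{2,3,5,\infty\}$ for $n \in \{1,5\}$ reduce to direct computations with {\sc magma}'s {\tt TwoSelmerGroup} applied to $C_{6}$ and $C_{30}$.

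Having established that $\Sha(\Jac(C_{6n}))[2^\infty]$ is nontrivial, I would carry out the transfer step using the exact sequences relating the $\phi_{6n}$-, $\hat\phi_{6n}$- and $2$-Selmer groups of the two Jacobians. Since $\hat\phi_{6n} \circ \phi_{6n} = [2]$ on $\Jac(C_{6n})$, there is an induced exact sequence
\[
0 \to \Sha(\Jac(C_{6n}))[\phi_{6n}] \to \Sha(\Jac(C_{6n}))[2] \to \Sha(\Jac(C'_{-6n}))[\hat\phi_{6n}],
\]
together with a dual sequence in the other direction. The nontrivial Sha class for $\Jac(C_{6n})$ is represented by the principal homogeneous space whose $\iota$-quotient is $V_{f,\delta}$, and I would show that its Richelot image in $\Sha(\Jac(C'_{-6n}))$ is again represented by a principal homogeneous space whose $\iota$-quotient carries the same Brauer-Manin obstruction inherited from Proposition \ref{obstr}.

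The main obstacle is controlling this last transfer: it is a priori possible for the chosen Sha class on $\Jac(C_{6n})$ to be killed by $\phi_{6n}$. I would address this by computing the induced map on fake Selmer groups Richelot-explicitly (via the formulas of \cite{CF}, Chapter 9), tracking $\delta \in H$ to its Richelot image $\delta'$ in the descent group attached to $g$, and observing that $V_{g,\delta'}$ is geometrically isomorphic to $V_{f,\delta}$, since both are twisted Kummer surfaces of Richelot-isogenous Jacobians twisted by Richelot-corresponding descent classes. If this identification goes through, then the single rational-point-free surface $V_{f,\delta}$ does double duty, witnessing nontrivial $\Sha[2^\infty]$ on both Jacobians simultaneously, and Theorem \ref{main2} follows.
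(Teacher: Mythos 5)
Your proposal takes a genuinely different route from the paper, and unfortunately it has several gaps that make the argument fail as stated.

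The paper's actual proof of Theorem~\ref{main2} is a direct repetition of the Theorem~\ref{mainnew} argument applied to the pair $(g,\delta')$ with the explicit choice $\delta'=(3,\,1+\sqrt 2,\,(1+\sqrt 5)/2)$, chosen so that Theorem~\ref{whichgp} applies and the field of definition of the lines of $V_{g,\delta'}$ coincides with that of $V_{f,\delta}$ (the paper notes this coincidence uses that $-1$ is a square in the splitting field of $g$). One then redoes the local‑solvability and Brauer--Manin computations on $V_{g,\delta'}$. The Richelot isogeny plays no role at all in the proof of Theorem~\ref{main2}; it is studied separately in Theorems~\ref{seliso} and~\ref{invis} as a question about the nature of the elements of $\Sha$ already constructed.

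Your Richelot‑transfer strategy has the following problems. First, you need an analog of Proposition~\ref{selmer} for the twist $C_{6n}$ (positive coefficient), not $C_{-6n}$; the argument at primes $p\mid n$ carries over, but the finitely many remaining local checks at $2,3,5,\infty$ are asserted, not verified, and there is no a priori reason they succeed after the sign flip. Second, and more seriously, you need the image of the constructed Sha class under the Richelot isogeny to be nonzero in $\Sha(\Jac(C'_{-6n}))$. The paper establishes this only for $n=1$ (Theorem~\ref{invis}), via an explicit cocycle computation and a comparison of Selmer groups, and the remark following Theorem~\ref{invis} explicitly warns that there is no reason to expect such a statement for general $n$: ``if the rank is large, it is very unlikely that the fake Selmer group for multiplication by $2$ would be isomorphic to the Selmer group of the Richelot isogeny, and the proof would fail.'' Your plan implicitly requires exactly this kind of statement for all products $n$ of elements of $S$. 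Third, the step you introduce to repair this --- ``observing that $V_{g,\delta'}$ is geometrically isomorphic to $V_{f,\delta}$'' --- is unsubstantiated, and even if a geometric isomorphism existed, a geometric isomorphism does not transfer a Brauer--Manin obstruction; you would need an isomorphism over $\Q$. The paper never asserts any isomorphism (geometric or rational) between $V_{g,\delta'}$ and $V_{f,\delta}$; the two surfaces arise from different \'etale algebras $A_f$ and $A_g$ and different twists, and each needs its own Brauer--Manin computation. So the ``double duty'' conclusion you draw does not follow.

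In short: the paper proves Theorem~\ref{main2} by running the whole machine a second time for $(g,\delta')$, not by pushing the Theorem~\ref{mainnew} class across a Richelot isogeny. Your approach would, even if all its steps went through, at best reprove Theorem~\ref{main2} contingent on an all‑$n$ strengthening of Theorem~\ref{invis}, which the paper itself says is unlikely to hold.
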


\begin{proof}
The proof of this theorem is very similar to that of Theorem
\ref{mainnew},
except that one uses $\delta' = (3,1 + \sqrt 2, (1+\sqrt 5)/2)$.
Note that the field of definition of the lines of $V_{g,\delta'}$ turns out to
be the same as that of $V_{f,\delta}$,
because $-1$ is a square in the splitting field of $g$.
\end{proof}

It is natural to ask whether these systematically-occurring elements of
$\Sha$ are in the kernel of the map induced on
Tate-Shafarevich groups by $\phi_n$.  It is not clear whether
a general result can be obtained here.  We will prove only
that they are not in the kernel for the smallest case $n = 1$.
To do so, we will calculate the Selmer groups of these isogenies, as in \cite{Sc},
in the special case $n = 1$.

We now sketch the method of \cite{Sc} for calculating Selmer groups.  Let $C$ be
a curve and $\Jac C$ its Jacobian.  Let $A$ be an abelian variety and
$\phi: A \ra J$ an isogeny from $A$ to $J$.  Let $\hat \phi$ be the dual
isogeny from $\hat J$ to $\hat A$.  Then we find divisors $D_1,
\dots, D_r$ the union of whose Galois orbits spans $\ker \hat \phi$,
and functions $\psi_1, \dots, \psi_r$ such that the divisor of $\psi_i$ is $kD_i$,
where $k$ is the exponent of $\ker \phi$ (here $2$).  The functions
$\psi_i$ are defined over number fields $K(D_i)$; we use them to define an
evaluation map from an open subset of
$C(K)$ to $\oplus_i(K \otimes_\Q K(D_i))^*$ for every extension $K$ of $\Q$, and hence a
descent map $\delta_{\phi,C}$ from $J(\Q)$ to $\oplus_i K(D_i)^*/{K(D_i)^*}^2)$,
extending the evaluation map from points of $C$ to divisors supported on an
open subset of $C$ by multiplicativity.  It is
proved in \cite{Sc}, Lemma 2.1, that this yields a well-defined map on the
Jacobian.  As in the usual descent procedure, the image is always contained
in the subgroup of $\oplus_i K(D_i)^*/{K(D_i)^*}^2$ unramified away from $2$, $\infty$,
and the bad primes of $J$, which are a subset of the bad primes of $C$
(\cite{Sc}, page 454).  Provided that Schaefer's
Assumptions I and II are satisfied, the Selmer group $\Sel_\phi$ of $\phi$
is then isomorphic to the subgroup of $\oplus_i K(D_i)^*/{K(D_i)^*}^2$ of
elements unramified away from $2$, $\infty$, and bad primes of $J$ which are
in the images of the local descent maps
at $2$, $\infty$, and the bad primes of $J$.
Assumption I is satisfied because every rational
divisor class on a curve of genus $2$ is represented by a rational divisor, and
Assumption II is satisfied because the natural map from $A[\phi]$ to
$\mu_2(\Q \oplus \Q)$ given by the Weil pairing is an isomorphism
(this is because there is a Galois-invariant basis for $\ker \phi$, not merely
a Galois-invariant spanning set).

\begin{definition}
With notation as in the discussion above, the Tate-Shafarevich group
$\Sha_\phi$ is defined to be $\Sel_\phi/\delta_{\phi,C}(J(\Q))$.
\end{definition}

\begin{lemma}\label{howsel}
The Selmer group $\Sel_{\phi_n}$ of $\phi_n$ is a
subgroup of $(\Q\oplus\Q)^*/{(\Q\oplus\Q)^*}^2$, and the descent map $\delta_{\phi_n,C}$
takes a $K$-point $(x,y)$ of $C_n$ to $(f_1(x),f_2(x)) \in (K\oplus K)^*/{(K\oplus K)^*}^2$.
For $C'_{-n}$ the Selmer group is again a subgroup of
$(\Q\oplus\Q)^*/{(\Q\oplus\Q)^*}^2$, and
the descent map corresponding to the isogeny dual to $\phi_n$
takes a $K$-point $(x,y)$ of $C'_{-n}$ to $(g_1(x),g_2(x)) \in (K\oplus K)^*/{(K\oplus K)^*}^2$.
\end{lemma}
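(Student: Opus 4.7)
My plan is to invoke Schaefer's general framework for descent via isogenies, as summarized in the text, with an explicit choice of divisors and associated functions that make both the ambient group and the evaluation map transparent.

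First, for $\phi_n\colon \Jac(C_n)\to \Jac(C'_{-n})$, I would describe $\ker\hat\phi_n$ in $\Jac(C_n)$ concretely. The Richelot isogeny $\phi_n$ is a $(2,2)$-isogeny whose kernel is generated by the classes $[D_i]$ for $i=1,2,3$, where $D_i=W_{i,1}+W_{i,2}-\infty_+-\infty_-$ and $W_{i,1},W_{i,2}$ are the Weierstrass points whose $x$-coordinates are the roots of $f_i$. Under the principal polarization, $\ker\hat\phi_n$ has the same description inside $\Jac(C_n)$. Since the three $D_i$ sum to zero in the Jacobian, any two of them generate this kernel; I would take $D_1$ and $D_2$. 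Each $D_i$ is a rational divisor on $C_n$ (although the individual Weierstrass points are not rational), so in Schaefer's notation $K(D_1)=K(D_2)=\Q$, and the ambient group $\bigoplus_i K(D_i)^*/{K(D_i)^*}^2$ is exactly $(\Q\oplus\Q)^*/{(\Q\oplus\Q)^*}^2$.

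Second, I would produce functions $\psi_i$ with $\text{div}(\psi_i)=2D_i$. The claim is $\psi_i=f_i(x)$. Indeed, at each $W_{i,j}$ the relation $y^2=nf_1f_2f_3$ combined with the fact that $f_{i+1}$ and $f_{i+2}$ are units at $W_{i,j}$ gives $f_i(x)=(\text{unit})\cdot y^2$ locally, so $f_i$ has a zero of order $2$ there. At each of the two points at infinity, $x$ has a simple pole and $f_i$ has degree $2$ in $x$, so $f_i$ has a double pole at each. Together this yields $\text{div}(f_i(x))=2W_{i,1}+2W_{i,2}-2\infty_+-2\infty_-=2D_i$. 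Hence Schaefer's evaluation map sends an affine non-Weierstrass point $(x_0,y_0)\in C_n(K)$ to $(f_1(x_0),f_2(x_0))\bmod \text{squares}$, and by the extension already invoked (Schaefer's Lemma 2.1) this descends multiplicatively to a well-defined map $\delta_{\phi_n,C_n}\colon \Jac(C_n)(\Q)\to (\Q\oplus\Q)^*/{(\Q\oplus\Q)^*}^2$ of the stated shape. The Selmer group is then carved out inside this group as usual by the local solubility conditions, using the already-verified Schaefer Assumptions I and II.

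Finally, the argument for $C'_{-n}:y^2=-ng_1(x)g_2(x)g_3(x)$ is identical after relabelling: the kernel of the dual Richelot isogeny on $\Jac(C'_{-n})$ is generated by the rational classes $[D'_i]$ coming from the quadratic factors $g_i$, the functions $g_i(x)$ have divisor $2D'_i$ by exactly the same pole/zero computation, and therefore the descent map has the form $(x,y)\mapsto(g_1(x),g_2(x))$. There is no real obstacle here once the correct divisors and functions are exhibited; the only thing one has to check with care is the local divisor computation for $f_i(x)$ (respectively $g_i(x)$), and the verification of Assumptions I and II has already been carried out.
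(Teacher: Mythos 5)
Your argument is correct and follows the same route as the paper: invoke Schaefer's machinery, identify the rational $2$-torsion divisor classes $D_i = W_{i,1}+W_{i,2}-\infty_+-\infty_-$ arising from the quadratic factors, and observe that $\operatorname{div}(f_i(x)) = 2D_i$, from which the form of the descent map and its target follow. You supply a more explicit local pole/zero computation where the paper merely asserts the divisor identity, but the essential steps and conclusions coincide.
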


\begin{proof}
We will only prove this for $C_n$, the proof for $C'$ being identical.
Let $D$ be a rational divisor class of order $2$.
Then $D$ is represented by a divisor $(\alpha,0)+(\alpha',0)-D_\infty$, where
$\alpha$ and $\alpha'$ are roots of one of the quadratic factors of
$f$ and $D_\infty$ is the divisor of poles of the
function $x$ on the curve.  It follows that $2D$ is the divisor
of the quadratic factor of $f$ whose roots are $\alpha, \alpha'$.
The kernel of $\phi_n$ is isomorphic to $(\Z/2)^2$
with trivial Galois action, so any two factors of $f$ give functions whose
divisors are doubles of the divisors that span the kernel.  Since the factors
are defined over $\Q$ and we use two of them, the description of Schaefer's
procedure above states that the descent map and its target are as claimed.
\end{proof}

\begin{proposition}\label{howbig}
There is an exact sequence
$$0 \ra J(\Q_p)[\phi]/\phi'(J'(\Q_p)[2]) \ra J(\Q_p)/\phi'(J'(\Q_p))
\ra J'(\Q_p)/2J'(\Q_p) \ra J'(\Q_p)/\phi(J(\Q_p)) \ra 0.$$
\end{proposition}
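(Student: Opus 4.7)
The plan is to recognize this statement as a straightforward exercise in diagram chasing, relying only on the defining relations $\phi' \circ \phi = [2]_J$ and $\phi \circ \phi' = [2]_{J'}$ that hold between the Richelot isogeny and its dual (equivalently, a formal consequence of the snake lemma applied to the factorization of $[2]$ through $\phi$). No substantive arithmetic input is required.

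First I would define each of the four maps. The leftmost map is induced by the inclusion $J(\Q_p)[\phi] \hookrightarrow J(\Q_p)$; it is well-defined because if $y \in J'(\Q_p)[2]$, then $\phi(\phi'(y)) = 2y = 0$, so $\phi'(J'(\Q_p)[2]) \subseteq J(\Q_p)[\phi] \cap \phi'(J'(\Q_p))$. The middle map is induced by $\phi$, which sends $\phi'(J'(\Q_p))$ into $2J'(\Q_p)$ since $\phi \circ \phi' = [2]$. The rightmost map is induced by the identity of $J'(\Q_p)$; it is well-defined because $2J'(\Q_p) = \phi(\phi'(J'(\Q_p))) \subseteq \phi(J(\Q_p))$.

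Next I would verify exactness at each of the four positions. Injectivity of the leftmost map: if $x \in J(\Q_p)[\phi]$ lies in $\phi'(J'(\Q_p))$, write $x = \phi'(y)$; then $0 = \phi(x) = 2y$, so $y \in J'(\Q_p)[2]$ and $x \in \phi'(J'(\Q_p)[2])$. Exactness at $J(\Q_p)/\phi'(J'(\Q_p))$: an element $x$ maps to $0$ iff $\phi(x) = 2y$ for some $y \in J'(\Q_p)$, and then $x - \phi'(y) \in J(\Q_p)[\phi]$, exhibiting $x$ as coming from the leftmost term. Exactness at $J'(\Q_p)/2J'(\Q_p)$: the image of the middle map is exactly $\phi(J(\Q_p))/2J'(\Q_p)$, which is precisely the kernel of the rightmost map. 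Surjectivity of the rightmost map is immediate from the definition.

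The main obstacle is essentially nil: the real content is simply the identity $\phi' \circ \phi = [2]_J$, and the proposition just packages its consequences on $\Q_p$-points into a four-term exact sequence. In particular, the statement would go through verbatim for any isogeny $\phi$ of abelian varieties over any field with a dual $\phi'$ satisfying $\phi'\phi = [2]$, so it is formal and requires nothing specific to Jacobians or to the Richelot construction.
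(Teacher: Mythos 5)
Your proof is correct and complete. The paper itself does not give a proof at all --- it simply observes that the statement is a special case of Schaefer's Proposition 2.6 (\cite{Sc}) and cites it. Your diagram chase, using only $\phi'\circ\phi = [2]_J$ and $\phi\circ\phi' = [2]_{J'}$, is essentially the content of that cited proposition and supplies the direct argument the paper omits. Your closing observation that the statement is formal and holds for any isogeny $\phi$ of abelian varieties with $\phi'\phi = [2]$ (indeed with $[2]$ replaced by $\phi'\phi$ throughout) is accurate and is precisely the generality of Schaefer's result; nothing about Jacobians, Richelot isogenies, or $\Q_p$ is used.
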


\begin{proof}
This is a special case of \cite{Sc}, Proposition 2.6.
\end{proof}

\begin{theorem}\label{seliso}
The Selmer groups of the Richelot isogenies on the Jacobians of the curves
$y^2 = -6f(x)$ and $y^2 = 6g(x)$ are isomorphic to $(\Z/2)^3$.
\end{theorem}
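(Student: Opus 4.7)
The plan is to carry out Schaefer's descent procedure explicitly in the case $n=-6$, as summarized immediately before Lemma \ref{howsel}. By that lemma each of the two Selmer groups embeds into $(\Q \oplus \Q)^*/{(\Q \oplus \Q)^*}^2$ via the descent map that takes $(x,y)$ to the pair of values of two of the three quadratic factors of the defining sextic of the curve, so the computation is finite and explicit.

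First I would identify the set $S$ of bad primes of the two Jacobians. A direct calculation shows that the discriminants of the sextics $-6f$ and $6g$ are supported on $\{2,3,5\}$, so both Selmer groups lie inside the $(S\cup\{\infty\}, 2)$-subgroup of $(\Q \oplus \Q)^*/{(\Q \oplus \Q)^*}^2$, a finite $\F_2$-vector space whose dimension and generators are explicit.

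Next, at each place $p \in S \cup \{\infty\}$ I would determine the image of the local descent map $\Jac(C_{-6})(\Q_p)/\hat\phi_{-6}(\Jac(C'_6)(\Q_p)) \hookrightarrow (\Q_p \oplus \Q_p)^*/{(\Q_p \oplus \Q_p)^*}^2$ and its counterpart for $\hat\phi_{-6}$ on $\Jac(C'_6)(\Q_p)$. The $\F_2$-dimension of each local image is pinned down by combining Proposition \ref{howbig} with the standard local formula $|J(\Q_p)/2J(\Q_p)| = |J[2](\Q_p)| \cdot |2|_p^{-\dim J}$ and the easy computation of the rational $\phi$- and $2$-torsion subgroups, while the image itself is found by evaluating the descent map on enough $\Q_p$-points of $C_{-6}$ (respectively $C'_6$) to span it.

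Finally, the Selmer group is the intersection of the $(S \cup \{\infty\}, 2)$-subgroup with the preimage of the product of these local images, and a routine linear-algebra calculation over $\F_2$ gives its dimension. The main obstacle is the local analysis at $p=2$: Hensel's lemma must be applied modulo a sufficiently high power of $2$ both to lift approximate local points on the curves and to certify square classes of evaluated factors. This is best done in {\sc magma}, and the expected outcome is that both Selmer groups have $\F_2$-dimension $3$, giving the stated isomorphism with $(\Z/2)^3$.
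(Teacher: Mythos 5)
Your proposal follows essentially the same route as the paper's proof: identify the bad primes $\{2,3,5\}$, bound each local image using Proposition \ref{howbig} together with the standard local index formula (which the paper obtains from Schaefer's Propositions 2.4 and 2.5), generate the local images by evaluating the descent map on explicit local points (the paper needs points of $C_{-6}$ and $C'_6$ over the quadratic extension $\Q_2(\sqrt{6})$ to get the two extra generators at $p=2$, so strictly speaking one evaluates on rational degree-two divisors rather than $\Q_p$-points of the curve), and then intersect inside the finite $(S\cup\{\infty\},2)$-subgroup. The only omission worth flagging is that the two Selmer computations must be carried out simultaneously because Proposition \ref{howbig} intertwines the local quotients for $\phi$ and $\phi'$, but since you invoke that proposition this is implicit in your plan.
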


\begin{proof}
Let the Jacobians of $C_{-6}$ and $C'_{6}$
be denoted $J$ and $J'$ respectively.
The Selmer groups of $J$ and $J'$
will be calculated together, because there is no obvious way
to determine the size of $J(\Q_p)/\phi'(J'(\Q_p))$ without computing
the size of $J'(\Q_p)/\phi(J(\Q_p))$ at the same time.  We will compute them
using Proposition \ref{howbig}.

The primes of bad reduction of both $C_{-6}$ and $C'_{6}$
are $2, 3, 5$.
Using \cite{Sc}, Propositions 2.4 and 2.5, we can calculate the order of
$J'(\Q_p)/2J'(\Q_p)$, the third nonzero term in the exact sequence of
Proposition \ref{howbig}:
it is $16$ for $p = 2$, while it is $4$ for $p = 3, 5$ and $2$ for
$p = \infty$.  It is easy to calculate that the first term of the exact
sequence in Proposition \ref{howbig} has order $4$
for $p = 2, 3, 5$ and $2$ for $p = \infty$.  For both curves
the $2$-torsion points map to $(10,2)$, $(2,-2)$, and their product
$(5,-1)$ in $\Sel_\phi$ and
$\Sel_\phi'$ under $\delta_{\phi,C}$ and $\delta_{\phi',C'}$ respectively.
The sum of the
dimensions of the second and fourth nonzero terms in the exact sequence
must be equal to the sum of the dimensions of the first and third nonzero
terms.  For $p \in \{3,5,\infty\}$, the sum of the dimensions of the
images of the subgroups generated by the $2$-torsion is already equal to
the sum of the dimensions of the first and third nonzero terms, so for
these $p$ the images of the $2$-torsion points generate the local image.
However, for $p = 2$, we need to find two additional generators.

We check that there is a $\Q_2(\sqrt 6)$-rational point on
$C_{-6}$ with $x$-coordinate $3 + \sqrt{6}$ whose image under
$\delta_R$ is $(10,-2)$.  Also, there is a $\Q_2(\sqrt
6)$-rational point on $C'_{6}$ with $x$-coordinate $5 + \sqrt{6}$
whose image is $(10,-2)$.  These new generators are independent of
the image of the $2$-torsion points of $C_{-6}$ and $C'_{6}$
respectively, so we have found the full Selmer group of the
isogeny for both $C$ and $C'$.
$$
\begin{array}
{|c|c|c|}
\hline
p  &  C  &  C' \\
\hline
2 & (5,1),(2,2),(1,-1) & (5,1),(2,2),(1,-1) \\
3 & (1,-1),(-1,1)      &   (1,-1),(-1,1) \\
5 & (5,1),(2,2)        & (5,1),(2,2) \\
\infty & (1,-1)        &  (1,-1) \\
\hline
\end{array}
$$

It is now straightforward to see that the Selmer groups in both cases are
generated by $(5,1),(2,2),(1,-1)$.
\end{proof}

It follows immediately that the Selmer groups contain $(1,-1)$, which
is not in the image of the $2$-torsion.
We can now show that $(1,-1)$ is in the Tate-Shafarevich group of
the Richelot isogeny, from which it will follow
that the elements of $\Sha$ found in Theorem \ref{mainnew}
and Theorem \ref{main2} are not in the kernel of
the map induced by the Richelot isogeny for $n = 1$.

\begin{theorem}\label{invis}
The element of
$\Sha(\Jac(C_{-6}))$ described in Theorem \ref{mainnew} is not in the kernel of the
map on Tate-Shafarevich groups induced by
the Richelot isogeny, and similarly for $C'_{6}$.
\end{theorem}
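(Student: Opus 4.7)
The plan is to use the exact sequence
\[
0 \to J'(\Q)/\phi(J(\Q)) \to \Sel_\phi \to \Sha(J)[\phi_*] \to 0
\]
arising from the Richelot isogeny $\phi\colon J = \Jac(C_{-6}) \to J' = \Jac(C'_6)$, where $\Sha(J)[\phi_*]$ denotes the kernel of the map $\phi_*\colon \Sha(J) \to \Sha(J')$. Showing that the class produced in Theorem~\ref{mainnew} is not in $\ker \phi_*$ amounts to showing that it is not in the image of $\Sel_\phi$ inside $\Sha(J)$.

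First I would pin down $\Sha(J)[\phi_*]$. By Theorem~\ref{seliso} we have $\Sel_\phi \isom (\Z/2)^3$, with generators $(5,1),(2,2),(1,-1)$, and the images of the rational $2$-torsion of $J'$ occupy the index-$2$ subgroup $\{(0,0,0),(1,1,0),(0,1,1),(1,0,1)\}$ in this basis. To upgrade this to $\Sha(J)[\phi_*] \isom \Z/2$, I would bound $\rk J'(\Q)$: computing $\Sel_2(J')$ via the algorithm of \cite{S} and combining it with the lower bound $\dim_{\F_2} \Sha(J')[2] \geq 1$ obtained from Theorem~\ref{main2} should force $J'(\Q)$ to be purely torsion, so that the image of $J'(\Q)\to \Sel_\phi$ coincides with the image of $J'(\Q)_{\rm tors}$. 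Then $(1,-1) \in \Sel_\phi$ descends to a nonzero class generating $\Sha(J)[\phi_*]$.

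Next I would show that the class of $\delta$ in $\Sha(J)[2]$ differs from the unique nontrivial class of $\Sha(J)[\phi_*]$. Working inside $\Sel_2(J) \subset A_f^*/A_f^{*2}\Q^*$, the element $\delta$ corresponds under $\iota$ to $(3, -(1+\sqrt 2), (1+\sqrt 5)/2)$, whose second and third components are genuinely irrational in $\Q(\sqrt 2)$ and $\Q(\sqrt 5)$ modulo squares and $\Q^*$. On the other hand, the map $\Sel_\phi \to \Sel_2(J)$ induced by $J[\phi] \hookrightarrow J[2]$ sends $(a,b)\in (\Q^*/\Q^{*2})^2$ to an element whose components in $\Q(i),\Q(\sqrt 2),\Q(\sqrt 5)$ are all defined over $\Q$ up to squares. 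The remaining step is to check, using an explicit set of generators of $J(\Q)/2J(\Q)$, that no translate of the image of $(1,-1)$ by a class of a point of $J(\Q)$ equals $\delta$ in $A_f^*/A_f^{*2}\Q^*$. The main obstacle is carrying out the rank bound on $J'(\Q)$ and computing $J(\Q)/2J(\Q)$ precisely enough to perform this final comparison; both are concrete $2$-descent computations feasible in {\sc magma}. The statement for $C'_6$ follows symmetrically upon interchanging $J$ and $J'$ via $\hat\phi$ and invoking the class $\delta' = (3, 1+\sqrt 2, (1+\sqrt 5)/2)$ of Theorem~\ref{main2}.
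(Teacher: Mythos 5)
Your route is genuinely different from the paper's, and it is broadly sound, but a central step is asserted rather than established. The paper never computes $\Sha(J)[\phi_*]$. Instead it expresses the class of Theorem \ref{mainnew} as the explicit cocycle $z_\alpha\colon\sigma\mapsto{}^\sigma\alpha/\alpha$ with values in $J[2]\isom\mu_2(A_f\otimes\bar\Q)/\pm1$, pushes $z_\alpha$ forward through $\phi$ to get a cocycle with values in $J'[\hat\phi]$, identifies the result concretely with $(1,-1)\in\Sel_{\hat\phi}$, and then shows $(1,-1)$ is a nonzero class in $\Sha$ by checking computationally that the norm map from the fake $2$-Selmer group of $J'$ to $\Sel_{\hat\phi}$ is an isomorphism. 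That argument needs no rank bound on $J$ or $J'$; yours does, because it pins down $\Sha(J)[\phi_*]$ via the exact sequence $0\to J'(\Q)/\phi(J(\Q))\to\Sel_\phi\to\Sha(J)[\phi_*]\to0$ and requires both that the nontorsion contribution vanishes and that the torsion image is exactly the index-two subgroup you identify. This part is only proposed, not carried out, and it implicitly requires information about $J(\Q)$ as well as $J'(\Q)$ (the term $J'(\Q)/\phi(J(\Q))$ involves both).

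The more substantive gap is the claim that the map $\Sel_\phi\to\Sel^{(2)}(J)$ induced by $J[\phi]\hookrightarrow J[2]$ has image consisting of classes ``defined over $\Q$ up to squares.'' This is plausible, and with work one can see it is roughly correct: because $J[\phi]$ has trivial Galois action, cocycles valued in $J[\phi]$ are homomorphisms $G_\Q\to(\Z/2)^2$, and tracing through the Kummer/Weil-pairing identification shows the image in $A_f^*/(A_f^*)^2\Q^*$ lands on elements $\iota(q)$ with components in $\Q^*$ (after sorting out which two of the three components are hit, which depends on the Weil-pairing duality between $J[\phi]$ and $J[2]/J[\phi]$). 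But you have not justified it, and it is exactly the point where care is needed: the map $\Sel_\phi\to\Sel^{(2)}$ can have a nontrivial kernel coming from $(J[2]/J[\phi])(\Q)$, and the Schaefer parametrizations of the two Selmer groups are via different functions, so identifying the induced map requires an argument. You would also need, as you note, explicit generators of $J(\Q)/2J(\Q)$ to rule out the translate. So: a correct and interesting alternative strategy, but the justification of the ``rationality of the image'' claim and the rank/torsion computations all need to actually be done before it constitutes a proof, whereas the paper's cocycle computation sidesteps the rank issue entirely and replaces it by one {\sc magma} check.
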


\begin{proof}
We will do this only for $C$, the calculations for $C'$ being essentially
identical.  We will first
represent that element as an explicit cocycle with values
in $\Jac(C_{-6})[2]$, apply $\phi$, and show that the image is
the nontrivial element $(1,-1)$ in $\Sel_{\phi'}$.

Recall (Definition \ref{greek}) that we write $\delta_i$ for the components
of the image of $\delta$ that gives the nontrivial element of $\Sha(\Jac(C_{-6n}))$
under a fixed isomorphism $A_f \otimes \bar \Q \ra \oplus_1^6 \bar \Q$ in which
$\bar \Q[X]/(f_j)$ corresponds to components $2j-1$ and $2j$.
Given $\sigma \in \Gal (\bar \Q /\Q)$, write $s$ for the permutation of
$\{1,2,\dots,6\}$ induced by $\sigma$ on the $\delta_j$.
By remarks in the proof of \cite{Sc}, Proposition 2.2 and
in \cite{Sc}, section
2.5, we can write the element of $\Sha$ as a cocycle with values in $\mu_2(A_f \otimes \bar\Q)/\pm 1$.
As in the discussion preceding Lemma \ref{zott},
we identify $A_f \otimes\bar\Q$ with $\oplus_1^6 \bar \Q$ with Galois acting by
${}^\sigma (a_1, \dots, a_6) = ({}^\sigma a_{s^{-1}(1)}, \dots,
{}^\sigma a_{s^{-1}(6)})$.  Indeed, the cocycle corresponding to the
element of $\Sha$ constructed in Theorem \ref{main2} is the one
that takes $\sigma \in \Gal(\bar \Q/\Q)$ to ${}^\sigma \alpha/\alpha$, where
$$\alpha = \left(\sqrt{(\delta_1\delta_j)}\right)_{j=1}^6 =
\sqrt{(1,1,-3(1+\sqrt{2}),-3(1-\sqrt{2}),3(1+\sqrt{5})/2,3(1-\sqrt{5})/2)}.$$
In particular, this cocycle, which we will denote $z_\alpha$,
factors through $\Gal(F/\Q)$, where $F$ is as in Theorem \ref{mainnew}.

Now let us write $z_\alpha$ as a cocycle with values in $J[2]$.
Following the description in
\cite{Sc}, section 2.5, we see that $(r_i)\in \mu_2(L')/\pm 1$
corresponds to a $2$-torsion point $T$ on the Jacobian
such that $e(T,(\delta_j,0)-(\delta_1,0)) = r_j$
for all $j$ from $1$ to $6$, where $e$ is the Weil pairing.
The rational $2$-torsion divisors are $0$ and $(\delta_{2k},0)-(\delta_{2k-1},0)$
for $k = 1, 2, 3$.
Since two $2$-torsion divisors of the form $(\delta_i,0)-(\delta_j,0)$
have Weil pairing $1$ or
$-1$ depending on whether the number of points appearing in both counted with
multiplicity is even or odd, the rational $2$-torsion points arise only
from the following sequences of square roots of $1$:
$$(1,1,1,1,1,1),\quad (1,1,-1,-1,1,1), \quad(1,1,1,1,-1,-1),
\quad (1,1,-1,-1,-1,-1).$$
We claim that the image of $z_\alpha(\sigma)$ under the pairing with the points
$(\delta_j,0)-(\delta_1,0)$
is one of these sequences corresponding to an element of the kernel of the
Richelot isogeny if and only if $\sigma$ fixes a square root $i$ of $-1$.
Indeed, it is clear that the first two components of $z_\alpha(\sigma)$ are
always $1$, while the product of components $3$ and $4$ is $1$ if and only if
$\sigma$ fixes a square root of $\delta_3 \delta_4 = -9$, and similarly
for components $5$ and $6$.

These points are the kernel of the Richelot isogeny, which therefore maps
 $z_\alpha$ to a cocycle that factors
through $\Q(i)$ and takes the nontrivial element of this Galois group
to the rational $2$-torsion point arising from the factor $g_1$.
This corresponds to the element $(1,-1)$ of the Selmer group of $\phi'$.
To see this, note that $(1,i)$ is a square root of $(1,-1)$, and
the action of $\sigma$ multiplies $(1,i)$ by $(1,1)$ if $\sigma$ fixes $i$ and
by $(1,-1)$ otherwise.  But the image of the $2$-torsion point coming from
$g_1$ under the $\phi'$-Weil pairing is $(1,-1)$, because the pullback of this
point pairs trivially with the point coming from the factor $f_1$ on
$J$ and nontrivially with the point coming from $f_2$.  As a result,
the image of $z_\alpha$ under the Richelot isogeny
corresponds to the cocycle $\sigma \ra {}^\sigma(1,i)/(1,i)$.

Recall from Definition \ref{descent} and Proposition \ref{descjac} that
the full $2$-descent map $\Delta_{C'}$ is a map from $\Jac(C')(\Q)$ to
$\Q[X]/(g)$.  Composing $\Delta_{C'}$ with an isomorphism
from $\Q[X]/(g)$ to $\oplus_{i=1}^3 \Q[X]/(g_i)$
obtained from the Chinese Remainder Theorem, we see that the map takes
a point $(x_0,y_0)$ to $(x_0-\rho_i)_{i=1}^3$; in other words, its components
are defined by the functions $x-\rho_i$.
We remark that
$g_1 = N (x-\rho_1)$ and $g_2 = N(x-\rho_2)$.  It follows that
if $P$ is a point of $\Jac(C'_{6})$, then the image of $P$ under $\delta_{R,C'}$,
the descent map
for the Richelot isogeny, is the norm of the first two components of its
image under $\Delta_{C'}$.

However, using {\sc magma}'s {\tt TwoSelmerGroup} command to compute the fake Selmer
group, it is easy to verify that the norm map from the fake Selmer group for
full $2$-descent to the
Selmer group of the Richelot isogeny is an isomorphism for $C'_{6}$.
It follows that
the image of an element of $\Sha(C'_{6})$ cannot be the image of a
rational divisor, so $(1,-1)$ belongs to the Tate-Shafarevich group of the
Richelot isogeny on $C'_{6}$.
\end{proof}

\begin{remark}
This proof applies to any twist $C'_{6n}$, where $n$ is a product of elements of
the set $S$ described in Theorem \ref{main2},
for which we know that no rational point
on the Jacobian of $C'_{6n}$ maps by $\delta_R$
to $(1,-1)$.  However, there is no reason to expect this to be true in general;
if the rank is large, it is very unlikely that the fake Selmer group for
multiplication by $2$ would be isomorphic to the Selmer group of the Richelot
isogeny, and the proof would fail.
\end{remark}

\end{document}